\numberwithin{equation}{section}
\renewcommand{\section}{\@startsection {section}{1}{\z@}%
                                   {-3.5ex \@plus -1ex \@minus -.2ex}%
                                   {.5\linespacing}%
                                   {\normalfont\scshape\centering}}
\newtheorem{theorem}{Theorem}[section]
\newtheorem{lemma}[theorem]{Lemma}
\newtheorem{corollary}[theorem]{Corollary}
\newtheorem{proposition}[theorem]{Proposition}
\newtheorem{conjecture}[theorem]{Conjecture}
\theoremstyle{definition}
\newtheorem{definition}[theorem]{Definition}
\theoremstyle{remark}
\newtheorem*{example}{Example}
\newtheorem*{remark}{Remark}
\def\beq#1\eeq{\begin{equation}#1\end{equation}}
 \newcommand{\onto}{\to\mkern-14mu\to}
 \def\hbar{{\mathchar'26\mkern-6.5muh}}
 \DeclareMathSymbol{\onto}{\mathrel}{AMSa}{"10}
 \renewcommand{\hbar}{{\mathchar'26\mkern-9muh}}
\newcommand{\into}{\hookrightarrow}
\DeclareMathOperator{\id}{id}
\newcommand{\Ghom}{\mathcal{G}}
\newcommand{\E}{\mathcal{E}}
\newcommand{\V}{\mathcal{V}}
\newcommand{\W}{\mathcal{W}}
\newcommand{\Tree}{\mathcal{T}}
\renewcommand{\Im}{\operatorname{Im}}
\newcommand{\s}{\mathsf{s}}
\newcommand{\m}{\mathsf{m}}
\newcommand{\abs}[1]{\lvert#1\rvert}
\newcommand{\Abs}[1]{\left|#1\right|}
\newcommand{\A}{\mathcal{A}}
\newcommand{\N}{\mathbb N}
\newcommand{\norm}[1]{\lVert#1\rVert}
\newcommand{\Vect}{\mathsf{Vec}}
\newcommand{\Alg}{\mathsf{Alg}}
\newcommand{\Xc}{\mathsf C}
\DeclareMathOperator{\Hom}{Hom}
\DeclareMathOperator{\Qcat}{\mathsf{Quilt}}
\DeclareMathOperator{\Qop}{\textit{Quilt}}
\DeclareMathOperator{\Brace}{\textit{Brace}}
\newcommand{\bphi}{\boldsymbol{\phi}}
\DeclareMathOperator{\sgn}{sgn}
\newcommand{\Rep}{\mathcal{R}}
\newcommand{\mQuilt}{\textit{mQuilt}}
\DeclareMathOperator{\ad}{ad}
\newcommand{\dH}{\delta^{\mathrm H}}
\newcommand{\dS}{\delta^{\mathrm S}}
\newcommand{\comp}{\mathbin{\bar\circ}}
\newcommand{\Gerst}{\textit{Gerst}}
\newcommand{\Susp}{\mathcal{S}}
\newcommand{\Linfty}{\textit{L}_\infty}
\newcommand{\Place}{\mathcal{C}}
\DeclareMathOperator{\Wordcat}{\mathsf{F_2S}}
\DeclareMathOperator{\Wordop}{\textit{F}_2\textit{S}}
\newcommand{\Comcat}{\Simplex\!'}
\DeclareMathOperator{\Comop}{\textit{Com}}
\DeclareMathOperator{\Treecat}{\mathsf{Tree}}
\DeclareMathOperator{\Lieop}{\textit{Lie}}
\newcommand{\com}[1]{\langle#1\rangle}
\newcommand{\perm}{\mathbb{S}}
\DeclareMathOperator{\Ext}{Ext}
\newcommand{\Simplex}{\boldsymbol{\Delta}}
\DeclareMathOperator{\Endop}{\textit{End}}
\DeclareMathOperator{\tot}{tot}
\newcommand{\rep}{\mathcal{L}}
\newcommand{\Had}{\mathbin{\underset{\raisebox{.25ex}{$\scriptscriptstyle\mathrm H$}}{\otimes}}}
\newcommand{\Hadtimes}{\mathbin{\underset{\raisebox{.25ex}{$\scriptscriptstyle\mathrm H$}}{\times}}}
\newcommand{\homone}{\mathcal{K}}
\newcommand{\homtwo}{\mathcal{J}}
\DeclareMathOperator{\Sh}{Sh}
\DeclareMathOperator{\Char}{Char}
\DeclareMathOperator{\Obj}{Obj}
\DeclareMathOperator{\CQ}{\textit{ColorQuilt}}
\DeclareMathOperator{\SemiD}{\Simplex_+}
\DeclareMathOperator{\MultiSemiD}{\textit{Multi}\!\SemiD}
\DeclareMathOperator{\MultiD}{\textit{Multi}\Simplex}\DeclareMathOperator{\NSOp}{\textit{NS\hspace{.05em}Op}}
\DeclareMathOperator{\MC}{MC}
\newcommand{\Qquotient}{\mathcal H}
\newcommand{\bp}{\mathbf p}
\newcommand{\bq}{\mathbf q}
\newcommand{\bzeta}{\boldsymbol\zeta}
\newcommand{\bxi}{\boldsymbol\xi}
\DeclareMathOperator{\Ind}{Clr}
\newcommand{\etab}{\boldsymbol\eta}
\DeclareMathOperator{\Set}{\mathsf{Set}}
\newcommand{\op}{\mathrm{op}}
\DeclareMathOperator{\Sq}{Sq}
\newcommand{\Wone}{\mathfrak{I}}
\newcommand{\Wtwo}{\mathfrak{S}}
\newcommand{\onecat}{\mbox{\boldmath{$1$}}}
\newcommand{\twocat}{\mbox{\boldmath{$2$}}}
\newcommand{\crush}[1]{\makebox[.5em]{$#1$}}
\newcommand{\Qtwo}[2]{\Qwrap{\begin{array}{|c|}\hline\crush#1\\ \hline \crush#2 \\ \hline \end{array}}}
\newcommand{\Qthreeb}[3]{\Qwrap{\begin{array}{|c|}\hline\crush#1\\ \hline \crush#2 \\ \hline \crush#3 \\ \hline\end{array}}}
\newcommand{\Qfive}[5]{\Qwrap{\begin{array}{|c|c|}\hline\multicolumn{2}{|c|}{#1}\\ \hline \crush#2&\crush#3 \\ \hline \crush#4&\crush#5 \\ \hline\end{array}}}
\newcommand{\blank}{{\color[gray]{.8}\makebox[.5em]{\rule[-.4em]{1.125em}{1.25em}}}}
\newcommand{\Qthreea}[3]{\Qwrap{\begin{array}{|c|c|}\hline\multicolumn{2}{|c|}{#1}\\\hline#2 & #3\\\hline\end{array}}}
\newcommand{\Qwrap}[1]{\mathord{\raisebox{0pt}[\height+.5ex][\depth+.5ex]{$\,\setlength{\arraycolsep}{.33em}#1\,$}}}
\newcommand{\Qfoura}[4]{\Qwrap{\begin{array}{|c|c|}%
\hline%
\multicolumn{2}{|c|}{#1}\\%
\hline%
#2 & \\%
\cline{1-1}%
#4 & \raisebox{1ex}[0pt]{#3}\\%
\hline%
\end{array}}}
\newcommand{\Qfourb}[4]{\Qwrap{\begin{array}{|c|c|}%
\hline%
\multicolumn{2}{|c|}{#1}\\%
\hline%
#2 & \multicolumn{1}{||c|}{}\\%
\cline{1-1} %
#4 & \multicolumn{1}{||c|}{\raisebox{1ex}[0pt]{#3}}\\%
\hline%
\end{array}}}
\newcommand{\Qfourc}[4]{\Qwrap{\begin{array}{|c|c|}\hline\multicolumn{2}{|c|}{#1}\\\hline\multicolumn{2}{|c|}{#2}\\\hline#3 & #4\\\hline\end{array}}}
\newcommand{\Qsevena}[7]{\Qwrap{\begin{array}{|c|c|}\hline\multicolumn{2}{|c|}{#1}\\ \hline#2&#3\\ \hline#4&#5\\ \hline#6&#7\\ \hline\end{array}}}
\newcommand{\Qnineb}[9]{\Qwrap{\begin{array}{|c|c|c|}
\hline
\multicolumn{3}{|c|}{#1}\\
\hline
#2 & \multicolumn{2}{|c|}{#3}\\
\hline
#4 & #5 & #6 \\
\hline
#7 & #8 & #9 \\
\hline
\end{array}}}
\newcommand{\Qninea}[9]{\Qwrap{\begin{array}{|c|c|c|}
\hline
\multicolumn{3}{|c|}{#1}\\
\hline
\multicolumn{2}{|c|}{#2} & #3\\
\hline
#4 & #5 & #6 \\
\hline
#7 & #8 & #9 \\
\hline
\end{array}}}
\newcommand{\Qsixa}[6]{\Qwrap{\begin{array}{|c|c|}\hline\multicolumn{2}{|c|}{#1}\\\hline\multicolumn{2}{|c|}{#2}\\\hline#3 & #4\\\hline#5 & #6\\\hline\end{array}}}
\newcommand{\Qsixc}[6]{\Qwrap{\begin{array}{|c|c|c|}\hline\multicolumn{3}{|c|}{#1}\\\hline#2&\multicolumn{2}{|c|}{#3}\\\hline#4&#5&#6\\\hline\end{array}}}
\newcommand{\Qeightb}[8]{\Qwrap{\begin{array}{|c|c|c|}
\hline
\multicolumn{3}{|c|}{#1} \\
\hline
#2 & \multicolumn{2}{|c|}{#3}\\
\hline
 #4 &#5& \\%
\cline{1-2} 
\blankstrut#7&#8  & \raisebox{1ex}[0pt]{$#6$} \\
\hline
\end{array}}}
\newcommand{\Qsixe}[6]{\Qwrap{\begin{array}{|c|c|c|}\hline\multicolumn{3}{|c|}{#1}\\\hline\multicolumn{2}{|c|}{#2}&#3\\\hline#4&#5&#6\\\hline\end{array}}}
\newcommand{\Qeighta}[8]{\Qwrap{\begin{array}{|c|c|c|}
\hline
\multicolumn{3}{|c|}{#1} \\
\hline
\multicolumn{2}{|c|}{#2} & #3\\
\hline
 #4 &#5& \\%
\cline{1-2} 
\blankstrut#7&#8  & \raisebox{1ex}[0pt]{$#6$} \\
\hline
\end{array}}}
\newcommand{\Qfourd}[4]{\Qwrap{\begin{array}{|c|c|c|}\hline\multicolumn{3}{|c|}{#1}\\\hline#2&#3&#4\\\hline\end{array}}}
\newcommand{\Qsevenb}[7]{\Qwrap{\begin{array}{|c|c|c|}\hline\multicolumn{3}{|c|}{#1}\\\hline#2&#3&#4\\\hline#5&#6&#7\\\hline\end{array}}}
\newcommand{\Qsixd}[6]{\Qwrap{\begin{array}{|c|c|c|}\hline\multicolumn{3}{|c|}{#1}\\\hline#2&#3&\multicolumn{1}{||c|}{}\\\cline{1-2}\blankstrut#5&#6&\multicolumn{1}{||c|}{\raisebox{1ex}[0pt]{$#4$}}\\\hline\end{array}}}
\newcommand{\Qsixb}[6]{\Qwrap{\begin{array}{|c|c|c|}\hline\multicolumn{3}{|c|}{#1}\\\hline#2&#3&\multicolumn{1}{|c|}{}\\\cline{1-2}\blankstrut#5&#6&\multicolumn{1}{|c|}{\raisebox{1ex}[0pt]{$#4$}}\\\hline\end{array}}}
\newcommand{\Qfivea}[5]{\Qwrap{\begin{array}{|c|c|}\hline\multicolumn{2}{|c|}{#1}\\\hline#2&\\ \cline{1-1}#4&#3\\ \cline{1-1} #5&\\ \hline\end{array}}}
\newcommand{\Qfiveb}[5]{\Qwrap{\begin{array}{|c|c|c|}\hline\multicolumn{3}{|c|}{#1}\\ \hline \multicolumn{2}{|c|}{#2}&\\ \cline{1-2}  #4&#5&\raisebox{1ex}[0pt]{$#3$}\\ \hline
\end{array}}}
\newcommand{\Qseven}[7]{\Qwrap{\begin{array}{|c|c|c|}\hline\multicolumn{3}{|c|}{#1}\\ \hline\multicolumn{2}{|c|}{#2}&\\ \cline{1-2} \blankstrut#4&#5&#3\\ \cline{1-2} \blankstrut#6&#7& \\ \hline
\end{array}}}
\newcommand{\Qfivec}[5]{\Qwrap{\begin{array}{|c|c|c|}\hline\multicolumn{3}{|c|}{#1}\\\hline\multicolumn{2}{|c|}{#2}&\multicolumn{1}{||c|}{}\\\cline{1-2}\blankstrut#4&#5&\multicolumn{1}{||c|}{\raisebox{1ex}[0pt]{$#3$}}\\\hline\end{array}}}
\newcommand{\blankstrut}{\rule{0pt}{.9em}}
\newcommand{\Qeightc}[8]{\Qwrap{\begin{array}{|c|c|c|}\hline\multicolumn{3}{|c|}{#1}\\ \hline\multicolumn{2}{|c}{#2} & \multicolumn{1}{@{\,}|c|}{#3} \\ \hline #4 & #5 & \multicolumn{1}{||c|}{}\\ \cline{1-2} #6 & \blankstrut #7 & \multicolumn{1}{||c|}{\raisebox{1ex}[0pt]{#8}}\\ \hline \end{array}}}
\begin{document}
\title[Operations on the Hochschild Bicomplex]{Operations on the Hochschild Bicomplex\\ of a Diagram of Algebras}
\author{Eli Hawkins}
\subjclass[2020]{16E40, 18M70 (16S80, 17B55)}
\address{Department of Mathematics\\
The University of York\\
United Kingdom}
\email{eli.hawkins@york.ac.uk}
\begin{abstract}
A diagram of algebras is a functor valued in a category of associative algebras.
I construct an operad acting on the Hochschild bicomplex of a diagram of algebras. Using this operad, I give a direct proof that the Hochschild cohomology of a diagram of algebras is a Gerstenhaber algebra. I also show that the total complex is an $\Linfty$-algebra. The same results are true for the reduced  and asimplicial subcomplexes and asimplicial cohomology. This structure governs deformations of diagrams of algebras through the Maurer-Cartan equation.
\end{abstract}
\maketitle
\tableofcontents

\section{Introduction}
Let $\Bbbk$ be a field and $A$ an associative algebra over $\Bbbk$. In \cite{ger1963,ger1964}, Gerstenhaber investigated the Hochschild cohomology $H^\bullet(A,A)$ and showed that this is naturally equipped with a  commutative-associative product of degree $0$ and a compatible Lie bracket of degree $-1$. This structure is now known as a \emph{Gerstenhaber algebra}. In particular, the graded Lie algebra structure comes from a differential graded (dg) Lie algebra structure on the Hochschild  complex $C^\bullet(A,A)$. Deformations of $A$ are described by the Maurer-Cartan equation for this dg-Lie algebra. The algebraic structure of $C^\bullet(A,A)$ is the subject of the \emph{Deligne conjecture} \cite{g-l-t2015,g-v1995,kau2007,m-s2002,m-s2003,vor2000}.

Subsequently, Gerstenhaber and Voronov \cite{g-v1995} showed that $C^\bullet(A,A)$ has the further structure of a \emph{brace algebra}. Indeed, this only depends upon $A$ as a vector space, not an algebra. The brace algebra elucidates the structure of  Gerstenhaber's original proof that $H^\bullet(A,A)$ is a Gerstenhaber algebra.

Let $\Alg_\Bbbk$ be the category of associative algebras over $\Bbbk$ and $\Xc$ some small category. A \emph{diagram of algebras} is a covariant functor $\A:\Xc\to\Alg_\Bbbk$. 

Gerstenhaber and Schack \cite{g-s1988a,g-s1988b} generalized Hochschild cohomology  to diagrams of algebras (although $\A$ is contravariant there). The complex for computing $H^\bullet(\A,\A)$ is naturally expressed as the total complex of a double complex $C^{\bullet,\bullet}(\A,\A)$. They showed that this cohomology is a Gerstenhaber algebra indirectly by a ``cohomology comparison theorem''  that $H^\bullet(\A,\A)$ is naturally identified with the Hochschild cohomology of a certain single algebra. They also defined \emph{asimplicial} cohomology $H_{\mathrm a}^\bullet(\A,\A)$ from the subcomplex  $C_{\mathrm a}^{\bullet,\bullet}(\A,\A)\subset C^{\bullet,\bullet}(\A,\A)$ where the second degree is strictly positive. They showed that deformations of $\A$ are described to first order in $H_{\mathrm a}^\bullet(\A,\A)$.

In \cite{haw2018}, I pointed out that an algebraic quantum field theory is a diagram of algebras, and thus asimplicial cohomology can be used to study infinitesimal deformations and symmetries of algebraic quantum field theories. Moreover, the full Hochschild cohomology suggests a generalization of algebraic quantum field theory and a definition of generalized symmetries.

A diagram of algebras $\A:\Xc\to\Alg_\Bbbk$ consists of 3 pieces of information. For each object  $x\in\Obj\Xc$, there is a vector space $\A(x)$. For each object $x\in\Obj\Xc$ there is an associative multiplication, $\hat\m[x]:\A(x)\otimes\A(x)\to\A(x)$. For each $\Xc$-morphism, $\phi:x\to y$, there is a linear map $\A[\phi]:\A(x)\to\A(y)$. 

These structures satisfy some conditions: Multiplication is associative, $\A$ is functorial, and $\A[\phi]$ is an algebra homomorphism. This last condition means that,
\[
\A[\phi]\circ\hat\m[x] = \hat\m[y]\circ\left(\A[\phi]\otimes\A[\phi]\right) .
\]
The left hand side is quadratic, but the right hand side is \emph{cubic} (linear in $\hat\m[y]$ and quadratic in $\A[\phi]$). 

A \emph{deformation} of $\A$ is a cochain that when added to $\hat\m$ and $\A$ gives another diagram of algebras.
This shows that a deformation of $\A$ must satisfy a cubic equation.

Deformations of a single algebra satisfy the Maurer-Cartan equation in a dg-Lie algebra. This is always a quadratic equation, therefore deformations of a diagram of algebras \emph{cannot} be described by a dg-Lie algebra. The natural generalization of a dg-Lie algebra is an $\Linfty$-algebra; the Maurer-Cartan equation generalizes to that setting, but may have arbitrarily high degree.
This suggests that $C_{\mathrm a}^{\bullet,\bullet}(\A,\A)$ may have the structure of an $\Linfty$-algebra such that deformations of $\A$ are characterized by the Maurer-Cartan equation.

A single morphism between two algebras is equivalant to a diagram of algebras $\A:\Xc\to\Alg_\Bbbk$, where $\Xc$ has two objects and only one nonidentity morphism; Borisov \cite{bor2007} has constructed the $\Linfty$-algebra in this case. Fr\'egier, Markl, and Yau \cite{f-m-y2009} gave a construction for the $\Linfty$-algebra from a resolution of the colored operad governing diagrams of algebras over $\Xc$, but they did not present such a resolution, except in the aforementioned case of a single morphism. 

The main purposes of this paper are to provide a direct proof that $H^\bullet(\A,\A)$ and $H_{\mathrm a}^\bullet(\A,\A)$ are Gerstenhaber algebras, to explicitly construct for the first time an $\Linfty$-algebra structure on $C^{\bullet,\bullet}(\A,\A)$  (with the asimplicial and reduced subcomplexes being $\Linfty$-subalgebras), and to show that the deformations of a diagram of algebras $\A$ are the solutions of the Maurer-Cartan equation in the reduced asimplicial subcomplex $\bar C_{\mathrm a}^{\bullet,\bullet}(\A,\A)$.

To do this, I will first construct an operad $\Qop$ that acts on the Hochschild bicomplex of a diagram of vector spaces. This should be seen as a generalization of the brace algebra structure on $C^\bullet(A,A)$. I then extend this to another operad $\mQuilt$ by adjoining an additional generator $\m$ that represents multiplication. Using $\mQuilt$, I construct representatives of the Gerstenhaber algebra operations and explicit homotopies for each of the identities that the operations need to satisfy on cohomology. I also construct a homomorphism $\homtwo:\Linfty\to\mQuilt$ that makes $C^{\bullet,\bullet}(\A,\A)$ an $\Linfty$-algebra. All of these structures restrict to the asimplicial subcomplex $C_{\mathrm a}^{\bullet,\bullet}(\A,\A)$ and reduced subcomplex $\bar C^{\bullet,\bullet}(\A,\A)$.

Kontsevich's formality theorem  and classification of formal deformation quantizations \cite{kon2003} is based upon the dg-Lie algebra structure of the Hochschild complex of an algebra, but that is really treated as an $\Linfty$-algebra whose higher operations happen to vanish. The formality map is an $\Linfty$-quasi-isomorphism. The $\Linfty$ structure that I present here opens up the possibility of similar results for diagrams of algebras, such as a classification of formal deformation quantizations of field theories.

\begin{remark}
My results are stated in terms of diagrams of algebras over a field, but they are true more generally. In fact, $\A$ could be a functor from $\Xc$ to a nonsymmetric colored operad (multicategory) of abelian groups, equipped with a ``multiplication'' in the sense of \cite{g-v1995}. In particular, the results are true for diagrams of algebras over a commutative ring. I have focussed on diagrams of algebras over a field because that is my primary interest and I want to avoid overly esoteric notation and terminology.
\end{remark}
\begin{remark}
A Gerstenhaber algebra contains 2 structures on the underlying graded vector space: a commutative algebra on the vector space and a Lie algebra on the suspended (degree shifted) vector space. This creates a small dilemma when describing this with an operad. There are 2 ways to proceed:
\begin{enumerate}
\item
The operad $\Gerst$ in \cite{l-v2012} acts directly on the  vector space. Consequently, $\Comop$ is a suboperad, but $\Lieop$ appears as the desuspended Lie operad, $\Susp^{-1}\Lieop$, with a \emph{symmetric} bracket. 
\item
Alternately, one can treat the bracket with greater respect and use the suspended Gerstenhaber operad $\Susp\Gerst$ acting on the suspended vector space, so that $\Susp\Comop$ and $\Lieop$ are suboperads. This makes the commutative product look rather odd.
\end{enumerate}
I will be taking the second approach here, because this is more convenient for $\Qop$ and is more natural for $\Linfty$.
\end{remark}

\subsection{Synopsis}
I begin in Section \ref{Old operads} by reviewing two known operads. $\Brace$ governs brace algebras. $\Wordop$ governs Gerstenhaber-Voronov ``homotopy G-algebras''.

In Section~\ref{Quilt section}, I define a new operad, $\Qop$, as a suboperad of the Hadamard product $\Wordop\Had\Brace$. 

In Section~\ref{Representation}, I define a representation of $\Qop$ on the Hochschild bicomplex of a diagram of vector spaces. To this end, I first describe colored operads $\NSOp$, $\MultiSemiD$, and $\CQ\subset \MultiSemiD\Hadtimes\NSOp$. $\CQ$ acts on the Hochschild bicomplex (as a collection of vector spaces) and the action of an element of $\Qop$ on a cochain is defined by a sum over elements of $\CQ$. I define the reduced subcomplex and show that this and the asimplicial subcomplex are $\Qop$-subalgebras.

In Section~\ref{mQuilts}, I construct another operad, $\mQuilt$, from $\Qop$ and an additional generator. I define a representation of $\mQuilt$ on the Hochschild bicomplex of a diagram of algebras. 

In Section~\ref{Gerstenhaber}, I use $\mQuilt$ to express an associative product and a bracket on the Hochschild bicomplex. Using $\mQuilt$, I write explicit homotopies for each of the relations of a Gerstenhaber algebra, thus showing that the Hochschild cohomology of a diagram of algebras is a Gerstenhaber algebra. I also show that over a field of characteristic $2$, the squaring operation descends to cohomology.

In Section~\ref{L-infinity}, I first construct a homomorphism $\homone:\Linfty\to\Qop$. I then modify this to get another homomorphism $\homtwo:\Linfty\to\mQuilt$. In this way, the Hochschild complex of a diagram of algebras is an $\Linfty$-algebra. In Section~\ref{Maurer-Cartan}, I discuss the Maurer-Cartan equation that governs deformations of diagrams of algebras. Finally, in Section~\ref{One or two}, I briefly discuss the $\Linfty$ algebra for the cases of a single algebra or a single morphism between algebras.

Appendix~\ref{Notation} is a summary of notation.

\section{Some Combinatorial Operads}
\label{Old operads}
In this section, I describe two known operads and  my notations for trees and words.
\subsection{Extensions}
\label{Com}
Recall the standard notation that for any $n\in\N=\{0,1,2,\dots\}$, $[n]=\{0,\dots,n\}\subset\N$. The \emph{simplex} category $\Simplex$ is the category whose set of objects is $\{[n]\mid n\in\N\}$ and whose  morphisms are weakly increasing functions.

The following slight variant of this notation will be useful here.
\begin{definition}
\label{Comcat def}
For any $n\in\N$, let $\com{n} := \{1,2,\dots,n\}$. 
Let $\Comcat$ be the category whose set of objects is $\{\com{n} \mid n\geq1\}$ and whose morphisms are weakly increasing maps. 
\end{definition}
There is an obvious isomorphism from $\Simplex$ to $\Comcat$ such that $[n]\mapsto \com{n+1}$.

Let's define something like a short exact sequence in $\Comcat$:
\[
\com{n} \stackrel\alpha\into \com{l} \stackrel\beta\onto \com{m} ,
\]
with $\alpha$  injective and $\beta$  surjective. We can't require $\beta\circ\alpha$ to be $0$, because there is no $0$; the best that we can do instead is to require $\beta\circ\alpha$ to be a constant function with image (say) $a\in\com{m}$. To make this exact, require the image of $\alpha$ to be $\beta^{-1}(a)$. This is enough information to uniquely determine $l$, $\alpha$, and $\beta$. 
\begin{definition}
\label{Ext def}
\emph{The extension of $\com{m}$ by $\com{n}$ at $a\in\com{m}$} in $\Comcat$ is
\[
\com{n} \stackrel\alpha\into \com{n+m-1} \stackrel\beta\onto \com{m} ,
\]
where $\alpha(j) := j+a-1$, and 
\[
\beta(k) = 
\begin{cases}
k & k<a \\
a & a\leq k < a+n\\
k+1-n & k\geq a+n .
\end{cases}
\]
\end{definition}

Rather trivially,  the operad $\Comop$ for commutative algebras can be constructed from this. Let $\Comop(n)$ be the free abelian group spanned by one element, $\com{n}$. Let the permutation group act trivially on $\Comop(n)$. Let the partial composition $\com{n}\circ_a\com{m}$ be the sum of extensions of $\com{n}$ by $\com{m}$ at $a$; since there is only one such extension, this is just
\[
\com{n}\circ_a\com{m} = \com{n+m-1} .
\]

Other operads can be constructed from categories over $\Comcat$ in a less trivial way.

\subsection{The $\Brace$ operad}
In this paper, ``tree'' will always refer to a planar rooted tree. There are many ways of defining these, so here is mine:
\begin{definition}
\label{Tree}
A \emph{tree} $T=(\V_T,\E_T,\trianglelefteq_T)$ consists of a finite set $\V_T$ (the \emph{vertices}) a subset $\E_T\subset\V_T\times\V_T$ (the \emph{edges}), and a partial order $\trianglelefteq_T$ on $\V_T$ such that:
\begin{enumerate}
\item
For all $u\in \V_T$, $(u,u)\notin\E_T$.
\item
$\E_T$ generates a partial order, $\leq_T$, such that $(u,v)\in\E_T\implies u<_Tv$.
\item
There exists a unique $\leq_T$-minimal element (called \emph{the root of $T$}).
\item
\label{Minimality}
If $(u,v)\in\E_T$, then there does not exist any $t\in\V_T$ such that $u<_Tt<_Tv$.
\item
\label{Comparability}
Two vertices are $\vartriangleleft_T$-comparable if and only if they are $\leq_T$-incomparable. (I.e., $\forall u,v\in\V_T$, precisely one is true of: $u=v$, $u<_Tv$, $u>_Tv$, $u\vartriangleleft_Tv$, and $u\vartriangleright_Tv$.)
\item
\label{Inheritance}
If $u \vartriangleleft_T v$, $u\leq_T u'$, and $v\leq_T v'$, then $u'\vartriangleleft_T v'$.
\end{enumerate}
\end{definition}
\begin{definition}
If $(u,v)\in\E_T$, then $u$ is the \emph{parent} of $v$ and $v$ is a \emph{child} of $u$.  
\end{definition}

Think of these relations as 
\begin{gather*}
\text{above}\leq_T\text{below}\\
\text{left}\trianglelefteq_T\text{right}.
\end{gather*}
 The roots are at the top and the leaves are at the bottom, so this is botanically incorrect.

\begin{remark}
The partial order  $\trianglelefteq_T$ restricts to a total order on the leaves of $T$. It also restricts to a total order on the set of children of any vertex. Those total orders are other ways of defining a planar rooted tree.
\end{remark}

\begin{remark}
These axioms imply that a vertex has at most one parent. Suppose that $v$ has two parents, $u\neq w$. By \ref{Tree}\eqref{Minimality}, these are not $\leq_T$-related. By \ref{Tree}\eqref{Comparability}, they must be $\vartriangleleft_T$-related; say $u\vartriangleleft_T w$. By \ref{Tree}\eqref{Inheritance}, this implies that $v\vartriangleleft_T v$, which is a contradiction.
\end{remark}

\begin{definition}
\label{Treecat def}
$\Treecat(n)$ is the set of trees with vertex set $\com{n}$, and
\[
\Treecat := \bigcup_{n\geq1} \Treecat(n) .
\]
For $T\in\Treecat(n)$, the \emph{arity} is $\#T:=n$, so $\V_T=\com{\#T}$.

For $T\in\Treecat(n)$ and a permutation $\sigma\in\perm_n$, $T^\sigma\in\Treecat(n)$ is defined by replacing any $u\in\com{n}$ with $\sigma^{-1}(u)$.

$\Brace$ is the $\perm$-module \cite{l-v2012} that is the free abelian group spanned by $\Treecat$.
\end{definition}

$\Treecat$ is the set of objects of a category. For two trees $S,T\in\Treecat$, a morphism $\alpha:S\to T$ is a $\Comcat$-morphism $\alpha:\com{\#S}\to\com{\#T}$ such that
\begin{enumerate}
\item
\label{Tree morpism1}
$(u,v)\in\E_S$ $\implies$ $(\alpha u,\alpha v)\in\E_T$ or $\alpha u = \alpha v$.
\item
\label{Tree morphism2}
$\alpha u \vartriangleleft_T \alpha v$ $\implies$ $u\vartriangleleft_S v$.
\end{enumerate}

However,  all that we will need from this category is the following notion of  an extension. 
\begin{definition}
\label{Tree extension}
Let $\com{n} \stackrel\alpha\into \com{n+m-1} \stackrel\beta\onto \com{m}$ be the $\Comcat$-extension at $a\in\com{m}$ as in Definition~\ref{Ext def}. 
Given $S\in\Treecat(m)$ and $T\in\Treecat(n)$, \emph{an extension of $S$ by $T$ at $a$} is a tree $U\in\Treecat(n+m-1)$ such that:
\begin{itemize}
\item
$T$, relabelled by $\alpha$, is a subtree of $U$ in the sense that 
\[
(r,s)\in\E_T \iff (\alpha t,\alpha s)\in \E_U
\]
and
\[
r\vartriangleleft_T t \iff \alpha r \vartriangleleft_U \alpha t ;
\]
\item
$S$ is the quotient of $U$ by the image of $T$, relabelled by $\beta$ in the sense that 
\[
(u, w)\in\E_U \implies (\beta u ,\beta w)\in\E_S\text{ or }\beta u = \beta w = a
\]
and
\[
\beta u \vartriangleleft_U \beta w \implies u\vartriangleleft_T w   .
\]
\end{itemize} 

Denote the set of such extensions as $\Ext(S,T,a)$.
\end{definition}
Essentially, this means that $T$ is identified with a subtree of $U$, which is contracted to a single vertex, $a$ to give $S$.

\begin{remark}
An extension is called a \emph{greffe} (graft) in \cite{cha2002,foi2002}.
\end{remark}

\begin{definition}
\label{Tree composition}
For $S,T\in\Treecat$ and $a\in\com{\#S}$, the \emph{partial composition} is
\[
S \circ_a T = \sum_{U\in\Ext(S,T,a)} U \ \in\Brace\ .
\]
\end{definition}
With this operation, $\Brace$ is an operad (of abelian groups). A representation of $\Brace$ is a brace algebra \cite{g-v1995}.
\begin{remark}
Note that $\Brace$ is ungraded, so it can be thought of as a graded operad concentrated in degree $0$. This is inconsistent with the most obvious grading on the Hochschild complex $C^\bullet(A,A)$ of an associative algebra, $A$. Instead, $\Brace$ acts on the suspension, $\s\,C^\bullet(A,A)$, which is the same vector space with the grading decreased by $1$.
The alternative would be to desuspend $\Brace$, which is messier.
\end{remark}
The eponymous ``brace'' operations are given by the corollas in $\Treecat$. These generate $\Brace$ as an operad.

\subsection{The Gerstenhaber-Voronov operad}
In this section, I present a dg-operad, which I denote as $\Wordop$. This has appeared in several forms in the literature. It is the operad governing ``homotopy G-algebras'' as defined by Gerstenhaber and Voronov \cite{g-v1995}. It is the operad of ``spineless cacti'' \cite{g-l-t2015,kau2007}. It is the second term in a filtration by suboperads of the ``sequence'' operad of McClure and Smith \cite{m-s2002,m-s2003}. The sequence operad was renamed ``surjection operad'' by Berger and Fresse \cite{b-f2002}. The last description will be the most useful here.

\subsubsection{Words}
\label{Words sec}
\begin{definition}
\label{Word}
Given a set $A$ (the \emph{alphabet}),
a \emph{word} $W$ over $A$ is a finite, totally ordered set with a function $\pi_W:W\to A$. 
The \emph{length} of $W$ is the cardinality $\abs W$.
I will denote elements of $W$ with an underscore such that $\pi_W(\underline a) = a$, and call $\underline a$ an \emph{occurrence} of $a\in A$.
A \emph{letter} is an element of $W$, or equivalently an element of $A$ if $\pi_W$ is a bijection.
$W$ will be written as the sequence of values of $\pi_W$  (not separated by commas). 
The \emph{concatenation} of words $W$ and $V$ is the disjoint union with $W$ before $V$, denoted $W\smile V$.
\end{definition}

\begin{example}
Given a (planar rooted) tree, $T$, the \emph{corner word}, $\Place_T$, is a word over $\V_T$. \label{Corner word}
Imagine embedding $T$ in the closed lower half plane with the root on the boundary. Then start from the left and go around $T$ counterclockwise. $\Place_T$ is the list of each vertex visited in order. 

This has the following properties:
\begin{itemize}
\item
If $(u,v)\in\E_T$, then $\Place_T = \dots uv\dots vu\dots$  or $\dots uvu\dots$.
\item
$\abs{\Place_T} = \abs{\V_T} + \abs{\E_T}$.
\item
$\Place_T = \dots u \dots v\dots u\dots$ if and only if $u\leq_T v$.
\item
If $\Place_T = \dots u \dots v \dots$, then $u \ntriangleright_T v$.
\end{itemize}
$\Place_T$ is the set of places where an additional edge and vertex could be attached to $T$.
\end{example}
\begin{remark}
In \cite{cha2002,foi2002}, this is the set of ``\emph{angles}''. I have translated this as ``corners''.
\end{remark}

\subsubsection{The allowed words}
\begin{definition}
\label{Wordcat def}
Let $\Wordcat(n)$ be the set of (isomorphism classes of) words $W$ over $\com{n}$ such that 
\begin{enumerate}
\item
each element of $\com{n}$ appears in $W$ (surjectivity), 
\item
$W\neq \dots uu\dots$ (nondegeneracy), and
\item
for any $u\neq v\in\com{n}$, $W\neq \dots u\dots v\dots u\dots v\dots$ (no interlacing).
\end{enumerate}
\end{definition}
In this context, I will refer to the elements of $\com{n}$ as \emph{vertices}. In the next section, they will be the vertices of trees.
\begin{definition}
Let
\[
\Wordcat = \bigcup_{n\geq1}\Wordcat(n) .
\]

The \emph{arity} of $W\in\Wordcat(n)$ is $\#W:=n$.

\label{Degree def}
The \emph{degree} of  $W\in\Wordcat(n)$ is $\deg W := \abs{W}-n$.

For $W\in\Wordcat(n)$ and $\sigma\in\perm_n$, $W^\sigma\in\Wordcat(n)$ is defined by replacing any $u\in\com{n}$ by $\sigma^{-1}(u)$, i.e., $\pi_{W^\sigma}=\sigma^{-1}\circ\pi_W$.

\label{Wordop def}
$\Wordop$ is the graded $\perm$-module that is a free graded abelian group spanned by $\Wordcat$ with grading $\deg$.
\end{definition}
\begin{remark}
I am denoting the operad in italics and its basis in sans serif.
\end{remark}

\begin{definition}
\label{Down order}
For $W\in\Wordcat(n)$, the \emph{$\downarrow$ ordering} of $\com{n}$ is the (total) order of first occurrence in $W$.
\end{definition}

\begin{definition}
 If $W=\dots u  v \dots u\dots$, then $v$ is \emph{interposed} in $W$.

A letter $\underline u\in W$ is a \emph{caesura} \cite{b-f2002} if there is a later occurrence of $u$ in $W$.

A pair of successive letters $\underline u$ and $\underline v \in W$ is a \emph{last-first pair} if $\underline u$ is the last occurrence of $u$ and $\underline v$ is the first occurrence of $v$.
\end{definition}

\begin{remark}
If $W = \dots \underline u\,\underline v\dots\in\Wordcat$, then there are three possibilities:
\begin{itemize}
\item
$W = \dots v \dots \underline u\,\underline v\dots$, in which case $\underline u$ must be the last occurrence of $u$ (i.e., not a caesura).
\item
$W = \dots \underline u\,\underline v\dots u\dots$, which means precisely that $\underline u$ is a caesura and $v$ is an interposed vertex. This shows that there is a 1-1 correspondence between caesurae and interposed vertices (although caesurae are letters of $W$ and interposed vertices are elements of $\com{\#W}$). They always occur paired like this.
\item
$\underline u\,\underline v$ is a last-first pair.
\end{itemize}
\end{remark}

Every vertex occurs last precisely once, so $\deg W$ is the number of caesurae in $W$. Because of the 1-1 correspondence, this is also the number of interposed vertices.

\begin{lemma}
\label{Word length}
If\/ $W\in\Wordcat(n)$, and $s$ is the number of last-first pairs in $W$, then $\deg W = n-s-1$. That is, $\abs{W} = 2n-s-1$.

If\/ $W$ can be written as a concatenation of $m$ words with disjoint alphabets then $\deg W \leq n-m$.
\end{lemma}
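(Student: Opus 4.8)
The plan is to establish the exact formula first; the concatenation inequality will then follow with little extra work. Write $\ell=\abs W$, and recall from the discussion preceding the lemma that $\deg W=\ell-n$ is the number of caesurae of $W$ (each vertex contributes all of its occurrences but the last one as caesurae).

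For the formula I would classify the $\ell-1$ pairs of successive letters $\underline u\,\underline v$ of $W$ by their second letter. Exactly $n$ of the $\ell$ letters of $W$ are first occurrences, one of them being the very first letter of $W$, so there are exactly $n-1$ successive pairs whose second letter $\underline v$ is a first occurrence of its vertex. In any such pair the letter $\underline u$ is either the last occurrence of its vertex --- in which case $\underline u\,\underline v$ is by definition a last-first pair --- or else $\underline u$ is a caesura; these alternatives are exclusive, so $n-1 = s + t$, where $t$ counts the pairs of the second kind. It then remains to identify $t$ with the number of caesurae, i.e.\ with $\deg W$. Sending such a pair to its first letter $\underline u$ is injective (a letter determines its successor) and lands among the caesurae; the real point is surjectivity. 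Given a caesura $\underline u$, it is not the final letter of $W$ (there is a later occurrence of $u$), so it has a successor $\underline v$; if $\underline v$ were not a first occurrence, then $v$ would occur strictly before $\underline u$ (strictly, by nondegeneracy), while $u$ occurs strictly after $\underline v$, so $W$ would contain the forbidden interlacing pattern $\dots v\dots u\dots v\dots u\dots$. Hence $\underline v$ is a first occurrence and $\underline u\,\underline v$ is a pair of the second kind mapping to $\underline u$. Therefore $t=\deg W$, which gives $\deg W = n-s-1$ and $\ell = \deg W + n = 2n-s-1$.

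For the second statement I would write $W=W_1\smile\dots\smile W_m$ with the $W_i$ nonempty and having pairwise disjoint alphabets. Every letter of $W$ lies in exactly one $W_i$, and $W$ is surjective onto $\com n$, so these alphabets partition $\com n$; let $n_i$ be the size of the alphabet of $W_i$, so that $\sum_i n_i=n$. After relabelling, each $W_i$ lies in $\Wordcat(n_i)$: surjectivity onto its alphabet is automatic, while nondegeneracy and the no-interlacing condition are inherited from $W$, since a forbidden subword of $W_i$ is a forbidden subword of $W$. By the first part $\deg W_i = n_i-s_i-1\le n_i-1$. Both length and arity are additive under concatenation of words with disjoint alphabets, hence so is the degree, and $\deg W=\sum_i\deg W_i\le\sum_i(n_i-1)=n-m$.

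The only step that is not pure bookkeeping is the surjectivity argument in the second paragraph --- matching caesurae with the non-last-first successive pairs whose second letter is a first occurrence --- and this is precisely where the defining nondegeneracy and no-interlacing conditions of $\Wordcat$ are used; it is essentially the trichotomy for successive letters recorded in the remark preceding the lemma. I expect no other obstacle.
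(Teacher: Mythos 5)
Your proof is correct, and it takes a genuinely different route from the paper's. For the exact formula the paper identifies $W$ with a spineless cactus and reads off $\abs W = 2n-s-1$ from the Euler characteristic of a contractible planar complex; you instead count the $\abs W-1$ successive pairs whose second letter is a first occurrence (there are $n-1$ of them) and split them into last-first pairs and caesura-followed-by-first-occurrence pairs, using nondegeneracy and no-interlacing to show every caesura is of the latter kind --- this is exactly the trichotomy in the remark before the lemma, so your argument is purely letter-theoretic and self-contained, at the cost of being a bit longer than the one-line Euler-characteristic computation. For the concatenation bound the paper simply observes that each junction between consecutive subwords produces a last-first pair, so $s\geq m-1$ and the formula gives $\deg W\leq n-m$; you instead verify that each block (relabelled) lies in $\Wordcat(n_i)$, apply the formula blockwise, and use additivity of $\deg$ under concatenation with disjoint alphabets --- slightly more bookkeeping, but equally valid. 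Your explicit assumption that the blocks are nonempty is the intended reading (and is needed for the inequality to hold), so there is no gap there.
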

\begin{proof}
$W\in\Wordcat(n)$ is combinatorially equivalent to a spineless cactus \cite{g-l-t2015} which is a contractible cellular complex in the plane. The numbers $1,\dots,n$ label the \emph{lobes} (2-cells). Each lobe has a base vertex (0-cell), and each of these is the base of at least one lobe.
 Each arc (1-cell) is on the boundary of a unique lobe; the word $W$ is defined by traversing around the cactus and noting the lobe for each arc, so $\abs W$ is the number of arcs. Consecutive lobes $a$ and $b$ share a base vertex if the last $a$ is followed by the first $b$ in $W$, so the number of vertices is $n-s$.

Since the cactus is contractible, its Euler characteristic is $1 = n-s - \abs W + n$, so
\[
\abs W = 2n-s-1 
\]

If $W$ can be written as a concatenation of $m$ subwords, then the last letter of a subword is a final occurrence, and the first letter of the next subword is a first occurrence. This shows that $s\geq m-1$, so
\[
\deg W = n-s-1 \leq n-m . \qedhere
\]
\end{proof}

\subsubsection{Boundary}
\begin{definition}
\label{Face def}
Given a word, $W\in\Wordcat$ and $\underline a\in W$ such that $a$ is repeated in $W$, the \emph{face} $\partial_{\underline a}W\in\Wordcat(n)$ is the word given by deleting $\underline a$ from $W$. If $a$ is not repeated, then set $\partial_{\underline a}W=0\in\Wordop$.
\end{definition}
\begin{remark}
If $a$ is repeated, then there is no subword of the form $uau$, so deleting an $a$ cannot create a consecutive repetition. This is why the faces are still words in $\Wordcat$.
\end{remark}
\begin{definition}
\label{Face sign}
Given $W\in\Wordcat$, define $\sgn_W: W\to\{1,-1\}$ as follows.
Number the caesurae in $W$ consecutively, starting from $1$.
If $\underline a\in W$ is the  $k$'th caesura, then  $\sgn_W \underline a = (-1)^{k}$.
If $\underline a$ is the last occurrence of $a$, but the previous occurrence of $a$ is the $k$'th caesura, then $\underline a$ is numbered $k+1$ and $\sgn_W\underline a=(-1)^{k+1}$. If $\underline a$ is the only occurrence of $a$, then it is numbered $1$ and $\sgn_W(\underline a)=-1$.
\end{definition}
\begin{definition}
\label{word boundary}
The \emph{boundary} of $W\in\Wordcat$ is
\[
\partial W = \sum_{\underline a \in  W}\sgn_W(\underline a)\, \partial_{\underline a}W \ \in\Wordop\ .
\]
\end{definition}
Clearly, $\deg\partial W = \deg W -1$.
\begin{remark}
This definition of $\partial$ differs from that in \cite{b-f2002} by an overall negative sign.
\end{remark}

\begin{example}
For $123242151\in\Wordcat(5)$, the numbers associated to the occurrences of repeated vertices are
\[
\underset{\color{blue}1}1\; \underset{\color{blue}2}2\; 3\; \underset{\color{blue}3}2 \;4\; \underset{\color{red}4}2\; \underset{\color{blue}4}1\; 5\; \underset{\color{red}5}1 ,
\]
so the boundary is
\begin{multline*}
\partial 123242151 = -23242151 + 13242151 - 12342151 \\ + 12324151 + 12324251 - 12324215 .
\end{multline*}
\end{example}
\begin{example}
For the word $123432151 \in\Wordcat(6)$,
\[
\underset{\color{blue}1}1\; \underset{\color{blue}2}2\;\underset{\color{blue}3}3\;4\;\underset{\color{red}4}3\,\underset{\color{red}3}2\;\underset{\color{blue}4}1\;5\;\underset{\color{red}5}1 ,
\]
so the boundary is
\begin{multline*}
\partial 123432151 = - 23432151 + 13432151 - 12432151 \\ + 12342151 - 12343151 + 12343251 - 12343215 .
\end{multline*}
\end{example}

\subsubsection{Composition}
Like $\Treecat$, $\Wordcat$ is the set of objects of a category over $\Comcat$. For two words $V,W\in\Wordcat$, a morphism $\alpha:V\to W$ is a $\Comcat$-morphism $\alpha:\com{\#V}\to\com{\#W}$ such that:
\begin{itemize}
\item
The following constructions produce the same word:
\begin{itemize}
\item
Delete from $W$ all vertices not in the image of $\alpha$, then eliminate any consecutive repetitions ($uu\mapsto u$).
\item
Apply $\alpha$ to every letter of $V$ and eliminate any consecutive repetitions.
\end{itemize}
\item
In the first construction, consecutive repetitions occur wherever letters have been deleted between vertices from the image of $\alpha$.
\end{itemize}

Note that the corner word construction defines a functor $\Place:\Treecat\to\Wordcat$. 

As for $\Treecat$, we will only need the extensions from this category.
\begin{definition}
\label{Word extension}
Again, let  $\com{n} \stackrel\alpha\into \com{n+m-1} \stackrel\beta\onto \com{m}$ be the $\Comcat$-extension at $a\in\com{m}$ from Definition~\ref{Ext def}.
Given $V\in\Wordcat(m)$ and $W\in\Wordcat(n)$,  \emph{an extension of $V$ by $W$ at $a$} is a word $X\in\Wordcat(n+m-1)$ such that:
\begin{itemize}
\item
deleting from $X$ the vertices not in the image of $\alpha$ and eliminating consecutive repetitions ($uu\mapsto u$) gives the same word as $W$ relabelled by $\alpha$;
\item
these repetitions occur wherever letters have been deleted;
\item
relabelling $X$ by $\beta$ and eliminating consecutive repetitions gives $V$.
\end{itemize}

$\Ext(V,W,a)$ denotes the set of extensions of $V$ by $W$ at $a$.
\end{definition}

\begin{remark}
If $S,T\in\Treecat$ and $a\in\com{\#S}$, then
\[
\Ext(\Place_S,\Place_T,a) = \{\Place_U \mid U\in\Ext(S,T,a)\} .
\]
\end{remark}

\begin{definition}
Suppose that $X\in\Ext(V,W,a)$. Let $\com{\#W} \stackrel\alpha\into \com{\#X} \stackrel\beta\onto \com{\#V}$ be the $\Comcat$-extension at $a$. Identify interposed vertices of $V$ and $W$ with those of $X$ as follows:
\begin{itemize}
\item
If $w\in\com{\#W}$ is interposed in $W$, then identify this with $\alpha w \in\com{\#X}$.
\item
If $v\in\com{\#V}$ is interposed in $V$ and $v\neq a$, then identify $v$  with $\beta^{-1}v\in \com{\#X}$.
\item
If $a$ is interposed in $V$, and $w$ is the first letter of $W$, then identify $a$  with $\alpha w$.
\end{itemize}
\label{Ext sign}
Define $\sgn_{V,W,v}(X)$ as the sign of the $(\deg V,\deg W)$-shuffle from
\begin{itemize}
\item
the interposed vertices of $V$ in $\downarrow$ order, identified with vertices of $X$, then the interposed vertices of $W$, identified with vertices of $X$
\item
to the interposed vertices of $X$ in $\downarrow$ order.
\end{itemize}
This defines
\[
\sgn_{V,W,a}:\Ext(V,W,a)\to\{1,-1\} .
\]
\end{definition}
\begin{remark}
If $\deg V=0$ or $\deg W =0$, then $\sgn_{V,W,a}(X)=+1$, since the shuffle must be trivial. This obviates many computations.
\end{remark}

\begin{definition}
\label{Word composition}
For $V,W\in\Wordcat$ and $a\in\com{\#V}$, the \emph{partial composition} is
\[
V\circ_aW = \sum_{X\in\Ext(V,W,a)} \sgn_{V,W,a}(X)\, X \ \in\Wordop \ .
\]
\end{definition}
This makes $\Wordop$ a dg-operad.


The construction $\Place:\Treecat\to\Wordcat$, along with appropriate signs, gives an injective operad homomorphism from $\Susp^{-1}\Brace$ to $\Wordop$. Consequently, any  homotopy G-algebra is in particular a brace algebra. Historically, these structures originally appeared together in \cite{g-v1995}.

\subsubsection{Computing extensions}
\label{Computing extensions}
Given $V,W\in\Wordcat$ and $a\in\com{\#V}$, any extension of $V$ by $W$ at $a$ is uniquely determined by a weakly increasing function, $\kappa$, from  the set of caesura occurrences of $a$ in $V$ to $W$.

Let $\com{\#W}\stackrel\alpha\into \com{\#V+\#W-1} \stackrel\beta\onto \com{\#W}$ be the $\Comcat$-extension at $a$. To each occurrence of $\underline a \in W$, associate a subword of $V$ that
\begin{itemize}
\item
begins with $\kappa(\underline a')$ where $\underline a'$ is the previous occurrence, or otherwise at the beginning of $V$,
\item
ends with $\kappa(\underline a)$ if $\underline a$ is a caesura, or otherwise at the end of $V$.
\end{itemize}
Construct $X\in\Ext(W,V,a)$ by replacing each occurrence of $u\neq a$ in $W$ with $\beta^{-1}(u)$ and replacing each occurrence of $a$ in $W$ with the associated subword of $V$, relabelled by $\alpha$.

If $a$ occurs $r+1$ times in $W$, then 
\[
\Abs{\Ext(W,V,a)} = \binom{\abs{V}+r-1}{r} .
\]

In the case of trees, the caesura occurrences of $a$ in $S$ correspond to the children of $a$. The idea of an extension of $S$ by $T$ at $a$ is to (after relabelling) replace $a$ by $T$. The word $\Place_T$ is the list of places where the edges coming from $a$ can be reattached to $T$. There are $\abs{\Place_T} = 2\,\#T-1$ of these, so if $a$ has valency $r$ (i.e., $r$ children) then
\[
\Abs{\Ext(S,T,a)} = \binom{2\,\#T+r-2}{r} .
\]

\section{The $\Qop$ Operad}
\label{Quilt section}
In this section, I define a new operad, $\Qop$.
\subsection{Quilts}
The \emph{Hadamard product} \cite[Sec.~5.3.2]{l-v2012} of $\Wordop$ and $\Brace$ is a dg-operad $\Wordop\Had\Brace$ with 
\[
\Bigl(\Wordop\Had\Brace\Bigr)(n) = \Wordop(n)\otimes\Brace(n) 
\]
as abelian groups. Since the factors are spanned by  $\Treecat(n)$ and $\Wordcat(n)$, this product is generated by
\[
\Wordcat(n) \times \Treecat(n)  .
\]
Since $\Brace$ is ungraded, the grading of the product is defined by
\[
\deg (W,T) := \deg W ,
\]
and the boundary operator is
\[
\partial (W,T) := \partial W \otimes T .
\]
The action of $\perm_n$ is diagonal:
\[
(W,T)^\sigma := (W^\sigma,T^\sigma) .
\]
The partial composition in the Hadamard product is simply
\[
(V,S) \circ_a (W,T) := (V\circ_aW)\otimes(S\circ_a T) .
\]
Note that this definition doesn't have any further signs, because $\Brace$ is ungraded.

It will be convenient to extend notation from $\Wordop$ to the Hadamard product. Let 
\beq
\label{Q boundary}
\partial_{\underline a}(W,T) := (\partial_{\underline a}W,T)
\eeq
if $a$ is repeated in $W$ and $\partial_{\underline a}(W,T):=0$ if it isn't, and $\sgn_{(W,T)}(\underline a) := \sgn_W\underline a$, so that
\[
\partial (W,T) = \sum_{\underline a\in W} \sgn_{(W,T)}(\underline a)\,\partial_{\underline a}(W,T) .
\]

\begin{definition}
\label{Quilt def}
$\Qcat(n)$ is the set of $(W,T) \in \Wordcat(n)\times \Treecat(n)$ such that
\begin{enumerate}
\item\label{Quilt1}
$W=\dots u \dots v\dots \implies u \ngtr_T v$
\item\label{Quilt2}
$W=\dots u \dots v \dots u\dots \implies v \trianglelefteq_T u$.
\end{enumerate}

A \emph{quilt} is an element of
\[
\Qcat := \bigcup_{n\geq1}\Qcat(n) ,
\]
and $\Qop\subset \Wordop\Had\Brace$ is the abelian subgroup spanned by $\Qcat$.

Denote $Q = (\W_Q,\Tree_Q) \in\Qcat$. Any definition or notation for $\Treecat$ and $\Wordcat$ extends to $\Qcat$, e.g., $\leq_Q$ means $\leq_{\Tree_Q}$.
\end{definition}
\begin{remark}
These conditions are similar to the properties of $\Place_T$ (Sec.~\ref{Corner word}) but the roles of the partial orders are switched.
\end{remark}
\begin{remark}
Condition (2) alone implies the no interleaving property  of $W$ in Definition~\ref{Wordcat def}, so $\Qop$ could have been defined just as easily inside the Hadamard product of $\Brace$ with the (full) surjection operad, rather than just $\Wordop\subset\mathit{S}$.
\end{remark}

\begin{example}
If $\deg Q=0$, then $\W_Q$ is just a total ordering of $\com{\#Q}$ such that $(u,v)\in\E_Q$ implies that $u$ precedes $v$ in $\W_Q$, i.e., a total order compatible with $\leq_Q$.
\end{example}

\begin{lemma}
\label{Face quilt}
If $Q \in\Qcat(n)$ and $a$ is repeated in $\W_Q$, then 
\[
\partial_{\underline a}Q\in\Qcat(n) ,
\]
hence $\partial Q\in\Qop$.
\end{lemma}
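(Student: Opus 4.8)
The plan is to show that each of the two defining conditions of $\Qcat(n)$ in Definition~\ref{Quilt def} is preserved under deleting a repeated occurrence of a vertex from the word. Write $Q=(\W_Q,\Tree_Q)$ and let $W'=\partial_{\underline a}\W_Q$ be the word obtained by deleting the occurrence $\underline a$; the tree is unchanged, so $\partial_{\underline a}Q=(W',\Tree_Q)$. By the remark after Definition~\ref{Face def}, $W'$ is again a legitimate element of $\Wordcat(n)$ (no consecutive repetition is created, since $a$ being repeated precludes a subword $uau$). It remains to check conditions \eqref{Quilt1} and \eqref{Quilt2} for $(W',\Tree_Q)$.

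First I would handle condition \eqref{Quilt1}: if $W'=\dots u\dots v\dots$, then, since $W'$ is a subword of $\W_Q$ (as an ordered set, with $\underline a$ removed), we also have $\W_Q=\dots u\dots v\dots$, so condition \eqref{Quilt1} for $Q$ gives $u\ngtr_{\Tree_Q}v$, which is exactly what is needed. This direction is essentially immediate because deleting a letter can only destroy, never create, a pattern ``$u$ before $v$''. For condition \eqref{Quilt2}, suppose $W'=\dots u\dots v\dots u\dots$. Again, the two displayed occurrences of $u$ and the occurrence of $v$ between them are occurrences in $\W_Q$ appearing in the same relative order, so $\W_Q=\dots u\dots v\dots u\dots$, and condition \eqref{Quilt2} for $Q$ yields $v\trianglelefteq_{\Tree_Q}u$, as required. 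Thus both conditions are inherited, so $\partial_{\underline a}Q\in\Qcat(n)$.

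Finally, since $\partial\W_Q=\sum_{\underline a\in\W_Q}\sgn_{\W_Q}(\underline a)\,\partial_{\underline a}\W_Q$ and $\partial(W,T)=\partial W\otimes T$, the boundary $\partial Q$ is a $\Z$-linear combination of the terms $\partial_{\underline a}Q$ over those $\underline a$ for which $a$ is repeated (the other terms vanish by definition of $\partial_{\underline a}$). Each nonzero term lies in $\Qop$ by the first part, so $\partial Q\in\Qop$.

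\textbf{Where the work is.} There is essentially no obstacle here: the key observation is simply that passing to a subword preserves every ``pattern of occurrences'' hypothesis appearing in Definition~\ref{Quilt def}, and the tree component is untouched. The only point needing a moment's care is confirming that $W'$ still lies in $\Wordcat(n)$ — i.e.\ that surjectivity onto $\com{n}$, nondegeneracy, and no-interlacing survive — but surjectivity holds because $a$ is repeated (so an occurrence of $a$ remains), no-interlacing is inherited just as \eqref{Quilt2} is, and nondegeneracy is the remark already recorded after Definition~\ref{Face def}.
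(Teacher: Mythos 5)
Your proposal is correct and follows essentially the same argument as the paper: any pattern of occurrences present in $\partial_{\underline a}\W_Q$ is already present in $\W_Q$, so both conditions of Definition~\ref{Quilt def} (and membership in $\Wordcat(n)$) are inherited, and the tree is untouched. Your extra checks on surjectivity and nondegeneracy are fine but just spell out what the paper leaves implicit.
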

\begin{proof}
Recall that $\W_{\partial_{\underline a}Q}=\partial_{\underline a}\W_Q$ is defined by deleting $\underline a \in\W_Q$.
If a certain subsequence occurs in $\partial_{\underline a}\W_Q$, then it must also occur in $\W_Q$, so $\partial_{\underline a}Q$ cannot fail to satisfy Definition~\ref{Quilt def}.
\end{proof}

\begin{lemma}
\label{Quilt extension lemma}
For $P,Q\in\Qcat$, if   $W\in\Ext(\W_P,\W_Q,a)$ and $T\in\Ext(\Tree_P,\Tree_Q,a)$, then $(W,T)\in \Qcat$, and  therefore, $P\circ_a Q\in\Qop$.
\end{lemma}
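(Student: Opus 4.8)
The statement to prove is that $\Qop\subset\Wordop\Had\Brace$ is closed under partial composition: if $P,Q\in\Qcat$, $W\in\Ext(\W_P,\W_Q,a)$, and $T\in\Ext(\Tree_P,\Tree_Q,a)$, then the pair $(W,T)$ again satisfies conditions \eqref{Quilt1} and \eqref{Quilt2} of Definition~\ref{Quilt def}. Since $P\circ_a Q=(\W_P\circ_a\W_Q)\otimes(\Tree_P\circ_a\Tree_Q)$ is by definition a signed sum of exactly such pairs $(W,T)$, establishing this for each summand gives $P\circ_a Q\in\Qop$. The plan is to work directly with the combinatorial descriptions of tree-extensions (Definition~\ref{Tree extension}) and word-extensions (Definition~\ref{Word extension}), tracking how the partial orders $\leq_T$, $\trianglelefteq_T$ and the occurrence pattern of $\W$ relate to those of $P$ and $Q$ under the relabelling maps $\alpha$ (injective) and $\beta$ (the contraction) of the $\Comcat$-extension at $a$.

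The key step is to set up the right dictionary between positions in $(W,T)$ and positions in $P$ or $Q$. Every vertex of $W$ (equivalently $T$) lies in exactly one of two regimes: it is $\alpha w$ for a unique $w\in\com{\#Q}$ (``inside'' the inserted quilt $Q$), or it is $\beta^{-1}u$ for a unique $u\in\com{\#P}$ with $u\neq a$ (``outside''). From Definition~\ref{Tree extension}, $\alpha$ carries $\E_Q$ and $\vartriangleleft_Q$ isomorphically onto the corresponding relations on the $\alpha$-image inside $T$, while $\beta$ collapses that image to the single vertex $a$ and otherwise preserves $\E_P$ and $\vartriangleleft_P$; dually, from Definition~\ref{Word extension}, deleting the $\alpha$-image from $W$ and removing consecutive repetitions recovers $\W_P$ (relabelled), and relabelling $W$ by $\beta$ recovers $\W_Q$. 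So for a pair of vertices $x,y$ of $W$ I split into three cases: (i) both outside, (ii) both inside, (iii) one inside and one outside.

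For case~(i), any order or comparability relation between $x=\beta^{-1}u$, $y=\beta^{-1}v$ in $W$ or $T$ pushes down to the same relation between $u$ and $v$ in $P$, so conditions \eqref{Quilt1}–\eqref{Quilt2} for $(W,T)$ follow from those for $P$. For case~(ii), writing $x=\alpha w$, $y=\alpha w'$, the subword of $W$ spanned by $\alpha$-images is (the relabelling of) $\W_Q$, and the subtree on $\alpha$-images is $\Tree_Q$, so these conditions follow from those for $Q$ — one just has to note that occurrences of $x,y$ and the relation ``$x$ appears between two occurrences of $y$'' inside that subword match the analogous data in $\W_Q$. Case~(iii) is where the two quilts interact and is the main obstacle: say $x=\beta^{-1}u$ is outside and $y=\alpha w$ is inside. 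Here I use that $\beta(y)=a$ and $\beta(x)=u$, so the occurrence pattern of $x$ relative to the whole $\alpha$-block in $W$ mirrors the pattern of $u$ relative to $a$ in $\W_P$, while the ordering/comparability of $x$ and $y$ in $T$ mirrors that of $u$ and $a$ in $\Tree_P$ (because contracting the $\alpha$-block in $T$ gives $\Tree_P$, and $\beta$ is a $\Comcat$-morphism preserving $\vartriangleleft$ in the sense of the tree-morphism axioms). Thus a violation of \eqref{Quilt1} or \eqref{Quilt2} for $(W,T)$ in this case would force the corresponding violation for $P$ — unless the relevant occurrences of $x$ straddle the $\alpha$-block on both sides, in which case between those two occurrences of $x$ in $\W_P$ the vertex $a$ appears, and condition~\eqref{Quilt2} for $P$ gives $a\trianglelefteq_P u$; transporting this back through $\beta$ (and the $\vartriangleleft$-inheritance axiom \ref{Tree}\eqref{Inheritance} applied within $T$) yields $y\trianglelefteq_T x$ as required. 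The only subtlety is the handling of the extra occurrences of $a$ that the extension may introduce or delete — here I invoke the explicit description in Section~\ref{Computing extensions} of $W$ as built by replacing each occurrence of $a$ in $\W_P$ by a subword of $\W_Q$ (relabelled), which pins down exactly which occurrences of $x$ bracket which parts of the $\alpha$-block, making the case analysis finite and mechanical.
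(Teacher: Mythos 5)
Your proposal is correct and follows essentially the same route as the paper's proof: both verify the two conditions of Definition~\ref{Quilt def} for each pair of vertices of $(W,T)$ by transporting word-occurrence patterns and tree-order relations through $\alpha$ (for pairs inside the inserted $Q$) and through $\beta$ (for all other pairs), using the defining clauses of Definitions~\ref{Tree extension} and \ref{Word extension}. The only cosmetic difference is that the paper merges your cases (i) and (iii) into the single case $\beta u\neq\beta v$, handling mixed pairs by exactly the projection-to-$P$ and pull-back-of-$\vartriangleleft$ argument you sketch.
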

\begin{proof}
Let $m=\#P$ and $n=\#Q$. Let $\com{n}\stackrel\alpha\into\com{n+m-1}\stackrel\beta\onto\com{m}$ be the $\Comcat$-extension at $a$. 

First consider 2 vertices in the image of $\alpha$. Let $s,t\in\com{n}$. If $W = \dots\alpha s\dots\alpha t\dots$, then we must have $\W_Q = \dots s\dots t\dots$, so $s \ngtr_Q t$. By Definition~\ref{Tree extension}, $\alpha$ strictly preserves the relations, therefore $\alpha s\ngtr_T\alpha t$.

If $W=\dots\alpha s\dots\alpha t\dots\alpha s\dots$, then $\W_Q = \dots s\dots t\dots s\dots$, so $s\trianglelefteq_Q t$. Since $\alpha$ preserves the relations, $\alpha s\trianglelefteq_T \alpha t$.

That covers the cases of two vertices in the image of $\alpha$, which is also $\beta^{-1}a$, so now consider $u,v\in\com{n+m-1}$ with $\beta u \neq \beta v$. If $W = \dots u\dots v\dots$, then $\W_P = \dots\beta u\dots\beta v\dots$, so $\beta u\ngtr_P\beta v$. We have assumed these are not equal, so $\beta u\ngeq_P\beta v$. By Definition~\ref{Tree extension}, $\beta$ respects the partial orders, so contrapositively, $u\ngeq_T v$, and hence $u\ngtr_Tv$. This shows \ref{Quilt def}\eqref{Quilt1}.

If $W = \dots u\dots v\dots u\dots$, then $\W_P = \dots\beta u\dots\beta v\dots\beta u\dots$. 
This implies that $\beta u\trianglelefteq_P\beta v$, but we assumed that these are not equal, so $\beta u\vartriangleleft_P\beta v$. By Axiom~\ref{Tree}\eqref{Comparability}, $u$ and $v$ are related in precisely one of 5 possible ways; since $\beta$ respects the partial orders, all of the possible relations are contradicted except $u\vartriangleleft_T v$, and so $u\trianglelefteq_T v$. This shows \ref{Quilt def}\eqref{Quilt2}.
\end{proof}
\begin{definition}
\label{Quilt extension}
$\Ext(P,Q,a) :=  \Ext(\W_P,\W_Q,a)\times \Ext(\Tree_P,\Tree_Q,a)$ and $\sgn_{P,Q,a}:=\sgn_{\W_P,\W_Q,a}$.
\end{definition}
\begin{theorem}
$\Qop \subset \Wordop\Had\Brace$ is a dg-suboperad.
\end{theorem}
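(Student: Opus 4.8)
The plan is to check the four things that make $\Qop$ a dg-suboperad of $\Wordop\Had\Brace$: that it is a graded sub-$\perm$-module, that it contains the operadic unit, that it is closed under all partial compositions, and that it is closed under the differential $\partial$. All the operad axioms (equivariance, associativity and unitality of $\circ_a$) and the dg-identities ($\partial^2=0$, Leibniz) then come for free, since $\Qop$ is by definition a subset of the dg-operad $\Wordop\Had\Brace$ spanned by $\Qcat$, and these identities are inherited as soon as the structure maps restrict to $\Qop$.

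First I would treat the $\perm$-module structure and grading. For $Q=(W,T)\in\Qcat(n)$ and $\sigma\in\perm_n$, the element $Q^\sigma=(W^\sigma,T^\sigma)$ is obtained by relabelling every vertex $u$ as $\sigma^{-1}(u)$ simultaneously in $W$, in $\trianglelefteq_T$, and in $\leq_T$. Since conditions \eqref{Quilt1} and \eqref{Quilt2} of Definition~\ref{Quilt def} are universally quantified over vertices, a uniform relabelling preserves them, so $Q^\sigma\in\Qcat(n)$ and hence $\Qop(n)$ is a sub-$\perm_n$-module; as each spanning element $(W,T)$ is homogeneous of degree $\deg W$, this sub-$\perm$-module is graded. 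For the unit: the operadic unit of $\Wordop\Had\Brace$ sits in arity $1$ and is the pair consisting of the one-letter word over $\com1$ and the one-vertex tree, which satisfies \eqref{Quilt1}--\eqref{Quilt2} vacuously, so it lies in $\Qop(1)$.

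Next comes closure under partial composition, which is where the preceding lemmas do the work. For $P,Q\in\Qcat$ and $a\in\com{\#P}$, the Hadamard partial composition unwinds (with no extra signs, because $\Brace$ is ungraded) to
\[
P\circ_a Q \;=\; (\W_P\circ_a\W_Q)\otimes(\Tree_P\circ_a\Tree_Q)
\;=\; \sum_{\substack{W\in\Ext(\W_P,\W_Q,a)\\ T\in\Ext(\Tree_P,\Tree_Q,a)}} \sgn_{\W_P,\W_Q,a}(W)\,(W,T) .
\]
By Definition~\ref{Quilt extension} the indexing set is precisely $\Ext(P,Q,a)$ and the coefficient is $\sgn_{P,Q,a}$, while by Lemma~\ref{Quilt extension lemma} every term $(W,T)$ lies in $\Qcat$; hence $P\circ_a Q\in\Qop$, and extending bilinearly shows $\Qop$ is closed under all partial compositions. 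Closure under the differential is immediate from Lemma~\ref{Face quilt}: $\partial Q\in\Qop$ for each $Q\in\Qcat$, and extending linearly gives $\partial\Qop\subseteq\Qop$. Assembling these facts, $\Qop$ is a graded sub-$\perm$-module containing the unit and stable under $\circ_a$ and $\partial$, hence a dg-suboperad.

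The main obstacle here is not really in this theorem but in Lemmas~\ref{Face quilt} and~\ref{Quilt extension lemma}, which already absorb all the combinatorial content; the only delicate bookkeeping left is to confirm that expanding the Hadamard composition produces each quilt $(W,T)$ exactly once and that its sign comes entirely from the $\Wordop$-factor, which is exactly the point at which one uses that $\Brace$ is concentrated in degree $0$.
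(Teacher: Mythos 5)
Your proof is correct and follows essentially the same route as the paper: permutation invariance of Definition~\ref{Quilt def} gives the sub-$\perm$-module structure, Lemma~\ref{Face quilt} gives closure under $\partial$, and Lemma~\ref{Quilt extension lemma} gives closure under partial composition. Your additional check that the arity-one unit $(1,\fbox{1})$ lies in $\Qcat(1)$ is a harmless extra detail the paper leaves implicit.
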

\begin{proof}
The definition of a quilt is invariant under permutations of the labels of the vertices, therefore $\Qop$ is a sub-$\perm$-module.

Lemma~\ref{Face quilt} shows that $\Qop$ is closed under $\partial$, so it is a differential graded submodule.

Lemma~\ref{Quilt extension lemma} shows that $\Qop$ is closed under  partial composition.
\end{proof}

\subsection{Diagrams}
The name ``quilt'' is based on the following diagrams. A diagram consists of a rectangle subdivided into labelled rectangles and shaded regions, with some vertical lines between rectangles drawn as double lines. There is one rectangle across the top of the diagram. Any rectangle has at most one other rectangle above it and at most one other rectangle to the right.

A quilt $Q$ defines a diagram as follows. Begin with a rectangle subdivided into a grid of squares. The columns correspond to the leaves of $Q$, in $\vartriangleleft_Q$ order from left to right. The rows correspond to those vertices that occur in $\W_Q$ only once, in their order in $\W_Q$ arranged from top to bottom. 
Each vertex of $Q$ labels one rectangular region that is a union of these squares; the $u$ rectangle intersects the $v$ column if $u\leq_Q v$; the $u$ rectangle intersects with the $w$ row  if $\W_Q=\dots u\dots w\dots u\dots$. If $\W_Q=\dots uv\dots wu\dots$ (with no $u$ in between) then draw a double line on the left edge of $u$, along the part to the right of $v$, $w$, and anything in between. Otherwise, shade any unfilled regions and bound labelled rectangles by single lines.

A diagram determines a quilt $Q$ as follows. The vertices are the labelled rectangles. Call two rectangles \emph{adjacent} if they touch or are only separated (vertically or horizontally) by a shaded region.  There is an edge  $(u,v)\in\E_Q$ if $v$ is adjacent below $u$. The word $\W_Q$ begins with the vertex at the top of the diagram and is built iteratively. The next vertex in $\W_Q$ after $\underline u$ is determined as:
\begin{itemize}
\item
If there is a rectangle left adjacent to $u$ that is not yet in $\W_Q$, then use the uppermost such rectangle. Otherwise,$\dots$
\item
If $u$ is left adjacent to a double line or the right edge of the diagram --- but not to the bottom of that --- then use the highest unused rectangle adjacent to that double line or right edge.
\item
If $u$ is otherwise left adjacent to $v$, then use $v$.
\item
If $u$ is adjacent to the right edge and is at the bottom of the diagram, then $\W_Q$ is complete.
\end{itemize}
\begin{remark}
What I am calling the $\downarrow$ order of the vertices is given by passing downward through the diagram and moving temporarily left where possible.
\end{remark}

\subsection{Examples}
\begin{example}
\[
\Qwrap{\begin{array}{|c|c|}
\hline
\multicolumn{2}{|c|}{1}\\
\hline
2&\\
\cline{1-1} \hspace{-.333em}\begin{array}{c|c|}3\blankstrut&\blank\\ \hline 4&5\blankstrut\end{array}\hspace{-.167em}& \raisebox{1ex}{6} \\
\hline
\end{array}}
=
\left(162635456\;,
\raisebox{-.5\height}{\begin{tikzpicture}[scale=.7,node font=\footnotesize]\node{$1$}
child{node{$2$}
child{node{$3$}
child{node{$4$}}}
child{node{$5$}}}
child{node{$6$}};
\end{tikzpicture}} 
\right)
\]
\end{example}

\begin{example}
\[
\Qfourd1234 = \left(143234\;,\raisebox{-.5\height}{\begin{tikzpicture}[scale=.7,node font=\footnotesize]\node{$1$}
child{node{$2$}}
child{node{$3$}}
child{node{$4$}};
\end{tikzpicture}} 
\right) .
\]
The occurrences of repeated vertices are numbered,
\[
1\;\underset{\color{blue}1}4\;\underset{\color{blue}2}3\;2\;\underset{\color{red}3}3\;\underset{\color{red}2}4 .
\]
This means that the boundary of the word is $\partial 143234 = -13234+14234-14324+14323$. Each of these words is paired with the same tree. In diagrams, this gives
\[
\partial \Qfourd1234 = -\Qsevenb123\blank\blank\blank4 + \Qsixd12\blank4\blank3 - \Qsixd1\blank342\blank + \Qsevenb1\blank\blank423\blank .
\]
\end{example}

\begin{example}
Consider
\beq
Q = \Qthreea132 = 
\left(1232\;,
\raisebox{-.5\height}{
\begin{tikzpicture}[scale=.7,node font=\footnotesize]
\node{$1$}
child{node{$3$}}
child{node{$2$}};
\end{tikzpicture}} 
\right)
\label{Qexample} 
\eeq
and compute $Q\circ_2 Q$.
Because $2$ is a leaf, there is only one extension of $\Tree_Q$ by $\Tree_Q$ at $2$:
\beq
\Tree_Q\circ_2\Tree_Q=
\raisebox{-.5\height}{
\begin{tikzpicture}[scale=.7,node font=\footnotesize]
\node{$1$}
child{node{$5$}}
child{node{$2$}
child{node{$4$}}
child{node{$3$}}};
\end{tikzpicture}} .
\label{Texample}
\eeq
On the other hand, $2$ occurs once as a caesura in $\W_Q$, so each quilt extension is given by choosing a letter of $\W_Q$, cutting there, and inserting the pieces in place of the two occurrences of $2$ in $\W_Q$ (with appropriate relabelling). 

Let $\com3 \stackrel\alpha\into \com5 \stackrel\beta\onto\com3$ be the $\Comcat$-extension at $2$.
The only interposed vertex in $Q$ is $3$. The $3$ from the first $Q$ is identified with $\beta^{-1}(3) = 5$ in the extension. The $3$ from the second $Q$ is identified with $\alpha(3)=4$. Observe that the first appearances of $5$ and $4$ are in that order in $1252343$ and $1235343$, but in reversed order in $1234543$ and $1234353$, therefore
\[
1232 \circ_2 1232 = 1252343 + 1235343 - 1234543 - 1234353 .
\]
Each term is paired with the same tree \eqref{Texample} so
\[
\Qthreea132\circ_2\Qthreea132 = \Qsixc152\blank43 + 
\Qeightb1\blank25\blank3\blank4 - 
\Qwrap{\begin{array}{|c|c|c|}
\hline
\multicolumn{3}{|c|}{1}\\
\hline
\blank&\multicolumn{2}{|c|}{2}\\
\hline
5&4&3\\
\hline
\end{array}} -
\Qeightb1\blank2\blank435\blank .
\]
\end{example}
\begin{example}
Finally, consider $Q\circ_1 Q$ for the same quilt \eqref{Qexample}. Since $1$ is not repeated, the only extension of $1232$ by $1232$ at $1$ is $1232454$. The only interposed vertex in $Q$ is $3$. The $3$ from the first $Q$ is identified with $5$ and the $3$ from the second $Q$ is identified with $3$. Their order is reversed, therefore
\[
1232 \circ_1 1232 = - 1232454 .
\]

On the other hand, $\Abs{\Place_Q}=5$ and $1$ has 2 children in $Q$, so there are $\binom62=15$ extensions of $\Tree_Q$ by $\Tree_Q$ at $1$. Each of these gives an extension of 
$Q$ by $Q$ at $1$ --- and hence a term of $Q\circ_1Q$ with coefficient $-1$. I will not write them out.
\end{example}

\subsection{Relation to $\Brace$}
$\Qop$ can be thought of as a more elaborate version of $\Brace$. Whereas $\Brace$ acts on the Hochschild complex of a vector space, we will see in the next section that $\Qop$ acts on the Hochschild bicomplex of a diagram of vector spaces.

\begin{definition}
\label{Quotient}
Define $\Qquotient:\Qop\to\Brace$ by
\[
\Qquotient(Q) =
\begin{cases}
\Tree_Q & \deg Q=0\\
0 &\deg Q\neq0
\end{cases}
\]
for any $Q\in\Qcat$.
\end{definition}
For degree $0$ quilts, there are no interposed vertices, so the signs of extensions are all positive. Consequently, $\Qquotient$ is a homomorphism of dg-operads.  Indeed, it comes from a homomorphism $\Wordop\to\Comop$. 

For any tree $T$, if $W$ is the set of vertices arranged in order of first occurrence in $\Place_T$, then $(W,T)\in\Qcat$ and $\Qquotient(W,T)=T$, therefore $\Qquotient$ is surjective.

\begin{conjecture}
\label{Acyclic}
$\Qop$ is acyclic in the sense that $H_k(\Qop)=0$ for $k>0$.
\end{conjecture}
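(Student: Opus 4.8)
The plan is to build an explicit contracting homotopy for the differential $\partial$ on $\Qop(n)$ for each fixed arity $n$, modelled on the standard argument that the (augmented) complex of a simplicial set with a basepoint-like degeneracy is acyclic. The key observation is that $\partial$ only touches the word component $\W_Q$ of a quilt $Q=(\W_Q,\Tree_Q)$: since $\partial(W,T)=\partial W\otimes T$, the complex $\Qop(n)$ decomposes as a direct sum, over the set of trees $T\in\Treecat(n)$, of the subcomplexes spanned by $\{(W,T)\mid (W,T)\in\Qcat(n)\}$. So it suffices to show that each such subcomplex (for fixed $n$ and fixed $T$) is acyclic in positive degree. I would first pin down exactly which words $W$ can occur with a given $T$: by Definition~\ref{Quilt def}, $W$ ranges over words in $\Wordcat(n)$ satisfying $W=\dots u\dots v\dots\implies u\ngtr_T v$ and $W=\dots u\dots v\dots u\dots\implies v\trianglelefteq_T u$. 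The degree-$0$ element is the unique word listing $\com n$ in a $\leq_T$-compatible total order that also respects the corner-word constraints — in fact I expect it to be exactly the word obtained from $\Place_T$ by keeping only first occurrences, i.e. the $\downarrow$-order of $T$; call it $W_0$.

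Next I would construct the homotopy $h$. The natural candidate, in the spineless-cacti / surjection-operad picture, is a "last vertex" operator: given $W$ of positive degree, one duplicates a suitable letter — the last letter of $W$, or the final occurrence of a cleverly chosen vertex — to produce a word of one higher degree with the same tree, chosen so that the new duplicated letter is always the highest-numbered caesura and hence the sign bookkeeping of Definition~\ref{Face sign} collapses the usual telescoping sum $\partial h + h\partial = \id - (\text{projection onto }W_0)$. Concretely: let $\underline u$ be the last letter of $W$; if $u$ occurs only once in $W$, replace $W=\dots\,\underline u$ by $\dots\,\underline u\,\underline u'$ where $\underline u'$ is a fresh occurrence of the same vertex — one must check this still lies in $\Qcat(n)$ (the tree is unchanged, and appending a repeat of the terminal vertex cannot violate condition~\eqref{Quilt1}, while condition~\eqref{Quilt2} for the pair $\underline u\dots\underline u'$ needs $v\trianglelefteq_T u$ for every $v$ strictly between, which holds because $u$ was $\downarrow$-last). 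If instead $u$ already repeats, the operator should delete-and-reinsert to reduce to the previous case, or simply be declared zero there, with the homotopy identity checked by a direct sign computation. I would verify $h\partial+\partial h=\id-\varepsilon$, where $\varepsilon$ projects onto the span of $W_0$, by the usual case analysis: the terms of $\partial h W$ that delete the newly-created letter give back $W$ (the signs work out because of the placement as the top caesura), and all other terms cancel in pairs with terms of $h\partial W$.

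The main obstacle, and where I would spend most of the effort, is the sign computation: Definition~\ref{Face sign} numbers caesurae in order of first occurrence and assigns $(-1)^k$, so inserting a new terminal repeated letter shifts these numbers in a controlled way only if the new letter is genuinely the last caesura in the renumbering — I need to confirm this is forced, and then track how $\sgn_W$ changes under both $\partial_{\underline a}$ and $h$ so that the cross-terms telescope rather than merely "morally" cancel. A secondary subtlety is making sure $h$ preserves $\Qcat(n)$ in every case and that the degenerate situations (where the naive $h$ would leave $\Qcat$, e.g. when duplicating would create an $\dots uu\dots$ forbidden by nondegeneracy after some deletion) are handled, perhaps by choosing the duplicated vertex more carefully — the final occurrence of the $\downarrow$-maximal vertex rather than literally the last letter. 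If a clean global $h$ proves elusive, the fallback is to filter $\Qop(n)$ by the value of $\Tree_Q$ and within each piece by a further statistic (number of last-first pairs, cf.\ Lemma~\ref{Word length}) and run a spectral-sequence or induction argument, reducing acyclicity to the known acyclicity of the relevant sub-quotients of the surjection operad $\Wordop$; but I expect the direct homotopy to go through.
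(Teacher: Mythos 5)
First, a point of context: the paper does not prove this statement at all --- it is stated as Conjecture~\ref{Acyclic} and only verified computationally up to arity $5$, so there is no proof of the author's to compare yours against. Your opening reduction is sound: since $\partial(W,T)=\partial W\otimes T$ and faces of quilts are quilts (Lemma~\ref{Face quilt}), the complex $\Qop(n)$ does split as a direct sum over trees $T\in\Treecat(n)$ of subcomplexes $C_T$ spanned by the words $W$ with $(W,T)\in\Qcat(n)$. But that reduction is the easy part; the acyclicity of each $C_T$ is exactly the content of the conjecture, and your proposed contraction does not establish it.

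Two concrete problems. First, your homotopy $h$ is not well defined as stated: appending a fresh occurrence of the terminal vertex $u$ immediately after its last occurrence produces a word of the form $\dots uu$, which is excluded from $\Wordcat$ by the nondegeneracy condition of Definition~\ref{Wordcat def}; this failure is not a corner case arising ``after some deletion'' but occurs for every input, so the duplicated letter must be inserted elsewhere, and then neither membership in $\Qcat(n)$ nor the sign bookkeeping is routine. Second, your identification of the degree-$0$ part is wrong: by the example following Definition~\ref{Quilt def}, the degree-$0$ words compatible with a fixed $T$ are \emph{all} linear extensions of $\leq_T$, of which there are in general many, not just the $\downarrow$-order word $W_0$. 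Consequently the identity $h\partial+\partial h=\id-\varepsilon$ with $\varepsilon$ the projection onto $W_0$ cannot hold: for a degree-$0$ word $W\neq W_0$ one would need $\partial hW=W$, but $hW$ has exactly one repeated vertex, so $\partial hW$ is a difference of two distinct basis words and can never equal a single basis word. At best one could hope for $\id-S$ with $S$ an operator moving linear extensions toward $W_0$, which requires an iteration/filtration argument you have not supplied. Finally, the fallback of reducing to ``known acyclicity of the relevant sub-quotients of the surjection operad'' is unfounded: $\Wordop=F_2S$ is not acyclic (its homology is the Gerstenhaber operad), and the subcomplexes in question are precisely the $C_T$ whose acyclicity is to be proven. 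So the proposal identifies a reasonable strategy (tree-wise splitting plus an explicit contraction or discrete-Morse-type argument), but as written it leaves the conjecture open.
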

I have verified this up to arity $5$.

If this conjecture is true, then $\Qquotient$ induces an isomorphism from $H_\bullet(\Qop)$ to $\Brace$.

\section{Representation of $\Qop$}
\label{Representation}
If we forget multiplication, a diagram of algebras is a diagram of vector spaces. In this section, I will consider a diagram of vector spaces. This has its own version of Hochschild cohomology, and  I will show that the Hochschild bicomplex of a diagram of vector spaces is a $\Qop$-algebra. Multiplication will return in Section~\ref{mQuilts}.
\subsection{The nerve}
Recall that the \emph{nerve} of a  small category, $\Xc$,   is a simplicial set, i.e., a (contravariant) functor, $B_\bullet\Xc:\Simplex^\op\to\Set$, where $\Simplex$ is again the simplex category. Denote $B_p\Xc:= B_\bullet\Xc([p])$ and $\zeta^*:=B_\bullet\Xc(\zeta)$ for a $\Simplex$-morphism $\zeta$.

In particular, $B_0\Xc = \Obj\Xc$ is the set of objects, $B_1\Xc$ is the set of morphisms, and 
\[
B_2\Xc = \{(\psi,\phi)\in B_1\Xc\times B_1\Xc \mid \exists x,y,z\in B_0\Xc,\ \phi:x\to y,\ \psi:y\to z\}
\]
is the set of pairs of composable morphisms.

Let $\bphi = (\phi_1,\phi_2,\dots,\phi_{p'}) \in B_{p'}\Xc$. More specifically
\[
x_0\xleftarrow{\phi_1} x_1\xleftarrow{\phi_2}\dots\xleftarrow{\phi_{p'}}x_{p'} .
\]
For $0\leq k \leq l\leq p'$, denote $\phi_{k,l}=\phi_{k+1}\circ\dots\circ\phi_{l}$, so that $x_k\xleftarrow{\phi_{k,l}}x_l$. In particular, $\phi_k=\phi_{k-1,k}$.
The action of  $\zeta:[p]\to [p']$ is explicitly, 
\[
\zeta^*\bphi := (\phi_{\zeta(0),\zeta(1)},\dots,\phi_{\zeta(p-1),\zeta(p)}) \in B_p\Xc .
\]

However, we will mainly need the structure of the nerve as a semisimplicial (presimplicial) set, i.e., the restriction of $B_\bullet\Xc$ to the semisimplex category.
\begin{definition}
\label{Semisimplex def}
The \emph{semisimplex} category $\SemiD$ has objects $[p]$ for all $p\in\N$ and morphisms are \textbf{strictly} increasing (increasing and injective) functions.
\end{definition}
Note that a $\SemiD$-morphism is completely described by its codomain and image.

\subsection{The underlying vector spaces}
Let $\Xc$ be a small category, $\Bbbk$ a field of any characteristic, and $\Vect_\Bbbk$ the category of vector spaces over $\Bbbk$.
\begin{remark}
I am using $\Vect_\Bbbk$ for concreteness, but the results are more general. Firstly, the category of modules of any commutative ring would work. 

Note that $\Vect_\Bbbk$ becomes a nonsymmetric colored operad (multicategory) of abelian groups if we define the operations (multimorphisms) to be the multilinear maps. That is the structure that will be used here, and all results are true for an arbitrary nonsymmetric colored operad.
\end{remark}

Let $\A:\Xc\to\Vect_\Bbbk$ be a \emph{diagram of vector spaces}, i.e., a covariant functor. (In Sec.~\ref{Rep mQuilt}, I will consider a diagram of algebras.)
\begin{definition}
\label{Bicomplex def}
Given a diagram of vector spaces $\A:\Xc\to\Vect_\Bbbk$, the bigraded vector space $C^{\bullet,\bullet}(\A,\A)$ is defined by
\[
C^{p,q}(\A,\A) := \prod_{(x_0\leftarrow\dots\leftarrow x_p)\in B_p\Xc} \Hom_\Bbbk[\A(x_p)^{\otimes q},\A(x_0)] .
\]
\end{definition}
This has several relevant gradings.
\begin{definition}
\label{Gradings}
A homogeneous element $f\in C^{p,q}(\A,\A)$ has \emph{bidegree} $(p,q)$.

The total space $\tot C^{\bullet,\bullet}(\A,\A)$ is graded by the \emph{total degree} $p+q$.

The suspension $\s_2 C^{\bullet,\bullet}(\A,\A)$ is graded by the shifted bidegree $\norm f := (p,q-1)$.

The total suspension $\s \tot C^{\bullet,\bullet}(\A,\A)$ is graded by the shifted total degree $\abs f := p+q-1$.
\end{definition}
The shifted bidegree will be most relevant in this section.

The simplicial coboundary $\dS$ has bidegree $(1,0)$ and will be defined below (Def.~\ref{dS def}).

For a diagram of algebras, the Hochschild coboundary $\dH$ has degree $(0,1)$ and will be defined below (Def.~\ref{dH def}). 
The Hochschild cohomology of a diagram of algebras is the cohomology of $\tot C^{\bullet,\bullet}(\A,\A)$ with coboundary $\delta := \dS+\dH$. 

\subsection{Some colored operads}
\label{Colored operads}
\begin{definition}
\label{MultiSemiD def}
$\MultiSemiD$ is an $\N$-colored operad of sets (i.e., a symmetric multicategory with object set $\N$).
For any $n\in\N$, $\bp = (p_1,\dots,p_n)\in\N^n$, and $p'\in\N$, define $\MultiSemiD(n;\bp,p')$ as the set of $n$-tuples of $\SemiD$-morphisms,
\[
\bzeta = (\zeta_1,\dots,\zeta_n)
\]
where
\[
\zeta_a:[p_a]\to[p'] .
\]
$\perm_n$ acts on this by permuting labels. The partial composition is
\[
\bzeta \circ_a \bxi :=
(\zeta_1,\dots,\zeta_{a-1},\zeta_a\circ\xi_1,\dots ,\zeta_a\circ\xi_{\#\bxi},\zeta_{a+1},\dots,\zeta_{\#\bzeta}) .
\]
\end{definition}
\begin{remark}
This construction could be applied to any small category to produce a colored operad.

Dmitri Pavlov and Rune Haugseng have pointed out to me that $\MultiSemiD$ is the symmetric multicategory associated to the symmetric monoidal envelope of $\SemiD$.
\end{remark}
\begin{definition}
\label{NSOp def}
Let $\NSOp$ be the $\N$-colored operad of sets whose algebras are nonsymmetric operads \cite[\S1.5.3]{cav2016}. (See also \cite[\S1.5.6]{b-m2007} for the symmetric version.)
Explicitly, given $n\in\N$, $\bq=(q_1,\dots,q_n)\in\N^n$, and 
\[
q' = 1 + \sum_{a=1}^n (q_a-1)
\]
define $\NSOp(n;\bq,q')$ to be the set of pairs $(\Tree_I,I)$ where $\Tree_I\in\Treecat(n)$ and $I:\E_I\to\N$ such that:
\begin{itemize}
\item
for $(u,v)\in\E_I$, $1\leq I_{(u,v)} \leq q_u$;
\item
$(t,u),(t,v)\in\E_I$, $u\vartriangleleft_I v \implies I_{(t,u)}<I_{(t,v)}$.
\end{itemize}

An element $(\Tree_I,I)\in \NSOp(n;\bq,q')$ describes a way of composing elements $f_1,\dots, f_n$ in an arbitrary nonsymmetric operad, where $\# f_a=q_a$, and an edge $(u,v)\in\E_I$ means the composition $f_u \circ_{I_{(u,v)}} f_v$.
\end{definition}
\begin{remark}
The planarity of $\Tree_I$ is redundant. The relation $\vartriangleleft_I$ can be reconstructed from the numbers.
\end{remark}
\begin{remark}
My explicit description is different from that in \cite{b-m2007,cav2016}. 
To go from that description to mine, number the edges from each vertex in planar order and then delete any edge that does not connect two vertices.
\end{remark}

$\NSOp$ is isomorphic to a suboperad of $\MultiD$. For example
\[
\raisebox{-.5\height}{
\begin{tikzpicture}[scale=.7,node font=\footnotesize]
\node{$1$}
child{node{$2$} edge from parent node[right]{$\scriptstyle i$}};
\end{tikzpicture}}
\in \NSOp(2;q_1,q_2,q_1+q_2-1)
\]
corresponds to $\zeta \in \MultiD(2;q_1,q_2,q_1+q_2-1)$ where
\[
\zeta_1(k)  = 
\begin{cases}
k & 0\leq k\leq i-1\\
k+q_2-1 & i\leq k \leq q_1
\end{cases}
\]
and $\zeta_2(k) = k+ i-1$.

I denote the Hadamard product of operads of sets as $\Hadtimes$, since the Cartesian product is the relevant monoidal operation for $\Set$.
\begin{definition}
\label{CQ def}
$\CQ \subset\MultiSemiD\Hadtimes\NSOp$ is the $\N^2$-colored suboperad consisting of pairs $(\bzeta,I)$ such that for all $(a,b)\in\E_I$, $\max \zeta_a\leq \min\zeta_b$.
\end{definition}

\subsection{The representation of $\CQ$}
Let $\A:\Xc\to\Vect_\Bbbk$ be a diagram of vector spaces.
\begin{definition}
\label{IndQuilt action}
Let $(\bzeta,I) \in \CQ(n;\bp,\bq,p',q')$, $f_a\in C^{p_a,q_a}(\A,\A)$ for all $a\in\com{n}$, and  $\bphi = (\phi_1,\dots,\phi_{p'})$ with $\phi_k:x_k\to x_{k-1}$ in $\Xc$. First note that 
\[
f_a[\zeta_a^*\bphi] : \A(x_{\zeta_a(p_a)})^{\otimes q_a} \to \A(x_{\zeta_a(0)}) .
\] 
Now,  compose all of these, using the functor $\A$ to connect different vector spaces. 
\begin{itemize}
\item
If $u$ is the root of $\Tree_I$, then compose $\A[\phi_{0,\zeta_u(0)}]\circ f_u[\zeta_u^*\bphi]$.
\item
If $(a,b)\in\E_I$, then compose $f_a[\zeta_a^*\bphi] \circ_{I_{(a,b)}} \A[\phi_{\zeta_a(p_a),\zeta_b(0)}]\circ f_b[\zeta_b^*\bphi]$.
\item
For $a\in\com{n}$ and $1\leq j \leq q_a$, if there does \textbf{not} exist $(a,b)\in\E_I$ with $I_{(a,b)}=j$, then compose $f_a[\zeta_a^*\bphi]\circ_j \A[\phi_{\zeta_a(p_a),p'}]$.
\end{itemize}
All of these compositions together define 
\[
\rep(\bzeta,I;f_1,\dots,f_n)[\bphi] : \A(x_{p'})^{\otimes q'} \to \A(x_0) ,
\]
and hence
\[
\rep(\bzeta,I;f_1,\dots,f_n) \in C^{p',q'}(\A,\A) .
\]
This is multilinear by construction, so
\[
\rep[\bzeta,I] : C^{p_1,q_1}(\A,\A)\otimes\dots \otimes C^{p_n,q_n}(\A,\A)\to C^{p',q'}(\A,\A) .\
\]
\end{definition}
\begin{remark}
This is the motivation for Definition~\ref{CQ def}. $\CQ$ is simply the set of pairs for which $\rep$ makes sense.
\end{remark}
\begin{lemma}
\label{rep lemma}
This is a representation of $\CQ$ in the sense that 
\[
\rep[\bzeta,I]\circ_a \rep[\bxi,J] = \rep[\bzeta \circ_a \bxi,I\circ_a J]\ .
\]
\end{lemma}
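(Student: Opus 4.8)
The plan is to prove the composition identity of Lemma~\ref{rep lemma} by unwinding both sides according to Definition~\ref{IndQuilt action} and matching the two resulting composites of maps of the form $\A[\phi_{\dots}]$ and $f_a[\zeta_a^*\bphi]$ term by term. Fix $(\bzeta,I)\in\CQ(n;\bp,\bq,p',q')$ and $(\bxi,J)\in\CQ(m;\bp',\bq',p'_a,q'_a)$ meeting at the slot $a\in\com n$, together with cochains $f_1,\dots,f_{a-1},g_1,\dots,g_m,f_{a+1},\dots,f_n$ and a composable string $\bphi=(\phi_1,\dots,\phi_{p'})$ of $\Xc$-morphisms. Both sides are elements of $C^{p',q''}(\A,\A)$ for the appropriate $q''$, so it suffices to check that they agree after evaluating on $\bphi$, i.e.\ as multilinear maps $\A(x_{p'})^{\otimes q''}\to\A(x_0)$.

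First I would record how the colored-operad structures compose. On the $\MultiSemiD$ side, $(\bzeta\circ_a\bxi)$ replaces $\zeta_a$ by the list $\zeta_a\circ\xi_1,\dots,\zeta_a\circ\xi_m$; in particular the relevant faces are $(\zeta_a\circ\xi_c)^*\bphi = \xi_c^*(\zeta_a^*\bphi)$ by functoriality of the nerve, which is exactly what lets the inner representation $\rep[\bxi,J]$ applied to $\zeta_a^*\bphi$ reappear inside the outer one. On the $\NSOp$ side, $I\circ_a J$ is the tree obtained by grafting $\Tree_J$ in place of vertex $a$ of $\Tree_I$, with the edge-labels of $I$ into and out of $a$ redistributed among the vertices of $\Tree_J$ exactly as the $\NSOp$ composition prescribes (this is the content of Definition~\ref{NSOp def} read as a composition recipe). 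I would then go through the three bullet points of Definition~\ref{IndQuilt action} for $\rep[\bzeta\circ_a\bxi,I\circ_a J]$: the root contribution, the edge contributions $(u,v)\in\E_{I\circ_a J}$, and the ``free slot'' contributions. Edges and free slots not touching the grafted region reproduce verbatim the corresponding contributions of $\rep[\bzeta,I]$; edges internal to the grafted $\Tree_J$ and its free slots reproduce the contributions of $\rep[\bxi,J]$ (with $\bphi$ replaced by $\zeta_a^*\bphi$, as noted); and the edges of $\Tree_I$ that entered or left $a$ now enter or leave specific vertices of $\Tree_J$, where the connecting map $\A[\phi_{\dots}]$ has its indices given by composing the relevant $\xi_c$ with $\zeta_a$ — again matched by the $\phi_{k,l}$ cocycle identity $\phi_{k,l}=\phi_{k,j}\circ\phi_{j,l}$. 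Packaging these matches shows the two big composites of maps coincide.

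The one genuine technical point — and the step I expect to be the main obstacle — is the bookkeeping of \emph{which} slot of $f_a[\zeta_a^*\bphi]$ each incoming arrow of the grafted subtree attaches to, and checking that the ``inner'' free slots of $g_c$ that get filled by $\A[\phi_{\dots}]$ on the left-hand side are precisely the free slots of the grafted vertices on the right-hand side, with matching source/target objects. This is where the defining inequality of $\CQ$ (Definition~\ref{CQ def}), $\max\zeta_a\le\min\zeta_b$ for $(a,b)\in\E_I$, is used: it guarantees that all the index ranges $\phi_{\zeta_a(p_a),\zeta_b(0)}$ appearing after grafting are well-formed (the upper index does not exceed the lower), so that the composite written down for $I\circ_a J$ actually lies in $\CQ$ and the formula of Definition~\ref{IndQuilt action} is legitimate. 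I would phrase the argument so that the verification reduces to: (i) faces compose, $(\zeta_a\xi_c)^*=\xi_c^*\zeta_a^*$; (ii) connecting morphisms compose, $\A[\phi_{i,j}]\A[\phi_{j,k}]=\A[\phi_{i,k}]$, which is just functoriality of $\A$ together with $\phi_{i,j}\phi_{j,k}=\phi_{i,k}$; and (iii) the tree/edge-label combinatorics of $\NSOp$-composition, which is by definition associative grafting. Once (i)–(iii) are in place the identity of multilinear maps is forced, and since $\bphi$ was arbitrary the cochain identity $\rep[\bzeta,I]\circ_a\rep[\bxi,J]=\rep[\bzeta\circ_a\bxi,\,I\circ_a J]$ follows.
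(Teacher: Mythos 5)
Your proposal is correct and follows essentially the same route as the paper: evaluate both sides on $(f_1,\dots)$ and $\bphi$, use $(\zeta_a\circ\xi_c)^*\bphi=\xi_c^*(\zeta_a^*\bphi)$ to identify the building blocks, and observe that they are assembled according to $I\circ_a J$ by the definition of $\NSOp$-composition. In fact you are somewhat more explicit than the paper about the bookkeeping of the connecting morphisms $\A[\phi_{k,l}]$ at the grafting vertex (functoriality of $\A$ and the $\CQ$ inequality), details the paper's proof leaves implicit.
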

\begin{proof}
Let $m=\#\bzeta=\#I$ and $n=\#\bxi=\#J$. Since the numbering of vertices is only used to keep track of them, it is apparent that everything is $\perm$-equivariant, so we can without loss of generality consider the partial composition at $1$. This simplifies notation slightly.

To check this, apply both sides to $(f_1,\dots,f_{n+m-1})$ and evaluate the result at $\bphi\in B_\bullet\Xc$. 
On the right hand side, we need to compute,
\begin{multline*}
\left(\rep[\bzeta,I]\circ_1 \rep[\bxi,J]\right)(f_1,\dots,f_{n+m-1})[\bphi] \\
= \rep\bigl(\bzeta,I;\rep(\bxi,J;f_1,\dots,f_n),f_{n+1},\dots,f_{n+m-1}\bigr)[\bphi] .
\end{multline*}
The building blocks here are $\rep(\bxi,J;f_1,\dots,f_n)[\zeta_1^*\bphi]$ and, for $a=n+1,\dots,n+m-1$,
\[
f_a[\zeta_{a-n+1}^*\bphi] = f_a[(\bzeta\circ_1\bxi)_a^*\bphi] .
\]
Applying the definition to $\rep[\bxi,J]$ expresses the first piece in terms of  
\[
f_a[\xi_a^*\zeta_1^*\bphi] = f_a[(\bzeta\circ_1\bxi)^*_a\bphi] ,
\]
for $a=1,\dots,n$.
Comparing this with the definition of $\NSOp$ shows that these pieces are composed according to $I\circ_1J$, so the result is
\[
\rep(I\circ_1J,\bzeta\circ_1\bxi;f_1,\dots,f_{n+m-1})[\bphi] .
\qedhere
\]
\end{proof} 

\begin{remark}
In \cite{g-s1988a,g-s1988b}, diagrams are \emph{contravariant} functors, a.k.a., presheaves. Everything can be translated to that setting by replacing $\Xc$ with the opposite category.
\end{remark}

\subsection{Simplicial coboundary}
Using this representation, $\rep$, of $\CQ$, it is easy to define the simplicial coboundary operator.
\begin{definition}
\label{Face map def}
For any $p\in\N$ and $0\leq i\leq p+1$, the $i$'th \emph{face map} $\varepsilon_i:[p]\to[p+1]$ is the unique $\SemiD$-morphism whose image does not include $i$.

Let {\footnotesize$1$} be the trivial tree with one vertex. This represents the identity element in $\NSOp(1;q,q)$ for any $q\in\N$.
\end{definition}

\begin{definition}
\label{dS def}
\[
\dS_i := \rep[\varepsilon_i,\mbox{\footnotesize$1$}]: C^{p,q}(\A,\A)\to C^{p+1,q}(\A,\A)
\]
and
\[
\dS := \sum_{i=0}^{p+1} (-1)^i\dS_i  .
\]
\end{definition}
\begin{definition}
The \emph{Hochschild bicomplex} of a diagram of vector spaces, $\A$, is $C^{\bullet,\bullet}(\A,\A)$ with coboundary $\dS$.
\end{definition}

\subsection{Coloring}
\begin{definition}
\label{Tree coloring}
For $T\in\Treecat(n)$ and $\bq\in\N^n$, let
\[
\Ind(T;\bq) = \{I\in\NSOp(n;\bq,q') \mid \Tree_I=T\} ,
\]
where $q'=1+\sum_{a=1}^n(q_a-1)$. An element of this set is a \emph{coloring} of $T$.
\end{definition}
\begin{definition}
\label{Word coloring}
For $W\in\Wordop(n)$, $\bp\in\N^n$, and
\[
p' = \sum_{a=1}^n p_a - \deg W ,
\]
let $\Ind(W;\bp)$ be the set of $\bzeta\in\MultiSemiD(n;\bp,p')$ such that $\bigcup_{a=1}^{\#W} \Im\zeta_a = [p']$ and for each $a\in\com{\#W}$, there exists a function $\pi_a:[p_a]\to W$ such that: 
\begin{itemize}
\item
The image of $\pi_a$ is the set of occurrences of $a$ in $W$;
\item
for an $\underline a\in W$, $\zeta_a(\pi_a^{-1}\underline a)$ is an interval (set of consecutive integers);
\item
if $W = \dots \underline a\,\underline b\dots$, then 
\[
\max \zeta_a(\pi_a^{-1}\underline a) = \min \zeta_b(\pi_b^{-1}\underline b) .
\]
\end{itemize}
An element of this set is a \emph{coloring} of $W$.
\end{definition}
\begin{remark}
Every element of $\NSOp$ arises as a coloring, but not every element of $\MultiSemiD$ does. For example, the face map $\varepsilon_i$ is not a coloring of any word.
\end{remark}
\begin{remark}
If $\bzeta\in\Ind(W)$, then it is usually --- but not always --- possible to reconstruct $W\in\Wordcat$ from $\bzeta$. For example, if $\id_{[1]}:[1]\to[1]$ is the identity function, then $(\id_{[1]},\id_{[1]})$ is a coloring of both $12$ and $21$.
\end{remark}
\begin{remark}
$\Ind(W;\bp)$ is in one-to-one correspondence with the set of functions $\omega:W\to\N$ such that for all $a$,
\[
1+p_a = \sum_{\underline a\in W}(1+\omega_{\underline a}) 
\]
(where the sum is over occurrences of $a$) and between any 2 consecutive occurrences of $a$, there is some letter $\underline b$ with $\omega_{\underline b}\neq0$.

Given such a function, $\omega$, let $\omega^-_{\underline a}$ be the sum of $\omega_{\underline b}$ over $\underline b$ before $\underline a$ in $W$. Define $\bzeta \in \Ind(W;\bp)$ by 
\[
\Im \bzeta_a = \bigcup_{\underline a \in W} \{\omega^-_{\underline a},\omega^-_{\underline a}+1,\dots,\omega^-_{\underline a}+\omega_{\underline a}\}, 
\]
where the union is over occurrences of $a$.

Conversely, $\omega_{\underline a} = \Abs{\pi_a^{-1}(\underline a)}-1$.
\end{remark}
\begin{lemma}
For any $\bzeta\in\Ind(W;\bp)$, the functions $\pi_a$ in Definition~\ref{Word coloring} are unique.
\end{lemma}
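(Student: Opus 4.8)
The plan is to fix $\bzeta\in\Ind(W;\bp)$ and show that each $\pi_a:[p_a]\to W$ is forced. First I would observe that the image of $\pi_a$ must equal the set $O_a$ of occurrences of $a$ in $W$, and that the conditions on $\pi_a$ say exactly that $\pi_a$ is weakly increasing (since consecutive blocks $\pi_a^{-1}(\underline a)$ map to intervals $\zeta_a(\pi_a^{-1}(\underline a))$ that are glued end-to-end, in the left-to-right order of the occurrences in $W$) with each fiber $\pi_a^{-1}(\underline a)$ an interval in $[p_a]$. So the data of $\pi_a$ is equivalent to a choice of block sizes $\abs{\pi_a^{-1}(\underline a)}$ summing to $p_a+1$, one block per occurrence $\underline a\in O_a$, and I must show these sizes are determined by $\bzeta$.

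The key point is that the sizes are read off from $\zeta_a$ itself: since the blocks $\zeta_a(\pi_a^{-1}(\underline a))$ are consecutive intervals whose union is $\Im\zeta_a$, and $\zeta_a$ is strictly increasing, $\abs{\pi_a^{-1}(\underline a)} = \abs{\zeta_a(\pi_a^{-1}(\underline a))}$. Thus it suffices to show that the decomposition of $\Im\zeta_a$ into the ordered list of intervals $\zeta_a(\pi_a^{-1}(\underline a))$, $\underline a$ ranging over $O_a$ in word order, is itself determined by $\bzeta$ (and $W$). Suppose $a$ occurs $r$ times, so $\Im\zeta_a$ is partitioned into $r$ consecutive nonempty intervals $J_1 < J_2 < \dots < J_r$. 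I claim the breakpoints between $J_i$ and $J_{i+1}$ are recoverable from the other coordinates $\zeta_b$. Indeed, by the remark preceding the lemma, between any two consecutive occurrences of $a$ there is a letter $\underline b$ ($b\ne a$) with a nonempty gap, i.e.\ $\max\zeta_a(J_i) < \min\zeta_b(\pi_b^{-1}\underline b)$ strictly, whereas the third bullet of Definition~\ref{Word coloring} forces equality of adjacent max/min across every consecutive pair of letters. Chasing the word from one occurrence of $a$ to the next through the intervening letters, the equalities propagate the value $\max\zeta_a(J_i)$ through the intervening blocks, and the condition $p'=\sum p_a - \deg W$ together with $\bigcup\Im\zeta_a=[p']$ guarantees there is genuine "movement" exactly at the interposed letters, so $\max\zeta_a(J_i)$ is pinned down as a specific value determined by $\bzeta$.

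Concretely I would argue: the first bullet forces $\min\zeta_a(J_1)=\min$ of the appropriate initial segment, and proceeding inductively along $W$, the third bullet of Definition~\ref{Word coloring} (the $\max$–$\min$ matching across consecutive letters) determines, for each occurrence $\underline a$, the value $\min\zeta_a(\pi_a^{-1}\underline a)$ and hence also $\max\zeta_a(\pi_a^{-1}\underline a)$ (as $\min\zeta_b(\pi_b^{-1}\underline b)$ for the next letter $\underline b$, or as $p'$ if $\underline a$ is the last letter of $W$), since $\zeta_a$ is injective and $\Im\zeta_a$ is known. Therefore each interval $\zeta_a(\pi_a^{-1}\underline a)$ is determined, and since $\zeta_a$ is an injection, $\pi_a^{-1}\underline a = \zeta_a^{-1}(\zeta_a(\pi_a^{-1}\underline a))$ is determined; as $\underline a$ ranges over $O_a$ this determines $\pi_a$ completely.

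I expect the main obstacle to be the bookkeeping in the induction along the word $W$: one has to verify that starting from the leftmost letter and repeatedly applying the consecutive-letter matching condition really does visit and pin down every block, and in particular that the "strict gap between consecutive occurrences of the same vertex" (from the $\omega$-remark, equivalently the degree count $\deg W = p'-\sum(p_a-1)+\dots$ forcing $\bigcup\Im\zeta_a=[p']$) is used correctly to rule out ambiguous colorings where two adjacent blocks could be merged or split. Once the induction is set up carefully — tracking a running "current position" in $[p']$ as we scan $W$ and reading block sizes as the increments — uniqueness of $\pi_a$ follows immediately.
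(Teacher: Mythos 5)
Your overall strategy---reduce the data of $\pi_a$ to the decomposition of $\Im\zeta_a$ into the blocks $\zeta_a(\pi_a^{-1}\underline a)$ and argue that the endpoints of every block are forced by $\bzeta$---is the same as the paper's, but the step that actually pins down the endpoints is missing. In your left-to-right scan you determine $\max\zeta_a(\pi_a^{-1}\underline a)$ ``as $\min\zeta_b(\pi_b^{-1}\underline b)$ for the next letter $\underline b$''; that is just the defining matching condition of Definition~\ref{Word coloring}, and in a forward induction the next letter's minimum is exactly what has not yet been determined, so the induction is circular and does not close. Knowing where a block starts does not tell you where it ends: when $a$ recurs later in $W$, the set $\Im\zeta_a$ continues past the block, so the right endpoint needs external input. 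The appeal to the count $p'=\sum_a p_a-\deg W$ and to ``genuine movement exactly at the interposed letters'' only checks a consistency of cardinalities; it does not locate the breakpoints. (Also, the claimed strict inequality $\max\zeta_a(J_i)<\min\zeta_b(\pi_b^{-1}\underline b)$ contradicts the third bullet when $\underline b$ is the letter adjacent to that occurrence of $a$; the correct statement from the remark is that some \emph{intermediate} letter has a fiber with more than one element.)

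The missing idea, which is the heart of the paper's short proof, is a dichotomy coming from the no-interlacing condition of Definition~\ref{Wordcat def}: for consecutive letters $\underline a\,\underline b$ in $W$, either $\underline a$ is the last occurrence of $a$ or $\underline b$ is the first occurrence of $b$. Combined with the facts that the fiber over a first occurrence contains $0$ and the fiber over a last occurrence contains $p_a$ (this is where your ``blocks glued in word order'' observation is needed, and it should itself be justified by propagating the max/min equalities through the intervening letters and using injectivity of $\zeta_a$), the shared boundary at every junction equals either $\zeta_a(p_a)$ or $\zeta_b(0)$, hence is read off from $\bzeta$ alone. This determines both $\min$ and $\max$ of each set $\pi_a^{-1}(\underline a)$, and since each such fiber is a set of consecutive integers in $[p_a]$, uniqueness follows---which is precisely how the paper argues, letter by letter rather than by a scan.
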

\begin{proof}
Consider some $\underline a\in\W$. 

If $\underline a$ is the first occurrence (of $a$) then $\min\pi^{-1}_a(\underline a) = 0$. 
If $W = \dots a\dots \underline b\,\underline a\dots$, then $\underline b$ is the final occurrence and 
\[
\min\pi^{-1}_a(\underline a) = \max \zeta_b = \zeta_b(p_b) .
\]

If $\underline a$ is the final occurrence, then $\max\pi^{-1}_a(\underline a) = p_a$. 
If $W= \dots \underline a\,\underline c\dots a\dots$, then $\underline c$ is the first occurrence and 
\[
\max\pi^{-1}_a(\underline a) = \min \zeta_c = \zeta_c(0) .
\]

Since $\pi^{-1}_a(\underline a)$ must be a set of consecutive integers, this determines it uniquely.
\end{proof}
\begin{lemma}
\label{Coloring completeness}
Let $V,W\in\Wordcat$, $a\in\com{\#V}$, $\bp\in\N^{\#V+\#W-1}$, and $p'=p_a+\dots+p_{a+\#W-1}$. If $\bzeta\in\Ind(V;p,\dots,p_{a-1},p',p_{a+\#W},\dots,p_{\#V+\#W-1})$ and $\bxi\in\Ind(W;p_a,\dots,p_{a+\#W-1})$, then there exists a unique $X\in\Ext(V,W,a)$ such that $\bzeta\circ_a\bxi\in\Ind(X;\bp)$. 

Conversely, if $X\in\Ext(V,W,a)$ and $\etab\in\Ind(X;\bp)$, then there exist 
\[
\bzeta\in\Ind(V;p,\dots,p_{a-1},p',p_{a+\#W},\dots,p_{\#V+\#W-1})
\] 
and $\bxi\in\Ind(W;p_a,\dots,p_{a+\#W-1})$ such that $\etab=\bzeta\circ_a\bxi$.
\end{lemma}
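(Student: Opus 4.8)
### Proof proposal

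The plan is to prove both directions by exploiting the characterization of colorings via the functions $\omega:W\to\N$ given in the remark after Definition~\ref{Word coloring}, together with the combinatorial description of word extensions from Section~\ref{Computing extensions}. The key observation is that a coloring of a word is equivalent to assigning a nonnegative integer $\omega_{\underline b}$ to each letter, subject to the ``gap'' condition (between consecutive occurrences of the same vertex, some letter must have nonzero weight) and the arity constraints $1+p_a = \sum_{\underline a\in W}(1+\omega_{\underline a})$. So the statement becomes: weight data on $V$ together with weight data on $W$ assemble uniquely into weight data on some $X\in\Ext(V,W,a)$, and conversely.

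For the forward direction, first I would unpack what $\bzeta\circ_a\bxi$ looks like: by Definition~\ref{MultiSemiD def}, composing at $a$ replaces the single map $\zeta_a:[p']\to[\,\cdot\,]$ by the family $\zeta_a\circ\xi_b$, and since $p'=p_a+\dots+p_{a+\#W-1}$ with $\bxi\in\Ind(W;p_a,\dots,p_{a+\#W-1})$, the map $\zeta_a$ is forced to be the identity on $[p']$ (its image is all of $[p']$ and it is strictly increasing between equal-size sets). Thus $\bzeta\circ_a\bxi$ restricts on the $W$-vertices to exactly $\bxi$ and on the other vertices to $\bzeta$. Now I recover $X$ via Section~\ref{Computing extensions}: the caesura occurrences of $a$ in $V$ correspond to the ``cut points'' in $W$, and the weight function $\omega$ on $\bzeta$ restricted to those caesura letters of $a$ determines the weakly increasing function $\kappa$ from caesura-occurrences of $a$ to letters of $W$ that defines the extension. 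Concretely, the interposed-vertex/weight data on the occurrences of $a$ in $V$ tells you where in the traversal of $W$ to splice, and this is precisely the data of an element of $\Ext(V,W,a)$. Uniqueness of $X$ follows because the $\pi_a$ functions are unique (the preceding lemma) and hence the splice positions are determined; one then checks that the glued weight function on $X$ satisfies the gap condition by examining the two places where gaps could fail — inside the inserted copies of $W$ (inherited from $\bxi$'s gap condition) and at the seams between $W$-material and $V$-material (inherited from $\bzeta$'s gap condition at the $a$-letters).

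For the converse, given $X\in\Ext(V,W,a)$ and a coloring $\etab\in\Ind(X;\bp)$, I would run the construction backwards. Using the $\Comcat$-extension $\com{\#W}\stackrel\alpha\into\com{\#X}\stackrel\beta\onto\com{\#V}$ at $a$, the vertices of $X$ partition into the image of $\alpha$ (a subword isomorphic to $W$) and the rest (which map under $\beta$ to vertices of $V$ other than $a$). Set $\bxi$ to be the restriction of $\etab$ to the $\alpha$-image, with target $[p']$ where $p'$ is the total arity absorbed into $a$; set the non-$a$ components of $\bzeta$ to be the corresponding components of $\etab$; and set $\zeta_a=\id_{[p']}$. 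I would verify that $\bxi\in\Ind(W;p_a,\dots,p_{a+\#W-1})$ and $\bzeta\in\Ind(V;\dots,p',\dots)$ by checking the three bullet conditions of Definition~\ref{Word coloring}, which amounts to: the $W$-part of $X$'s traversal, with its inherited weights, has the right total arity and satisfies the gap condition (this uses that $W\in\Wordcat$ so consecutive occurrences in $W$ are genuinely separated in $X$); and collapsing $X$ by $\beta$ gives $V$ with weights $\omega^V_{\underline a}$ at an occurrence of $a$ equal to the sum of $\etab$-weights over the corresponding $W$-block. Then $\etab=\bzeta\circ_a\bxi$ is immediate from the formula for partial composition in $\MultiSemiD$.

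The main obstacle I anticipate is not any single step but the bookkeeping of the seam conditions: verifying the gap condition on $X$ in the forward direction and on $V$ in the converse requires carefully tracking, at each boundary between inserted-$W$ material and ambient-$V$ material, that the ``some intervening letter has nonzero weight'' requirement is neither created nor destroyed by the splice. This is exactly where the hypotheses $V,W\in\Wordcat$ (no degeneracies, no interlacing) and the structure of $\Ext(V,W,a)$ from Definition~\ref{Word extension} — that repetitions in the deleted/collapsed word occur precisely where letters were removed — do the real work, and I would state the seam analysis as a short separate paragraph rather than inlining it. Everything else is a direct translation between the $\omega$-description of colorings and the $\kappa$-description of extensions.
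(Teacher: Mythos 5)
There is a genuine gap, and it sits exactly where your argument does its real work: the claim that ``$\zeta_a$ is forced to be the identity on $[p']$''. This is false. The codomain of $\zeta_a$ is not $[p']$ but the output color of the whole coloring of $V$, and $\Im\zeta_a$ is only the union of the intervals $\zeta_a(\pi_a^{-1}\underline a)$ attached to the occurrences of $a$ in $V$; it is the union of the images of \emph{all} the $\zeta_u$ that must be everything. Already for $V=12$ and $a=2$ one has $\Im\zeta_2=\{p_1,\dots,p_1+p'\}$ (this is the paper's first sign example), so $\zeta_2$ is a shift, not the identity, as soon as $p_1\geq1$; if $a$ is repeated in $V$, $\Im\zeta_a$ is not even an interval. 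Consequently your next step --- that $\bzeta\circ_a\bxi$ ``restricts on the $W$-vertices to exactly $\bxi$'' --- also fails: the $W$-components of the composite are $\zeta_a\circ\xi_w$, i.e.\ $\bxi$ transported into, and cut up among, the slots that $a$ occupies in the coloring of $V$. That transport is precisely the content of the lemma, and your argument erases it; it also invalidates the converse as you set it up, since one cannot take $\zeta_a=\id$ and $\bxi$ the literal restriction of $\etab$. The paper instead recovers $\zeta_a$ as the unique increasing injection with $\Im\zeta_a=\bigcup_{w}\Im\eta_{\alpha w}$, defines $\xi_w$ by $\zeta_a\circ\xi_w=\eta_{\alpha w}$, and sets $\zeta_{\beta u}=\eta_u$ for the remaining vertices. (Taking the displayed $p'$ at face value may have fed the ``equal-size sets'' slip: for the composition to typecheck, the $a$-th color of $\bzeta$ must be the output color $p_a+\dots+p_{a+\#W-1}-\deg W$ of $\bxi$.)

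Your surrounding plan --- translating colorings into the weight functions $\omega$, identifying the extension through the $\kappa$-description of Section~\ref{Computing extensions}, and checking the gap conditions at the seams --- is reasonable and close in spirit to the paper's proof, which substitutes $W$ for the occurrences of $a$ in $V$ to form an intermediate word $U$, transports the functions $\pi_u$, and deletes the letters not hit by any $\pi_u$; the remark after the lemma even records the resulting $\kappa$. But two further repairs are needed even after dropping the identity claim. First, $\kappa$ is not determined by the weights of $\bzeta$ on the $a$-letters of $V$ alone: to know \emph{which} letter of $W$ a caesura occurrence of $a$ is sent to, you need the block structure of $\bxi$ (which parts of the codomain belong to which letters of $W$), exactly as in the paper's formula $\pi^V_a(\xi_w(i))=\underline a\neq\pi^V_a(\xi_w(i+1))$. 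Second, the seam analysis that you yourself flag as the crux --- that the gap conditions of Definition~\ref{Word coloring} survive at the boundaries between inserted-$W$ material and ambient-$V$ material, and that the deleted letters are precisely the consecutive repetitions required by Definition~\ref{Word extension} --- is never actually carried out, and it must be done with the honest maps $\zeta_a\circ\xi_w$ rather than with $\bxi$ itself; your uniqueness claim for $X$ likewise rests on the erased transport and needs to be rerun with the correct data.
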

\begin{proof}
Again, let $\com{\#W} \stackrel\alpha\into \com{\#V+\#W-1} \stackrel\beta\onto \com{\#V}$ be the $\Comcat$-extension at $a$. 

Distinguish functions for different words by superscripts.
For the first claim, construct a word $U$ from $V$ by replacing  $v\neq a$ by $\beta^{-1}(v)$ and $a$ by $W$, relabelled by $\alpha$. Next, define functions $\pi_u:[p_u]\to U$ as follows. For $u\notin \Im\alpha$, $\pi_u(i)$ is $\pi^V_{\beta u}(i) \in W$, identified with a letter of $U$. If $w\in\com{\#W}$, then $\pi_{\alpha w}(i)$ is the letter of $U$ that comes from $\pi^W_w(i)\in W$ when $\pi^V_a(\xi_w(i))$ is replaced with $W$ and relabelled. Finally, $X$ is given by deleting from $U$ any letter that is not in the image of some $\pi_u$.

Conversely, for any $u\notin\Im\alpha$, $\zeta_{\beta u} = \eta_u$, and $\zeta_a$ is determined by
\[
\Im \zeta_a = \bigcup_{w=1}^{\#W} \Im\eta_{\alpha w} .
\]
Since $\zeta_a$ is injective, it determines $\xi_w$ for any $w\in\com{\# W}$ by
\[
\zeta_a\circ\xi_w = \eta_{\alpha w} .
\]
For any $u\notin\Im\alpha$, $\pi^V_{\beta u}(i)$ is $\pi^X_u(i)$ (after relabelling by $\beta$ and eliminating consecutive repetitions). For any $w\in\com{\#W}$,  $\pi^V_a[\xi_w(i)]$ is similarly defined by $\pi^X_{\alpha w}(i)$, and $\pi^W_w(i)$ is $\pi^X_{\alpha w}(i)$ (after relabelling by $\alpha^{-1}$).
\end{proof}
\begin{remark}
In terms of the description in Section~\ref{Computing extensions}, the extension $X$ is determined by a function $\kappa$, where $\kappa(\underline a) = w$ if there exists $0\leq i<p_{\alpha w}$ such that
\[
\pi^V_a(\xi_w(i))=\underline a \neq \pi^V_a(\xi_w(i+1)) .
\]
\end{remark}
\begin{lemma}
\label{Quilt coloring}
For $Q\in\Qcat$, if $\bzeta\in\Ind(\W_Q;\bp)$ and $I\in\Ind(\Tree_Q;\bq)$, then 
\[
(\bzeta,I)\in\CQ .
\]
\end{lemma}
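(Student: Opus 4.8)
The plan is to verify directly the single condition that cuts out $\CQ$ inside $\MultiSemiD\Hadtimes\NSOp$ (Definition~\ref{CQ def}). We are already given that $\bzeta\in\MultiSemiD$ and $I\in\NSOp$, and the hypothesis $I\in\Ind(\Tree_Q;\bq)$ forces $\Tree_I=\Tree_Q$, hence $\E_I=\E_Q$. So it suffices to prove that for every edge $(a,b)\in\E_Q$ one has $\max\zeta_a\leq\min\zeta_b$, where $\max\zeta_a:=\max\Im\zeta_a$ and $\min\zeta_b:=\min\Im\zeta_b$.

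First I would set up the monotonicity that comes out of the coloring structure. For a letter $\underline c\in\W_Q$, write $m(\underline c):=\min\zeta_c(\pi_c^{-1}\underline c)$ and $M(\underline c):=\max\zeta_c(\pi_c^{-1}\underline c)$; since $\pi_c^{-1}(\underline c)$ is nonempty, $m(\underline c)\leq M(\underline c)$. The last clause of Definition~\ref{Word coloring} says that if $\underline c\,\underline c'$ are consecutive in $\W_Q$ then $M(\underline c)=m(\underline c')$. Combining, $M(\underline c)=m(\underline c')\leq M(\underline c')$ and $m(\underline c)\leq M(\underline c)=m(\underline c')$, so both $m$ and $M$ are weakly increasing as one reads $\W_Q$ from left to right; consequently $M(\underline c)\leq m(\underline d)$ whenever $\underline c$ is strictly before $\underline d$ in $\W_Q$. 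Since $\Im\zeta_a$ is the union of the intervals $\zeta_a(\pi_a^{-1}\underline a)$ taken over the occurrences $\underline a$ of $a$, and these occur in word order, monotonicity of $M$ gives $\max\zeta_a=M(\underline a^*)$, where $\underline a^*$ is the last occurrence of $a$; similarly $\min\zeta_b=m(\underline b_*)$, where $\underline b_*$ is the first occurrence of $b$.

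Next I would feed in the quilt axiom. Fix $(a,b)\in\E_Q$; by Definition~\ref{Tree} this means $a<_Q b$, i.e.\ $b>_Q a$. Apply condition~\ref{Quilt def}\eqref{Quilt1} with the pair $(b,a)$: $\W_Q$ cannot contain an occurrence of $b$ followed later by an occurrence of $a$. Since both $a$ and $b$ occur (surjectivity, Definition~\ref{Wordcat def}) and $a\neq b$, this forces every occurrence of $a$ to precede every occurrence of $b$; in particular $\underline a^*$ is strictly before $\underline b_*$. Monotonicity then gives $\max\zeta_a=M(\underline a^*)\leq m(\underline b_*)=\min\zeta_b$, which is exactly the requirement for $(\bzeta,I)\in\CQ$.

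I do not anticipate a genuine obstacle here; the content is combinatorial bookkeeping. The one place that needs care is the direction of condition~\ref{Quilt def}\eqref{Quilt1}: unlike the corner word $\Place_T$, in a quilt the roles of $\leq_Q$ and $\trianglelefteq_Q$ are interchanged, so it is the \emph{parent} $a$ whose occurrences must all come before those of the child $b$ --- getting this backwards would point the inequality the wrong way. The secondary point is to match $\max\zeta_a$ and $\min\zeta_b$ with the intervals attached to the extreme occurrences $\underline a^*$ and $\underline b_*$ (rather than to some other occurrence of the same vertex), which is precisely where the monotonicity of $m$ and $M$ is used.
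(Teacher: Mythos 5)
Your proof is correct and follows essentially the same route as the paper's: use Axiom~\ref{Quilt def}\eqref{Quilt1} to place every occurrence of the parent $a$ before every occurrence of the child $b$, then invoke the last clause of Definition~\ref{Word coloring} to get $\max\zeta_a\leq\min\zeta_b$. The only difference is that you spell out explicitly (via the monotonicity of the endpoints $m$ and $M$ along $\W_Q$) the step the paper dispatches with the word ``indirectly,'' which is a welcome clarification rather than a deviation.
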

\begin{proof}
Consider any $(a,b)\in\E_Q$. By definition, $a<_Qb$. By Axiom~\ref{Quilt def}\eqref{Quilt1}, every occurrence of $a$ in $W$ precedes every occurrence of $b$.
Let $\underline a=\pi_a(p_a) \in\W_Q$ be the last occurrence of $a$ and $\underline b = \pi_b(0) \in\W_Q$ the first occurrence of $b$. The last axiom of Definition~\ref{Word coloring} implies  (indirectly) that 
\[
\max \zeta_a = \zeta_a(p_1) \leq \zeta_b(0) = \min \zeta_b . \qedhere
\]
\end{proof}
In light of this, it is appropriate to call $(\bzeta,I)$ a \emph{coloring} of $Q$.
\begin{definition}
\label{Expansion}
Given $W\in\Wordcat$, $\underline a\in\W$, and $\bp\in\N^{\#W}$, define 
\[
E_a\bp := (p_1,\dots,p_{a-1},p_a+1,p_{a+1},\dots p_{\#W}) ,
\]
and $E_{W,\underline a}:\Ind(W;\bp)\to\Ind(W; E_a\bp)$  such that for any $\bzeta\in\Ind(W;\bp)$, $j\in\Im (E_{W,\underline a}\bzeta)_b$ if
\begin{itemize}
\item
$j = \zeta_b(i)$, where $\pi_b(i)$ precedes or equals $\underline a$ in $W$, or
\item
$j = 1+\zeta_b(i)$, where $\pi_b(i)$ follows or equals $\underline a$ in $W$.
\end{itemize}
\end{definition}
The idea is to expand by $1$ the part of $\bzeta$ corresponding to $\underline a$. In the above construction of $\bzeta$ from $\omega:W\to\N$, $E_{W,\underline a}$ corresponds to replacing $\omega_{\underline a}$ with $\omega_{\underline a}+1$.

\subsection{Signs}
$\bzeta\in\Ind(\W_Q;\bp)$ and $I\in\Ind(\Tree_Q;\bq)$  determine a sign, given as the sign of a shuffle. I will describe this shuffle as a permutation relating two sequences, and it will be convenient to describe these sequences as words, even though they never contain repetitions.

 I will use an alphabet consisting of what I will refer to as \emph{vertical} and \emph{horizontal} letters for each vertex of $Q$. Specifically, for each $a\in\com{\#Q}$, we have:
\[
0_{\mathrm v a},\dots,(p_a-1)_{\mathrm v a}
\]
and
\[
1_{\mathrm h a},\dots,(q_a-1)_{\mathrm h a} .
\]
\begin{definition}
\label{Initial word}
Define the word
\beq
\label{a word}
\Wone_a(p_a,q_a):= 0_{\mathrm v a}\dots(p_a-1)_{\mathrm v a}
1_{\mathrm h a}\dots(q_a-1)_{\mathrm h a} 
\eeq
for each $a\in\com{\#Q}$, and concatenate these in labelled order:
\[
\Wone(\bp,\bq) := \Wone_1(p_1,q_1)\smile\Wone_1(p_2,q_2)\smile\dots\smile\Wone_{\#Q}(p_{\#Q},q_{\#Q}) .
\]
\end{definition}
\begin{definition}
\label{Shuffled word}
For $Q\in\Qcat$, $\bzeta\in\Ind(\W_Q;\bp)$, and $I\in\Ind(\Tree_Q;\bq)$, the word 
\[
\Wtwo_Q(\bzeta,I) := \Wtwo^0_Q \smile \Wtwo^{\mathrm v}_Q(\bzeta) \smile \Wtwo^{\mathrm h}(I) 
\]
is the concatenation of the following 3 words:
\begin{enumerate}
\item
$\Wtwo^0_Q$ consists of $0_{\mathrm v a}$ for each interposed vertex $a\in\com{\#Q}$, arranged in \textbf{reverse} $\downarrow$ order.
\item
$\Wtwo^{\mathrm v}_Q(\bzeta)$ has the letter $i_{\mathrm va}$ in position $\zeta_a(x)$ for $0\leq i\leq p_a-1$ if $a$ is not interposed and $1\leq i\leq p_a-1$ if $a$ is interposed. (Positions start from $0$ here.) 
\item
$\Wtwo^{\mathrm h}(I)$ is given by going through the corner word $\Place_Q$ in order:
\begin{itemize}
\item
If $\Place_Q = \dots \underline a b\dots$ and $\underline a$ is a caesura, then replace $\underline a$ with the next unused horizontal letters for $a$ up to $(I_{(a,b)}-1)_{\mathrm h a}$.
\item
If $\underline a$ is the last occurrence, then replace it with the remaining unused horizontal letters (up to $(q_a-1)_{\mathrm ha}$).
\end{itemize}
\end{enumerate}
\end{definition}
\begin{remark}
The  horizontal word, $\Wtwo^{\mathrm h}(I)$, can be described in the same manner as the vertical word, $\Wtwo^{\mathrm v}_Q(\bzeta)$,  by using the identification of $\NSOp$ with a suboperad of $\MultiSemiD$, although there are no $0$'s.
\end{remark}

\begin{definition}
\label{Sign shuffle}
Suppose that $p_a\geq1$ for all interposed $a\in\com{\#Q}$ and $q_a\geq1$ for all $a\in\com{\#Q}$.
\label{CQ sign}
Define  $\sgn_Q(\bzeta,I)$  as the sign of the  shuffle transforming $\Wone(\bp,\bq)$ to $\Wtwo_Q(\bzeta,I)$.
\end{definition}
This permutation is a shuffle in the sense that it does not change the order within the set of  letters associated to each vertex.

\begin{remark}
If $p_a=0$ and $a$ is interposed, then there is no $0_{\mathrm va}$ to put at the beginning of $\Wtwo_Q(\bzeta,I)$, hence the first restriction. The simplest case where this can happen even though a coloring does exist is $a=2$ in 
\[
\Qfourb1234 .
\]

 In general, there are $q_a-1$ horizontal letters for $a$, so $q_a=0$ would call for $-1$ of these letters, hence the second restriction.
 \end{remark}

These restrictions are unacceptable, so what can be done about them? Note that the semisimplicial maps can be parametrized (in various ways) by sequences of integers. The number of crossings for the permutation can be expressed in terms of these integers, along with the components of $I$ and $\bq$. This is a quadratic function with integer coefficients, therefore the sign of the permutation only depends upon the integers modulo $2$. The sign is defined in general by extrapolating modulo $2$.
\begin{definition}
If it is not already defined, then $\sgn_Q(\bzeta,I)$ is defined by  replacing $q_a=0$ with $2$ or by 
\[
\sgn_Q (\bzeta,I) := \sgn_Q (E_{Q,a}E_{Q,a}\bzeta,I) .
\]
\end{definition}

\subsubsection{Examples}
\begin{example}
Consider a coloring \label{Term} $(\bzeta,I)$ of
\[
\Qtwo12 = 
\left(
12\;,
\raisebox{-.5\height}{
\begin{tikzpicture}[scale=.7,node font=\footnotesize]
\node{$1$}
child{node{$2$}};
\end{tikzpicture}} 
\right) .
\]
 The $I$ is given by the underlying tree and one number, call this  $i=I_{(1,2)}$. Since there are no repetitions in the word $12$, we must have $\Im \zeta_1 = \{0,\dots,p_1\}$ and $\Im\zeta_2 = \{p_1,\dots,p_1+p_2\}$.

The initial word is  
\begin{align*}
\Wone(\bp,\bq) &= \Wone_1(p_1,q_1)\smile\Wone_2(p_2,q_2) \\
&=0_{\mathrm v1}\dots(p_1-1)_{\mathrm v1} 1_{\mathrm h1}\dots (q_1)_{\mathrm h1}0_{\mathrm v2}\dots(p_2-1)_{\mathrm v2} 1_{\mathrm h2}\dots (q_2)_{\mathrm h2} .
\end{align*}

Since there are no interposed vertices, $\Wtwo_Q(\bzeta,I) = \Wtwo^{\mathrm v}_Q(\bzeta)\smile\Wtwo^{\mathrm h}(I)$ where
\[
\Wtwo^{\mathrm v}_Q(\bzeta) = 0_{\mathrm v1}\dots(p_1-1)_{\mathrm v1} 0_{\mathrm v2}\dots(p_2-1)_{\mathrm v2} 
\]
and then
\[
\Wtwo^{\mathrm h}(I) = 1_{\mathrm h1}\dots (i-1)_{\mathrm h1} 1_{\mathrm h2}\dots (q_2-1)_{\mathrm h2} i_{\mathrm h1} \dots (q_1-1)_{\mathrm h1} .
\]

Now look at the numbers of consecutive letters of each type, and how they move around:
\[
\begin{tikzpicture}[node font=\footnotesize]
\draw (0,0) node[anchor=south]{$\begin{matrix}\mathrm v1\\ p_1\end{matrix}$}--(0,-2) node[anchor=north]{$p_1$};
\draw (1.25,0)  --(2,-2) node[anchor=north]{$i-1$};
\draw (1.75,0) --(4.5,-2)node[anchor=north]{$q_1-i$};
\draw (1.5,0) node[anchor=south]{$\begin{matrix}\mathrm h1\\ q_1-1\end{matrix}$};
\draw (3,0) node[anchor=south]{$\begin{matrix}\mathrm v2\\ p_2\end{matrix}$}[densely dashed]--(1,-2)node[anchor=north]{$p_2$};
\draw (4.5,0) node[anchor=south]{$\begin{matrix}\mathrm h2\\ q_2-1\end{matrix}$} [densely dashed]--(3.25,-2)node[anchor=north]{$q_2-1$};
\end{tikzpicture} .
\]
There are
\[
p_2(q_1-1) + (q_1-i)(q_2-1) 
\]
crossings, and the sign is $-1$ to this power.
\end{example}
\begin{example}
For a coloring $(\bzeta,I)$ of
\[
\Qthreea321 =
\left(3121\;,
\raisebox{-.5\height}{
\begin{tikzpicture}[scale=.7,node font=\footnotesize]
\node{$3$}
child{node{$2$}}
child{node{$1$}};
\end{tikzpicture}}
\right) 
\]
let $i = I_{(3,2)}$ and $j = I_{(3,1)}$. Of course, $1\leq i< j \leq p_3$.

Since $3$ only occurs first in $3121$, we must have $\Im \zeta_3 = \{0,\dots,p_3\}$, so $\min \zeta_1 = p_3$. This implies that $\min \zeta_2\geq p_3$. Define $k = \min\zeta_2 +1- p_3$. Since $2$ occurs once in $3121$, $\Im \zeta_2 = \{p_3+k-1,\dots,p_2+p_3+k-1\}$. This leaves
\[
\Im\zeta_1 = \{p_3,\dots,p_3+k-1\}\cup\{p_2+p_3+k-1,\dots,p_1+p_2+p_3-1\} .
\]
Note that $1\leq k \leq p_1$.

The shuffled word, $\Wtwo_Q(\bzeta,I)$, consists of $\Wtwo^0_Q=0_{\mathrm v2}$, then
\[
\Wtwo^{\mathrm v}_Q(\bzeta)= 0_{\mathrm v3}\dots(p_3-1)_{\mathrm v3} 0_{\mathrm v1}\dots (k-1)_{\mathrm v1} 1_{\mathrm v2}\dots (p_2-1)_{\mathrm v2} k_{\mathrm v1}\dots (p_1-1)_{\mathrm v1} 
\]
and finally,
\[
\Wtwo^{\mathrm h}(I)= 1_{\mathrm h3}\dots (i-1)_{\mathrm h3} 1_{\mathrm h2}\dots (q_2-1)_{\mathrm h2} i_{\mathrm h3}\dots (j-1)_{\mathrm h3} 1_{\mathrm h1}\dots (q_1-1)_{\mathrm h1} j_{\mathrm h3}\dots (q_3-1)_{\mathrm h3} .
\]
Again, look at the numbers of letters of each type:
\[
\begin{tikzpicture}[node font=\footnotesize]
\draw (3.75,0)[densely dashed]--(0,-2) node[anchor=north]{$1$};
\draw (8,0) node[anchor=south]{$\begin{matrix}\mathrm v3\\ p_3\end{matrix}$}[thick,densely dotted]--(.75,-2) node[anchor=north]{$p_3$};
\draw (0,0) node[anchor=south]{$\begin{matrix}\mathrm v1\\ p_1\end{matrix}$}--(1.5,-2) node[anchor=north]{$k$};
\draw (4,0) node[anchor=south]{$\begin{matrix}\mathrm v2\\ p_2\end{matrix}$}[densely dashed]--(2.5,-2) node[anchor=north]{$p_2-1$};
\draw (.25,0)-- (3.75,-2) node[anchor=north]{$p_1-k$};
\draw (9.75,0)[thick,densely dotted]--(5,-2) node[anchor=north]{$i-1$};
\draw (6,0) node[anchor=south]{$\begin{matrix}\mathrm h2\\ q_2-1\end{matrix}$} [densely dashed]--(6.25,-2) node[anchor=north]{$q_2-1$};
\draw (10,0) node[anchor=south]{$\begin{matrix}\mathrm h3\\ q_3-1\end{matrix}$}[thick,densely dotted]--(7.5,-2) node[anchor=north]{$j-i$};
\draw (2,0) node[anchor=south]{$\begin{matrix}\mathrm h1\\ q_1-1\end{matrix}$}--(8.75,-2) node[anchor=north]{$q_1-1$};
\draw (10.2,0)[thick,densely dotted]--(10,-2) node[anchor=north]{$q_3-j$};
\end{tikzpicture}
\]
The number of crossings is
\[
(q_1-1)(p_2+p_3+q_2+j-2) + (q_2-1)(p_3+i-1) + p_1(p_3+1) + (p_2-1)(p_1-k)
\]
and the sign is $-1$ to this power.
\end{example}

\subsection{The representation}
\begin{definition}
\label{Rep0 def}
Let $n\in\N$, $\bp,\bq\in\N^n$, $Q\in\Qcat(n)$, and $f_a\in C^{p_a,q_a}(\A,\A)$ for $a=1,\dots,n$. Define
\[
\widehat Q(f_1,\dots,f_n) := \sum_{\bzeta\in\Ind(\W_Q;\bp)} \sum_{I\in\Ind(\Tree_Q;\bq)} \sgn_Q(\bzeta,I)\, \rep(\bzeta,I;f_1,\dots,f_n) .
\]
Also denote this as
\[
\Rep_0(Q):=\widehat Q .
\]
\end{definition}

\begin{remark}
$\abs{f_a} = p_a+q_a-1=\Abs{\Wone_a(p_a,q_q)}$ is the length of the word in Definition~\ref{Sign shuffle}, whereas $\abs{\widehat Q(f_1,\dots,f_n)} = p'+q'-1 = \Abs{\Wtwo^{\mathrm v}_Q(\bzeta) \smile \Wtwo^{\mathrm h}(I)}$ is the length of $\Wtwo_Q(\bzeta,I)$ after $\Wtwo_Q^0$.
\end{remark}
\begin{remark}
$\widehat Q$ decreases shifted bidegree by $(\deg Q,0)$, i.e.,
\[
\bigl\|\widehat Q(f_1,\dots,f_{\#Q})\bigr\| = \norm{f_1} + \dots + \norm{f_{\#Q}}-(\deg Q,0) ,
\]
so
\[
\bigl|\widehat Q(f_1,\dots,f_{\#Q})\bigr| = \abs{f_1} + \dots + \abs{f_{\#Q}}-\deg Q .
\]
In other words, $\widehat Q$ has homological bidegree $(\deg Q,0)$.
\end{remark}

Let $\Endop$ denote the endomorphism operad of a differential graded or bigraded vector space.
\begin{lemma}
\label{S-equivariance}
The linear map 
\begin{align*}
\Rep_0 : &\Qop \to\Endop[\s_2 C^{\bullet,\bullet}(\A,\A)]\\ 
&Q\mapsto \widehat Q
\end{align*}
 is permutation equivariant.
\end{lemma}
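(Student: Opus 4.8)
The plan is to reduce permutation equivariance to a bijection of colorings together with a single sign computation. Fix $Q\in\Qcat(n)$, a permutation $\sigma\in\perm_n$, and cochains $f_a\in C^{p_a,q_a}(\A,\A)$; write $\bp=(p_1,\dots,p_n)$ and $\bq=(q_1,\dots,q_n)$. Since $Q^\sigma$ is nothing but $Q$ with the vertex labelled $u$ renamed $\sigma^{-1}(u)$, this relabelling induces bijections
\[
\Ind(\W_{Q^\sigma};\bp)\xrightarrow{\sim}\Ind(\W_Q;\bp'),\qquad \Ind(\Tree_{Q^\sigma};\bq)\xrightarrow{\sim}\Ind(\Tree_Q;\bq'),
\]
where $\bp'=(p_{\sigma^{-1}(1)},\dots,p_{\sigma^{-1}(n)})$ and $\bq'=(q_{\sigma^{-1}(1)},\dots,q_{\sigma^{-1}(n)})$; by Lemma~\ref{Quilt coloring} the source and target pairs all lie in $\CQ$. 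First I would record these bijections explicitly.

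Next I would observe that $\rep$ introduces no signs: it is a representation of the operad of \emph{sets} $\CQ$, and — exactly as in the proof of Lemma~\ref{rep lemma}, where the numbering of vertices is ``only used to keep track of them'' — relabelling commutes with $\rep$. Hence, setting $g_b:=f_{\sigma^{-1}(b)}$, the bijections above identify $\rep(\bzeta',I';f_1,\dots,f_n)$ with $\rep(\bzeta,I;g_1,\dots,g_n)$ for corresponding colorings $(\bzeta',I')\leftrightarrow(\bzeta,I)$.

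The substantive step is the comparison of the signs $\sgn_{Q^\sigma}(\bzeta',I')$ and $\sgn_Q(\bzeta,I)$. I would argue that, after transporting vertex labels along $\sigma$, the target word $\Wtwo_{Q^\sigma}(\bzeta',I')$ becomes literally $\Wtwo_Q(\bzeta,I)$: each of its three pieces $\Wtwo^0_Q$, $\Wtwo^{\mathrm v}_Q$, $\Wtwo^{\mathrm h}$ is built from data — which vertices are interposed, the $\downarrow$-order, the positions $\zeta_a(x)$, the corner word $\Place_Q$, and the components of $I$ — that relabelling merely renames. The only genuine difference between the two shuffles is then in the source word: $\Wone$ concatenates the blocks $\Wone_a(p_a,q_a)$ (of length $\abs{f_a}=p_a+q_a-1$) in label order, so for $Q^\sigma$ the blocks appear in order $1,\dots,n$ with lengths $\abs{f_1},\dots,\abs{f_n}$, while for $Q$ applied to $g_1,\dots,g_n$ they appear in order $1,\dots,n$ with lengths $\abs{f_{\sigma^{-1}(1)}},\dots,\abs{f_{\sigma^{-1}(n)}}$. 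The two source words therefore differ precisely by the block permutation that reorders $(f_1,\dots,f_n)$ according to $\sigma$, and its sign is by definition the sign $\epsilon$ by which $\sigma$ acts on $(f_1,\dots,f_n)$ in $\Endop[\s_2 C^{\bullet,\bullet}(\A,\A)]$ (the relevant grading being $\abs{\cdot}$, consistent with the length bookkeeping in the first remark after Definition~\ref{Rep0 def}). Thus $\sgn_{Q^\sigma}(\bzeta',I')=\epsilon\cdot\sgn_Q(\bzeta,I)$.

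Summing over colorings and combining the three observations yields $\widehat{Q^\sigma}(f_1,\dots,f_n)=\epsilon\,\widehat Q(f_{\sigma^{-1}(1)},\dots,f_{\sigma^{-1}(n)})$, i.e.\ $\widehat{Q^\sigma}=(\widehat Q)^\sigma$ in $\Endop[\s_2 C^{\bullet,\bullet}(\A,\A)]$. To close the degenerate cases — $q_a=0$, or $p_a=0$ with $a$ interposed — where $\sgn_Q$ is defined only by extrapolating a quadratic crossing-count modulo $2$, I would note that this crossing-count differs between $Q$ and $Q^\sigma$ by exactly the block-permutation correction, a quantity defined integrally, so the identity $\sgn_{Q^\sigma}=\epsilon\,\sgn_Q$ survives the extrapolation. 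The main obstacle is the third step: checking carefully that the relabelled target words really do coincide and that the residual reordering of source blocks is exactly the symmetric-group (Koszul) permutation — the rest is formal.
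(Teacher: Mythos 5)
Your argument is correct and is essentially the paper's own proof, spelled out in more detail: the paper likewise notes that the initial word $\Wone(\bp,\bq)$ lists $\abs{f_a}$ letters per vertex in labelled order while the shuffled word $\Wtwo_Q(\bzeta,I)$ is independent of the labelling, so permuting vertices only reorders the source blocks and produces exactly the Koszul sign for permuting the arguments. Your extra care with the coloring bijections, the sign-free nature of $\rep$, and the mod-$2$ extrapolation in the degenerate cases is consistent with, and a reasonable elaboration of, the paper's brief argument.
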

\begin{proof}
First, note that $\Rep_0$ does actually define a linear map thanks to Lemma~\ref{Quilt coloring}.

In the initial word $\Wone(\bp,\bq)$ above, there are $\abs{f_a}$ letters for the vertex $a$, and these are taken in the labelled order of the vertices. The shuffled word $\Wtwo_Q(\bzeta,I)$ is independent of the labelled order. Therefore, the effect of permuting the vertices is to multiply by the correct sign for permuting the arguments $f_1,\dots,f_{\#Q}$.
\end{proof}

\begin{lemma}
\label{Rep composition}
For $P,Q\in \Qcat$ and any $a\in\com{\#P}$,
\[
\widehat{P\circ_a Q} = \widehat{P}\circ_a\widehat{Q}  .
\]
(Using the Koszul sign convention.) More explicitly,
\begin{multline*}
(\widehat{P\circ_aQ})(f_1,\dots,f_{\#P+\#Q-1}) = \\(-1)^{(\abs{f_1}+\dots\abs{f_{a-1}})\deg Q} \widehat{P}(f_1,\dots,f_{a-1},\widehat{Q}[f_a,\dots,f_{a+\#Q-1}],f_{a+\#Q},\dots,f_{\#P+\#Q-1}) .
\end{multline*}
\end{lemma}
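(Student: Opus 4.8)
The plan is to expand both sides into sums of $\rep$-terms indexed by colorings, match the index sets, identify corresponding $\rep$-terms with Lemma~\ref{rep lemma}, and then reconcile the signs — the signs being the real content. For the left side, Definition~\ref{Tree composition}, Definition~\ref{Word composition}, the Hadamard structure and Definition~\ref{Quilt extension} give
\[
P\circ_a Q=\sum_{W\in\Ext(\W_P,\W_Q,a)}\ \sum_{T\in\Ext(\Tree_P,\Tree_Q,a)}\sgn_{\W_P,\W_Q,a}(W)\,(W,T),
\]
so, writing $U:=(W,T)$ and using linearity of $\Rep_0$ together with Definition~\ref{Rep0 def},
\[
\widehat{P\circ_a Q}(f_1,\dots,f_{\#P+\#Q-1})=\sum_{W,T}\ \sum_{\etab\in\Ind(W;\bp)}\ \sum_{K\in\Ind(T;\bq)}\sgn_{\W_P,\W_Q,a}(W)\,\sgn_{U}(\etab,K)\,\rep(\etab,K;f_\bullet).
\]

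On the right side, multilinearity of $\widehat P$ distributes it over the sum defining $\widehat Q$, and the strict composition law $\rep[\bzeta,I]\circ_a\rep[\bxi,J]=\rep[\bzeta\circ_a\bxi,I\circ_a J]$ of Lemma~\ref{rep lemma} turns the result into
\[
\widehat P\bigl(f_1,\dots,\widehat Q[f_a,\dots,f_{a+\#Q-1}],\dots\bigr)=\sum_{\bzeta,I}\ \sum_{\bxi,J}\sgn_P(\bzeta,I)\,\sgn_Q(\bxi,J)\,\rep(\bzeta\circ_a\bxi,I\circ_a J;f_\bullet),
\]
where $(\bzeta,I)$ ranges over colorings of $(\W_P,\Tree_P)$ (the $a$-th parameters being the bidegree of $\widehat Q[f_a,\dots]$) and $(\bxi,J)$ over colorings of $(\W_Q,\Tree_Q)$. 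By Lemma~\ref{Coloring completeness}, $(\bzeta,\bxi)\mapsto(W,\etab)$ with $\etab=\bzeta\circ_a\bxi$ and $W$ the unique extension carrying it is a bijection onto $\{(W,\etab):W\in\Ext(\W_P,\W_Q,a),\ \etab\in\Ind(W;\bp)\}$; the $\NSOp$-analogue — immediate from Definition~\ref{NSOp def}, since composition in $\NSOp$ is single-valued and edge-labels restrict to and reassemble along the contracted subtree — makes $(I,J)\mapsto(\Tree_{I\circ_a J},I\circ_a J)$ a bijection onto $\{(T,K):T\in\Ext(\Tree_P,\Tree_Q,a),\ K\in\Ind(T;\bq)\}$. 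Under the product of these bijections the $\rep$-terms on the two sides coincide, so the lemma reduces to the sign identity
\[
\sgn_{\W_P,\W_Q,a}(W)\,\sgn_{U}(\etab,K)=(-1)^{(\abs{f_1}+\dots+\abs{f_{a-1}})\deg Q}\,\sgn_P(\bzeta,I)\,\sgn_Q(\bxi,J).
\]

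This sign identity is the main obstacle. Every factor is, by Definition~\ref{Sign shuffle}, the sign of a shuffle $\Wone\to\Wtwo$; I would prove the identity by exhibiting the shuffle $\Wone(\bp,\bq)\to\Wtwo_{U}(\etab,K)$ computing $\sgn_U(\etab,K)$ as a composite of explicit permutations and reading off the right-hand factors. By the Remark following Definition~\ref{Rep0 def} the $a$-th block of $\Wone$ for $\widehat P$ has the same length as $\Wtwo^{\mathrm v}_Q(\bxi)\smile\Wtwo^{\mathrm h}(J)$, i.e.\ as $\Wtwo_Q(\bxi,J)$ with its leading block $\Wtwo^0_Q$ (of $\deg Q$ letters) removed; after identifying these blocks and relabelling by the extension maps, the explicit position functions of Definition~\ref{Shuffled word} and the construction of the extensions make the composite of the $Q$-shuffle and the $P$-shuffle agree with the $U$-shuffle \emph{except} for two corrections: (a) the interposed vertices of $U$ are those of $P$ together with those of $Q$ under the identifications of Definition~\ref{Ext sign}, and $\Wtwo^0_U$ lists their $0_{\mathrm v\bullet}$ in reverse $\downarrow_U$-order whereas the separate computations leave them in ``$P$ then $Q$'' order — the re-sorting permutation is precisely the $(\deg\W_P,\deg\W_Q)$-shuffle whose sign is $\sgn_{\W_P,\W_Q,a}(W)$ (Definition~\ref{Ext sign}); and (b) the block $\Wtwo^0_Q$ is produced by the $Q$-shuffle behind the $\abs{f_1}+\dots+\abs{f_{a-1}}$ letters of $f_1,\dots,f_{a-1}$ but is needed at the front of $\Wtwo^0_U$, and pulling it there crosses those letters and contributes $(-1)^{(\abs{f_1}+\dots+\abs{f_{a-1}})\deg Q}$ — the Koszul sign, the one part of the grand shuffle invisible to the two separate computations. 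Multiplying the resulting formula for $\sgn_U(\etab,K)$ by $\sgn_{\W_P,\W_Q,a}(W)$ then removes the square of that factor and yields the identity. Verifying the ``agree except for (a),(b)'' claim — carefully matching the position functions of Definition~\ref{Shuffled word} across the extension — is the bulk of the work.

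Finally, for colorings at which a sign was only defined by extrapolation — where $p_c=0$ for an interposed vertex $c$, or $q_c=0$ (the Definitions following Definition~\ref{Sign shuffle}) — the identity follows from the non-degenerate case, since the operators $E_{Q,c}$ of Definition~\ref{Expansion} and the substitution $q_c\rightsquigarrow q_c+2$ used to define those signs are compatible with partial composition and with the bijections above (they only shift colorings and degrees, not the combinatorial data underlying the shuffles), so applying them to an already-established instance of the identity produces the degenerate instances. Together with the bijection of index sets and Lemma~\ref{rep lemma}, this proves $\widehat{P\circ_a Q}=\widehat P\circ_a\widehat Q$. In the special case $\deg Q=0$ the argument collapses: the Koszul sign is trivial and the statement reduces to compatibility of the degree-zero part $\Qquotient$ of $\Rep_0$ with composition, i.e.\ to the corresponding identity in $\Brace$.
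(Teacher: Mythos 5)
Your proposal is correct and takes essentially the same route as the paper's proof: expand both sides over colorings, identify terms via Lemma~\ref{rep lemma} and Lemma~\ref{Coloring completeness} (together with its $\NSOp$ analogue), and reduce to the sign identity, which is verified by factoring the shuffle defining $\sgn_{P\circ_aQ}(\etab,K)$ through the $P$- and $Q$-shuffles, with the extension sign and the Koszul factor arising exactly as your corrections (a) and (b). The only organizational differences are that the paper first invokes $\perm$-equivariance (Lemma~\ref{S-equivariance}) to reduce to $a=1$, so the Koszul sign drops out and the sign identity is obtained from a closed cycle of shuffles whose composite is the identity, and it carries out your ``bulk of the work'' as the explicit renaming of $\Wtwo^{\mathrm v}_Q(\bxi)\smile\Wtwo^{\mathrm h}(J)$ into the first input block of $P$; your extrapolation argument for the degenerate signs corresponds to the paper's remark that it suffices to check the generic case.
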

\begin{remark}
The logic of the sign convention is that in this equation $Q$ has moved past $f_1$, $\dots$, $f_{a-1}$. There is a factor of $(-1)^{\abs{f_1}\deg Q}$ for moving past $f_1$, and so on.
\end{remark}
\begin{proof}
By Lemma~\ref{S-equivariance}, it is sufficient to consider the case that $a=1$, which simplifies the signs. Let $m:=\#P$ and $n:=\#Q$.  We need to check whether
\beq
\label{composition1}
(\widehat{P\circ_1Q})(f_1,\dots,f_{n+m-1}) \stackrel?= 
\widehat{P}(\widehat{Q}[f_1,\dots,f_{n}],f_{n+1},\dots,f_{n+m-1}) .
\eeq

An arbitrary term on the right hand side is given by some colorings $(\bzeta,I)$ of $P$ and $(\bxi,J)$ of $Q$. By Lemma~\ref{rep lemma}, this term is proportional to 
\[
\rep(\bzeta\circ_1\bxi,I\circ_1 J;f_1,\dots,f_{n+m-1}) .
\]
Let $T$ be the underlying tree of $I\circ_1J$, so that $I\circ_1J$ is a coloring of $T$ and $T\in\Ext(\Tree_P,\Tree_Q,1)$. By Lemma \ref{Coloring completeness}, there exists a word $W\in\Ext(\W_P,\W_Q,1)$ of which $\bzeta\circ_1\bxi$ is a coloring. By Lemma~\ref{Quilt extension lemma}, $(W,T)\in\Qcat$.  So, there is a proportional term on the left hand side.

An arbitrary term on the left hand side is given by some $R\in\Ext(P,Q,1)$ and a coloring $(\etab,H)$ of $R$. By Lemma~\ref{Coloring completeness}, there exist colorings $\bzeta$ of $\W_P$ and $\bxi$ of $\W_Q$ such that $\etab=\bzeta\circ_1\bxi$. Similarly, there exist colorings $I$ of $\Tree_P$ and $J$ of $\Tree_Q$ such that $H=I\circ_1J$. So, there is a proportional term on the right hand side.

So, there are terms proportional to $\rep(\bzeta\circ_1\bxi,I\circ_1J;f_1,\dots)$ on both sides of eq.~\eqref{composition1}. On the left, the coefficient is 
\[
\sgn_{P,Q,1}(R)\sgn_R(\bzeta\circ_1\bxi,I\circ_1J)
\]
and on the right
\[
\sgn_P(\bzeta,I)\sgn_Q(\bxi,J) .
\]

It is sufficient to prove equality in the generic case, when the signs are directly defined by shuffles.

First note that for the $\Comcat$-extension $\com{n} \stackrel\alpha\into \com{n+m-1} \stackrel\beta\onto \com{m}$ at $1$, $\alpha a = a$ and $\beta b = \max\{b-n+1,1\}$. Referring to Definition~\ref{Ext sign},  if $a$ is interposed in $Q$, then it is simply identified with the vertex $a$ of $R$. If $b\neq1$ is interposed in $P$, then it is identified with $b+n-1$ in $R$; if $1$ is interposed in $P$, then it is identified with the vertex of $R$ whose label is the same as the root of $Q$.

Next, consider what happens to certain letters and words if we rename 
\[
\Wtwo^{\mathrm v}_Q(\bxi)\smile \Wtwo^{\mathrm h}(J)\smile \Wone_{n+1}(p_{n+1},q_{n+1})\smile\dots\smile\Wone_{n+m-1}(p_{n+m-1},q_{n+m-1}) 
\]
so that it becomes
\[
\Wone_1(p',q') \smile \Wone_2(p_{n+1},q_{n+1})\smile\dots\smile\Wone_n(p_{n+m-1},q_{n+m-1}) ,
\]
where
\begin{align*}
p' &= \sum_{a=1}^n p_a - \deg Q, &
q' &= 1 + \sum_{a=1}^n(q_a-1) .
\end{align*}
(This corresponds to using the output of $\widehat Q$ as the first input of $\widehat P$.) If $b\neq1$, then $0_{\mathrm v (b+n-1)}$ is renamed  $0_{\mathrm v b}$. 
If $a$ is the root of $Q$, then $0_{\mathrm v a}$ is the first letter of $\Wtwo^{\mathrm v}_Q(\bxi)$ and so is renamed $0_{\mathrm v 1}$.
This shows that if $a$ is an interposed vertex of $P$, and $b$ is the vertex of $R$ identified with $a$ 
in the sense of Definition~\ref{Ext sign} then $0_{\mathrm v b}$ becomes $0_{\mathrm v a}$ in this renaming.

Next, consider the word $\Wtwo^{\mathrm v}_R(\bzeta\circ_1\bxi)$ and what happens under this renaming. For $1\leq a \leq n$, the letter $i_{\mathrm v a}$ is in position $\zeta_1(\xi_a[i])$; this gets renamed to $\xi_a(i)_{\mathrm v1}$. For $2\leq b \leq m$, the letter $i_{\mathrm v (b+n-1)}$ is in position $\zeta_b(i)$ and gets renamed to $i_{\mathrm v b}$. In this way, $\Wtwo^{\mathrm v}_R(\bzeta\circ_1\bxi)$ is renamed to $\Wtwo^{\mathrm v}_P(\bzeta)$. Similarly, $\Wtwo^{\mathrm h}(I\circ_1 J)$ is renamed to $\Wtwo^{\mathrm h}(I)$.

Consider the following sequence of words and shuffles.
\begin{itemize}
\item
Begin with $\Wone(\bp,\bq)$.
\item
Shuffle this to 
\[
\Wtwo_Q(\bxi,J)\smile \Wone_{n+1}(p_{n+1},q_{n+1})\smile\dots\smile\Wone_{n+m-1}(p_{n+m-1},q_{n+m-1}) .
\]
(Note that this begins with $\Wtwo^0_Q$.) The sign of this shuffle is $\sgn_Q(\bxi,J)$.
\item
Shuffle this to $\Wtwo^0_Q$, then $0_{\mathrm v a}$ for each of the interposed vertices of $P$ in reverse $\downarrow$ order identified with vertices of $R$, then $\Wtwo^{\mathrm v}_R(\bzeta\circ_1\bxi)$ and $\Wtwo^{\mathrm h}(I\circ_1 J)$.
The sign is $\sgn_P(\bzeta,I)$, because of the previous 2 paragraphs.
\item
Shuffle this to $\Wtwo_R(\bzeta\circ_1\bxi,I\circ_1J)$. This shuffles the first 2 subwords of the previous sequence to $\Wtwo^0_R$. The sign is by definition $\sgn_{P,Q,1}(R)$ (except that everything is in reverse order).
\item
Shuffle this back to the first sequence, $\Wone(\bp,\bq)$. The sign is $\sgn_R(\bzeta\circ_1\bxi,I\circ_1J)$, by definition.
\end{itemize}
The composition of these shuffles is the identity, so the product of their signs is $1$.  Therefore, the terms proportional to $\rep(\bzeta\circ_1\bxi,I\circ_1J;f_1,\dots)$ in eq.~\eqref{composition1} are equal and the equation is true.
\end{proof}

\begin{lemma}
\label{Rep differential}
For any $Q\in\Qcat$,
\[
\widehat{\partial Q} = \dS\circ\widehat Q - (-1)^{\deg Q} \sum_{a=1}^{\#Q} \widehat Q\circ_a \dS .
\]
More explicitly,
\begin{multline}
\label{rep differential}
\widehat{\partial Q}(f_1,\dots,f_{\#Q}) = \dS\widehat Q(f_1,\dots,f_{\#Q})\\ - (-1)^{\deg Q} \widehat Q(\dS f_1,f_2,\dots) - (-1)^{\deg Q + \abs{f_1}}\widehat Q(f_1,\dS f_2,\dots) - \dots \\ - (-1)^{\deg Q + \abs{f_1} + \dots + \abs{f_{\#Q-1}}}\widehat Q(f_1,\dots,f_{\#Q-1},\dS f_{\#Q}) .
\end{multline}
\end{lemma}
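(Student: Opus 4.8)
\textbf{The plan} is to expand both sides into signed sums of operations $\rep(-,-;f_1,\dots,f_{\#Q})$ coming from the representation of $\CQ$, and then to match the resulting terms. The first step is to push $\dS$ through the representation. Since $\dS_i=\rep[\varepsilon_i,\mbox{\footnotesize$1$}]$ (Definition~\ref{dS def}) and $\mbox{\footnotesize$1$}$ is the unit of $\NSOp$, Lemma~\ref{rep lemma} gives, for any coloring $(\bzeta,I)$ of $Q$,
\[
\dS_i\bigl(\rep(\bzeta,I;f_1,\dots,f_{\#Q})\bigr)=\rep(\varepsilon_i\circ\zeta_1,\dots,\varepsilon_i\circ\zeta_{\#Q},\,I;\,f_1,\dots,f_{\#Q}),
\]
and, for a coloring with the $a$-th $\SemiD$-morphism having source $[p_a+1]$,
\[
\rep(\bzeta,I;f_1,\dots,\dS_i f_a,\dots)=\rep(\zeta_1,\dots,\zeta_a\circ\varepsilon_i,\dots,\zeta_{\#Q},\,I;\,f_1,\dots,f_{\#Q}).
\]
Inserting these, together with Definitions~\ref{Rep0 def} and~\ref{dS def}, and using that deleting a repeated letter from $\W_Q$ does not alter $\Tree_Q$ (so $\Tree_{\partial_{\underline a}Q}=\Tree_Q$, and the $\NSOp$-factor $I$ is a coloring of $\Tree_Q$ on every term on both sides), the claimed identity reduces to one between formal sums of elements of $\CQ$ with the $\NSOp$-factor $I$ fixed: a purely combinatorial identity among signed $\#Q$-tuples of $\SemiD$-morphisms into $[p'+1]$. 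Membership of every such tuple in $\CQ$ is automatic from Lemmas~\ref{Face quilt} and~\ref{Quilt coloring}, so the reduction is legitimate.

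\textbf{The matching} I would organize term by term. Every term of $\widehat{\partial_{\underline a}Q}$ is a coloring of the word $\partial_{\underline a}\W_Q$, hence its $\SemiD$-components are jointly surjective onto $[p'+1]$. On the other side, a term $\varepsilon_i\circ\bzeta$ coming from $\dS_i\circ\widehat Q$ misses the single point $i$, while a term $\bzeta\circ_a\varepsilon_i$ from $\widehat Q\circ_a\dS$ either is jointly surjective or misses a single point, according to whether the point dropped from $\Im\zeta_a$ still lies in some $\Im\zeta_b$ with $b\neq a$. The non-surjective terms on the right must therefore cancel in pairs --- one from $\dS\circ\widehat Q$ and one from some $\widehat Q\circ_a\dS$ --- and I would realise this by expanding the relevant occurrence's block by one using the operation $E_{\W_Q,\underline a}$ of Definition~\ref{Expansion} and then collapsing it again; which occurrence $\underline a$ (and thus which $a$) is used is dictated by the local shape of $\W_Q$ at the insertion point, i.e.\ by which of the three possibilities for a pair of consecutive letters of a word in $\Wordcat$ (a caesura with its interposed letter, a non-caesura, or a last--first pair) occurs there. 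The surviving surjective terms on the right are then matched with the terms of $\widehat{\partial_{\underline a}Q}$: given a coloring of $\W_Q$ with $p_a$ enlarged by one together with a face $\varepsilon_i$ collapsing the extra index onto a shared point, the occurrence of $a$ that it collapses is exactly the occurrence whose deletion produces the word $\partial_{\underline a}\W_Q$ being coloured on the left.

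\textbf{The signs are the main obstacle.} For each matched pair one must reconcile: the simplicial signs $(-1)^i$ from the two copies of $\dS$, together with the overall $-(-1)^{\deg Q}$ and the Koszul sign $(-1)^{\abs{f_1}+\dots+\abs{f_{a-1}}}$; the caesura-counting sign $\sgn_{\W_Q}(\underline a)$ of Definition~\ref{Face sign} attached to $\partial_{\underline a}\W_Q$ on the left; and the shuffle sign $\sgn_Q(\bzeta,I)$ of Definition~\ref{Sign shuffle}, whose behaviour under $E_{\W_Q,\underline a}$, under deletion of $\underline a$, and under pre- or post-composition with a face map must be computed explicitly in terms of the vertical letters $i_{\mathrm va}$ and horizontal letters $i_{\mathrm ha}$. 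Crucially, the Koszul sign is not an extra input: it is produced automatically when one compares the shuffle built from $\Wone(\bp,\bq)$ (used on the left and for $\dS\circ\widehat Q$) with the one built from the enlarged initial word $\Wone(E_a\bp,\bq)$ (used for $\widehat Q\circ_a\dS$), because the extra letters attached to vertex $a$ are preceded there by exactly the letters of $f_1,\dots,f_{a-1}$. I expect this sign verification --- carried out case by case on the position of $\underline a$ in $\W_Q$, and then extended uniformly to the degenerate cases ($p_a=0$ for interposed $a$, or $q_a=0$) by the same quadratic-counting argument already used to define $\sgn_Q$ in general --- to be the bulk of the work; once it is in place, the identity follows term by term from the bijection and pairing described above.
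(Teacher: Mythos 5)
Your reduction to an identity among signed $\CQ$-terms with the $\NSOp$-factor $I$ held fixed, and your pairing of the non-surjective right-hand terms (one from $\dS\circ\widehat Q$ against one from some $\widehat Q\circ_a\dS$, via the expansion $E_{\W_Q,\underline a}$), follow the same route as the paper's proof of eq.~\eqref{expanded differential}. The genuine gap is in your sorting of the surjective terms: you assert that every surviving surjective term of $-\sum_a\widehat Q\circ_a\dS$ is matched by a term of $\widehat{\partial Q}$. That is only true when the collapsed occurrence's block $\pi_a^{-1}(\underline a)$ is a singleton, so that after the collapse no index maps to $\underline a$ and the result really is a coloring of $\partial_{\underline a}\W_Q$. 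If that block has at least two elements and the dropped index is the endpoint shared with the neighbouring occurrence $\underline b$, the collapsed tuple is still jointly surjective but is a coloring of neither $\W_Q$ nor any face of it: the sharing condition $\max\zeta_a(\pi_a^{-1}\underline a)=\min\zeta_b(\pi_b^{-1}\underline b)$ of Definition~\ref{Word coloring} fails while indices still map to $\underline a$, so there is nothing on the left for it to cancel. Such terms cancel in pairs \emph{within} $\sum_a\widehat Q\circ_a\dS$ itself --- the collapse at $\underline a$ of $E_{Q,\underline a}\bzeta$ against the collapse at $\underline b$ of $E_{Q,\underline b}\bzeta$ --- with a sign verification that splits according to whether $\underline a$ is a caesura or $\underline a\,\underline b$ is a last--first pair. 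A minimal example shows your matching cannot close as described: for $\W_Q=12$ one has $\partial\W_Q=0$, so the left side vanishes, yet both $\widehat Q(\dS f_1,f_2)$ and $\widehat Q(f_1,\dS f_2)$ contribute surjective terms (dropping the shared point $\max\zeta_1=\min\zeta_2$), and these must cancel against each other, not against the (empty) left side.

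Relatedly, the trichotomy of consecutive-letter types you invoke is not what selects the occurrence $\underline a$ in the non-surjective pairing (there $\underline a$ is simply the block containing the inserted value, and the sign argument is uniform); it is precisely the case division needed for the two families your plan conflates: the internal cancellation just described, and the matching of singleton-block collapses with the faces $\partial_{\underline a}Q$, where the caesura-counting sign of Definition~\ref{Face sign} has to be recovered from the number of interposed vertices preceding the relevant letter (with separate treatment when $\underline a$ is a first occurrence of an interposed vertex or a last occurrence). So beyond the sign computations you defer, the combinatorial skeleton of the matching needs this third family before the term-by-term cancellation can be completed.
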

\begin{proof}
Let $n := \#Q$, $f_a\in C^{p_a,q_a}(\A,\A)$, and $p' = \sum_{a=1}^n p_a - \deg W$. Inserting Definitions \ref{word boundary},  \ref{dS def}, \ref{Rep0 def} of $\partial$, $\dS$, and $\Rep_0$ --- and using Lemma~\ref{rep lemma} --- gives 
\[
\widehat{\partial Q}(f_1,\dots,f_n) = \sum_{I\in\Ind(\Tree_Q;\bq)} \sum_{\underline a\in\W_Q} \sum_{\etab\in\Ind(\partial_{\underline a}\W_Q;\bp)} \sgn_Q(\underline a) \sgn_{\partial_{\underline a}Q}(\etab,I)\, \rep(\etab,I;f_1,\dots,f_n) 
\]
(where $\Ind(\partial_{\underline a}\W_Q;\bp)=\emptyset$ if $a$ is not repeated),
\[
\dS\widehat Q(f_1,\dots,f_n) = \sum_{I\in\Ind(\Tree_Q;\bq)} \sum_{j=0}^{p'+1} \sum_{\bzeta\in\Ind(\W_Q;\bp)}(-1)^j\sgn_Q(\bzeta,I)\,\rep(\varepsilon_j\circ\bzeta,I;f_1,\dots,f_n) ,
\]
and
\[
\widehat{Q}(\dots,\dS f_a,\dots) = \sum_{I\in\Ind(\Tree_Q;\bq)} \sum_{\bxi\in\Ind(\W_Q;E_a\bp)}\sum_{k=0}^{p_a+1} (-1)^k \sgn_Q(\bxi,I)\,\rep(\bxi\circ_a\varepsilon_k,I;f_1,\dots,f_n) .
\]
Note that the sum over $I\in\Ind(\Tree_Q;\bq)$ is the same in every term and that everything acts on $f_1,\dots,f_n$. In this way, the equation to be checked can be written as
\begin{multline}
\label{expanded differential}
\sum_{\underline a\in\W_Q} \sum_{\etab\in\Ind(\partial_{\underline a}\W_Q;\bp)} \sgn_Q(\underline a) \sgn_{\partial_{\underline a}Q}(\etab,I)\, \rep[\etab,I] 
\stackrel?= \\
 \sum_{j=0}^{p'+1} \sum_{\bzeta\in\Ind(\W_Q;\bp)}(-1)^j\sgn_Q(\bzeta,I)\,\rep[\varepsilon_j\circ\bzeta,I] \\
- (-1)^{\deg Q}\sum_{a=1}^n \sum_{k=0}^{p_a}\sum_{\bxi\in\Ind(\W_Q;E_a\bp)}(-1)^{\abs{f_1}+\dots+\abs{f_{a-1}}}(-1)^k\sgn_Q(\bxi,I)\,\rep[\bxi\circ_a\varepsilon_k,I]
\end{multline}
We will see that every term here appears exactly twice, with consistent signs.

1. First, consider an arbitrary term in the middle sum of eq.~\eqref{expanded differential} given by some $j$ and $\bzeta$. 
There exists a unique $\underline a \in\W_Q$ and $k\in [p_a+1]$ such that $j = (E_{Q,\underline a}\bzeta)_a(k)$ and $\underline a = \pi_a(k)$ (where $E_{Q,\underline a}$ is defined in Def.~\ref{Expansion}). With this notation,
\[
\varepsilon_j \circ \bzeta = (E_{Q,\underline a}\bzeta)\circ_a \varepsilon_k ,
\]
hence there is a proportional term in the last sum with $\bxi=E_{Q,\underline a}\bzeta$.
Conversely, for $\underline a \in\W_Q$ and $k\in [p_a+1]$ such that $\underline a = \pi_a(k)$, this holds if $j := \bxi_a(k)$ is not in the image of any $\bxi_b$ for $b\neq a$.

Note that $j = \zeta_a(k)$, unless $\underline a$ is the last term of $\W_Q$ and $k=p_a+1$, in which case $\zeta_a(k)$ is undefined.

To see that those two terms cancel in eq.~\eqref{expanded differential}, 
we need to check whether
\[
(-1)^j\sgn_Q(\bzeta,I) \stackrel?= (-1)^{\deg Q} (-1)^{\abs{f_1}+\dots+\abs{f_{a-1}}}(-1)^k\sgn_Q(E_{Q,\underline a}\bzeta,I) .
\]
It is sufficient to check this in the generic case that signs are defined directly by shuffles (Def.~\ref{Sign shuffle}).
Using an extra letter, $\Diamond$, consider the following sequence of words and shuffles.
\begin{itemize}
\item
Begin with  $\Diamond\smile\Wone(\bp,\bq)$.
\item
Move $\Diamond$ to just before the word $\Wone_a(p_a,q_a)$. The sign of this is $(-1)^{\abs{f_1}+\dots+\abs{f_{a-1}}}$.
\item
Move $\Diamond$ to just before $k_{\mathrm v a}$. The sign of this is $(-1)^k$. 
\begin{itemize}
\item
Rename the vertical letters for $a$, so that $\Diamond\mapsto k_{\mathrm va}$, and $i_{\mathrm va}\mapsto (i+1)_{\mathrm va}$ for $i\geq k$. This gives $\Wone(E_a\bp,\bq)$.
\end{itemize}
\item
Shuffle this to $\Wtwo_Q(E_{Q,\underline a}\bzeta,I)$. The sign is $\sgn_Q(E_{Q,\underline a}\bzeta,I)$.
\begin{itemize}
\item
Undo the above renaming. This gives $\Wtwo_Q(\bzeta,I)$, but with $\Diamond$ inserted at position $\xi_a(k)=j$ in $\Wtwo^{\mathrm v}_Q(\bzeta)$
\end{itemize}
\item
Shuffle this to $\Wtwo^0_Q\smile \Diamond\smile \Wtwo^{\mathrm v}_Q(\bzeta)\smile \Wtwo^{\mathrm h}(I)$. The sign of this is $(-1)^j$.
\item
Move $\Diamond$  to the beginning, past $\Wtwo^0_Q$ (which has length $\deg Q$). This gives $\Diamond\smile\Wtwo_Q(\bzeta,I)$. The sign of this is $(-1)^{\deg Q}$.
\item
Shuffle this to where we started: $\Diamond\smile\Wone(\bp,\bq)$. The sign is (by definition) $\sgn_Q(\bzeta,I)$.
\end{itemize}
The composition of these shuffles is the identity, therefore the product of the signs is $1$ and the terms cancel in eq.~\eqref{expanded differential}.

2. Now let $\bzeta\in\Ind(\W_Q;\bp)$, suppose that $\W_Q = \dots \underline a\,\underline b\dots$, and consider $j\in[p_a+1]$ and $k\in[p_b]$ such that $(E_{Q,\underline a}\bzeta)_a(j) = (E_{Q,\underline a}\bzeta)_b(k)$ (which equals $1+\zeta_b(k)$). In this case
\[
(E_{Q,\underline a}\bzeta)\circ_a \varepsilon_j = (E_{Q,\underline b} \bzeta)\circ_b \varepsilon_k
\]
so this gives two terms of the last sum in eq.~\eqref{expanded differential}. 
To show that they cancel, we need to consider two slightly different cases. 

2a. The first case is if $\underline a$ is a caesura, i.e., for some $b$,
\[
\W_Q = \dots \underline a\,\underline b\dots a\dots \ ,
\]
hence $b$ is interposed.
This implies that there is no earlier $b$, so $k=0$. With that in mind, we need to check in this case whether 
\[
(-1)^{\abs{f_1}+\dots+\abs{f_{a-1}}}(-1)^j\sgn_Q(E_{Q,\underline a}\bzeta,I)
\stackrel?= - (-1)^{\abs{f_1}+\dots+\abs{f_{b-1}}}\sgn_Q(E_{Q,\underline b}\bzeta,I) .
\]
\begin{itemize}
\item
Begin with   $\Wtwo_Q(\bzeta,I)$,  but with an extra letter $\Diamond$ inserted between the vertical letters for $a$ and $b$, i.e.,
\[
\dots (j-1)_{\mathrm va} \,\Diamond\, 1_{\mathrm vb} \dots \ .
\]
\begin{itemize}
\item
Rename the vertical $a$ letters so that $\Diamond\mapsto j_{\mathrm va}$ and $i_{\mathrm va}\mapsto (i+1)_{\mathrm va}$ for $i\geq j$. This gives $\Wtwo_Q(E_{Q,\underline a}\bzeta,I)$.
\end{itemize}
\item
Shuffle this to $\Wone(E_a\bp,\bq)$. The sign is $\sgn_Q(E_{Q,\underline a}\bzeta,I)$.
\begin{itemize}
\item
Undo the renaming. This gives $\Wone(\bp,\bq)$ with $\Diamond$ inserted just after $(j-1)_{\mathrm va}$.
\end{itemize}
\item
Move $\Diamond$ to just before $\Wone_a(p_a,q_a)$. The sign is $(-1)^j$. 
\item
Move $\Diamond$ to the beginning, giving $\Diamond\smile\Wone(\bp,\bq)$. The sign of this is $(-1)^{\abs{f_1}+\dots+\abs{f_{a-1}}}$. 
\item
Move $\Diamond$ to just before $\Wone_b(p_b,q_b)$. The sign of this is $(-1)^{\abs{f_1}+\dots+\abs{f_{b-1}}}$.
\item
Move $\Diamond$ past $0_{\mathrm v b}$. The sign of this is $-1$. 
\begin{itemize}
\item
Rename the vertical $b$ letters so that $\Diamond\mapsto1_{\mathrm vb}$ and $i_{\mathrm vb}\mapsto(i+1)_{\mathrm vb}$ for $i\geq1$. This gives $\Wone(E_b\bp,\bq)$.
\end{itemize}
\item
Shuffle to $\Wtwo_Q(E_{Q,\underline b}\bzeta,I)$. The sign is $\sgn_Q(E_{Q,\underline b}\bzeta,I)$.
\begin{itemize}
\item
 Undo the renaming. This brings us back to where we started.
 \end{itemize}
\end{itemize}
This shows that in this case, the two terms cancel in  eq.~\eqref{expanded differential}.

2b. The other case is if $\underline a$ is the last occurrence of $a$. This implies that $j=p_a+1$. We need to check whether
\[
(-1)^{\abs{f_1}+\dots+\abs{f_{a-1}}}(-1)^{p_a+1}\sgn_Q(E_{Q,\underline a}\bzeta,I)
\stackrel?= - (-1)^{\abs{f_1}+\dots+\abs{f_{b-1}}}(-1)^k\sgn_Q(E_{Q,\underline b}\bzeta,I) .
\]
\begin{itemize}
\item
Begin with  $\Wtwo_Q(\bzeta,I)$,  with an extra letter $\Diamond$ inserted between the vertical letters for $a$ and $b$, i.e.,
\[
\dots (p_a-1)_{\mathrm va} \,\Diamond\, k_{\mathrm vb} \dots \ .
\]
\begin{itemize}
\item
Rename $\Diamond\mapsto (p_a)_{\mathrm va}$, giving $\Wtwo_Q(E_{Q,\underline a}\bzeta,I)$.
\end{itemize}
\item
Shuffle this to $\Wone(E_a\bp,\bq)$.
The sign is $\sgn_Q(E_{Q,\underline a}\bzeta,I)$.  
\begin{itemize}
\item
Undo the renaming. This gives $\Wone(\bp,\bq)$ with $\Diamond$ inserted to give 
\[
\dots (p_a-1)_{\mathrm va}\Diamond 1_{\mathrm ha}\dots \ .
\]
\end{itemize}
\item
Move $\Diamond$ to just before $\Wone_a(p_a,q_a)$. The sign is $(-1)^{p_a}$. 
\item
Move $\Diamond$ to the beginning. The sign of this is $(-1)^{\abs{f_1}+\dots+\abs{f_{a-1}}}$. 
\item
Move $\Diamond$ to just before $\Wone_b(p_b,q_b)$. The sign of this is $(-1)^{\abs{f_1}+\dots+\abs{f_{b-1}}}$.
\item
Move $\Diamond$ to just before $k_{\mathrm v b}$. The sign of this is $(-1)^k$.  
\begin{itemize}
\item
Rename the vertical $b$ letters so that $\Diamond\mapsto k_{\mathrm vb}$ and $i_{\mathrm vb}\mapsto(i+1)_{\mathrm vb}$ for $i\geq k$. This gives $\Wone(E_b\bp,\bq)$.
\end{itemize}
\item
 Shuffle to $\Wtwo_Q(E_{Q,\underline b}\bzeta,I)$. The sign is $\sgn_Q(E_{Q,\underline b}\bzeta,I)$.
\begin{itemize}
\item
Undo the previous renaming. This brings us back to where we started.
\end{itemize}
\end{itemize}
This shows us that these terms cancel again in this case.

3. Now consider an arbitrary nonzero term on the left side of eq.~\eqref{expanded differential}, given by $\underline a\in\W_Q$ (with $a$ repeated) and $\etab\in\Ind(\partial_{\underline a}\W_Q;\bp)$.  There exists a coloring $\bxi\in\Ind(Q;E_a\bp)$ such that $\pi_a^{-1}(\underline a)$ has only one element (call it $k$) and $\bxi\circ_a\varepsilon_k = \etab$, so there is a proportional term of the last sum in eq.~\eqref{expanded differential}.

Conversely, if $a$ is repeated and $\bxi\in\Ind(Q;E_a\bp)$ such that $\pi_a^{-1}(\underline a)=\{k\}$, then $\bxi\circ_a\varepsilon_k \in  \Ind(\partial_{\underline a}\W_Q;\bp)$.

To check whether
\[
\sgn_Q(\underline a) \sgn_{\partial_{\underline a}Q}(\etab,I) \stackrel?= 
- (-1)^{\deg Q} (-1)^{\abs{f_1}+\dots+\abs{f_{a-1}}}(-1)^k\sgn_Q(\bxi,I) ,
\]
we need to consider three separate cases.

3a. First, suppose that $\underline a$ is the first occurrence of $a$, and that $a$ is interposed in $Q$. In this case, $k=0$. Let $b$ be the vertex after $\underline a$; $b$ is interposed in both $Q$ and $\partial_{\underline a}Q$, but $a$ is not interposed in $\partial_{\underline a}Q$, so $\Wtwo^0_{\partial_{\underline a}Q}$ is $\Wtwo^0_Q$ with $0_{\mathrm va}$ deleted.

Consider the following sequence:
\begin{itemize}
\item
Begin with $\Diamond\smile\Wtwo_{\partial_{\underline a}Q}(\etab,I)$.
\item
Shuffle this to $\Diamond\smile\Wone(\bp,\bq)$. The sign is $\sgn_{\partial_{\underline a}Q}(\etab,I)$.
\item
Move $\Diamond$ to just before $\Wone_a(p_a,q_a)$. The sign of this is $(-1)^{\abs{f_1}+\dots+\abs{f_{a-1}}}$.
\begin{itemize}
\item
Rename the vertical $a$ letters so that $\Diamond\mapsto0_{\mathrm va}$, and $i_{\mathrm va}\mapsto(i+1)_{\mathrm va}$. This gives $\Wone(E_a\bp,\bq)$.
\end{itemize}
\item
Shuffle this to $\Wtwo_Q(\bxi,I)$. The sign is $\sgn_Q(\bxi,I)$.
\begin{itemize}
\item
Undo the previous renaming. This gives $\Wtwo^0_{\partial_{\underline a}Q}$ with $\Diamond$ inserted after $0_{\mathrm v b}$, then $\Wtwo^{\mathrm v}_{\partial_{\underline a}Q}(\etab)$, and then $\Wtwo^{\mathrm h}(I)$.
\end{itemize}
\item
Move $\Diamond$ to just after $\Wtwo^0_{\partial_{\underline a}Q}$. The sign is $(-1)^m$, where $m$ is the number of interposed vertices preceding $a$ in $\W_Q$, because this moves $\Diamond$ past $0_{\mathrm vu}$ for each such vertex, $u$. This means that $a$ is the $(m+1)$'st interposed vertex and $b$ is the $(m+2)$'nd. Because interposed vertices are paired with caesurae, $\underline a$ is the $(m+2)$'nd caesura. Therefore $\sgn_Q\underline a = (-1)^{m+2} = (-1)^m$.
\item
Move $\Diamond$ to the beginning, bringing us back to where we started. This moves $\Diamond$ past $\Wtwo^0_{\partial_{\underline a}Q}$, so the sign is $-(-1)^{\deg Q}$.
\end{itemize}
The product of these signs is $1$, therefore the coefficients are equal in this case.

3b. Now, suppose that $\underline a$ is the last occurrence of $a$. Let $b$ be the (interposed) vertex occurring after the previous occurrence of $a$,  hence 
\[
\W_Q = \dots  a\, b\dots\underline a\dots \ .
\]
In this case, $k=p_a+1$, and $b$ is interposed in $Q$ but not in $\partial_{\underline a}Q$, so $\Wtwo^0_{\partial_{\underline a}Q}$ is $\Wtwo^0_Q$ with $0_{\mathrm vb}$ deleted.
\begin{itemize}
\item
Begin with $\Diamond\smile\Wtwo_{\partial_{\underline a}Q}(\etab,I)$.
\item
Shuffle this to $\Diamond\smile\Wone(\bp,\bq)$. 
The sign is $\sgn_{\partial_{\underline a}Q}(\etab,I)$.
\item
Move $\Diamond$ to just before $\Wone_a(p_a,q_a)$. The sign of this is $(-1)^{\abs{f_1}+\dots+\abs{f_{a-1}}}$.
\item
Move $\Diamond$ to just after $(p_a-1)_{\mathrm va}$. The sign of this is $(-1)^{p_a} = - (-1)^{p_a+1}$. 
\begin{itemize}
\item
Rename $\Diamond\mapsto(p_a)_{\mathrm va}$, giving $\Wone(E_a\bp,\bq)$.
\end{itemize}
\item
Shuffle this to $\Wtwo_Q(\bxi,I)$. The sign is $\sgn_Q(\bxi,I)$
\begin{itemize}
\item
Undo the previous renaming. This gives $\Wtwo^0_Q$, then $\Wtwo^{\mathrm v}_{\partial_{\underline a}Q}(\etab)$ with $0_{\mathrm v b}$ replaced by $\Diamond$, and then $\Wtwo^{\mathrm h}(I)$.
\end{itemize}
\item
Swap $0_{\mathrm vb}$ with $\Diamond$. This almost gives $\Wtwo_{\partial_{\underline a}Q}(\etab,I)$, but with $\Diamond$ inserted in the $\Wtwo^0_{\partial_{\underline a}Q}$ part.
The sign of this transposition is $-1$.
\item
Move $\Diamond$ to give $\Wtwo^0_{\partial_{\underline a}Q}\smile\Diamond\smile\Wtwo^{\mathrm v}_{\partial_{\underline a}Q}(\etab)\smile\Wtwo^{\mathrm h}(I)$.
The sign of this is $(-1)^m$, where $m$ is the number of interposed vertices preceding $b$ in $\W_Q$. This means that $b$ is the $(m+1)$'st interposed vertex, the previous occurrence of $a$ is the $(m+1)$'st caesura, and $\sgn_Q\underline a = (-1)^{m+2}=(-1)^m$.
\item
Move $\Diamond$ to the beginning, giving $\Diamond\smile\Wtwo_{\partial_{\underline a}Q}(\etab,I)$ again. The sign is $-(-1)^{\deg Q}$, as  before.
\end{itemize}
Again, the coefficients are equal.

3c. Finally, consider the generic case that $\underline a$ is a caesura and either $\underline a$ is  not the first occurrence or $a$ is not interposed. Let $b$ be the vertex following $\underline a$;  $b$ is interposed in $Q$ but not in $\partial_{\underline a}Q$, so $\Wtwo^0_{\partial_{\underline a}Q}$ is $\Wtwo^0_Q$ with $0_{\mathrm vb}$ deleted. 
\begin{itemize}
\item
Begin with $\Diamond\smile\Wtwo_{\partial_{\underline a}Q}(\etab,I)$.
\item
Shuffle this to $\Diamond\smile\Wone(\bp,\bq)$. The sign is $\sgn_{\partial_{\underline a}Q}(\etab,I)$.
\item
Move $\Diamond$ to just before $\Wone_a(p_a,q_a)$. The sign of this is $(-1)^{\abs{f_1}+\dots+\abs{f_{a-1}}}$.
\item
Move $\Diamond$ to just before $k_{\mathrm va}$. The sign of this is $(-1)^k$. 
\begin{itemize}
\item
Rename the vertical $a$ letters so that $\Diamond\mapsto k_{\mathrm va}$ and $i_{\mathrm va}\mapsto(i+1)_{\mathrm va}$ for $i\geq k$. This gives $\Wone_Q(E_a\bp,\bq)$.
\end{itemize}
\item
Shuffle this to $\Wtwo_Q(\bxi,I)$. The sign is $\sgn_Q(\bxi,I)$.
\begin{itemize}
\item
Undo the previous renaming. This gives $\Wtwo^0_Q$, then $\Wtwo^{\mathrm v}_{\partial_{\underline a}Q}(\etab)$ with $0_{\mathrm v b}$ replaced by $\Diamond$, and then $\Wtwo^{\mathrm h}(I)$.
\end{itemize}
\item
Swap $0_{\mathrm vb}$ with $\Diamond$. The sign of this is $-1$.
\item
Move $\Diamond$ to just after $\Wtwo^0_Q$. The sign of this is $(-1)^m$, where  $m$ is the number of interposed vertices before $b$ in $\W_Q$, so $b$ is the $(m+1)$'st interposed vertex and $\underline a$ is the $(m+1)$'st caesura, therefore $\sgn_Q\underline a = (-1)^{m+1} = -(-1)^m$.
\item
Move $\Diamond$ to the beginning, giving $\Diamond\smile\Wtwo_{\partial_{\underline a}Q}(\etab,I)$ again. The sign is $-(-1)^{\deg Q}$, as  before.
\end{itemize}
Again, the coefficients are equal.

Finally, to be sure that all terms have been accounted for, consider an arbitrary term of the last sum in eq.~\eqref{expanded differential}, given by some $a$, $k$, and $\bxi$. Let $\underline a := \pi_a(k)$. 

If $k$ is the only element of $\pi^{-1}_a(\underline a)$, then $a$ must be repeated in $\W_Q$, otherwise $p_a+1$ would have to be $0$, which is a contradiction, so this was included in Case 2.

If $k$ is neither first nor last in $\pi^{-1}_a(\underline a)$, then $\xi_a(k)$ is not in the image of any $\xi_b$ for $b\neq a$. This was included in Case 1. If $k$ is  last  and $\underline a$ is last in $\W_Q$, or if $k$ is first and $\underline a$ is first, then the same conclusions hold.

Suppose that $\underline a$ is not the last term of $\W_Q$, hence $\W_Q=\dots \underline a\,\underline b\dots$, for some $b$. Suppose that $\pi_a^{-1}(\underline a)$ has more than one element, so that $\bxi = E_{Q,\underline a}\bzeta$ for some unique $\bzeta \in\Ind(\W_Q;\bp)$.
If $k$ is the last  element of $\pi^{-1}_a(\underline a)$, then for $j$ the first element of $\pi_b^{-1}(\underline b)$, $\xi_a(k)=\xi_b(j)$, and this is Case 2.

Suppose that $\underline a$ is not the first letter of $\W_Q$, hence $\W_Q=\dots \underline b\,\underline a\dots$, for some $b$. Suppose that $\pi_a^{-1}(\underline a)$ has more than one element, so that $\bxi = E_{Q,\underline a}\bzeta$ for some unique $\bzeta \in\Ind(\W_Q;\bp)$. If $k$ is the first element of $\pi_a^{-1}(\underline a)$, then for $j-1$ the last element of $\pi_b^{-1}(\underline b)$, $\xi_a(k)=\xi_b(j-1)$, so
\[
(E_{Q,\underline b}\bzeta)_b(j) = 1 + \zeta_b(j-1)= 1 + \xi_b(j-1) = 1 + \xi_a(k) 
= 1 + \zeta_a(k) = (E_{Q,\underline b}\bzeta)_a(k) 
\]
and this is Case 2 again.
\end{proof}

Lemmata \ref{S-equivariance}, \ref{Rep composition}, and \ref{Rep differential} prove the main result of this section:
\begin{theorem}
\label{Rep0 thm}
For any diagram of vector spaces $\A:\Xc\to\Vect_\Bbbk$
\[
\Rep_0 : \Qop \to \Endop[\s_2 C^{\bullet,\bullet}(\A,\A),\dS]
\]
is a dg-operad homomorphism, i.e., $\s_2 C^{\bullet,\bullet}(\A,\A)$ is a $\Qop$-algebra.
\end{theorem}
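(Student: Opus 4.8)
The plan is to unwind what a dg-operad homomorphism is and observe that each of its defining properties has already been verified. Recall that for a differential graded vector space $(V,d)$, the endomorphism operad $\Endop[V,d]$ has $n$-ary part $\Hom(V^{\otimes n},V)$ graded by the degree of a linear map, partial compositions given by insertion with Koszul signs, $\perm_n$ acting on arguments with Koszul signs, unit $\id_V\in\Endop(1)$, and differential $\partial^{\Endop}\phi=d\circ\phi-(-1)^{\abs\phi}\sum_{a=1}^{n}\phi\circ_a d$ on $\phi\in\Endop(n)$. Thus, to prove the theorem I need to check that $\Rep_0$ is (i) a well-defined, grading-preserving linear map, (ii) $\perm$-equivariant, (iii) compatible with partial compositions, (iv) compatible with the differentials, and (v) unital.

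All of (i)--(iv) are already in hand. Well-definedness of $\Rep_0$ is exactly the point of Lemma~\ref{Quilt coloring}: every coloring $(\bzeta,I)$ summed over in Definition~\ref{Rep0 def} lies in $\CQ$, so $\rep$ applies and $\widehat Q\in\Endop[\s_2 C^{\bullet,\bullet}(\A,\A)]$. The remark after Definition~\ref{Rep0 def} records that $\widehat Q$ has homological bidegree $(\deg Q,0)$, i.e.\ $\abs{\widehat Q}=\deg Q$, which is precisely the grading of $Q\in\Qop$, so $\Rep_0$ preserves gradings. Property (ii) is Lemma~\ref{S-equivariance}. Property (iii) is Lemma~\ref{Rep composition}, which asserts $\widehat{P\circ_a Q}=\widehat P\circ_a\widehat Q$ with the Koszul sign convention, i.e.\ $\Rep_0(P\circ_a Q)=\Rep_0(P)\circ_a\Rep_0(Q)$. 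Property (iv) is Lemma~\ref{Rep differential}: the identity
\[
\widehat{\partial Q}=\dS\circ\widehat Q-(-1)^{\deg Q}\sum_{a=1}^{\#Q}\widehat Q\circ_a\dS
\]
is, after substituting $\abs{\widehat Q}=\deg Q$, verbatim the statement that $\Rep_0(\partial Q)=\partial^{\Endop}(\Rep_0 Q)$.

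It remains to check (v), compatibility with units, which was not stated separately. The unit of $\Qop\subset\Wordop\Had\Brace$ is the arity-$1$ quilt with a single vertex: word $1$ and tree $\mbox{\footnotesize$1$}$. For any $\bp=(p)$, $\bq=(q)$ it has a unique coloring, namely $\bzeta=(\id_{[p]})$ and $I=\mbox{\footnotesize$1$}\in\NSOp(1;q,q)$. Since this quilt has no interposed vertices and its tree has no edges, the shuffle of Definition~\ref{Sign shuffle} is trivial and the sign is $+1$; and Definition~\ref{IndQuilt action} evaluates to $\rep(\bzeta,I;f)[\bphi]=f[\bphi]$, because every map $\A[\cdot]$ occurring in the composition ($\A[\phi_{0,0}]$ at the root and $\A[\phi_{p,p}]$ in each empty slot) is an identity and $\id_{[p]}^*\bphi=\bphi$. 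Hence $\Rep_0$ sends the unit to $\id$, and combining this with (i)--(iv) shows that $\Rep_0$ is a dg-operad homomorphism.

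The substance of the argument lies entirely in Lemmata~\ref{S-equivariance}, \ref{Rep composition}, and \ref{Rep differential} --- above all in the lengthy shuffle-sign bookkeeping proving Lemma~\ref{Rep differential} --- all of which are established above. The only remaining work is to line up the sign conventions for $\partial^{\Endop}$ and for $\circ_a$ in the endomorphism operad with those used in those lemmas, together with the one-line unit computation; so at this point there is no real obstacle left to overcome.
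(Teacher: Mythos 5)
Your proposal is correct and follows the same route as the paper, which simply cites Lemmata~\ref{S-equivariance}, \ref{Rep composition}, and \ref{Rep differential} (the shuffle-sign work having been done there). Your additional verification of unitality and the explicit grading check are harmless refinements the paper leaves implicit.
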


\subsection{$\Qop$-subalgebras}
Some important subcomplexes of $C^{\bullet,\bullet}(\A,\A)$ are closed under the action of $\Rep_0$.

\begin{definition}
 \cite{g-s1988b}
 \label{Asimplicial}
The \emph{asimplicial subcomplex} is 
\[
C^{\bullet,\bullet}_{\mathrm a}(\A,\A) = \bigoplus_{p=0}^\infty \bigoplus_{q=1}^\infty C^{p,q}(\A,\A) .
\]
\end{definition}
\begin{theorem}
\label{Asimplicial algebra}
The asimplicial subcomplex $\s_2 C_{\mathrm a}^{\bullet,\bullet}(\A,\A)$ is a $\Qop$-subalgebra.
\end{theorem}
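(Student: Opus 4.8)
The plan is to verify the two closure properties that define a $\Qop$-subalgebra: stability under the differential $\dS$ and stability under the operations $\Rep_0(Q)=\widehat Q$ for every $Q\in\Qcat$. Stability under $\dS$ is immediate, since $\dS$ has bidegree $(1,0)$ (Definition~\ref{dS def}) and therefore cannot decrease the second degree; this is already implicit in calling $C^{\bullet,\bullet}_{\mathrm a}(\A,\A)$ a subcomplex in Definition~\ref{Asimplicial}.

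The substance is stability under $\widehat Q$. Fix $Q\in\Qcat(n)$ and inputs $f_a\in C^{p_a,q_a}(\A,\A)$ with each $q_a\geq1$. By Definition~\ref{Rep0 def}, $\widehat Q(f_1,\dots,f_n)$ is a sum of terms $\rep(\bzeta,I;f_1,\dots,f_n)$ with $I\in\Ind(\Tree_Q;\bq)$, and by Definitions~\ref{Tree coloring} and~\ref{IndQuilt action} every such term lies in $C^{p',q'}(\A,\A)$ where
\[
q' = 1 + \sum_{a=1}^n (q_a-1) .
\]
Hence it suffices to observe that $q_a\geq1$ for all $a$ forces $q_a-1\geq0$, so $q'\geq1$, i.e.\ $\widehat Q(f_1,\dots,f_n)\in C^{\bullet,\bullet}_{\mathrm a}(\A,\A)$. (Note also that the sign restriction of Definition~\ref{Sign shuffle} involving $q_a=0$ never arises in this setting.)

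The argument has no real obstacle; the only point requiring care is extracting the value of $q'$ from the colored-operad bookkeeping, namely that the $\NSOp$-coloring $I$ of $\Tree_Q$ takes values in $\NSOp(n;\bq,q')$ with $q'=1+\sum_{a}(q_a-1)$, so that the horizontal degree of the output is determined additively from, and is monotone in, the horizontal degrees of the inputs. Once this is in hand, the asimplicial condition $q\geq1$ is visibly preserved, and together with the $\dS$-stability above this shows that $\s_2 C^{\bullet,\bullet}_{\mathrm a}(\A,\A)$ is a $\Qop$-subalgebra.
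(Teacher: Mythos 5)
Your proposal is correct and follows essentially the same route as the paper: the key point in both is that the second (horizontal) degree of the output is $q' = 1+\sum_a(q_a-1)$, so it stays $\geq 1$ whenever all $q_a\geq 1$; the paper phrases this via the shifted bidegree identity $\|\widehat Q(f_1,\dots,f_n)\| = \sum_a\|f_a\|-(\deg Q,0)$, which is the same additivity you extract from the $\NSOp$-coloring. Your extra remarks on $\dS$-stability and the $q_a=0$ sign convention are harmless asides, not differences of substance.
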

\begin{proof}
Note that a homogeneous cochain $f$ is in $C_{\mathrm a}^{\bullet,\bullet}$ if and only if the second component of the shifted bidegree $\norm f$ is nonnegative.

Now, for $Q\in\Qop(n)$ and $f_1,\dots,f_n\in C_{\mathrm a}^{\bullet,\bullet}$,
\[
\bigl\|\widehat Q(f_1,\dots,f_n)\bigr\| = \sum_{u=1}^n\norm{f_u} - (\deg Q,0) ,
\]
so the second component is nonnegative and 
\[
\widehat Q(f_1,\dots,f_n) \in C_{\mathrm a}^{\bullet,\bullet}(\A,\A) .
\]
Thus, the asimplicial subcomplex is closed under $\Qop$ operations.
\end{proof}

\begin{definition}
\label{Reduced}
For $p,q\in\N$, $\bar C^{p,q}(\A,\A)$ is the set of $f\in C^{p,q}(\A,\A)$ such that if some component of $\bphi\in B_p\Xc$ is an identity morphism, then $f[\bphi]=0$. The \emph{reduced subcomplex} \cite{g-s1988b} is spanned by these cochains.
\end{definition}

The reduced subcomplex is indeed a subcomplex (closed under $\dS$) and the complex $C^{\bullet,\bullet}(\A,\A)$ is a direct sum of $\bar C^{\bullet,\bullet}(\A,\A)$ with an acyclic subcomplex, therefore the cohomology is the same.

\begin{theorem}
\label{Reduced thm}
The reduced subcomplex $\s_2\bar C^{\bullet,\bullet}(\A,\A)$ is a $\Qop$-subalgebra.
\end{theorem}
\begin{proof}
For any $Q\in\Qcat(n)$, $\bp,\bq\in\N^n$, and $f_a\in\bar C^{p_a,q_a}(\A,\A)$, we need to show that $\widehat Q(f_1,\dots,f_n)\in \bar C^{p',q'}(\A,\A)$, where again $p'=p_1+\dots+p_n-\deg Q$ and $q'=q_1+\dots+q_n+1-n$.

It is sufficient to prove for any $\bzeta\in\Ind(\W_Q;\bp)$ and $I\in\Ind(\Tree_Q;\bq)$, that 
\[
\rep(\bzeta,I;f_1,\dots,f_n) \in \bar C^{p',q'}(\A,\A) .
\] 
Consider any $\bphi\in B_{p'}\Xc$ such that for some $0\leq i\leq p'-1$, $\phi_{i+1}$ is an identity.

By Definition~\ref{Word coloring}, 
\[
\bigcup_{a=1}^n \Im\zeta_a = [p']
\]
so there exists $a\in\com{n}$ such that $i\in\Im\zeta_a$.
Let $\underline a\in \W_Q$ be the last letter such that $i\in\zeta_a(\phi_a^{-1}\underline a)$.

If $\underline a$ is not the final letter of $\W_Q$, then write $\W_Q = \dots \underline a\,\underline b\dots$. Since $\underline a$ is the last letter satisfying the above condition, $i\not\in \zeta_b(\pi_b^{-1}\underline b)$. By Definition~\ref{Word coloring},
\[
i < \min\zeta_b(\pi_b^{-1}\underline b) = \max\zeta_a(\pi_a^{-1}\underline a) ,
\]
so $i+1\leq \max\zeta_a(\pi_a^{-1}\underline a)$. Since $\zeta_a(\pi_a^{-1}\underline a)$ is a set of consecutive integers and contains $i$, it must contain $i+1$ as well.

If $\underline a$ is the final letter of $\W_Q$, then $\max \zeta_a(\pi_a^{-1}\underline a) = p'$. Since that is a set of consecutive integers, all the integers from $i$ to $p'$ are in $\Im\zeta_a$. In particular, $i+1$ is.

In either case, there exists $j\in[p_a]$ such that $\zeta_a(j)=i$ and $\zeta_a(j+1)=i+1$. This shows that $(\zeta_a^*\bphi)_{j+1} = \phi_{i+1}$, which is an identity morphism.

By Definition~\ref{IndQuilt action}, $\rep(\bzeta,I;f_1,\dots,f_n)[\bphi]$ is a composition of multilinear maps, including $f_a[\zeta_a^*\bphi]=0$, so $\rep(\bzeta,I;f_1,\dots,f_n)[\bphi]=0$.

Since this holds for any $\bphi$ containing an identity, it shows that $\rep(\bzeta,I;f_1,\dots,f_n) \in \bar C^{p',q'}(\A,\A)$.
\end{proof}
Note that this proof depended upon the definition of a coloring. It does not imply that the reduced subcomplex is closed under the action of $\CQ$.

\begin{definition}
$\bar C_{\mathrm a}^{\bullet,\bullet}(\A,\A) := \bar C^{\bullet,\bullet}(\A,\A) \cap C_{\mathrm a}^{\bullet,\bullet}(\A,\A)$
\end{definition}
\begin{corollary}
$\s_2\bar C_{\mathrm a}^{\bullet,\bullet}(\A,\A)$ is a $\Qop$-subalgebra.
\end{corollary}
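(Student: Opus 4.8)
The plan is to observe that this is a formal consequence of Theorems~\ref{Asimplicial algebra} and~\ref{Normalized} together with the elementary fact that the intersection of two suboperad-algebras (inside a fixed $\Qop$-algebra) is again a $\Qop$-subalgebra. So the only thing to really check is that $\bar C_{\mathrm a}^{\bullet,\bullet}(\A,\A)$, defined as the intersection $\bar C^{\bullet,\bullet}(\A,\A)\cap C_{\mathrm a}^{\bullet,\bullet}(\A,\A)$, is closed under the operations $\widehat Q$ for $Q\in\Qcat$, and that it is a subcomplex for $\dS$.

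First I would recall that, by Theorem~\ref{Rep0 thm}, $\s_2 C^{\bullet,\bullet}(\A,\A)$ is a $\Qop$-algebra, so all the operations $\widehat Q$ and the differential $\dS$ act on the ambient space; it then suffices to verify closure of the subspace. Given $Q\in\Qcat(n)$ and homogeneous $f_1,\dots,f_n\in\bar C_{\mathrm a}^{\bullet,\bullet}(\A,\A)$, each $f_a$ lies in both $C_{\mathrm a}^{\bullet,\bullet}(\A,\A)$ and $\bar C^{\bullet,\bullet}(\A,\A)$. By Theorem~\ref{Asimplicial algebra}, $\widehat Q(f_1,\dots,f_n)\in C_{\mathrm a}^{\bullet,\bullet}(\A,\A)$, and by Theorem~\ref{Normalized}, $\widehat Q(f_1,\dots,f_n)\in \bar C^{\bullet,\bullet}(\A,\A)$; hence $\widehat Q(f_1,\dots,f_n)$ lies in the intersection $\bar C_{\mathrm a}^{\bullet,\bullet}(\A,\A)$. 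The same argument applied to the binary operation giving $\dS$ (or simply the remark that both $C_{\mathrm a}^{\bullet,\bullet}$ and $\bar C^{\bullet,\bullet}$ are $\dS$-stable) shows $\dS$ preserves $\bar C_{\mathrm a}^{\bullet,\bullet}(\A,\A)$, so it is a subcomplex.

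There is essentially no obstacle here: the content has already been done in the two cited theorems, and what remains is the trivial observation that membership in an intersection is equivalent to membership in both factors. If one wishes to be slightly more careful, the only point worth spelling out is that the grading conventions used in Theorems~\ref{Asimplicial algebra} and~\ref{Normalized} are the same (the shifted bidegree $\norm{\,\cdot\,}$ on $\s_2 C^{\bullet,\bullet}(\A,\A)$), so that the two subalgebra statements are about subspaces of the \emph{same} $\Qop$-algebra and their intersection makes sense; this is immediate from Definition~\ref{Gradings}.
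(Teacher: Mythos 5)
Your proposal is correct and is exactly the argument the paper intends: the corollary is stated without proof precisely because $\bar C_{\mathrm a}^{\bullet,\bullet}(\A,\A)$ is by definition the intersection of the two subspaces shown to be $\Qop$-subalgebras (of the same $\Qop$-algebra $\s_2 C^{\bullet,\bullet}(\A,\A)$) in Theorems~\ref{Asimplicial algebra} and~\ref{Normalized}, and an intersection of subalgebras is a subalgebra. Your extra remarks about $\dS$-stability and the shared grading convention are fine but not needed beyond the cited theorems.
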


\section{The $\mQuilt$ Operad}
\label{mQuilts}
In this section, I construct another operad, $\mQuilt$, from $\Qop$ and an additional generator and show that the Hochschild complex of a diagram of algebras is an $\mQuilt$-algebra.
\begin{definition}
\label{ad def}
In any graded operad, let $\ad$ denote the adjoint action of the $1$-ary part. That is, if $\#A = 1$, then
\beq
\label{ad}
\ad_A B := A\circ_1 B -(-1)^{\deg A \deg B} \sum_{a=1}^{\#B}B \circ_{a} A.
\eeq
\end{definition}
This is the commutator with respect to  infinitesimal composition, $\circ'$ \cite[Sec.~6.1.3]{l-v2012}.

In the following, $\circ$ will denote the full composition operation; this can be constructed from partial composition and \emph{vice versa}.
\begin{definition}
\label{mQuilt def}
Consider the graded operad  generated by $\Qop$ and an additional generator $\m$ of arity $\#\m=0$ and degree $\deg \m=-1$ with the following relations:
\begin{enumerate}
\item
\label{MC axiom}
\[
0 = \Qtwo{1}{2}\circ(\m,\m) .
\]
\item
\label{binary axiom}
For some quilt $Q$ and $u\in \com{\#Q}$, if $u$ has more than 2 children (i.e., edges of the form $(u,v)\in\E_Q$) then $0=Q\circ_u\m$.
\item
\label{flat1 axiom}
If $u\in \com{\#Q}$ is repeated in $\W_Q$, then $0=Q\circ_u\m$.
\item
\label{flat2 axiom}
If $u\in\com{\#Q}$ and $\W_Q = \dots vuv\dots$, then $0=Q\circ_u\m$.
\item
\label{rearrangement axiom}
If $\Tree_Q=\Tree_P$, $u\in\com{\#Q}=\com{\#P}$, and $\W_Q$ and $\W_P$ are equal except for the position of $u$, then $Q\circ_u\m=P\circ_u\m$.
\end{enumerate}

Let 
\beq
\label{Delta def}
\Delta := \Qtwo{1}{2}\circ_1\m-\Qtwo{1}{2}\circ_2\m .
\eeq
Extend the boundary operator $\partial$ by $\partial\m:=0$ and let 
\beq
\label{dm}
\partial' := \partial + \ad_\Delta .
\eeq
Let $\mQuilt$ denote this graded operad with differential (boundary operator) $\partial'$.
\end{definition}

\begin{definition}
For $Q\in\Qcat$, the element 
\[
Q\circ (\id,\dots,\id,\underbrace{\m,\dots,\m}_{n\text{ times}}) 
\in \mQuilt
\]
where the last $n$ vertices occur in $\downarrow$ order
will be denoted by a diagram with the last $n$ vertices labelled by $\m$. 
\end{definition}

Any composition of a quilt with $\m$'s can be rewritten in this form.
\begin{example}
\[
\Qtwo12\circ_1\m = \Qtwo21\circ_2\m = \Qtwo\m1\  ,
\]
so
\[
\Delta = \Qtwo{\m}{1} - \Qtwo{1}{\m}\  .
\]
\end{example}
\begin{example}
Relation \ref{mQuilt def}\eqref{MC axiom} is simply
\[
0 = \Qtwo{\m}{\m} \ .
\]
\end{example}
Note that $\mQuilt$ is spanned by elements of this form, but there are additive relations among them.
\begin{remark}
Some convention of ordering the last $n$ vertices is necessary, because reordering them can introduce a sign.

On the other hand, putting $\m$ in the \emph{last} $n$ places is not so important. That merely serves to keep the numbering of vertices the same.
\end{remark}

\begin{theorem}
The operator $\partial'$ satisfies $\left(\partial'\right)^2=0$ and makes $\mQuilt$ a differential graded operad.
\end{theorem}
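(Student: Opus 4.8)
The statement packages two assertions: that $(\partial')^2=0$, and that $\partial'$ is a derivation of the operadic composition, so that $(\mQuilt,\partial')$ is a differential graded operad. The plan is to treat these in turn, after recording a few elementary facts.

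First, the preliminaries. Since $\Qop$ is a dg-operad (Section~\ref{Quilt section}) and $\partial\m:=0$, the operator $\partial$ extends to the operad generated by $\Qop$ and $\m$ as a degree $-1$ derivation of $\circ$ with $\partial^2=0$; one must still check that $\partial$ (and likewise $\ad_\Delta$) preserves the ideal cut out by the relations of Definition~\ref{mQuilt def}, so that it descends to $\mQuilt$. The word $12$ underlying $\Qtwo{1}{2}$ has no repeated letter, so $\partial\Qtwo{1}{2}=0$; with $\partial\m=0$ and the Leibniz rule this gives $\partial\Delta=0$. Finally $\#\Delta=1$, so $\ad_\Delta$ is the commutator $[\Delta,-]$ for the infinitesimal composition $\circ'$, a graded Lie bracket, and $[\Delta,\Delta]=2\,\Delta\circ_1\Delta$.

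For the derivation property, it suffices to verify the Leibniz rule $\partial'(X\circ_a Y)=(\partial'X)\circ_a Y+(-1)^{\deg X}X\circ_a(\partial'Y)$ on the spanning set of $\m$-decorated quilts. The $\partial$-part follows from $\Qop$ being a dg-operad once well-definedness on the quotient is settled, and that check is itself an application of the relations: for instance, compatibility of $\partial$ with relation~\eqref{flat1 axiom} uses that when a vertex $u$ occurs exactly twice in $\W_Q$ the two faces deleting an occurrence of $u$ have the same tree and words differing only in the position of $u$, hence agree after composing with $\m$ at $u$ by relation~\eqref{rearrangement axiom}, while the sign convention of Definition~\ref{Face sign} makes them enter with opposite signs and cancel. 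For the $\ad_\Delta$-part the ``top'' term is harmless, since $\#\Delta=1$ gives $\Delta\circ_1(X\circ_a Y)=(\Delta\circ_1 X)\circ_a Y$ by associativity; the substance is to reorganize the lower terms $(X\circ_a Y)\circ'\Delta$ into $(X\circ'\Delta)\circ_a Y$ and $X\circ_a(Y\circ'\Delta)$, which is where the explicit form $\Delta=\Qtwo{1}{2}\circ_1\m-\Qtwo{1}{2}\circ_2\m$ and the relations~\eqref{binary axiom}--\eqref{rearrangement axiom} are used (they determine precisely which slots $\Delta$ may be grafted into and identify grafts differing only by the word-position of the $\m$-vertex). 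I expect this combinatorial bookkeeping to be the main obstacle.

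For $(\partial')^2=0$, expand $(\partial')^2=\partial^2+\bigl(\partial\,\ad_\Delta+\ad_\Delta\,\partial\bigr)+\ad_\Delta^2$. The first summand vanishes. Because $\partial$ is a derivation of $\circ$, it is a derivation of $\circ'$ and hence of the bracket $[-,-]$, so the middle summand equals $\ad_{\partial\Delta}$, which is $0$ since $\partial\Delta=0$. By the graded Jacobi identity and the oddness of $\Delta$, $\ad_\Delta^2=\tfrac12\ad_{[\Delta,\Delta]}=\ad_{\Delta\circ_1\Delta}$. Hence $(\partial')^2=\ad_{\Delta\circ_1\Delta}$, and the proof reduces to the Maurer-Cartan identity $\Delta\circ_1\Delta=0$ in $\mQuilt$ (or merely $\ad_{\Delta\circ_1\Delta}=0$, if that is easier). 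Expanding $(\Qtwo{1}{2}\circ_1\m-\Qtwo{1}{2}\circ_2\m)\circ_1(\Qtwo{1}{2}\circ_1\m-\Qtwo{1}{2}\circ_2\m)$ by operad associativity produces a signed sum of three-level $\m$-decorated quilts; relations~\eqref{binary axiom}--\eqref{rearrangement axiom} collapse these so that all but one cancel in pairs, and the surviving term is a multiple of $\Qtwo{1}{2}\circ(\m,\m)$, which is $0$ by relation~\eqref{MC axiom}. Together with the derivation property, this would establish that $\partial'$ is a differential making $\mQuilt$ a dg-operad.
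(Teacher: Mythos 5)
Your proposal follows essentially the same route as the paper's proof: $\partial^2=0$ together with the anticommutation of $\partial$ and $\ad_\Delta$ (from $\partial\Delta=0$) reduces everything to the identity $(\ad_\Delta)^2Q=(\Delta\circ_1\Delta)\circ_1Q-Q\circ'(\Delta\circ_1\Delta)$ and then to the Maurer--Cartan identity $\Delta\circ_1\Delta=0$, which is forced by the defining relations and ends at Relation~\ref{mQuilt def}\eqref{MC axiom}. One technical caveat: $\mQuilt$ is an operad of \emph{abelian groups}, so the step $\ad_\Delta^2=\tfrac12\ad_{[\Delta,\Delta]}$ is not available as written (you cannot divide by $2$, and torsion-freeness of the quotient is not established). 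The identity $(\ad_\Delta)^2Q=(\Delta\circ_1\Delta)\circ_1Q-Q\circ'(\Delta\circ_1\Delta)$ must instead be verified directly, using only $\#\Delta=1$ and the oddness of $\Delta$ to cancel the cross terms $(Q\circ_a\Delta)\circ_b\Delta$; this is a three-line computation and is exactly how the paper proceeds, so the repair is easy but needed.

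The more substantive comment is that you have located the difficulty in the wrong place. The Leibniz rule for $\ad_\Delta$ is purely formal: it follows from the operad axioms and $\#\Delta=1$ alone, with no use of the explicit form of $\Delta$ or of Relations~\ref{mQuilt def}\eqref{binary axiom}--\eqref{rearrangement axiom}, so it is not ``the main obstacle''. The genuine content of the theorem --- and the bulk of the paper's proof --- is the explicit verification that $\Delta\circ_1\Delta=0$, which you only sketch. There the cancellation pattern is also slightly different from ``all but one cancel in pairs'': each of the four products expands as a sum over extensions in $\Qop$ (for instance $\Qtwo{1}{\m}\circ_1\Qtwo{\m}{1}$ contributes three terms), one term vanishes outright by Relation~\ref{mQuilt def}\eqref{MC axiom}, one pair cancels directly, one pair cancels via Relation~\ref{mQuilt def}\eqref{rearrangement axiom}, and the three surviving terms assemble into $\Qtwo{1}{2}\circ_1\bigl(\Qtwo{1}{2}\circ(\m,\m)\bigr)=0$, again by Relation~\ref{mQuilt def}\eqref{MC axiom}. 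Your plan would carry this computation through, but as written it is an outline rather than a proof. On the credit side, your insistence that $\partial$ be checked to preserve the ideal of relations --- for example the cancellation, via Relation~\ref{mQuilt def}\eqref{rearrangement axiom} and the signs of Definition~\ref{Face sign}, of the two faces deleting the occurrences of a twice-repeated vertex --- is a point the paper leaves implicit under ``by construction'', and your treatment of that case is correct.
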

\begin{proof}
First, note that $\mQuilt$ with $\partial$ is a dg-operad by construction.

Secondly, $\ad_\Delta$ is a derivation of $\mQuilt$ by construction. This doesn't even depend on the definition of $\Delta$.

Thirdly, $\ad_\Delta$ anticommutes with $\partial$ because $\deg\Delta=-1$ and $\partial\Delta=0$, so we just need to compute $(\ad_\Delta)^2Q$. 

Using the infinitesimal composition notation \cite{l-v2012} 
\[
Q\circ'\Delta := \sum_{a=1}^{\#Q}Q\circ_a\Delta ,
\]
$\ad_\Delta Q = \Delta\circ_1 Q - (-1)^{\deg Q}Q\circ'\Delta$. So,
\begin{align*}
(\ad_\Delta)^2Q &=\Delta\circ_1(\ad_\Delta Q) + (-1)^{\deg Q} (\ad_\Delta Q)\circ' \Delta \\
&= \Delta\circ_1(\Delta\circ_1 Q) - (-1)^{\deg Q}\Delta\circ_1(Q\circ'\Delta)\\ &\qquad + (-1)^{\deg Q}(\Delta\circ_1Q)\circ'\Delta - (Q\circ'\Delta)\circ'\Delta \\
&= (\Delta\circ_1\Delta)\circ_1 Q - (Q\circ'\Delta)\circ'\Delta .
\end{align*}
Because $\deg\Delta=-1$, for $a\neq b$, $(Q\circ_a\Delta)\circ_b\Delta = - (Q\circ_b\Delta)\circ_a\Delta$, therefore $(Q\circ'\Delta)\circ'\Delta = Q\circ'(\Delta\circ_1\Delta)$ and 
\[
(\ad_\Delta)^2Q = (\Delta\circ_1\Delta)\circ_1 Q - Q\circ'(\Delta\circ_1\Delta) .
\]

There are 4 terms to $\Delta\circ_1\Delta$.
\beq
\label{nilpotent1}
\Qtwo\m1\circ_1\Qtwo\m1 = \Qthreeb\m\m1  
\eeq
\beq
\label{nilpotent2}
-\Qtwo\m1\circ_1\Qtwo1\m =  -\Qthreeb\m1\m
\eeq
\begin{align}
\label{nilpotent3}
- \Qtwo1\m\circ_1\Qtwo\m1  &= \Qfive\m\blank1\m\blank + \Qthreeb\m1\m + \Qfive\m1\blank\blank\m  \\
&= \Qfive\m\m\blank\blank1 + \Qthreeb\m1\m + \Qfive\m\blank\m1\blank 
\nonumber
\end{align}
The sign is cancelled by a rearrangement of $\m$'s, and the first and last terms have been rewritten by Relation \ref{rearrangement axiom}. Clearly, the second term cancels with \eqref{nilpotent2}.
\beq
\label{nilpotent4}
\Qtwo1\m\circ_1\Qtwo1\m =-\Qfive1\blank\m\m\blank - \Qthreeb1\m\m - \Qfive1\m\blank\blank\m 
\eeq
The second term here is 
\[
\Qthreeb1\m\m = \Qtwo12\circ_2 \Qtwo\m\m = 0
\]
by Relation \ref{MC axiom}. By Relation \ref{rearrangement axiom},
\[
\Qfive1\blank\m\m\blank = \Qfive1\blank23\blank\circ(\id,\m,\m) = \Qfive13\blank\blank2\circ(\id,\m,\m) = - \Qfive12\blank\blank3\circ(\id,\m,\m) = - \Qfive1\m\blank\blank\m ,
\]
so the remaining terms of \eqref{nilpotent4} cancel. The only  terms left are from \eqref{nilpotent1} and \eqref{nilpotent3}:
\[
\Delta\circ_1\Delta = 
\Qthreeb\m\m1 +\Qfive\m\m\blank\blank1 + \Qfive\m\blank\m1\blank 
= \Qtwo12\circ_1\Qtwo\m\m  = 0 .
\] 

So, $(\ad_\Delta)^2=0$, and therefore $(\partial')^2=0$.
\end{proof}

\begin{remark}
Note that the inclusion of $\Qop$ into $\mQuilt$ is a homomorphism of graded operads, but not of \emph{differential} graded operads. On the other hand, there is a dg-operad homomorphism from $\mQuilt$ to $\Qop$ defined by $\m\mapsto 0$.
\end{remark}

\subsection{Representation}
\label{Rep mQuilt}
Let $\Alg_\Bbbk$ be the category of associative algebras (not necessarily unital) and homomorphisms over a field $\Bbbk$ of arbitrary characteristic.
\begin{definition}
\label{dH def}
Let $\A:\Xc\to\Alg_\Bbbk$ be a diagram of algebras. Define $\hat\m \in C^{0,2}(\A,\A)$ by
\[
\hat\m(x;a,b) := ab 
\]
for all $x\in\Obj\Xc$ and $a,b\in\A(x)$, i.e., multiplication in $\A(x)$ expressed as a cochain. Define the \emph{Hochschild coboundary} $\dH:C^{\bullet,\bullet}(\A,\A)\to C^{\bullet,\bullet}(\A,\A)$ by
\[
\dH f := \Rep_0\left(\Qtwo12-\Qtwo21\right)(\hat\m,f) 
\]
for all $f\in C^{\bullet,\bullet}(\A,\A)$.

The \emph{Hochschild complex} of $\A$ is $\tot C^{\bullet,\bullet}(\A,\A)$ with coboundary $\delta := \dS+\dH$. The \emph{Hochschild cohomology} $H^\bullet(\A,\A)$ is the cohomology of this complex.
\end{definition}
\begin{theorem}
\label{mQuilt algebra}
For any diagram of algebras, $\A:\Xc\to\Alg_\Bbbk$,
there is a dg-operad homomorphism
\[
\Rep : \mQuilt \to \Endop[\s\tot C^{\bullet,\bullet}(\A,\A),\delta] 
\]
defined by $\Rep(Q):=\Rep_0(Q)= \widehat Q$ (Def.~\ref{Rep0 def}) for $Q\in\Qop$, and $\Rep(\m) :=\hat\m$. This makes $\s\tot C^{\bullet,\bullet}(\A,\A)$ an $\mQuilt$-algebra.
\end{theorem}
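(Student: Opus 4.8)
The plan is to use the presentation of $\mQuilt$ given in Definition~\ref{mQuilt def}: as a graded operad it is the quotient of the operad generated by $\Qop$ together with the generator $\m$ of arity $0$ and degree $-1$ by Relations~\ref{MC axiom}--\ref{rearrangement axiom}, and it is then equipped with the differential $\partial' = \partial + \ad_\Delta$ of \eqref{dm}. Two things must be done. First, to know that $\Rep$ is a well-defined homomorphism of graded operads it suffices, by the universal property of this presentation, to check that (i) $\Rep_0$ is an operad homomorphism out of $\Qop$ --- this is Theorem~\ref{Rep0 thm}, which I would regard as landing in $\Endop[\s\tot C^{\bullet,\bullet}(\A,\A)]$, the Koszul signs of the operad structure being those of the total degree $\abs{\cdot}$ that already appear in Lemma~\ref{Rep composition} --- and (ii) the images in $\Endop$ of Relations~\ref{MC axiom}--\ref{rearrangement axiom} hold when $\m\mapsto\hat\m$, the element $\hat\m$ having arity $0$ and the degree required of $\m$. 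Second, one must check that $\Rep$ intertwines $\partial'$ with the differential $D$ induced by $\delta$ on $\Endop[\s\tot C^{\bullet,\bullet}(\A,\A)]$; since $\partial'$ and $D$ are both derivations for operadic composition and $\Rep$ is an operad homomorphism, it is enough to verify $\Rep(\partial'x) = D\,\Rep(x)$ for $x$ a generator.

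For (ii) each relation becomes a statement about $\widehat Q$ and $\hat\m$, and each is settled by the single fact that $\hat\m\in C^{0,2}(\A,\A)$, i.e.\ $p=0$ and $q=2$. For Relation~\ref{MC axiom}: by Definitions~\ref{IndQuilt action} and~\ref{Rep0 def} the operation $\Rep(\Qtwo{1}{2})(\hat\m,\hat\m)$ is the sum over the two colorings $I_{(1,2)}\in\{1,2\}$ of the $2$-vertex corolla, and by the sign computation for $\Qtwo{1}{2}$ in the example following Definition~\ref{Sign shuffle} these two terms enter with opposite signs, so $\Rep(\Qtwo{1}{2})(\hat\m,\hat\m)$ evaluates to the map $(a,b,c)\mapsto a(bc)-(ab)c$, which vanishes because each $\A(x)$ is associative. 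For Relations~\ref{binary axiom}--\ref{flat2 axiom}: here I would evaluate $\widehat Q\circ_u\hat\m = \widehat Q(\dots,\hat\m,\dots)$, with $\hat\m$ in slot $u$, through Definition~\ref{Rep0 def} and observe that its indexing set is empty. If $u$ has more than two children then $q_u=2$ makes it impossible to color the edges out of $u$ by distinct elements of $\{1,\dots,q_u\}$, so $\Ind(\Tree_Q;\bq)=\varnothing$ (Definition~\ref{Tree coloring}). If $u$ is repeated in $\W_Q$ then $p_u=0$ is incompatible with two occurrences, and if $u$ lies inside a subword $vuv$ then $p_u=0$ leaves nothing of positive vertical weight between the two occurrences of $v$; either way $\Ind(\W_Q;\bp)=\varnothing$ (Definition~\ref{Word coloring}). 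Hence $\widehat Q\circ_u\hat\m=0$ in all three cases. For Relation~\ref{rearrangement axiom}: if $u$ is repeated both sides vanish by Relation~\ref{flat1 axiom}, so assume $u$ occurs once; then $u$ is not interposed, so with $p_u=0$ it contributes no vertical letters and does not appear in $\Wtwo^0_Q$, while its single horizontal letter lands in $\Wtwo^{\mathrm h}(I)$ at a position depending only on $\Tree_Q=\Tree_P$ and $I$. Sliding $u$ through $\W_Q$ therefore gives a bijection between the colorings of $Q$ and of $P$ under which $\rep$ and $\sgn_Q$ are both unchanged, so $\widehat Q\circ_u\hat\m = \widehat P\circ_u\hat\m$.

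For the differential I would first take the generator $\m$: then $\partial'\m = \ad_\Delta\m = \Delta\circ_1\m$, which is a combination of copies of $\Qtwo{1}{2}\circ(\m,\m)$ and hence vanishes by Relation~\ref{MC axiom}, while $D\hat\m = \delta\hat\m = \dS\hat\m + \dH\hat\m$, where $\dS\hat\m=0$ is exactly the statement that each $\A[\phi]$ is an algebra homomorphism and $\dH\hat\m$ is again a combination of associators, hence $0$. Next take a generator $Q\in\Qop$ and write $\partial'Q = \partial Q + \ad_\Delta Q$. Lemma~\ref{Rep differential} gives
\[
\Rep(\partial Q) = \dS\circ\widehat Q - (-1)^{\deg Q}\sum_{a=1}^{\#Q}\widehat Q\circ_a\dS .
\]
For the rest, expand $\ad_\Delta Q = \Delta\circ_1 Q - (-1)^{\deg Q}\sum_a Q\circ_a\Delta$ as in Definition~\ref{ad def} and apply Lemma~\ref{Rep composition}; using $\perm$-equivariance (Lemma~\ref{S-equivariance}) to identify $\Rep(\Delta)$ with the operator $f\mapsto\dH f$ of Definition~\ref{dH def}, this yields
\[
\Rep(\ad_\Delta Q) = \dH\circ\widehat Q - (-1)^{\deg Q}\sum_{a=1}^{\#Q}\widehat Q\circ_a\dH .
\]
Adding the two displays and using $\delta=\dS+\dH$ gives $\Rep(\partial'Q) = \delta\circ\widehat Q - (-1)^{\deg Q}\sum_a\widehat Q\circ_a\delta = D\,\widehat Q$, which finishes the check on generators and hence on all of $\mQuilt$.

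The step I expect to be the real obstacle is the sign bookkeeping: extracting the opposite signs in Relation~\ref{MC axiom} from Definition~\ref{Sign shuffle}, proving invariance of $\sgn_Q$ under the word-rearrangement of Relation~\ref{rearrangement axiom}, and above all confirming that the Koszul conventions of Definition~\ref{ad def}, Lemma~\ref{Rep composition}, Lemma~\ref{Rep differential}, and the induced differential $D$ all mesh so that $\Rep(\partial'Q)=D\,\widehat Q$ holds with no stray sign. The combinatorial inputs --- emptiness of the coloring sets when $p_u=0$ or $q_u=2$, and the bijection of colorings obtained by sliding a weightless vertex through a word --- are routine once Definitions~\ref{Tree coloring} and~\ref{Word coloring} are unwound.
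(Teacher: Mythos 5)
Your overall architecture coincides with the paper's: verify the five relations of Definition~\ref{mQuilt def} and then check compatibility of $\partial'=\partial+\ad_\Delta$ with $\delta=\dS+\dH$ using $\hat\Delta=\dH$ together with Lemmas~\ref{Rep composition} and~\ref{Rep differential}. Your treatment of Relations~\ref{mQuilt def}\eqref{MC axiom}--\eqref{flat2 axiom} (associativity, and emptiness of the coloring sets when $p_u=0$ or $q_u=2$) and of the differential is sound and matches the paper in substance.

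The gap is in your verification of Relation~\ref{mQuilt def}\eqref{rearrangement axiom}. The colorings of $Q$ and of $P$ do correspond, but under that correspondence $\rep$ is \emph{not} unchanged: since $p_u=0$, the single value $\zeta_u(0)$ is forced to be the meeting point of the intervals of the letters adjacent to $u$, so when $u$ is moved in the word this value moves from some $j$ to some $j'\neq j$. The corresponding terms of Definition~\ref{IndQuilt action} therefore apply multiplication in \emph{different} algebras, $\hat\m[x_j]$ versus $\hat\m[x_{j'}]$, composed with different connecting maps $\A[\phi_{k,l}]$. These composite multilinear maps agree precisely because each $\A[\phi]$ is an algebra homomorphism (together with functoriality of $\A$); this is the single point in the theorem where the hypothesis that $\A$ is a diagram of \emph{algebras}, rather than a diagram of vector spaces each carrying an associative product, enters, and it is exactly the paper's justification of this relation. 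As written, your argument would equally ``prove'' the relation when the maps $\A[\phi]$ fail to be multiplicative, where it is false. A smaller slip in the same step: occurring only once in $\W_Q$ does not imply that $u$ is not interposed (the middle letter of a word of the form $1\,u\,1$ occurs once and is interposed), so when $u$ is interposed in exactly one of $Q$, $P$ your sign comparison must go through the extrapolated sign convention following Definition~\ref{Sign shuffle} rather than the naive shuffle count.
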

\begin{proof}
To check that this is a graded operad homomorphism, we just need to check compatibility with each of the relations defining $\mQuilt$. 
\begin{enumerate}
\item
\[
\widehat{\Qtwo\m\m} = \widehat{\Qtwo12}(\hat\m,\hat\m) = 0
\]
is equivalent to the associativity of $\hat\m$, so we have compatibility with 
Relation~\ref{MC axiom}. 
\item
Relation~\ref{binary axiom} follows because multiplication is (only) a binary operation. 
\item
$\hat\m$ has vertical degree $0$. If $u$ is repeated in $\W_Q$, then there exist no colorings of $Q$ compatible with this degree, hence 
\[
\widehat Q(f_1,\dots,f_{u-1},\hat\m,f_{u+1},\dots) = 0 .
\]
This is consistency with Relation~\ref{flat1 axiom}.
\item
Similarly, if $\W_Q=\dots vuv\dots$, then in the computation of $\widehat Q(\dots,\hat\m,\dots)$, the intervals that are intended to make up the image of $\zeta_v$ overlap at a point. This means that this is not the image of an injective map, so the action of $Q$ here must be $0$, and we have compatibility with Relation~\ref{flat2 axiom}.
\item
Relation~\ref{rearrangement axiom} follows because the functor $\A$ gives homomorphisms, hence they intertwine the products in the various algebras.
\end{enumerate}

This homomorphism property also gives that $\dS\hat\m = 0$. This is the condition of consistency with $\partial\m=0$. This shows that $\mQuilt$ with the differential $\partial$ acts on $C^{\bullet,\bullet}(\A,\A)$ with the differential $\dS$.

By construction, $\hat\Delta = \dH$. This implies that $\widehat{\ad_\Delta Q} = \ad_{\dH}\widehat Q$.
\end{proof}

\begin{remark}
More generally, this holds if $\A$ is a functor from $\Xc$ to a nonsymmetric colored operad of abelian groups, equipped with ``multiplication'' (in the sense of \cite{g-v1995}) $\hat\m\in C^{0,2}(\A,\A)$ that is natural ($\dS\hat\m=0$) and associative,
\[
\widehat{\Qtwo12}(\hat\m,\hat\m) = 0 .
\]
Everything in this paper holds in this generality, except for statements about the Maurer-Cartan equation that involve division. 
In particular, $\A$ could be a diagram of algebras over any commutative ring.
\end{remark}

\begin{definition}
 \cite{g-s1988b}
The \emph{asimplicial cohomology} $H^\bullet_{\mathrm a}(\A,\A)$ of a diagram of algebras $\A:\Xc\to\Alg_\Bbbk$ is the cohomology of $\tot C^{\bullet,\bullet}_{\mathrm a}(\A,\A)$ with the coboundary $\delta = \dS+\dH$.
\end{definition}
\begin{theorem}
\label{Asimplicial algebra2}
Under the action $\Rep$, the asimplicial subcomplex $\s\tot C_{\mathrm a}^{\bullet,\bullet}(\A,\A)$, the reduced subcomplex $\s\tot \bar C^{\bullet,\bullet}(\A,\A)$, and their intersection $\s\tot\bar C_{\mathrm a}^{\bullet,\bullet}(\A,\A)$ are $\mQuilt$-subalgebras.

The inclusions of the reduced complexes give isomorphisms $H^\bullet[\tot \bar C^{\bullet,\bullet}(\A,\A),\delta]\cong H^\bullet(\A,\A)$ and $H^\bullet[\tot \bar C_{\mathrm a}^{\bullet,\bullet}(\A,\A),\delta]\cong H_{\mathrm a}^\bullet(\A,\A)$.
\end{theorem}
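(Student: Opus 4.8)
The plan is to derive both halves of the statement from results already established. For the three subalgebra claims, I would use that $\mQuilt$ is generated as an operad by $\Qop$ together with the arity-zero generator $\m$, so that $\Rep(\mQuilt)$ is generated, under composition, by the operations $\widehat Q$ ($Q\in\Qop$) and the single element $\hat\m=\Rep(\m)\in C^{0,2}(\A,\A)$. Now $\hat\m$ lies in all three subcomplexes: its shifted bidegree is $(0,1)$, whose second component is nonnegative, so it is asimplicial, and it is trivially normalized since an element of $B_0\Xc$ has no morphism components that could be an identity. Theorems~\ref{Asimplicial algebra} and~\ref{Normalized} and the Corollary after the latter already give that $\s_2 C_{\mathrm a}^{\bullet,\bullet}$, $\s_2\bar C^{\bullet,\bullet}$, and $\s_2\bar C_{\mathrm a}^{\bullet,\bullet}$ are $\Qop$-subalgebras, so $\widehat Q$ applied to arguments drawn from one of these subcomplexes --- some of which may be $\hat\m$ --- stays in the subcomplex. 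Each subcomplex is moreover $\delta$-closed: it is $\dS$-closed by the cited theorems, and $\dH f=\Rep_0(\Qtwo12-\Qtwo21)(\hat\m,f)$ is again of the form $\widehat Q(\dots)$ with all arguments in the subcomplex. Hence the differential of $\Endop[\s\tot C^{\bullet,\bullet},\delta]$, namely the commutator with $\delta$, restricts to the endomorphism operad of $\s\tot$ of each subcomplex, producing the three $\mQuilt$-subalgebras.

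For the cohomology isomorphisms, the strategy is a spectral-sequence comparison in the $\dS$-direction. First I would check that $\bar C^{\bullet,\bullet}\subset C^{\bullet,\bullet}$ is a sub-double-complex: it is $\dS$-closed, and it is $\dH$-closed because $(\dH f)[\bphi]$ depends only on $f[\bphi]$ (it is assembled from $\hat\m$ and the twist $\A[\phi_{0,p}]$), so $f[\bphi]=0$ forces $(\dH f)[\bphi]=0$. For each fixed $q$, the complex $(C^{\bullet,q},\dS)$ is the alternating-sum cochain complex of a cosimplicial $\Bbbk$-vector space built from the nerve of $\Xc$ --- cofaces $\dS_i=\rep[\varepsilon_i,\mbox{\footnotesize$1$}]$, codegeneracies given by precomposition with the degeneracies of $B_\bullet\Xc$ --- and $\bar C^{\bullet,q}$ is exactly its normalized subcomplex, since a simplex of $B_\bullet\Xc$ is degenerate precisely when one of its components is an identity. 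By the normalization theorem, the form of which is the remark invoked just before Theorem~\ref{Normalized}, the inclusion $\bar C^{\bullet,q}\hookrightarrow C^{\bullet,q}$ is a $\dS$-quasi-isomorphism, its cokernel (the degenerate part) being $\dS$-acyclic.

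I would then filter the total complexes by the $q$-degree. The resulting spectral sequences are first-quadrant, hence convergent, and the inclusion $\tot\bar C^{\bullet,\bullet}\hookrightarrow\tot C^{\bullet,\bullet}$ induces a map of spectral sequences which on the $E_1$ page is the map $H^\bullet_{\dS}(\bar C^{\bullet,q})\to H^\bullet_{\dS}(C^{\bullet,q})$, an isomorphism by the previous paragraph. By the comparison theorem for spectral sequences, an isomorphism on $E_1$ yields an isomorphism on the abutment, so $H^\bullet[\tot\bar C^{\bullet,\bullet},\delta]\cong H^\bullet(\A,\A)$. The asimplicial case is identical: restricting to $q\geq1$ removes whole columns of the bicomplex and leaves a first-quadrant double complex to which the same argument applies, giving $H^\bullet[\tot\bar C_{\mathrm a}^{\bullet,\bullet},\delta]\cong H_{\mathrm a}^\bullet(\A,\A)$.

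The genuinely substantive point is the one isolated in the second paragraph --- that the normalization in the simplicial direction is compatible with the second differential, i.e.\ that the degenerate subcomplex in the $p$-direction is stable under $\dH$ --- since this is what makes $\bar C^{\bullet,\bullet}$ a sub-double-complex and lets the $\dS$-level normalization theorem feed the spectral-sequence comparison; everything else is either quoted from Section~\ref{Representation} or a routine application of the normalization theorem and of convergence for a first-quadrant spectral sequence. The only bookkeeping subtlety is confirming that $\hat\m$, although it lives in cosimplicial degree $0$, genuinely belongs to the normalized subcomplex, which is vacuous because $B_0\Xc$ contains no morphisms.
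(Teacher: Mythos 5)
Your proposal is correct and follows essentially the same route as the paper: the subalgebra claims reduce to Theorems~\ref{Asimplicial algebra} and \ref{Normalized} because every $\mQuilt$ operation is a quilt operation with $\hat\m\in\bar C_{\mathrm a}^{0,2}(\A,\A)$ inserted in some arguments (which also gives closure under $\dH$), and the cohomology statements follow from the $\dS$-level normalization together with the spectral sequence of the bicomplex, which is exactly the argument the paper invokes, only stated more tersely there. Your spelled-out $E_1$-comparison is a faithful elaboration of that step, not a different method.
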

\begin{proof}
 The $\mQuilt$ operations simply include $\hat\m\in \bar C_{\mathrm a}^{0,2}(\A,\A)$ in some arguments, and the conclusion is the same as for Theorems~\ref{Asimplicial algebra} and \ref{Reduced thm}. In particular, this is also why these are closed under $\dH$ and are thus subcomplexes.
 
As mentioned before, $C^{\bullet,\bullet}(\A,\A)$, with coboundary $\dS$, is the direct sum of the reduced subcomplex and an acyclic subcomplex. The spectral sequence shows that this is also true for the total complex with coboundary $\delta$. The same is true for the asimplicial subcomplexes.
\end{proof}
\begin{remark}
This reduced subcomplex is normalized with respect to the cosimplicial structure coming from the nerve of $\Xc$.
The ``normalized'' subcomplex in \cite{g-s1988b}  corresponds to the familiar normalized subcomplex for Hochschild cohomology of a unital algebra, but is only available for a diagram of \emph{unital} algebras, since only then is $C^{\bullet,\bullet}(\A,\A)$ bicosimplicial.
\end{remark}

\subsection{Boundary computation}
To compute $\ad_\Delta Q$ in practice, we will need a few ways of modifying $Q$.
\begin{definition}
Let $Q\in\Qcat(n)$. 

Given $u\in\com{n}$, define $\Tree_{Q^{(1)}_u}$ by attaching $n+1$ as the parent of $u$ in $\Tree_Q$. If $u$ already had a parent, then that becomes the parent of $n+1$. Define $\W_{Q^{(1)}_u}$ by inserting $n+1$ before the first $u$ in $\W_Q$. See Figure~\ref{Modification1}.

Given a corner $\underline u\in\Place_Q$, define $\Tree_{Q^{(2)}_{\underline u}}$ by attaching $n+1$ to $u$ at $\underline u$. Define $\W_{Q^{(2)}_{\underline u}}$ by inserting $n+1$ after the last $u$ in $\W_Q$. See Figure~\ref{Modification2}.
\begin{figure}
\begin{minipage}[b]{.49\textwidth}
\[
\raisebox{-.5\height}{\begin{tikzpicture}[scale=.7,node font=\footnotesize]
\node (a){}
child {node (b){$u$}
 child {node (c){}} 
 child {node (d){}}};
\end{tikzpicture}}
\to
\raisebox{-.5\height}{\begin{tikzpicture}[scale=.7,node font=\footnotesize]
\node (a){}
child {node (z){$n+1$}
child {node (b){$u$}
 child {node (c){}}
 child {node (d){}}}};
\end{tikzpicture}}
\]
\caption{Construction of $Q^{(1)}_u$.\label{Modification1}}
\end{minipage}
\hfill
\begin{minipage}[b]{.49\textwidth}
\[
\raisebox{-.5\height}{\begin{tikzpicture}[scale=.7,node font=\footnotesize]
\node (a){}
child {node (b){$u$}
 child {node (c){}}
 child {node (d){}}};
\end{tikzpicture}}
\to
\raisebox{-.5\height}{\begin{tikzpicture}[scale=.7,node font=\footnotesize]
\node (a){}
child {node (b){$u$}
 child {node (c){}}
 child {node (z){$n+1$}}
 child {node (d){}}};
\end{tikzpicture}}
\]
\caption{Construction of $Q^{(2)}_{\underline u}$.\label{Modification2}}
\end{minipage}
\end{figure}

Given $u\in\com{n}$, let  $v$ be is its $\vartriangleleft_Q$-first child. Define $\Tree_{Q^{(3)}_u}$ by removing the edge $(u,v)$ and attaching $n+1$ as the parent of $v$ and $u$. If $u$ already has a parent, then that becomes the parent of $n+1$. Define $\W_{Q^{(3)}_u}$ by inserting $n+1$ before the first $u$. See Figure~\ref{Modification3}.
\begin{figure}
\begin{minipage}[b]{.49\textwidth}
\[
\raisebox{-.5\height}{\begin{tikzpicture}[scale=.7,node font=\footnotesize]
\node (a){}
child {node (b){$u$}
 child {node (c){$v$}}
 child {node (d){}}
 child {node (e){}}};
\end{tikzpicture}}
\to
\raisebox{-.5\height}{\begin{tikzpicture}[scale=.7,node font=\footnotesize]
\node (a){}
child {node (z){$n+1$}
 child {node (c){$v$}}
child {node (b){$u$}
 child {node (d){}}
 child {node (e){}}}};
\end{tikzpicture}}
\]
\caption{Construction of $Q^{(3)}_u$.\label{Modification3}}
\end{minipage}
\begin{minipage}[b]{.49\textwidth}
\[
\raisebox{-.5\height}{\begin{tikzpicture}[scale=.7,node font=\footnotesize]
\node (a){}
child {node (b){$u$}
 child {node (c){}}
 child {node (d){}}
 child {node (e){$w$}}};
\end{tikzpicture}}
\to
\raisebox{-.5\height}{\begin{tikzpicture}[scale=.7,node font=\footnotesize]
\node (a){}
child {node (z){$n+1$}
child {node (b){$u$}
 child {node (d){}}
 child {node (e){}}}
  child {node (c){$w$}}};
\end{tikzpicture}}
\]
\caption{Construction of $Q^{(4)}_u$.\label{Modification4}}
\end{minipage}
\end{figure}

Given $u\in\com{n}$, let $w$ be is its $\vartriangleleft_Q$-last child. Define $\Tree_{Q^{(4)}_u}$ by removing the edge $(u,w)$ and attaching $n+1$ as the parent of  $u$ and $w$. If $u$ already has a parent, then that becomes the parent of $n+1$.  Define $\W_{Q^{(3)}_u}$ by inserting $n+1$ before the first $u$. See Figure~\ref{Modification4}.

Given $u\in\com{n}$ and $\vartriangleleft_Q$-consecutive children $v,w\in\com{n}$, define $\Tree_{Q^{(5)}_{u,v,w}}$ by attaching $n+1$ as a child of $u$ and the parent of $v$ and $w$. Define $\W_{Q^{(5)}_{u.v,w}}$ by inserting $n+1$ after the last $u$. See Figure~\ref{Modification5}.
\begin{figure}
$
\raisebox{-.5\height}{\begin{tikzpicture}[scale=.7,node font=\footnotesize]
\node (a){}
child {node (b){$u$}
 child {node (c){}}
 child {node (d){$v$}}
 child {node (e){$w$}}
 child {node (f){}}};
\end{tikzpicture}}
\to
\raisebox{-.5\height}{\begin{tikzpicture}[scale=.7,node font=\footnotesize]
\node (a){}
child {node (b){$u$}
 child {node (c){}}
 child {node (z){$n+1$}
 child {node (d){$v$}}
 child {node (e){$w$}}}
 child {node (f){}}};
\end{tikzpicture}}
$
\caption{Construction of $Q^{(5)}_{u,v,w}$.\label{Modification5}}
\end{figure}
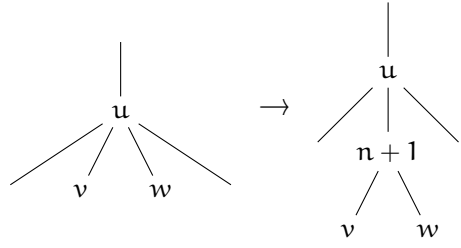
\end{definition}

I will not call upon the following result explicitly, but it stands behind many of the calculations in the next section.
\begin{lemma}
Given $Q\in\Qcat(n)$,
\[
\ad_\Delta Q = \sum_{u} (-1)^{1+\deg Q} \left(Q^{(3)}_u+Q^{(4)}_u\right)\circ_{n+1}\m + \sum_{u,v,w} (-1)^{\deg Q} Q^{(5)}_{u,v,w}\circ_{n+1}\m
\]
where the sums are over the sets for which these modified quilts are defined. That is, the first sum is over $u\in\com{n}$ with children; the second sum is over pairs of successive children. 
\end{lemma}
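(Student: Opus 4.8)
The plan is to expand the adjoint action (Definition~\ref{ad def}), $\ad_\Delta Q = \Delta\circ_1 Q - (-1)^{\deg Q}\sum_{a=1}^{n}Q\circ_a\Delta$, using $\Delta = \Qtwo{\m}{1}-\Qtwo{1}{\m} = \Qtwo{1}{2}\circ_1\m - \Qtwo{1}{2}\circ_2\m$, and to evaluate the four families of compositions that appear --- $\Qtwo{\m}{1}\circ_1 Q$, $\Qtwo{1}{\m}\circ_1 Q$, and for each $a$ the compositions $Q\circ_a\Qtwo{\m}{1}$ and $Q\circ_a\Qtwo{1}{\m}$ --- by first carrying out the ordinary $\Qop$-composition with the degree-$0$ quilt $\Qtwo{1}{2}$ and then filling one vertex with $\m$. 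Since $\Qtwo{1}{2}$ has degree $0$, all of its composition shuffle-signs are $+1$ (the remark following Definition~\ref{Ext sign}), so the only signs to chase are the Koszul signs from commuting the degree-$(-1)$ elements $\Delta$ and $\m$ past the inputs, the relative minus between $\Qtwo{\m}{1}$ and $\Qtwo{1}{\m}$, and the $(-1)^{\deg Q}$ of the adjoint formula. After the composition, relation~\ref{binary axiom} keeps only the tree-extensions in which the new $\m$-vertex acquires at most two children, relations~\ref{flat1 axiom} and~\ref{flat2 axiom} discard the word-extensions in which that vertex would be repeated or would sit as $\dots v\m v\dots$, and relation~\ref{rearrangement axiom} collapses the survivors to terms depending only on the tree, which one then identifies with $Q^{(1)}_u,\dots,Q^{(5)}_{u,v,w}$ after relabelling the new vertex $n+1$. (That these modified quilts really lie in $\Qcat$ is automatic, since they arise as terms of $\Qop$-compositions, by Lemma~\ref{Quilt extension lemma}.)

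The enumeration of extensions is short because vertex $2$ of $\Qtwo{1}{2}$ is a leaf and vertex $1$ has exactly one child (Section~\ref{Computing extensions}). Writing $\rho$ for the root of $Q$, one finds: $\Qtwo{\m}{1}\circ_1 Q = \pm\, Q^{(1)}_{\rho}\circ_{n+1}\m$ (a single term, $\m$ becoming the new root above $\rho$); $\Qtwo{1}{\m}\circ_1 Q = \sum_{\underline u}\pm\,Q^{(2)}_{\underline u}\circ_{n+1}\m$, the sum over the corners of $Q$; $Q\circ_a\Qtwo{\m}{1} = \pm\bigl(Q^{(1)}_a + Q^{(3)}_a + Q^{(4)}_a\bigr)\circ_{n+1}\m$, since in the tree-extension the children of $a$ split among the three corners of $\Qtwo{1}{2}$ and ``at most binary'' leaves exactly ``all children under $a$'' (so $\m$ is unary, giving $Q^{(1)}_a$), ``first child raised'' ($Q^{(3)}_a$), and ``last child raised'' ($Q^{(4)}_a$), the last two occurring only when $a$ has children; and $Q\circ_a\Qtwo{1}{\m}$, in which the $\m$-vertex lies among $a$'s children and takes $0$, $1$, or $2$ of them, equals a signed sum of $Q^{(2)}_{\underline u}\circ_{n+1}\m$ over corners $\underline u$ of $a$, of $Q^{(1)}_v\circ_{n+1}\m$ over children $v$ of $a$, and of $Q^{(5)}_{a,v,w}\circ_{n+1}\m$ over successive pairs of children $v,w$ of $a$.

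It then remains to assemble these and track the cancellations. The corners of $a$, summed over $a$, exhaust $\Place_Q$, so the $Q^{(2)}$-terms from $\Qtwo{1}{\m}\circ_1 Q$ cancel those from the $0$-child part of $\sum_a Q\circ_a\Qtwo{1}{\m}$. Every $Q^{(1)}_v$ arises exactly twice --- for $v=\rho$ from $\Qtwo{\m}{1}\circ_1 Q$ and from $Q\circ_\rho\Qtwo{\m}{1}$, and for $v\neq\rho$ from $Q\circ_v\Qtwo{\m}{1}$ and from $Q\circ_a\Qtwo{1}{\m}$ with $a$ the parent of $v$ --- and (once the signs are resolved) with opposite coefficients, so these cancel too. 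What is left is the $Q^{(3)}_u$ and $Q^{(4)}_u$ coming from the $\Qtwo{\m}{1}$ family, with overall sign $(-1)^{1+\deg Q}$, and the $Q^{(5)}_{u,v,w}$ coming from the $\Qtwo{1}{\m}$ family, with overall sign $(-1)^{\deg Q}$ --- the two differing precisely by the minus in front of $\Qtwo{1}{\m}$ in $\Delta$ --- which is the claimed identity.

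The step I expect to be the real obstacle is the sign bookkeeping. The combinatorics of the extensions is rigidly forced by the shape of $\Qtwo{1}{2}$, but one must verify that the Koszul signs (from Lemma~\ref{Rep composition}, i.e.\ from the positions of $\m$ and of the relabelled new vertex), the internal minus of $\Delta$, and the $(-1)^{\deg Q}$ combine to give exactly $(-1)^{1+\deg Q}$ on the $Q^{(3)},Q^{(4)}$ terms and $(-1)^{\deg Q}$ on the $Q^{(5)}$ terms, and --- what is most delicate --- that the $Q^{(1)}$ and $Q^{(2)}$ contributions cancel with matching signs rather than merely matching shapes. Once that verification is complete the lemma follows.
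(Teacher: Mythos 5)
Your proposal is essentially the paper's own proof: the same expansion of $\ad_\Delta Q$ into the four families $\Qtwo{\m}{1}\circ_1 Q$, $\Qtwo{1}{\m}\circ_1 Q$, $Q\circ_u\Qtwo{\m}{1}$, $Q\circ_u\Qtwo{1}{\m}$, the same identification of their surviving terms with $Q^{(1)}_u,\dots,Q^{(5)}_{u,v,w}$ via the defining relations of $\mQuilt$, and the same cancellation of the $Q^{(1)}$ and $Q^{(2)}$ contributions. The sign verification you flag as the remaining obstacle is exactly what the paper disposes of at the outset: it factors $(-1)^{\deg Q}$ out of $\ad_\Delta Q$, rewrites each family as $(\cdots)\circ_{n+1}\m$ up to a cyclic relabelling of the new vertex, and notes that all extension signs are $+1$ since $\deg\Qtwo{1}{2}=0$, so each of the four displayed identities holds with coefficient $+1$ and the cancellations are exact as you predicted.
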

\begin{proof}
First, by direct computation,
\begin{multline*}
\begin{aligned}
(-1)^{\deg Q}\ad_\Delta Q &= (-1)^{\deg Q} \left(\Qtwo12\circ_1\m-\Qtwo12\circ_2\m\right)\circ_1Q \\
&\qquad - \sum_{u=1}^n Q\circ_u \left(\Qtwo12\circ_1\m-\Qtwo12\circ_2\m\right) \\
&=
\left(\Qtwo21\circ_1 Q - \Qtwo12\circ_1 Q\right)\circ_{n+1}\m \\
&\qquad +
\sum_{u=1}^n \left(-Q\circ_u\Qtwo21 + Q\circ_u\Qtwo12\right)\circ_{u+1}\m 
\end{aligned}
\\
=
\left[\Qtwo21\circ_1 Q - \Qtwo12\circ_1 Q + \sum_{u=1}^n \left(-Q\circ_u\Qtwo21 + Q\circ_u\Qtwo12\right)^{(u+1\dots n+1)}\right]\circ_{n+1}\m ,
\end{multline*}
where $(u+1\dots n+1)$ is the cyclic permutation taking $n+1\mapsto u+1$, \emph{et cetera}.

Now, consider each of these terms. The following signs are all positive, because $\deg\Qtwo12=0$. 
\beq
\label{delta1}
\left(\Qtwo21\circ_1Q\right)\circ_{n+1}\m = Q^{(1)}_v \circ_{n+1}\m
\eeq
where $v\in\com{n}$ is the root of $Q$.
\beq
\label{delta2}
\left(\Qtwo12\circ_1Q\right)\circ_{n+1}\m = \sum_{\underline u\in\Place_Q} Q^{(2)}_{\underline u}\circ_{n+1}\m .
\eeq
\beq
\label{delta3}
\left(Q\circ_u\Qtwo21\right)^{(u+1\dots n+1)}\circ_{n+1}\m
= \left(Q^{(1)}_u + Q^{(3)}_u+Q^{(4)}_u\right) \circ_{n+1}\m
\eeq
where the latter 2 terms only occur if $u$ has children.
\begin{multline}
\label{delta4}
\left(Q\circ_u\Qtwo12\right)^{(u+1\dots n+1)}\circ_{n+1}\m = \\
\sum_{\substack{\underline a\in\Place_Q,\\ a=u}} Q^{(2)}_{\underline a}\circ_{n+1}\m + 
\sum_{(u,v)\in\E_Q} Q^{(1)}_v\circ_{n+1}\m +
\sum_{\substack{(u,v),(u,w)\in\E_Q\\\text{consecutive}}} Q^{(5)}_{u,v,w} \circ_{n+1}\m .
\end{multline}
When summed over $u$, the first term on the right of eq.~\eqref{delta4} becomes a sum over all of $\Place_Q$ and cancels with \eqref{delta2}. The second term becomes a sum over all $v\in\com{n}$ except the root; this combines with \eqref{delta1} to give a sum over all vertices and cancels with the first term of \eqref{delta3}. The third term adds up to a sum over all pairs of consecutive children.
\end{proof}

\section{Gerstenhaber Algebra up to Homotopy}
\label{Gerstenhaber}
In this section, I present a direct and explicit  proof that the Hochschild cohomology and asimplicial cohomology of a diagram of algebras are Gerstenhaber algebras. For each identity that the product and bracket should satisfy, there is an explicit homotopy on cochains defined by the action of an element of $\mQuilt$.

First, define some particularly useful elements of $\mQuilt$.
\begin{definition}
\begin{align*}
M_2 &:= \Qfive\m1\blank\blank2, 
&
P_2 &:= \Qtwo12 + \Qthreea\m21,
&
L_2 := P_2 - P_2^{(12)}\ .
\end{align*}
\end{definition}
\begin{remark}
$M$ stands for ``multiplication'', $L$  for ``Lie'', and $P$ for ``pre-Lie''.
\end{remark}

By direct computation, $\widehat{M_2}(f,g) = (-1)^{\abs f}f\smile g$, where $\smile$ is the cup product of cochains \cite{g-s1988a,g-s1988b,haw2018}, so $\widehat{M_2}$ is the degree shifted cup product. Likewise, $\widehat{P_2}(f,g) = f\comp g$ is the generalized composition product, so $\widehat{L_2}(f,g) = f\comp g - (-1)^{\abs f \abs g}g\comp f = [f,g]$ is the generalized Gerstenhaber bracket.

As we shall see, this cup product and bracket induce a Gerstenhaber algebra structure on Hochschild cohomology. The operations $M_2$ and $L_2$ satisfy the relations of the (shifted) Gerstenhaber operad either exactly or up to homotopy.

The simplest relation is associativity, which looks slightly unfamiliar because of the degree shift.
\begin{lemma}
\label{Associative}
$0=M_2\circ_1 M_2 + M_2\circ_2 M_2$.
\end{lemma}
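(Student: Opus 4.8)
The plan is to compute the two partial compositions explicitly as elements of $\mQuilt$ and to check that the resulting quilts cancel, the cancellation being forced by the associativity relation, Relation~\ref{MC axiom}.

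First I would record the structure of $M_2$. Let $K\in\Qcat(3)$ be the quilt whose tree is the corolla with root $1$ and leaves $2\vartriangleleft 3$ and whose word is $\W_K=123$, so that $\deg K=0$; then $M_2=\Qfive\m1\blank\blank2=K\circ_1\m$, and hence $\deg M_2=\deg K+\deg\m=-1$. This degree is precisely what will turn the expected ``$=$'' of associativity into the ``$+$'' of the stated relation. Next I would expand $M_2\circ_1 M_2$ and $M_2\circ_2 M_2$ using $M_2=K\circ_1\m$ and the operad axioms, with the Koszul sign convention of Lemma~\ref{Rep composition}. The two real inputs of $M_2$ correspond to the leaves $2$ and $3$ of $K$; each occurs exactly once in $\W_K$ and has no children in $\Tree_K$, so the tree extension (Definition~\ref{Tree extension}) and the word extension (Definition~\ref{Word extension}) are unique in each case. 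Therefore each of $M_2\circ_1 M_2$ and $M_2\circ_2 M_2$ is a \emph{single} quilt carrying two $\m$-labelled vertices, namely the ``left-nested'' triple product $\m(\m(1,2),3)$ and the ``right-nested'' triple product $\m(1,\m(2,3))$; labelling the two $\m$'s as $4,5$, their words come out as $45123$ and $41523$ and their trees as $4\to5,\ 4\to3,\ 5\to1,\ 5\to2$ and $4\to1,\ 4\to5,\ 5\to2,\ 5\to3$.

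It then remains to show the sum of these two single quilts vanishes, and this is the step where associativity of $\m$ is used: $\Qtwo12\circ(\m,\m)=\Qtwo\m\m=0$ by Relation~\ref{MC axiom}. I would rewrite one of the two quilts as the negative of the other by propagating this relation through the composition, using the rearrangement relation (Relation~\ref{rearrangement axiom}) to reconcile the $\downarrow$-orderings of the $\m$-vertices where necessary. I expect the only real obstacle to be the sign bookkeeping: one must track the Koszul signs produced when $\m$ (degree $-1$) is commuted past $M_2$ (degree $-1$) during the unnesting of the composites and inside the sign-shuffle of Definition~\ref{Sign shuffle}, and verify that they combine to give relative sign $-1$ between the two quilts. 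This mirrors the representation-level identity, valid for any diagram of algebras, $\widehat{M_2}(\widehat{M_2}(f,g),h)+(-1)^{\abs f}\widehat{M_2}(f,\widehat{M_2}(g,h))=(-1)^{\abs g}\bigl[f\smile(g\smile h)-(f\smile g)\smile h\bigr]=0$, which holds by strict associativity of the cup product.
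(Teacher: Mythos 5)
Your reduction is correct as far as it goes, and it matches the paper's starting point: since each leaf of the underlying quilt of $M_2$ occurs exactly once in its word and has no children, both $M_2\circ_1 M_2$ and $M_2\circ_2 M_2$ are single quilts decorated with two $\m$-vertices, and your explicit words $45123$, $41523$ and trees for the left- and right-nested products are right. The gap is in the step you leave as a plan. The two decorated quilts have \emph{different underlying trees} (in one the non-root $\m$ has children $1,2$ and the root's other child is $3$; in the other it has children $2,3$ and the root's other child is $1$), and Relation~\ref{mQuilt def}\eqref{rearrangement axiom} only identifies $Q\circ_u\m$ with $P\circ_u\m$ when $\Tree_Q=\Tree_P$: it moves a single $\m$ within the word but can never change the tree, so ``rewriting one quilt as the negative of the other'' by rearrangement cannot succeed. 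The appeal to the representation-level identity $\widehat{M_2}(\widehat{M_2}(f,g),h)+\cdots=0$ also proves nothing here, because the lemma is an identity in the operad $\mQuilt$ itself and $\Rep$ is not claimed to be faithful.

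What is actually needed --- and what the paper does --- is to exhibit the sum as the expansion of a composition with the zero element $\Qtwo\m\m$. Take $R\in\Qcat(4)$ with corolla tree (root $4$, children $1\vartriangleleft2\vartriangleleft3$) and word $4123$, and expand $0=R\circ_4\bigl(\Qtwo{1}{2}\circ(\m,\m)\bigr)=R\circ_4\Qtwo\m\m$ using Relation~\ref{mQuilt def}\eqref{MC axiom}. Since $4$ occurs once in $\W_R$ but has three children, there are $\binom{5}{3}=10$ extensions (all with sign $+1$, as $\deg\Qtwo{1}{2}=0$); in eight of them one of the two new $\m$-vertices acquires three children, so those terms die by Relation~\ref{mQuilt def}\eqref{binary axiom}, and the two survivors are precisely your two quilts, i.e.\ $M_2\circ_1M_2$ and $M_2\circ_2M_2$. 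Identifying this parent quilt and checking which extensions survive is the substantive content of the proof, and it is missing from your proposal; the ``propagation'' you describe has no mechanism other than this expansion.
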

\begin{proof}
By Relation~\ref{mQuilt def}\eqref{MC axiom},
\begin{align*}
0 &= 
\Qwrap{\begin{array}{|c|c|c|}
\hline
\multicolumn{3}{|c|}{4}\\
\hline
1 & \blank & \blank \\
\hline
\blank & 2 & \blank \\
\hline
\blank & \blank & 3\\
\hline
\end{array}}
\circ_4\Qtwo\m\m
=
\Qwrap{\begin{array}{|c|c|c|}
\hline
\multicolumn{3}{|c|}{\m}\\
\hline
\multicolumn{2}{|c|}{\m} & \blank\\
\hline
1 & \blank & \blank \\
\hline
\blank & 2 & \blank \\
\hline
\blank & \blank & 3\\
\hline
\end{array}}
+
\Qwrap{\begin{array}{|c|c|c|}
\hline
\multicolumn{3}{|c|}{\m}\\
\hline
\blank &\multicolumn{2}{|c|}{\m} \\
\hline
1 & \blank & \blank \\
\hline
\blank & 2 & \blank \\
\hline
\blank & \blank & 3\\
\hline
\end{array}}
\\
&= \Qfive\m1\blank\blank2 \circ_1 \Qfive\m1\blank\blank2 + \Qfive\m1\blank\blank2 \circ_2 \Qfive\m1\blank\blank2
= M_2\circ_1 M_2 + M_2\circ_2 M_2
\end{align*}
In the first line, another 8 terms vanish because of Relation~\ref{mQuilt def}\eqref{binary axiom}.
\end{proof}

Next is homotopy commutativity of the cup product. Again, this looks different because of the degree shift.
\begin{lemma}
\label{Commutative lem}
$M_2+M_2^{(12)} = - \partial' P_2$
\end{lemma}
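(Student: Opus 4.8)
The plan is to compute $\partial' P_2 = (\partial + \ad_\Delta)P_2$ directly and check that it equals $-(M_2 + M_2^{(12)})$. Write $P_2 = \Qtwo12 + \Qthreea\m21$. The first summand $\Qtwo12$ has degree $0$, so $\partial \Qtwo12 = 0$; for the second summand, $\Qthreea\m21 = \Qthreeb\blank\m1\circ(\ldots)$ comes from a degree-$1$ quilt by composing with $\m$, and I would compute $\partial$ of the underlying quilt word $1232$ (whose boundary was worked out in the examples: $\partial 1232$-type words have the form computed in the ``$\Qthreea132$'' example) and then note how $\partial$ interacts with $\circ_?\,\m$. In fact the cleanest route is: since $\partial\m = 0$ and $\partial$ is a derivation, $\partial(Q\circ_u\m) = (\partial Q)\circ_u\m$ up to sign, so I only need $\partial$ of the degree-$1$ quilt $\Qthreea132$ and then apply Relations \ref{mQuilt def}\eqref{flat1 axiom}--\eqref{flat2 axiom} to kill the terms where the relevant vertex becomes repeated or sandwiched, and Relation~\eqref{binary axiom} to kill terms with a vertex of valence $>2$.

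Next I would compute $\ad_\Delta P_2 = \Delta\circ_1 P_2 + (-1)^{\deg P_2}\sum_u P_2\circ_u\Delta$ (with $\deg P_2 = 0$) using the explicit description $\Delta = \Qtwo\m1 - \Qtwo1\m$, exactly as in the proof that $(\ad_\Delta)^2 = 0$ and in the boundary-computation Lemma. Concretely, $\ad_\Delta$ applied to each of the two terms of $P_2$ produces quilts with one extra $\m$-labelled vertex; I would expand each $\Delta\circ_1(-)$ and $(-)\circ_u\Delta$ as a sum over extensions of words (using the ``computing extensions'' recipe) and trees, apply the five $\mQuilt$ relations to discard or identify terms, and track Koszul signs via Lemma~\ref{Rep composition}'s convention. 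I expect massive cancellation: the ``spurious'' terms from $\partial(\Qthreea\m21)$ should cancel against terms of $\ad_\Delta(\Qtwo12)$ and $\ad_\Delta(\Qthreea\m21)$, by the same mechanism that made $\Delta\circ_1\Delta = \Qtwo12\circ_1\Qtwo\m\m = 0$. What should survive is precisely $-\Qfive\m1\blank\blank2 - \Qfive\m2\blank\blank1$, i.e.\ $-(M_2 + M_2^{(12)})$, after using Relation~\eqref{rearrangement axiom} to put the $\m$'s in canonical position.

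The main obstacle will be bookkeeping: correctly enumerating all the word-extensions appearing in $\Delta\circ_1 P_2$ and $P_2\circ_u\Delta$, assigning the right shuffle signs from Definition~\ref{Ext sign} (most are trivial here since $\deg\Delta = 1$ and the other factors have degree $0$ or $1$, which by the remark after Definition~\ref{Ext sign} forces many signs to $+1$), and then seeing which terms are annihilated by Relations \eqref{binary axiom}, \eqref{flat1 axiom}, \eqref{flat2 axiom} versus which are merged by \eqref{rearrangement axiom}. A secondary subtlety is the overall sign in $\partial' = \partial + \ad_\Delta$ together with the $(-1)^{\deg Q}$ factors in Lemma~\ref{Rep differential}/Definition~\ref{ad def}, which must be reconciled with the claimed sign $-\partial' P_2$; I would fix this by checking one low-complexity term (e.g.\ the coefficient of $\Qfive\m1\blank\blank2$ itself) carefully and trusting the derivation property for the rest. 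Once the algebra is organized as ``$\partial$-terms $+$ $\ad_\Delta$-terms,'' the identity should fall out as an equality of explicit $\mQuilt$-elements, with every non-surviving term justified by one of the five defining relations.
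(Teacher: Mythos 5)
Your plan is essentially the paper's own proof: the paper computes $\partial'$ of each summand of $P_2$ directly, getting $\partial'\Qtwo12=\ad_\Delta\Qtwo12=-M_2-\Qfive\m\blank12\blank$ and $\partial'\Qthreea\m21=\partial\Qthreea\m21=-M_2^{(12)}+\Qfive\m\blank12\blank$, so the ``massive cancellation'' you anticipate is in fact just the single cross term $\Qfive\m\blank12\blank$, and your sketch completes in exactly this way. One small correction to your setup: by Definition~\ref{ad def}, $\ad_\Delta P_2=\Delta\circ_1P_2-(-1)^{\deg P_2}\sum_u P_2\circ_u\Delta$ (minus, not plus, and $\deg P_2=0$), a sign slip that the low-complexity spot-check you propose (the coefficient of $M_2$) would indeed catch.
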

\begin{proof}
\[
\partial' \Qtwo12 = \ad_\Delta \Qtwo12 = - \Qfive\m1\blank\blank2 - \Qfive\m\blank12\blank
= -M_2 - \Qfive\m\blank12\blank
\]
\[
\partial' \Qthreea\m21 = \partial \Qthreea\m21 
= - \Qfive\m2\blank\blank1 + \Qfive\m\blank12\blank
= - M_2^{(12)} + \Qfive\m\blank12\blank
\]
\end{proof}
\begin{remark}
Alternatively, the cup product could have been defined using
\[
\Qfive\m\blank21\blank ,
\]
which is homotopic to $M_2$  by 
\[
\Qthreea\m12 .
\]
$M_2$ was chosen for consistency with \cite{g-s1988b}.
\end{remark}

In the case of a single algebra, the Gerstenhaber bracket satisfies the Jacobi identity exactly. Here, it is only satisfied up to homotopy. 
\begin{definition}
\label{P3}
\begin{align*}
P'_3 &:= \Qthreea132 + \Qfoura\m123 ,
&
L_3 &:= \sum_{\sigma\in\perm_3}\sgn(\sigma)\,{P'_3}^\sigma .
\end{align*}
\end{definition}
\begin{remark}
I am avoiding a clash of notation with a slightly different $P_3$ later (Def.~\ref{P def}).
\end{remark}
\begin{remark}
The following is a special case of Theorem~\ref{Linfty thm2} below. I am presenting it here for the sake of the explicit proof.
\end{remark}
\begin{lemma}
\label{Jacobi lem}
The Jacobiator of $L_2$ is
\[
L_2\circ_1L_2 + (L_2\circ_1L_2)^{(123)} + (L_2\circ_1L_2)^{(321)} = \partial' L_3 .
\qedhere
\]
\end{lemma}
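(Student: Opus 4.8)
The plan is to mirror the classical argument that a graded pre-Lie product induces a Lie bracket with vanishing Jacobiator, the new feature being that $P_2$ (which represents the composition operation $\comp$, with $\widehat{L_2}=[\cdot,\cdot]$ its commutator) is only pre-Lie \emph{up to the explicit homotopy} $P'_3$. The first step is a purely operadic identity: for any graded operad and any element $B$ of arity $2$ and degree $0$, setting $L:=B-B^{(12)}$ and $A:=B\circ_1 B-B\circ_2 B$, one has
\[
L\circ_1 L+(L\circ_1 L)^{(123)}+(L\circ_1 L)^{(321)}=\sum_{\sigma\in\perm_3}\sgn(\sigma)\,A^\sigma .
\]
This is the familiar bookkeeping ``cyclic sum of double brackets equals alternating sum of associators''; because $\deg B=0$ no Koszul signs are generated, so it reduces to expanding $L\circ_1 L$ into four terms and using $L^{(12)}=-L$. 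Taking $B=P_2$, the left-hand side is exactly the Jacobiator of $L_2$.

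Since $\partial'=\partial+\ad_\Delta$ is an operadic derivation, it commutes with the $\perm$-action, so $\sum_{\sigma\in\perm_3}\sgn(\sigma)(\partial'P'_3)^\sigma=\partial'\bigl(\sum_{\sigma\in\perm_3}\sgn(\sigma){P'_3}^\sigma\bigr)=\partial' L_3$. Hence the lemma is equivalent to $\sum_{\sigma\in\perm_3}\sgn(\sigma)\bigl(A-\partial'P'_3\bigr)^\sigma=0$, and a sufficient condition is that the associator defect $A-\partial'P'_3$ be fixed by the transposition $(23)$ swapping the labels of vertices $2$ and $3$ (any element of a symmetric sequence fixed by a transposition has vanishing alternation over $\perm_3$). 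So the reduced goal is the homotopy pre-Lie relation
\[
A-A^{(23)}=\partial'\bigl(P'_3-{P'_3}^{(23)}\bigr),
\]
equivalently that $A-\partial'P'_3$ is symmetric in its second and third inputs.

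I would establish this by direct expansion in $\mQuilt$. Writing $P_2=\Qtwo12+\Qthreea\m21$, expand the eight partial compositions that make up $A$ using the combinatorial description of extensions --- word extensions (Definition~\ref{Word extension}) and tree extensions (Definition~\ref{Tree extension}) subject to the quilt conditions (Definition~\ref{Quilt def}) --- and then use Relations~\ref{mQuilt def}\eqref{MC axiom}--\eqref{rearrangement axiom} to discard or normalise the terms carrying $\m$: any $m$-quilt in which $\m$ sits on a vertex with more than two children, a repeated vertex, or a $vuv$-pattern vanishes, and the positions of the $\m$'s inside a word can be brought to a standard form. On the other side, expand $\partial P'_3$ (a derivation, with $\partial\m=0$) via the word boundary of Definition~\ref{word boundary} together with the sign rule of Definition~\ref{Face sign}, and expand $\ad_\Delta P'_3=\Delta\circ_1 P'_3-(-1)^{\deg P'_3}P'_3\circ'\Delta$ using the explicit formula for $\ad_\Delta Q$ in terms of the modifications $Q^{(3)}_u,Q^{(4)}_u,Q^{(5)}_{u,v,w}$. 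Comparing the two expansions, the genuinely $(23)$-asymmetric contributions should match, while the residual $(23)$-symmetric terms --- the double-brace terms of the form $x\{y,z\}+x\{z,y\}$, the cup-product terms, and the associativity terms killed by Relation~\ref{mQuilt def}\eqref{MC axiom} --- drop out of the alternation.

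The main obstacle is exactly this last comparison, and within it the $\ad_\Delta P'_3$ piece: tracking the shuffle signs $\sgn_Q(\bzeta,I)$ that control how $\Delta$ composes, and checking that every use of the vanishing and rearrangement relations is legitimate --- the same kind of bookkeeping that appears in the proof that $(\ad_\Delta)^2=0$ and in Lemma~\ref{Commutative lem}. A point to verify rather than assume is that $P'_3=\Qthreea132+\Qfoura\m123$ is in fact the correct homotopy, i.e.\ that $A-\partial'P'_3$ is honestly $(23)$-symmetric; if that symmetry fails one falls back to checking $\sum_{\sigma\in\perm_3}\sgn(\sigma)(A-\partial'P'_3)^\sigma=0$ directly, or, bypassing the reduction altogether, to expanding both sides of the stated identity term by term.
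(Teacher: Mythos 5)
Your opening reduction is the same as the paper's: the Jacobiator of $L_2$ is the full antisymmetrization over $\perm_3$ of the associator $A=P_2\circ_1P_2-P_2\circ_2P_2$, and $\partial'L_3$ is the antisymmetrization of $\partial'P'_3$, so it suffices that $\sum_{\sigma\in\perm_3}\sgn(\sigma)\,(A-\partial'P'_3)^\sigma=0$. The gap is the step you then lean on: the defect $A-\partial'P'_3$ is \emph{not} fixed by $(23)$, so the ``reduced goal'' $A-A^{(23)}=\partial'\bigl(P'_3-{P'_3}^{(23)}\bigr)$ that you propose to verify by expansion is false. Carrying out the expansions (as the paper does) one finds
\begin{multline*}
A-\partial'P'_3 = \Qfive12\blank\blank3+\Qfive13\blank\blank2+\Qfive\m31\blank2+\Qfive\m21\blank3
+\Qfive\m213\blank+\Qfive\m123\blank-\Qfourb\m213-\Qfourb\m123\\
-\Qeightb\m\blank\m3\blank1\blank2+\Qeightb\m\blank\m1\blank2\blank3+\Qeightb\m\blank\m\blank213\blank+\Qeightb\m\blank\m\blank123\blank .
\end{multline*}
The first two pairs are $(23)$-symmetric, but $\Qfive\m213\blank+\Qfive\m123\blank$ and $-\Qfourb\m213-\Qfourb\m123$ are $(12)$-symmetric pairs whose images under $(23)$ (e.g.\ $\Qfive\m312\blank+\Qfive\m132\blank$) do not occur and cannot be produced by the $\mQuilt$ relations (here $\m$ sits at the root, whose position in the word is forced, so Relation~\ref{mQuilt def}\eqref{rearrangement axiom} gives nothing), while the two $\Qeightb$ pairs are matched by $3$-cycles. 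So the defect is killed by antisymmetrization term by term, for \emph{varying} permutations, but it is not $(23)$-invariant; your sufficient condition fails for the $P'_3$ at hand.

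What is then left is exactly your fallback, and it is the entire content of the lemma: expand $A$ and $\partial'P'_3$ in $\mQuilt$ (using Relations~\ref{mQuilt def}\eqref{MC axiom} and \eqref{binary axiom} to rewrite the double-$\m$ terms), check that every term of $\partial'P'_3$ equals a term of $A$ composed with some permutation and multiplied by its sign --- the identity, $(12)$, $(23)$ and a $3$-cycle all occur --- and that the two leftover associator terms $\Qfive\m31\blank2+\Qfive\m21\blank3$ form a $(23)$-symmetric pair annihilated by the antisymmetrization. That is the paper's proof. Your proposal names the right machinery (word/tree extensions, the $\mQuilt$ relations, the $\ad_\Delta$ formula) but does not execute this comparison, and the reduction that was meant to replace it does not hold; as written the argument has a genuine gap at its central step.
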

\begin{proof}
The Jacobiator of $L_2$ is the alternating sum of all permutations of the associator of $P_2$. Using the identity,
\[
0 = \Qfourd4321\circ_4\Qtwo\m\m = \Qsixe\m\m\blank321+\Qsixc\m\blank\m321
= \Qfivec\m\m132+\Qsixc\m\blank\m321,
\]
the associator is
\begin{multline*}
P_2\circ_1P_2 - P_2 \circ_2 P_2 = \Qfive12\blank\blank3+\Qfive1\blank23\blank+\Qfive\m31\blank2+\Qfive\m21\blank3+\Qfive\m\blank132+\Qfive\m213\blank \\
 - \Qfourb\m213 - \Qfourc1\m32 
- \Qeightb\m\blank\m3\blank1\blank2
+\Qeightb\m\blank\m\blank213\blank .
\end{multline*}
Note that the third and fourth terms cancel when this is antisymmetrized. 

To compute $\partial' P'_3$, we need  
\beq
\label{dP1}
\partial' \Qthreea132 = 
\Qfive1\blank23\blank - \Qfive13\blank\blank2 + \Qfive\m1\blank32 - \Qfourc1\m32 + \Qfive\m\blank132
\eeq
and, using $0 = \Qfoura4123\circ_4\Qtwo\m\m$,
\beq
\label{dP2}
\partial'\Qfoura\m123 = - \Qfive\m1\blank32 + \Qfourb\m123 - \Qfive\m123\blank + \Qeighta\m\m\blank1\blank2\blank3
+ \Qeighta\m\m\blank\blank123\blank .
\eeq
Adding these together, the third term of \eqref{dP1} cancels the first term of \eqref{dP2}. The last 2 terms of \eqref{dP2} can be rewritten using Relations \ref{mQuilt def}\eqref{MC axiom} and \ref{mQuilt def}\eqref{binary axiom} as above. This leaves
\[
\partial' P'_3 = \Qfive1\blank23\blank - \Qfive13\blank\blank2  - \Qfourc1\m32 + \Qfive\m\blank132 + \Qfourb\m123 - \Qfive\m123\blank
-\Qeightb\m\blank\m1\blank2\blank3
- \Qeightb\m\blank\m\blank123\blank .
\]
Each term here corresponds to a term of the associator. Either the terms are equal, or they differ by a permutation and the sign of that permutation. As already noted, the two other terms of the associator cancel each other when antisymmetrized. This means that the antisymmetrization of the associator equals the antisymmetrization of $\partial' P'_3$.
\end{proof}

Finally, we need to show that the Gerstenhaber bracket, $L_2$ is a derivation of $M_2$ up to homotopy. In the case of a single algebra, this proof breaks naturally into two pieces; the composition product  is exactly a derivation of the cup product in its first argument but only a derivation up to homotopy in its second argument. In the present case, these properties are both only true up to homotopy, but it is still convenient to deal with them separately.

First,  consider how close  $P_2$ is to being a derivation of $M_2$ in its first argument.
\begin{definition}
\[
C_3 :=  \Qfive\m1\blank32
\]
\end{definition}
\begin{lemma}
\label{Derivation lem1}
$P_2\circ_1 M_2 - (M_2\circ_1 P_2)^{(23)} - M_2\circ_2 P_2 = 
\partial' C_3$.
\end{lemma}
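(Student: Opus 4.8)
The plan is to verify the identity by direct computation, following the same template as the proofs of Lemmata~\ref{Commutative lem} and \ref{Jacobi lem}. The left-hand side is built from degree-$0$ quilts ($P_2$, $M_2$) together with a single extra $\m$, so everything lives in the span of diagrams of the form $Q\circ(\id,\dots,\id,\m,\dots,\m)$, and we only need to match terms with consistent signs. First I would write out the three composites on the left. Since $P_2 = \Qtwo12 + \Qthreea\m21$ and $M_2 = \Qfive\m1\blank\blank2$, each of $P_2\circ_1 M_2$, $(M_2\circ_1 P_2)^{(23)}$, and $M_2\circ_2 P_2$ expands into a handful of diagrams; some of these vanish by Relations~\ref{mQuilt def}\eqref{binary axiom}, \eqref{flat1 axiom}, or \eqref{flat2 axiom} (a vertex with too many children, or a repeated vertex receiving an $\m$), and others get simplified using Relation~\ref{rearrangement axiom} to move the $\m$-labelled vertices into $\downarrow$-order.

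Next I would compute $\partial' C_3 = \partial C_3 + \ad_\Delta C_3$ for $C_3 = \Qfive\m1\blank32$. The $\partial$ part acts only on the word of the underlying quilt $\Qthreea132$-shaped piece (the $\m$ has $\partial\m = 0$), giving a signed sum of faces; the $\ad_\Delta$ part is computed as in the preceding Boundary computation subsection, i.e.\ $\ad_\Delta C_3 = \Delta\circ_1 C_3 - (-1)^{\deg C_3}\sum_a C_3\circ_a\Delta$, where inserting $\Delta = \Qtwo\m1 - \Qtwo1\m$ produces quilts with an additional $\m$ that one again reduces using Relations~\ref{mQuilt def}\eqref{MC axiom}--\eqref{rearrangement axiom}. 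Terms of the form $\Qtwo12\circ\Qtwo\m\m$ drop out by the associativity relation \eqref{MC axiom}, exactly as in the identities $0 = \Qfourd4321\circ_4\Qtwo\m\m$ used in the proof of Lemma~\ref{Jacobi lem}.

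Then I would match the two sides diagram-by-diagram. The expected mechanism: after all reductions, $\partial' C_3$ is a signed sum of diagrams each of which coincides (possibly after a relabelling permutation, contributing the appropriate sign) with one of the surviving diagrams in the expansion of $P_2\circ_1 M_2 - (M_2\circ_1 P_2)^{(23)} - M_2\circ_2 P_2$, and the leftover diagrams pair off and cancel among themselves. The bookkeeping of signs is governed by the Koszul convention built into $\circ_a$ and the permutation action, together with the sign conventions in $\partial$ (Definition~\ref{Face sign}); since all pieces here have $\deg = 0$ except for the single $\m$ of degree $-1$, most shuffle signs are trivial by the remark following Definition~\ref{Ext sign}, which keeps the sign computations manageable.

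The main obstacle I anticipate is precisely this sign bookkeeping: getting the three correction terms on the left with exactly the right permutations $(23)$ and the right overall signs so that they cancel against $\partial C_3$ and the $\Delta\circ_1 C_3$, $C_3\circ_a\Delta$ pieces of $\ad_\Delta C_3$, while being careful that the $\m$-vertices are always presented in $\downarrow$-order before comparing diagrams (otherwise spurious signs appear). A secondary subtlety is making sure every diagram that looks like it should vanish actually does so by one of Relations~\eqref{binary axiom}, \eqref{flat1 axiom}, \eqref{flat2 axiom}, rather than merely being small; this requires checking the child-count and word-repetition conditions on each intermediate quilt. Once those two points are handled, the identity falls out by inspection, just as in Lemmata~\ref{Commutative lem} and \ref{Jacobi lem}.
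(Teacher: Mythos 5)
Your proposal follows essentially the same route as the paper: the paper's proof is exactly the direct computation you describe, expanding $P_2\circ_1 M_2$, $M_2\circ_1P_2$, $M_2\circ_2P_2$ and $\partial' C_3$ into $\m$-decorated quilt diagrams, simplifying via Relations \eqref{MC axiom}--\eqref{rearrangement axiom} of Definition~\ref{mQuilt def}, and cancelling term by term (in the paper only a handful of diagrams survive on each side, and one term of $\partial'C_3$ is rewritten by Relation~\eqref{MC axiom} to match). The only thing separating your outline from the paper's proof is the explicit execution of that bookkeeping, which proceeds exactly as you anticipate.
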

\begin{proof}
\beq
\label{PM}
P_2\circ_1 M_2 = \Qsevena\m1\blank\blank23\blank + \Qsevena\m1\blank\blank2\blank3 +
\Qnineb\m\blank\m31\blank\blank\blank2 +
\Qnineb\m\blank\m\blank1\blank3\blank2
\eeq
\beq
\label{MP1}
M_2\circ_1P_2 = \Qsevena\m1\blank2\blank\blank3 + \Qnineb\m\blank\m21\blank\blank\blank3
\eeq
\beq
\label{MP2}
M_2\circ_2P_2 = \Qsevena\m1\blank\blank2\blank3 +\Qninea\m\m\blank1\blank\blank\blank32
\eeq
Relations~\ref{mQuilt def}\eqref{MC axiom} and \ref{mQuilt def}\eqref{rearrangement axiom} has been used to rewrite the last terms of \eqref{MP1} and \eqref{MP2}.

Combining these, the third term of \eqref{PM} cancels with the second term of \eqref{MP1}, and the second term of \eqref{PM} cancels with the first term of \eqref{MP2}. This leaves
\[
 P_2\circ_1 M_2 - (M_2\circ_1 P_2)^{(23)} - M_2\circ_2 P_2 
 = \Qsevena\m1\blank\blank23\blank - \Qsevena\m1\blank3\blank\blank2 + \Qnineb\m\blank\m\blank1\blank3\blank2 -\Qninea\m\m\blank1\blank\blank\blank32
 \] 
On the other hand,
\[
\partial' C_3 = \partial' \Qfive\m1\blank32 = -\Qsevena\m1\blank3\blank\blank2 + \Qsevena\m1\blank\blank23\blank  - \Qninea\m\m\blank\blank1\blank3\blank2 - \Qninea\m\m\blank1\blank\blank\blank32
\]
These are equal, once the penultimate term has been rewritten by Relation~\ref{mQuilt def}\eqref{MC axiom}.
\end{proof}

\begin{definition}
\[
D_3 := \Qfive12\blank\blank3 + \Qfive\m21\blank3 + \Qeighta\m\m\blank2\blank1\blank3 .
\]
\end{definition}
\begin{lemma}
\label{Derivation lem2}
$P_2\circ_2 M_2 - M_2\circ_1P_2-(M_2\circ_2P_2)^{(12)} = \partial' D_3$
\end{lemma}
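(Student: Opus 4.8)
The plan is to prove this by a direct computation in $\mQuilt$, entirely parallel to Lemma~\ref{Derivation lem1}. First I would expand the three compositions on the left-hand side into explicit sums of $\mQuilt$-diagrams: $P_2\circ_2 M_2$ (writing $P_2 = \Qtwo12 + \Qthreea\m21$ and composing each summand in its \emph{second} slot with $M_2 = \Qfive\m1\blank\blank2$), $M_2\circ_1 P_2$, and $(M_2\circ_2 P_2)^{(12)}$. At this stage I would immediately discard every diagram in which some vertex acquires three or more children (these vanish by Relation~\ref{mQuilt def}\eqref{binary axiom}) and normalize the rest using the associativity relation~\ref{mQuilt def}\eqref{MC axiom} --- which annihilates any $\m$-configuration of the form $\Qtwo12\circ\Qtwo\m\m$ --- together with the rearrangement relation~\ref{mQuilt def}\eqref{rearrangement axiom}, which lets me slide an $\m$-labelled vertex freely along the word; this is exactly the bookkeeping carried out in eqs.~\eqref{PM}--\eqref{MP2}.

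Next I would compute $\partial' D_3 = \partial' \bigl(\Qfive12\blank\blank3 + \Qfive\m21\blank3 + \Qeighta\m\m\blank2\blank1\blank3\bigr)$ by applying $\partial$ and $\ad_\Delta$ to each summand. For the first two summands the underlying quilt has degree $0$ and $\partial\m=0$, so only $\ad_\Delta$ contributes; for the third, the cell carrying vertex $1$ spans two rows, so $\partial$ of the underlying quilt is also nonzero and must be included (just as in eq.~\eqref{dP2}). Each $\ad_\Delta$ of a generator is evaluated by the usual move of grafting a new $\m$-vertex above, beside, or between the children of an existing vertex (the $Q^{(i)}_{\cdot}$ constructions of Section~\ref{mQuilts}), again discarding and renormalizing via Relations~\ref{mQuilt def}\eqref{binary axiom}, \ref{mQuilt def}\eqref{MC axiom}, \ref{mQuilt def}\eqref{rearrangement axiom}.

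Finally I would match the two sides term by term. As in Lemma~\ref{Derivation lem1}, a handful of diagrams cancel in pairs among the three left-hand compositions (the analogues of the cancellations noted after eq.~\eqref{MP2}), and the surviving terms are then seen to coincide with those produced by $\partial' D_3$ after at most one further application of Relation~\ref{mQuilt def}\eqref{MC axiom}. I expect the only real obstacle to be organizational: keeping the Koszul and permutation signs straight --- in particular the sign attached to feeding $M_2$ into the second slot of $P_2$ (governed by the sign convention of Lemma~\ref{Rep composition}) and the sign introduced by the transposition $(12)$ acting on $M_2\circ_2 P_2$ --- while enumerating the roughly ten diagrams appearing on each side. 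There is no conceptual difficulty beyond that; the pattern of cancellations is forced once every diagram with a vertex carrying two $\m$-children is put into the normal form used on the left-hand side. Together with Lemma~\ref{Derivation lem1}, this establishes that the bracket $L_2$ is a derivation of $M_2$ up to homotopy.
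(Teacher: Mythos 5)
Your overall strategy is the paper's own: reuse eqs.~\eqref{MP1}--\eqref{MP2} from Lemma~\ref{Derivation lem1}, expand $P_2\circ_2 M_2$, compute $\partial' D_3$ summand by summand, and match diagrams, invoking Relation~\ref{mQuilt def}\eqref{MC axiom} for the final cancellation. However, your description of $\partial' D_3$ contains a concrete error that would make the verification fail as planned. The second summand $\Qfive\m21\blank3$ does \emph{not} have a degree-zero underlying quilt: its word is $41213$ (writing $4$ for the vertex replaced by $\m$), since the traversal reaches $1$ down the right edge, passes left to $2$, and returns to $1$ before descending to $3$. Vertex $1$ is repeated in the word even though its rectangle is a single cell, so the criterion you use (``a cell spans two rows'') misses this repetition; the reliable test is that the number of labelled rectangles ($4$) exceeds the number of rows ($3$). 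Consequently the simplicial part of the boundary is nonzero,
\[
\partial\,\Qfive\m21\blank3 \;=\; -\Qsevena\m2\blank\blank1\blank3 \;+\; \Qsevena\m\blank12\blank\blank3\,,
\]
and these two faces are essential: the first is precisely the term contributed to the left-hand side by $-(M_2\circ_2P_2)^{(12)}$, and the second cancels the term $-\Qsevena\m\blank12\blank\blank3$ produced by $\ad_\Delta\Qfive12\blank\blank3$. If you keep only the $\ad_\Delta$ contribution for this summand, both sides fail to match and you would be led to think $D_3$ is not a correct homotopy.

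Two smaller points of the same kind: Relation~\ref{mQuilt def}\eqref{binary axiom} only kills $Q\circ_u\m$ when the $\m$-labelled vertex $u$ has more than two children (an ordinary vertex may have three or more children, as in $\Qfourd1234$), and you should also be using Relations~\ref{mQuilt def}\eqref{flat1 axiom}--\eqref{flat2 axiom} to discard extensions in which the $\m$-vertex is repeated or sandwiched in the word. Finally, the signs in these manipulations are internal to $\mQuilt$ (the extension signs $\sgn_{V,W,a}$ and Koszul signs for the odd generator $\m$), not the representation-side convention of Lemma~\ref{Rep composition}; in the present computation all quilts fed into compositions have degree-zero words or a single $\m$, so in practice the only signs are those already displayed in eqs.~\eqref{PM}--\eqref{MP2} and in $\partial'$. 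With the missing face terms restored, your term-by-term matching goes through exactly as in the paper.
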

\begin{proof}
Two terms were computed in the previous lemma. That leaves
\[
P_2\circ_2M_2 =  \Qsixa1\m2\blank\blank3 + \Qeightc\m\m\blank2\blank\blank31
\]
so
\begin{multline*}
P_2\circ_2 M_2 - M_2\circ_1P_2-(M_2\circ_2P_2)^{(12)} = \\
\Qsixa1\m2\blank\blank3 - \Qsevena\m1\blank2\blank\blank3 - \Qsevena\m2\blank\blank1\blank3
+ \Qeightc\m\m\blank2\blank\blank31
- \Qnineb\m\blank\m21\blank\blank\blank3
- \Qninea\m\m\blank2\blank\blank\blank31
\end{multline*}

Now the boundaries:
\[
\partial' \Qfive12\blank\blank3 = - \Qsevena\m\blank12\blank\blank3 + \Qsixa1\m2\blank\blank3 - \Qsevena\m1\blank2\blank\blank3
\]
\[
\partial' \Qfive\m21\blank3 = - \Qsevena\m2\blank\blank1\blank3 + \Qsevena\m\blank12\blank\blank3 - \Qnineb\m\blank\m21\blank\blank\blank3 - \Qnineb\m\blank\m2\blank1\blank3\blank
\]
\[
\partial' \Qeighta\m\m\blank2\blank1\blank3
= 
- \Qninea\m\m\blank2\blank\blank\blank31 + \Qeightc\m\m\blank2\blank\blank31
- 
\Qninea\m\m\blank2\blank1\blank3\blank .
\]
Adding these up, the first term of the first line cancels the second term of the second line, and the last terms of the last two lines cancel by Relation~\ref{mQuilt def}\eqref{MC axiom}.
\end{proof}

\begin{definition}
\label{Gerstenhaber def}
The suspended Gerstenhaber operad, $\Susp\Gerst$ \cite{l-v2012} is the graded operad generated by $m,\ell\in\Susp\Gerst(2)$ with $\deg m=1$, $\deg\ell=0$, and the relations
\begin{align*}
0 &= m+m^{(12)} = \ell+ \ell^{(12)} & 0 &= m\circ_1 m + m\circ_2 m \\
0 &= \ell\circ_1\ell + (\ell\circ_1\ell)^{(123)} + (\ell\circ_1\ell)^{(321)} &
0 &= \ell\circ_1 m - (m\circ_1\ell)^{(23)} - m\circ_2\ell .
\end{align*}
\end{definition}

\begin{theorem}
\label{Gerstenhaber thm}
There exists a graded  operad homomorphism 
\[
\Ghom :\Susp\Gerst \to H_\bullet(\mQuilt)
\]
defined by $\Ghom(m) = [M_2]$ and $\Ghom(\ell) = [L_2]$. Consequently, for any diagram of algebras $\A:\Xc\to\Alg_\Bbbk$, the cup product and generalized Gerstenhaber bracket induce on Hochschild cohomology $H^\bullet(\A,\A)$ and asimplicial cohomology $H_{\mathrm a}^\bullet(\A,\A)$ the structure of Gerstenhaber algebras.
\end{theorem}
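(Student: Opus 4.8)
The plan is to exploit the presentation of $\Susp\Gerst$ given in Definition~\ref{Gerstenhaber def}: it is the free graded operad on two binary generators $m$ and $\ell$ modulo the ideal generated by the four listed relations. Hence to produce a graded operad homomorphism $\Ghom:\Susp\Gerst\to H_\bullet(\mQuilt)$ it suffices to declare $\Ghom(m):=[M_2]$ and $\Ghom(\ell):=[L_2]$ --- which are binary classes of the correct degrees in $H_\bullet(\mQuilt)$, since $\widehat{M_2}$ is the degree-shifted cup product and $\widehat{L_2}$ the generalized Gerstenhaber bracket --- and then to verify that these classes satisfy each of the four defining relations. By the universal property of the quotient the assignment then extends uniquely to a homomorphism of graded operads.

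Two of the relations hold already at the chain level. Anticommutativity of $\ell$, namely $L_2+L_2^{(12)}=0$, follows from $\perm_2$-equivariance and the definition $L_2=P_2-P_2^{(12)}$, which gives $L_2^{(12)}=P_2^{(12)}-P_2=-L_2$. Associativity of $m$, namely $M_2\circ_1 M_2+M_2\circ_2 M_2=0$, is exactly Lemma~\ref{Associative}. The other two relations hold up to a $\partial'$-boundary: homotopy commutativity of $m$, i.e.\ $[M_2]+[M_2^{(12)}]=0$, is Lemma~\ref{Commutative lem}, where $M_2+M_2^{(12)}=-\partial'P_2$; and the Jacobi identity for $L_2$ is Lemma~\ref{Jacobi lem}, where the Jacobiator of $L_2$ equals $\partial'L_3$. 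In each case the relevant chain-level expression is $\partial'$-exact, hence zero in $H_\bullet(\mQuilt)$.

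The remaining relation --- that $\ell$ is a derivation of $m$, i.e.\ that $L_2\circ_1 M_2-(M_2\circ_1 L_2)^{(23)}-M_2\circ_2 L_2$ vanishes in $H_\bullet(\mQuilt)$ --- is the step that must be assembled from the pieces. I would expand $L_2=P_2-P_2^{(12)}$ throughout and use $\perm$-equivariance of partial composition to normalise each summand, rewriting this expression as a signed combination of permuted copies of the left-hand sides of Lemma~\ref{Derivation lem1} (which says $P_2$ is a homotopy derivation of $M_2$ in its first slot, with homotopy $C_3$) and Lemma~\ref{Derivation lem2} (the analogous statement for the second slot, with homotopy $D_3$), together --- if needed --- with a correction drawn from Lemma~\ref{Commutative lem} to exchange $M_2\circ_2 P_2$ for $(M_2\circ_2 P_2)^{(12)}$. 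Since every contributing term is $\partial'$ of one of $C_3$, $D_3$, $P_2$ (up to sign and permutation), their sum is a single $\partial'$-boundary and so vanishes in $H_\bullet(\mQuilt)$. Keeping the permutations and Koszul signs consistent while matching each term of the derivation expression against these prior lemmas is the main obstacle; no identity beyond those lemmas is required.

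For the concluding assertion, recall from Theorem~\ref{mQuilt algebra} that $\Rep:\mQuilt\to\Endop[\s\tot C^{\bullet,\bullet}(\A,\A),\delta]$ is a dg-operad homomorphism. Passing to homology on both sides and composing with $\Ghom$ equips $H^\bullet(\A,\A)$ with the structure of an algebra over $\Susp\Gerst$. Unwinding the presentation of $\Susp\Gerst$, this is precisely a commutative product of degree $0$ (induced by the cup product, via $M_2$) together with a compatible Lie bracket of degree $-1$ (the generalized Gerstenhaber bracket, via $L_2$) satisfying the Gerstenhaber relations --- that is, a Gerstenhaber algebra structure on $H^\bullet(\A,\A)$, with suspension conventions as in the second Remark of the Introduction. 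The identical argument applied to the $\mQuilt$-subalgebra $\s\tot C_{\mathrm a}^{\bullet,\bullet}(\A,\A)$ of Theorem~\ref{Asimplicial algebra2} gives the corresponding statement for the asimplicial cohomology $H_{\mathrm a}^\bullet(\A,\A)$.
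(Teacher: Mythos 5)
Your proposal is correct and takes essentially the same route as the paper's proof: the relations are checked against Lemmas~\ref{Associative}, \ref{Commutative lem} and \ref{Jacobi lem}, and the derivation relation is obtained exactly as you sketch, by expanding $L_2=P_2-P_2^{(12)}$ and subtracting the $(123)$-permuted conclusion of Lemma~\ref{Derivation lem2} from Lemma~\ref{Derivation lem1}, giving the homotopy $C_3+D_3^{(123)}$ (no correction from Lemma~\ref{Commutative lem} is needed), before invoking Theorems~\ref{mQuilt algebra} and \ref{Asimplicial algebra2}. The only step you gloss over is that $[M_2]$ and $[L_2]$ are well-defined classes, i.e.\ $\partial'M_2=0$ (direct computation) and $\partial'L_2=\partial'P_2-\partial'P_2^{(12)}=0$ (from Lemma~\ref{Commutative lem}), which the paper records in one line.
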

\begin{proof}
Firstly, by direct computation, $\partial'M_2=0$. By Lemma~\ref{Commutative lem}, $\partial'L_2 = \partial'P_2-\partial'P_2^{(12)}=0$. Therefore, $[M_2],[L_2]\in H_\bullet(\mQuilt)$.

The definition of $\Ghom$ is consistent with $0=m+m^{(12)}$ by Lemma~\ref{Commutative lem}. It is consistent with $0=\ell+ \ell^{(12)}$ by the definition of $L_2$. It is consistent with $0= m\circ_1 m + m\circ_2 m$ by Lemma~\ref{Associative}. It is consistent with $0=\ell\circ_1\ell + (\ell\circ_1\ell)^{(123)} + (\ell\circ_1\ell)^{(321)}$ by Lemma~\ref{Jacobi lem}. 

Applying the permutation $(123)$ to the conclusion of Lemma~\ref{Derivation lem2} gives
\[
P_2^{(12)}\circ_1M_2 - \left(M_2\circ_1P_2^{(12)}\right)^{(23)}-M_2\circ_2P_2^{(12)} = \partial'D_3^{(123)} .
\]
Subtracting this from the conclusion of Lemma~\ref{Derivation lem1} gives
\[
L_2\circ_1 M_2 - (M_2\circ_1 L_2)^{(23)} - M_2\circ_2 L_2 = \partial'\left(C_3+D_3^{(123)}\right) .
\]
This shows consistency with  $0=\ell\circ_1 m - (m\circ_1\ell)^{(23)} - m\circ_2\ell$.

Theorem \ref{mQuilt algebra} shows that $\s\tot C^{\bullet,\bullet}(\A,\A)$ is a (differential graded) $\mQuilt$-algebra. This implies that its cohomology $\s\, H^\bullet(\A,\A)$  is an $H_\bullet(\mQuilt)$-algebra. The homomorphism $\Ghom$ then induces a Gerstenhaber algebra structure. By construction, the shifted product $m$ is given up to cohomology by $\widehat{M_2}$, which is the shifted cup product, hence the product on cohomology is given by the cup product on cochains. Also by construction, the Lie bracket on cohomology is given by the generalized Gerstenhaber bracket, $\widehat{L_2}$.

By Theorem \ref{Asimplicial algebra2}, the asimplicial  subcomplex is an $\mQuilt$-subalgebra, so the same conclusions follow for asimplicial cohomology.
\end{proof}

I stated the following result without proof in \cite{haw2018}. It is now simple to prove with the tools at hand.
\begin{corollary}
The space of degree $1$ Hochschild cocycles
\[
Z^1(\A,\A) := \{f\in C^{0,1}(\A,\A)\oplus C^{1,0}(\A,\A) \mid \delta f =0\}
\]
is a Lie algebra under the operation $\widehat{L_2}$.
\end{corollary}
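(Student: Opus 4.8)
The plan is to deduce the corollary from Theorem~\ref{Gerstenhaber thm} and the lemmas behind it, the only genuine content being a degree count that makes the homotopy term vanish on degree-$1$ cocycles.

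I would first record that $\widehat{L_2}=\Rep(L_2)$ restricts to a bilinear operation $Z^1\times Z^1\to Z^1$. Bilinearity is built into Definition~\ref{IndQuilt action}. Because $L_2=P_2-P_2^{(12)}$ has operadic degree $0$, the defining formulas for the actions force $\widehat{L_2}$ to send a pair of cochains of total degree $1$ to a cochain of total degree $1$; since such a cochain lies in $C^{p',q'}$ with $p'+q'=1$ and $p',q'\geq0$, it is in $C^{0,1}(\A,\A)\oplus C^{1,0}(\A,\A)$. And $\partial'L_2=0$ was already observed in the proof of Theorem~\ref{Gerstenhaber thm} (via Lemma~\ref{Commutative lem}), so since $\Rep$ is a dg-operad homomorphism the Leibniz rule gives $\delta\,\widehat{L_2}(f,g)=\pm\widehat{L_2}(\delta f,g)\pm\widehat{L_2}(f,\delta g)$, which is $0$ for $f,g\in Z^1$. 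Antisymmetry is immediate as well: $L_2+L_2^{(12)}=0$ by construction, and equivariance of $\Rep$ (Lemma~\ref{S-equivariance}) --- with trivial Koszul sign, since $f,g$ sit in degree $0$ after suspension --- gives $\widehat{L_2}(f,g)+\widehat{L_2}(g,f)=0$.

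The work is in the Jacobi identity. By Lemma~\ref{Jacobi lem}, the Jacobiator of $L_2$ equals $\partial'L_3$ in $\mQuilt$; applying $\Rep$, evaluating on $f,g,h\in Z^1$, and using the Leibniz rule together with $\delta f=\delta g=\delta h=0$, the Jacobiator of $\widehat{L_2}$ on $(f,g,h)$ reduces to $\delta\bigl(\widehat{L_3}(f,g,h)\bigr)$. So everything comes down to showing $\widehat{L_3}(f,g,h)=0$ when $f,g,h\in C^{0,1}\oplus C^{1,0}$, and this is the step I expect to be the main obstacle. I would argue it term by term. By Definition~\ref{P3}, $L_3$ is a linear combination of quilts conjugate to $\Qthreea132$ or to $\Qfoura\m123$, each of operadic degree $1$; a short check with the bidegree formulas shows that any such term applied to $f,g,h$ would produce a cochain in $C^{0,0}$ (total degree $0$), so its $p'$ and $q'$ must both vanish. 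For the pure quilt $\Qthreea132$ (and its conjugates) this is already impossible: its root has two children, so by Definition~\ref{NSOp def} the argument at the root would need horizontal degree $\geq 2$, whereas every $f_a\in C^{0,1}\oplus C^{1,0}$ has $q_a\leq 1$. For $\Qfoura\m123$, whose distinguished vertex absorbs $\hat\m\in C^{0,2}$, the same bidegree count forces exactly two of $f,g,h$ to lie in $C^{1,0}$, hence to have first bidegree component $1$; but then, with $p'=0$, those two inputs would require a strictly increasing map $[1]\to[0]$ in any coloring, and there is none, so $\Ind(\W_Q;\bp)=\emptyset$ and the term is zero. Hence $\widehat{L_3}(f,g,h)=0$, the Jacobiator of $\widehat{L_2}$ vanishes on $Z^1$, and together with bilinearity and antisymmetry this makes $Z^1(\A,\A)$ a Lie algebra under $\widehat{L_2}$.

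The moral is that the Gerstenhaber bracket satisfies the Jacobi identity only up to the homotopy $L_3$, but $L_3$ happens to act by zero on the thin degree band $C^{0,1}\oplus C^{1,0}$ where $Z^1$ lives; the rest is read off directly from the $\mQuilt$-algebra structure established in Theorem~\ref{mQuilt algebra}.
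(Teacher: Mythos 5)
Your proposal is correct and takes essentially the same route as the paper: reduce the Jacobiator on cocycles to $\delta\bigl[\widehat{L_3}(f,g,h)\bigr]$ via Lemma~\ref{Jacobi lem} and then kill $\widehat{L_3}$ on $C^{0,1}(\A,\A)\oplus C^{1,0}(\A,\A)$ by showing no colorings exist, with the root-has-two-children argument for the pure quilt terms matching the paper's ``$q_1<2$'' step exactly. The only (immaterial) variation is in the $\m$-decorated term, where the paper observes that the thrice-repeated vertex forces $p_2\geq2$, while you instead use $p'=0$ together with the nonexistence of a strictly increasing map $[1]\to[0]$; both are valid ways of seeing that the coloring set is empty.
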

\begin{proof}
Let $f_a\in Z^1(\A,\A)$ with bidegree $(p_a,q_a)$ for $a=1,2,3$. Because these are closed, their Jacobiator simplifies to $\delta[\widehat{L_3}(f_1,f_2,f_3)]$. Up to permutation, there are 2 types of terms in $\widehat{L_3}(f_1,f_2,f_3)$. The first is
\[
\widehat{\Qthreea132}(f_1,f_2,f_3) .
\]
Because $q_1<2$, there is no possible coloring of this quilt with these degrees, therefore this term vanishes. The second is
\[
\widehat{\Qfoura\m123}(f_1,f_2,f_3) = \widehat{\Qfoura4123}(f_1,f_2,f_3,\hat\m) .
\]
Because $p_2<2$, there is no possible coloring, therefore this term vanishes. Therefore, $\widehat{L_3}(f_1,f_2,f_3)=0$ and the Jacobiator vanishes.
\end{proof}
\begin{definition}
The \emph{squaring map}  \cite[\S 21.2]{g-s1988b}  is a function $\Sq:C^{\bullet,\bullet}(\A,\A) \to C^{\bullet,\bullet}(\A,\A)$ defined by $\Sq(f) = f\comp f$, where $\comp$ is again the generalized composition product, $\widehat{P_2}$.
\end{definition}
If $\Char\Bbbk\neq2$ and the total degree of $f$ is even, then $\Sq(f) = \frac12[f,f]$, and by Theorem~\ref{Gerstenhaber thm} this induces a function $H^{2j}(\A,\A)\to H^{4j-1}(\A,\A)$.

\begin{theorem}
If $\Bbbk$ is a field of characteristic $2$, then the squaring map  induces a function $H^k(\A,\A)\to H^{2k-1}(\A,\A)$.
\end{theorem}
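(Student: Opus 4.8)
The plan is to carry out everything on cochains, writing $\Sq(f)=\widehat{P_2}(f,f)$ and pushing it through the single homotopy identity of Lemma~\ref{Commutative lem}. Since $\widehat{P_2}(f,g)=f\comp g$ preserves the shifted total degree, a cochain $f$ of total degree $k$ produces $\Sq(f)$ of total degree $2k-1$, which is the required target; so it remains to check that $\Sq$ sends cocycles to cocycles and that the class $[\Sq(f)]$ depends only on $[f]$.

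First I would extract a Leibniz-type identity for $\widehat{P_2}$. Applying the dg-operad homomorphism $\Rep$ of Theorem~\ref{mQuilt algebra} to the relation $M_2+M_2^{(12)}=-\partial'P_2$ of Lemma~\ref{Commutative lem}, unwinding the definition of the differential in the endomorphism operad, and using that over a field of characteristic $2$ all Koszul signs are trivial and the transposition $(12)$ acts trivially on $\widehat{M_2}$, one gets
\begin{equation}
\label{Sq leibniz}
\delta\,\widehat{P_2}(f,g)=\widehat{P_2}(\delta f,g)+\widehat{P_2}(f,\delta g)+\widehat{M_2}(f,g)+\widehat{M_2}(g,f)
\end{equation}
for all cochains $f,g$. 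Putting $g=f$ with $\delta f=0$ gives $\delta\,\Sq(f)=2\,\widehat{M_2}(f,f)=0$; this is the one point where characteristic $2$ is genuinely used, and it shows that $\Sq$ maps cocycles to cocycles.

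For well-definedness I would take a cocycle $f$ and $f'=f+\delta h$ and expand $\Sq(f')=\widehat{P_2}(f',f')$ bilinearly, so $\Sq(f')-\Sq(f)=\widehat{P_2}(f,\delta h)+\widehat{P_2}(\delta h,f)+\widehat{P_2}(\delta h,\delta h)$. Applying \eqref{Sq leibniz} to the pairs $(h,f)$ and $(f,h)$ and adding (using $\delta f=0$, so the duplicated cup terms drop out in characteristic $2$) shows $\widehat{P_2}(\delta h,f)+\widehat{P_2}(f,\delta h)=\delta\bigl(\widehat{P_2}(h,f)+\widehat{P_2}(f,h)\bigr)$. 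Applying \eqref{Sq leibniz} to $(h,\delta h)$, together with the fact that a direct computation gives $\partial'M_2=0$ (as in the proof of Theorem~\ref{Gerstenhaber thm}), whence $\widehat{M_2}$ commutes with $\delta$, gives $\widehat{P_2}(\delta h,\delta h)=\delta\bigl(\widehat{P_2}(h,\delta h)+\widehat{M_2}(h,h)\bigr)$. Summing the three pieces exhibits $\Sq(f')-\Sq(f)$ as a coboundary, so $[\Sq(f)]$ depends only on $[f]$ and $\Sq$ descends to a map $H^k(\A,\A)\to H^{2k-1}(\A,\A)$; the identical argument on the asimplicial subcomplex (which is an $\mQuilt$-subalgebra by Theorem~\ref{Asimplicial algebra2}) gives the corresponding statement for $H_{\mathrm a}^\bullet$. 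I expect the only slightly delicate step to be the treatment of $\widehat{P_2}(\delta h,\delta h)$: it is not visibly exact on its own, and one must introduce the cup-product cochain $\widehat{M_2}(h,h)$ and invoke that $M_2$ is a cocycle of $\mQuilt$; the rest is routine bookkeeping with \eqref{Sq leibniz}.
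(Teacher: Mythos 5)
Your proposal is correct and follows essentially the same route as the paper: both derive the two cochain identities (the Leibniz rule for the cup product from $\partial'M_2=0$ and the homotopy-commutativity relation of Lemma~\ref{Commutative lem} giving $\delta(f\comp g)=\delta f\comp g+f\comp\delta g+f\smile g+g\smile f$ in characteristic $2$), use $2f\smile f=0$ to see $\Sq(f)$ is a cocycle, and then exhibit $\Sq(f+\delta h)-\Sq(f)$ as the coboundary of $f\comp h+h\comp f+h\comp\delta h+h\smile h$. The only difference is cosmetic — you phrase the identities via $\Rep$ applied to $P_2$ and $M_2$ rather than directly in terms of $\comp$ and $\smile$ — and your concluding cochain primitive matches the paper's exactly.
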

\begin{proof}
In characteristic $2$, there are no signs to worry about. 

For any $f,g\in C^{\bullet,\bullet}(\A,\A)$, $\partial'M_2=0$ means that
\[
\delta(f\smile g)=\delta f\smile g + f\smile \delta g .
\]
Lemma~\ref{Commutative lem} means that
\[
\delta (f\comp g) = \delta f\comp g + f\comp\delta g+f\smile g+g\smile f .
\]

Suppose $\delta f=0$.  This shows that $\delta\Sq(f) = \delta(f\comp f) = 2f\smile f=0$, so $\Sq(f)$ is closed. Furthermore,
\[
\delta (f\comp g) = f\comp\delta g+f\smile g+g\smile f ,
\]
\[
\delta (g\comp f) = \delta g\comp f +f\smile g+g\smile f,
\]
and 
\begin{align*}
\delta (g\comp \delta g) &= \delta g\comp\delta g + \delta g\smile g + g\smile \delta g \\
&=  \delta g\comp\delta g + \delta(g\smile g) ,
\end{align*}
so
\begin{align*}
\Sq(f+\delta g)-\Sq(f) &= \delta g\comp f + f\comp\delta g + \delta g\comp\delta g\\
&= \delta (f\comp g+ g\comp f+  g\comp\delta g + g\smile g) .
\end{align*}
Therefore, the cohomology class of $\Sq(f)$ only depends upon the cohomology class of $f$.
\end{proof}

\section{Strong Homotopy Lie Algebras}
\label{L-infinity}
In this section, I will show that the Hochschild complex (and asimplicial complex) of a diagram of algebras $\A$, graded by the shifted total degree, is an $\Linfty$-algebra and that deformations of $\A$ are the solutions of the Maurer-Cartan equation.
\begin{definition}
\label{Linfty def}
$\Linfty$ is the dg-operad generated by the elements $\ell_n\in \Linfty(n)$ for $n=2,3,\dots$ satisfying $\deg\ell_n=n-2$, 
\beq
(\ell_n)^\sigma = \sgn(\sigma)\,\ell_n
\eeq
for all $\sigma \in\perm_n$, and
\beq\label{Linfinity_relation}
0 = \partial\ell_n + \sum_{\substack{p,q\geq2\\ p+q=n+1}}\sum_{\sigma\in\Sh_{p-1,q}} (-1)^{(p-1)q}\sgn\sigma\,\left(\ell_p\circ_p\ell_q\right)^{\sigma^{-1}} ,
\eeq
where $\Sh_{p-1,q}\subset\perm_n$ is the set of  $(p-1,q)$-shuffles.
\end{definition}

\subsection{$\Linfty$ in $\Qop$} 
The first step is to construct a dg-operad homomorphism, $\homone:\Linfty\to\Qop$. It is enough to describe this by giving each of the images of generators. These must satisfy the same relations as the generators.

\begin{lemma}
If $Q\in\Qcat(n)$ with $n\geq2$, and $a\in\com{n}$ is the root of $\Tree_Q$, then $\W_Q=a\dots$, $a$ only occurs once in $\W_Q$, and $\deg Q\leq n-2$
\end{lemma}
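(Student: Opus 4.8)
The plan is to establish the three assertions one at a time, using only the defining conditions of a quilt in Definition~\ref{Quilt def}, the word axioms in Definition~\ref{Wordcat def}, Lemma~\ref{Word length}, and the following elementary remark about trees: the root $a$ of a tree $T$ satisfies $a\leq_T v$ for \emph{every} vertex $v$. To see the remark, note that if $a\not\leq_T v$ for some $v$, then $a$ and $v$ are $\leq_T$-incomparable (nothing is $<_T a$, since $a$ is $\leq_T$-minimal), so a $\leq_T$-minimal element of the finite nonempty set $\{x\mid x\leq_T v\}$ is a global $\leq_T$-minimal element, hence equal to $a$ by uniqueness of the root, forcing $a\leq_T v$ --- a contradiction. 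By the comparability axiom \ref{Tree}\eqref{Comparability}, the remark also gives $a\ngtr_Q v$ and $a\not\vartriangleleft_Q v$ for every $v\neq a$.

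For the first assertion I would let $b$ be the first letter of $\W_Q$. By surjectivity every other vertex $v$ occurs in $\W_Q$, necessarily after the (first) letter $b$, so $\W_Q=\dots b\dots v\dots$, and Axiom~\ref{Quilt def}\eqref{Quilt1} gives $b\ngtr_Q v$. Hence no vertex lies strictly $\leq_Q$-below $b$, i.e.\ $b$ is $\leq_Q$-minimal, so $b=a$ by uniqueness of the root, and $\W_Q=a\dots$.

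For the second assertion I would suppose, for contradiction, that $a$ occurs at least twice, and pick two consecutive occurrences of $a$ in $\W_Q$ (no occurrence of $a$ strictly between). They are not adjacent, by nondegeneracy, so a letter $\underline b$ with $b\neq a$ lies strictly between them, whence $\W_Q=\dots a\,b\dots a\dots$ and Axiom~\ref{Quilt def}\eqref{Quilt2} gives $b\trianglelefteq_Q a$, i.e.\ $b\vartriangleleft_Q a$. But $a\leq_Q b$ by the remark, so $b>_Q a$, contradicting $b\vartriangleleft_Q a$ via \ref{Tree}\eqref{Comparability}; hence $a$ occurs exactly once. Combining with the first assertion, $\W_Q=a\smile W'$ where $W'$ deletes the unique occurrence of $a$; this displays $\W_Q$ as a concatenation of $m=2$ words with disjoint alphabets $\{a\}$ and $\com n\setminus\{a\}$ (the latter nonempty because $n\geq2$), and the concatenation bound of Lemma~\ref{Word length} then gives $\deg Q=\deg\W_Q\leq n-2$.

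I expect no real obstacle. The single point requiring care is the preliminary remark, since Definition~\ref{Tree} posits only that the root is $\leq_T$-\emph{minimal}; it is the comparability axiom that lets me turn "$b$ is $\leq_Q$-minimal" and "$b\vartriangleleft_Q a$" into contradictions with $a$ being the unique root. Everything else follows directly from the cited definitions.
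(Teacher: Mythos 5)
Your proof is correct and follows essentially the same route as the paper: Axiom~\ref{Quilt def}\eqref{Quilt1} forces the root to be the first letter, uniqueness of its occurrence follows from the quilt axioms (the paper gets it from \eqref{Quilt1} again, you from \eqref{Quilt2} --- an immaterial variation), and the degree bound comes from writing $\W_Q$ as a concatenation of two words and invoking Lemma~\ref{Word length}. Your explicit verification that the root lies $\leq_Q$-below every vertex is a detail the paper takes for granted, and it is argued correctly.
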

\begin{proof}
If $\W_Q = \dots u \dots a \dots$, then by Axiom~\ref{Quilt def}\eqref{Quilt1}, $u\ngtr_Q a$, but this contradicts $a$ being the root. Therefore, $\W_Q=a\dots$, with $a$ only occurring once.

Since $a$ only occurs first, $\W_Q$ can be written as the concatenation of $a$ with the remainder. By Lemma~\ref{Word length}, $\deg Q \leq n-2$.
\end{proof}
The sole exception to the latter conclusion is the identity element,  $\id = \fbox{1}\in\Qcat(1)$ with $\deg\fbox{1}=0\nleq-1$ for $n=1$.
\begin{definition}
$Q\in\Qcat$ is \emph{maximal} if $\deg Q = \#Q-2$.
\end{definition}
Since $\ell_n\in \Linfty(n)$ has $\deg\ell_n=n-2$, if $\homone:\Linfty\to\Qop$ is a dg-operad homomorphism, then $\homone(\ell_n)$ must be an integer combination of maximal quilts. In fact, I will construct it as an alternating sum of \emph{all} the maximal quilts.

\begin{lemma}
\label{Maximal}
If $Q$ is a maximal quilt, then $\#Q\geq2$ and $\W_Q = ab\dots b$, where $a$ is the root and $b$ a leaf (has no children), and these are the only $\vartriangleleft_Q$-maximal vertices. (In this notation, $b\dots b$ could represent a single $b$.)
\end{lemma}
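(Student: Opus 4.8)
The plan is to read almost everything off the word $\W_Q$ via Lemma~\ref{Word length}, and then use the two axioms of Definition~\ref{Quilt def} to pass from $\W_Q$ to the tree $\Tree_Q$.

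First I would dispose of $\#Q\geq2$: the unique arity-$1$ quilt is $\fbox{1}$, with $\deg\fbox{1}=0\neq-1=\#Q-2$, so it is not maximal; hence $n:=\#Q\geq2$ and the preceding lemma gives $\W_Q=aW'$ with $a$ the root, occurring exactly once. Maximality says $\deg Q=n-2$, so Lemma~\ref{Word length} (which gives $\deg Q=n-s-1$ for $s$ the number of last-first pairs) forces $s=1$; but the only occurrence of $a$ followed by the first letter $b$ of $W'$ is already a last-first pair of $\W_Q$, so it is the only one, and in particular $W'$ contains no last-first pair.

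Next I would show $\W_Q=ab\dots b$. If $n=2$ then $W'=b$ and we are done, so suppose $n\geq3$; then $b$ is repeated, since otherwise $W'=bW''$ with $W''$ a nonempty $b$-free word, and $b$ followed by the first letter of $W''$ would be a last-first pair of $W'$. Write $\W_Q=a\,b\,W_1\,b\cdots b\,W_r\,b\,W_{r+1}$, with $r+1\geq2$ occurrences of $b$ and with $W_1,\dots,W_{r+1}$ the intervening $b$-free blocks. No interlacing forces each vertex other than $a,b$ to lie in a single block, so the blocks have disjoint alphabets partitioning $\com n\setminus\{a,b\}$; writing $W_i\in\Wordcat(m_i)$ (with $m_i=0$ for an empty block and $\sum m_i=n-2$), a direct count gives $\deg\W_Q=r+\sum_i\deg W_i$, so maximality yields $r=\sum_i(m_i-\deg W_i)$. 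The interior blocks $W_1,\dots,W_r$ are nonempty (an empty one would create a forbidden $bb$), and the concatenation bound of Lemma~\ref{Word length} gives $\deg W_i\leq m_i-1$, hence $m_i-\deg W_i\geq1$, for $i\leq r$; these $r$ terms already sum to at least $r$, so $m_{r+1}-\deg W_{r+1}\leq0$, which forces $W_{r+1}$ to be empty. Thus $\W_Q$ ends in $b$.

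For the tree statements: if $b$ had a child $v$ then $v\neq b$, $b<_Qv$, and $v$ occurs somewhere before the final letter $b$ of $\W_Q$, so axiom~\ref{Quilt def}\eqref{Quilt1} applied to $\W_Q=\dots v\dots b$ gives $v\ngtr_Qb$, a contradiction; hence $b$ is a leaf. For the $\vartriangleleft_Q$-maximal vertices: the root $a$ is $\leq_Q$-comparable to every vertex, so by axiom~\ref{Tree}\eqref{Comparability} it is $\vartriangleleft_Q$-incomparable to every vertex and therefore $\vartriangleleft_Q$-maximal; each $v\neq a,b$ occurs strictly between the first and last $b$, so axiom~\ref{Quilt def}\eqref{Quilt2} gives $v\trianglelefteq_Qb$, hence $v\vartriangleleft_Qb$, so $v$ is not $\vartriangleleft_Q$-maximal and, by comparability, $b\not\vartriangleleft_Qv$; finally $b$ is $\vartriangleleft_Q$-incomparable to $a$, so $b$ is $\vartriangleleft_Q$-maximal, and $\{a,b\}$ is exactly the set of $\vartriangleleft_Q$-maximal vertices. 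The only step that needs real care is the block count --- extracting $\deg W_i\leq m_i-1$ from Lemma~\ref{Word length} and verifying the interior blocks are nonempty; everything else is a one-line deduction from the axioms, so I do not anticipate a genuine obstacle.
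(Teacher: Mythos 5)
Your proof is correct and follows the same skeleton as the paper's: the preceding lemma pins the root to the first position with a single occurrence, Lemma~\ref{Word length} with $\deg Q=\#Q-2$ forces exactly one last-first pair, and the quilt axioms of Definition~\ref{Quilt def} then give the tree statements. The one place you diverge is in showing $\W_Q$ ends in $b$: the paper deduces this in one line from the uniqueness of the last-first pair (a letter after the last $b$ would either be a first occurrence, giving a second last-first pair, or a repeat, interlacing with $b$), whereas you run a block decomposition of $\W_Q$ at the occurrences of $b$ and a degree count using the concatenation bound of Lemma~\ref{Word length} to force the tail block to be empty --- heavier, but sound. You also spell out the claim that $a$ and $b$ are exactly the $\vartriangleleft_Q$-maximal vertices (via comparability and axiom~\eqref{Quilt2}), which the paper leaves implicit after establishing $u\vartriangleleft_Q b$ for all other $u$; that is a worthwhile addition rather than a defect.
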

\begin{proof}
Firstly, $\deg Q = \#Q-2 \geq 0$ implies $\#Q\geq2$.

By the previous lemma, the root is the first vertex of $\W_Q$. Denote this as $a$ and the next vertex as $b$, so $\W_Q = ab\dots$. By Lemma~\ref{Word length}, there is only 1 last-first pair, therefore the last $b$ must be at the end of $\W_Q$, so $\W_Q = ab\dots b$.

If $u$ is any other vertex, then $\W_Q = ab\dots u\dots b$, so by Axiom~\ref{Quilt def}\eqref{Quilt2}, $u\vartriangleleft_Q b$. In particular, $u\ngtr_Q b$, so $b$ is a leaf.
\end{proof}

Note that in a maximal quilt, all vertices are interposed, except the first 2 in $\downarrow$ order (i.e., the first 2 in $\W_Q$). This simplifies calculations of signs for extensions below.

If $Q$ is a face of a maximal quilt or an extension of a maximal quilt by a maximal quilt, then $\deg Q = \#Q-3$. Such quilts will be crucial in checking that the $\Linfty$-relations are satisfied.

\begin{lemma}
\label{Submaximal}
If $Q\in\Qcat$ with $\deg Q = \#Q-3$, then $\#Q\geq3$ and $\W_Q$ takes the form
\begin{enumerate}
\item
$\W_Q = ab\dots bc\dots c$, or
\item
$\W_Q = \dots ab\dots bc \dots ca\dots$.
\end{enumerate}
\end{lemma}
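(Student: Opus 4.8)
The plan is to analyze the structure of a quilt $Q$ with $\deg Q=\#Q-3$ by counting caesurae and last-first pairs, using the combinatorial tools already established, principally Lemma~\ref{Word length} and the two axioms of Definition~\ref{Quilt def}. First I would observe that $\deg Q=\#Q-3\geq0$ forces $\#Q\geq3$. Next, by Lemma~\ref{Word length}, if $s$ is the number of last-first pairs in $\W_Q$ then $\deg Q=\#Q-s-1$, so the hypothesis $\deg Q=\#Q-3$ is equivalent to $s=2$: there are exactly two last-first pairs in $\W_Q$. Equivalently, the ``spineless cactus'' picture of $\W_Q$ has $\#Q-2$ base vertices, or: there are exactly $\#Q-2$ interposed vertices (caesurae), so all but two vertices are interposed.

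The main work is then to extract the two shapes from the condition $s=2$. Since $Q\in\Qcat(n)$ with $n\geq2$, let $a$ be the root of $\Tree_Q$; by the lemma preceding Lemma~\ref{Maximal}, $\W_Q=a\dots$ and $a$ occurs only once, so $\W_Q$ splits as the concatenation of the singleton $a$ with a word on the remaining $n-1$ vertices. The key dichotomy is whether that remaining word is itself connected or splits further. If $\W_Q$ can be written as a concatenation of $m$ nonempty subwords on disjoint alphabets, Lemma~\ref{Word length} gives $s\geq m-1$; since $s=2$ we get $m\leq 3$. I would argue that after peeling off the root $a$ (which is one subword), the remainder is a concatenation of at most two further pieces. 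Case (1), $\W_Q=ab\dots bc\dots c$, is exactly the situation where the remainder splits into two pieces each of the ``maximal'' form $b\dots b$ and $c\dots c$ — here each of those two pieces contributes exactly one last-first pair internally plus there is the last-first pair joining $a$ to $b$ and the one joining $b\dots b$ to $c$; a careful tally shows $s=2$ precisely in this configuration (each ``$x\dots x$'' block with the block's first and last letters equal contributes no internal last-first pair of its own, and the last-first pairs are $(a,b)$ and $(\text{last }b,\,c)$). Case (2) is the ``connected'' case: the word does not split after the root, i.e., there is only one last-first pair available from concatenation structure, and the second last-first pair must occur internally; unwinding Axioms \ref{Quilt def}\eqref{Quilt1} and \eqref{Quilt2} on the positions of repeated letters forces $\W_Q$ to contain a segment of the shape $\dots ab\dots bc\dots ca\dots$, where the inner $a\dots a$ and $c\dots c$ bracketings come from condition \eqref{Quilt2} and the fact that $b$ is $\vartriangleleft$-below both.

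The step I expect to be the main obstacle is the careful case analysis in distinguishing (1) from (2) and verifying that no third shape is possible — in particular, ruling out configurations with three separate ``returns'' or with interleaved returns that would violate the no-interlacing condition of Definition~\ref{Wordcat def} or condition \eqref{Quilt2}. The bookkeeping is essentially the same flavor as the proofs of Lemmas~\ref{Maximal} and \ref{Word length}: one must track, for each repeated vertex $u$, the positions of its first and last occurrences and apply \eqref{Quilt1} (everything between two occurrences of $u$ is $\trianglelefteq_Q u$-bounded, hence $\ngtr_Q u$) and \eqref{Quilt2}. I would organize this by induction on $\#Q$ or by directly examining the at-most-three concatenation blocks and the at-most-two caesurae that are \emph{not} accounted for by block boundaries, showing each possibility collapses to (1) or (2). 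Once the shape of $\W_Q$ is pinned down, no further work is needed for the statement as given, since it only asserts the form of the word.
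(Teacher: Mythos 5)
Your opening steps coincide with the paper's: $\#Q\geq3$ is immediate, and Lemma~\ref{Word length} converts $\deg Q=\#Q-3$ into the statement that $\W_Q$ has exactly two last-first pairs, with the root occurring exactly once, as the first letter. One bookkeeping error: the number of caesurae (equivalently interposed vertices) is $\deg Q=\#Q-3$, not $\#Q-2$, so all but \emph{three} vertices are interposed (the root and the two vertices that open the last-first pairs); you conflated this count with the number of base vertices $\#Q-s$ of the cactus. You do not use the wrong count later, but it should be fixed. Your case split (the word after the root either splits into two disjoint-alphabet blocks or does not) is a legitimate variant of the paper's split (the letter before the first $b$ of the second pair is or is not the root), and in the split case it even yields for free that the word ends at the last $c$ --- \emph{provided} you prove that a block containing no internal last-first pair begins and ends with the same vertex. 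You only assert this, and the sentence preceding the parenthetical contradicts it (``exactly one last-first pair internally'' versus ``no internal last-first pair''); the correct claim needs a short argument: if the block started with $b$ and ended with the last occurrence of $b'\neq b$, the first occurrence of $b'$ would have to follow a caesura $\underline x$, and then $x\dots b'\dots x\dots b'$ violates the no-interlacing condition of Definition~\ref{Wordcat def}.

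The genuine gap is Case (2), which is the heart of the lemma and which your proposal replaces by ``unwinding Axioms~\ref{Quilt def}\eqref{Quilt1} and \eqref{Quilt2} \dots forces'' the shape. That justification runs backwards: the facts $b\vartriangleleft_Q a$ and $c\vartriangleleft_Q a$ are \emph{consequences}, via Axiom~\ref{Quilt def}\eqref{Quilt2}, of the very word pattern $\dots a\dots b\dots a\dots$ you are trying to establish; they cannot be used to produce it. What actually has to be shown, and is missing from your sketch, is: (i) writing the second pair as $bc$ and letting $\underline a$ be the letter directly before the first $b$ (in your ``connected'' branch the first $b$ cannot be adjacent to the root, else the word splits), $\underline a$ cannot be a last occurrence --- otherwise $\underline a$ followed by the first $b$ would be a third last-first pair (equivalently, the paper's route: $\W_Q$ would decompose into four disjoint-alphabet subwords, contradicting Lemma~\ref{Word length}); hence $a$ recurs; (ii) no-interlacing with $b$ and with $c$ forces every later occurrence of $a$ to lie after the last $c$; (iii) the letter immediately following the last $c$ can be neither a first occurrence (third pair) nor a repeated vertex other than $a$ (it would interlace with $a$ or with $c$), so it is $a$, giving $\W_Q=\dots ab\dots bc\dots ca\dots$. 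These are word-level arguments (pair count, nondegeneracy, no interlacing) plus the root-occurs-once fact; the quilt axioms enter only through that preliminary fact. As written, your Case (2) is an assertion rather than a proof, so the proposal does not yet establish the lemma.
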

\begin{proof}
Firstly, $\deg Q = \#Q-3\geq0$ implies that $\#Q\geq3$.

By Lemma~\ref{Word length}, there must be exactly 2 last-first pairs in $\W_Q$. The first pair is of the root and whatever comes next. Denote the second pair as $bc$. Since $b$ is not the root, something must precede the first $b$; denote this as $\underline a$, so $\W_Q= \dots \underline a\,b\dots bc\dots c\dots$.

There are now 2 possibilities: $a$ is or is not the root.
\begin{enumerate}
\item
If $a$ is the root, then it is the first vertex, so $\W_Q = ab\dots bc\dots c\dots$. Since $ab$ and $bc$ are the only last-first pairs, the last $c$ must be the end of $\W_Q$, therefore $\W_Q = ab\dots bc\dots c$.
\item
Suppose that $a$ is not the root. If $\underline a$ is the last $a$, then $\W_Q$ is a concatenation of $4$ subwords: the root, the remainder up to $\underline a$, $b\dots b$, and the remainder $c\dots$. By Lemma~\ref{Word length}, this would imply that $\deg Q\leq \#Q-4$, which it isn't. Therefore, there is another $a$, somewhere after the last $c$.

Because $\deg Q=\#Q-3$, $\W_Q$ contains precisely 2 last-first pairs: the root and whatever follows it, and $bc$. The last $c$ cannot be followed directly by the first occurrence of some vertex, therefore $\W_Q = \dots ab\dots bc \dots ca\dots$. \qedhere
\end{enumerate}
\end{proof}

\begin{lemma}
\label{Extensions}
Now suppose that $R\in\Qcat(n)$ is an extension up to permutation in the sense  that there exist $p,q\in\N$ with $p+q=n+1$, maximal quilts $P\in\Qcat(p)$ and $Q\in\Qcat(q)$, and a shuffle $\sigma\in\Sh_{p-1,q}$, such that $R^\sigma \in\Ext(P,Q,p)$. 
Denote $K= \sigma(\{p,\dots,p+q-1\})\subset \com{p+q-1}$. 
\begin{enumerate}
\item
If $\W_R = ab\dots bc\dots c$ then either 
\begin{itemize}
\item
$K=\{u \text{ occurring in }ab\dots b\} =:K_1$, or
\item
$K=\{u\mid u\geq_R b\} =:K_2$ and $c>_R b$.
\end{itemize}
\item
If $\W_R = \dots ab\dots bc\dots ca\dots$, then
\begin{itemize}
\item
$K= \{ u \text{ occurring in }b\dots bc\dots c\mid u\geq_R b\}=:K_3$ and $c>_R b$.
\end{itemize}
\end{enumerate} 

Conversely, if $R\in\Qcat$ with $\deg R = \#R-2$, then in each of these 3 cases,  $R$ actually is an extension up to permutation. 
\end{lemma}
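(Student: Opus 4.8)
The plan is to prove both implications by classifying the extensions $\Ext(P,Q,p)$ of one maximal quilt by another and matching them against the two possible shapes of $\W_R$ from Lemma~\ref{Submaximal}. The starting point is a degree count: if $R^\sigma\in\Ext(P,Q,p)$ with $P,Q$ maximal, then $\deg R=\deg R^\sigma=\deg P+\deg Q=(\#P-2)+(\#Q-2)=\#R-3$, so Lemma~\ref{Submaximal} applies to $R$; conversely, the hypothesis that $\W_R$ has the shape $ab\dots bc\dots c$ or $\dots ab\dots bc\dots ca\dots$ is, by Lemma~\ref{Word length}, equivalent to $\deg R=\#R-3$. Thus in both directions $\W_R$ is one of these two shapes, and we track how $K$ sits inside it.

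For the forward direction, recall from Lemma~\ref{Maximal} and its proof that a maximal quilt $P$ of arity $p$ has $\W_P=a_0b_0u_1b_0u_2b_0\cdots u_{p-2}b_0$, with $a_0$ the root, $b_0$ the unique repeated vertex (which is a leaf), and $u_1,\dots,u_{p-2}$ the interposed vertices (no two of which are $\W_P$-adjacent); likewise for $Q$. The extension vertex $p=\#P$ is therefore $a_0$, $b_0$, or some $u_i$, and these cases are exhaustive. In each case $\W_{R^\sigma}$ is obtained by substituting a copy of $\W_Q$, relabelled by the map $\alpha$ of the $\Comcat$-extension at $p$, for each occurrence of $p$ in $\W_P$, following Section~\ref{Computing extensions}; since $a_0$ and each $u_i$ occur only once, multiplicity arises only when $p=b_0$, where the substitution is governed by a weakly increasing $\kappa$. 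One then reads off: if $p=a_0$, then $\W_{R^\sigma}=(\W_Q)\,b_0u_1b_0\cdots b_0$, which has the shape $ab\dots bc\dots c$ with the root of $Q$ as $a$, and its ``$ab\dots b$'' segment is exactly the copy of $\W_Q$, so $K=K_1$; if $p=b_0$, then $\W_{R^\sigma}$ again has the shape $ab\dots bc\dots c$, now with the root of $Q$ as $b$, and since $b_0$ is a leaf of $P$ it contributes no children, so the subtree of $R^\sigma$ rooted at the image of $Q$'s root is exactly the $Q$-vertices, giving $K=K_2$, with $c>_Rb$ because the leaf of $Q$ lies below its root; if $p=u_i$, then $\W_{R^\sigma}=\cdots b_0(\W_Q)b_0\cdots$ has the shape $\dots ab\dots bc\dots ca\dots$, the segment between the two displayed $b_0$'s being exactly the $Q$-vertices, so $K=K_3$ and again $c>_Rb$. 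Applying $\sigma^{-1}$, which is a relabelling and so preserves $\leq$, $\vartriangleleft$, and positions in words, transports each conclusion to $R$ itself, using $K=\sigma(\{p,\dots,p+q-1\})$.

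For the converse, suppose $\W_R$ has one of the two shapes and $K$ equals $K_1$, or $K_2$ with $c>_Rb$, or $K_3$ with $c>_Rb$. Let $Q$ be the sub-quilt of $R$ supported on $K$ --- its tree the full subtree of $\Tree_R$ spanned by $K$, its word $\W_R$ with the vertices outside $K$ deleted and consecutive repetitions removed --- and let $P:=R/K$ be obtained by contracting $K$ to a single vertex. The axioms of Definition~\ref{Quilt def} pass to $P$ and $Q$; one has $\#Q=\abs K\geq2$ and $\#P=\#R-\abs K+1\geq2$, and $\deg P+\deg Q=\deg R=(\#P-2)+(\#Q-2)$, so since a quilt of arity at least two has degree at most its arity minus two, both inequalities are equalities and $P,Q$ are maximal. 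Relabel so that the contracted vertex of $P$ is $p=\#P$ and the vertices of $K$ carry the labels $p,\dots,p+q-1$ in their $\W_R$-order, and let $\sigma$ be the resulting $(p-1,q)$-shuffle; then $R^\sigma\in\Ext(P,Q,p)$, because the word and tree conditions of Definitions~\ref{Word extension} and~\ref{Tree extension} say precisely that deleting the $K$-vertices and contracting recovers $P$ and restricting to $K$ recovers $Q$, which holds by construction.

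The part needing the most care is the forward case $p=b_0$ when $b_0$ occurs several times in $\W_P$: the word extension is then not unique, and one must verify that every member of the $\kappa$-indexed family has the shape $ab\dots bc\dots c$ with $K=K_2$ and $c>_Rb$, which amounts to following how the fragments of $\W_Q$ interleave with the residual letters of $\W_P$. A secondary point is checking, in the converse, that $\Tree_R$ restricted to $K_1$ is a genuine subtree --- unlike $K_2$ and $K_3$, the set $K_1$ is not a principal up-set of $\Tree_R$ --- which follows from the fact that an interposed vertex occurring in the ``$ab\dots b$'' segment of a word of the first shape cannot recur in its ``$c\dots c$'' segment, there being exactly two last-first pairs.
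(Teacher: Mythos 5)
Your forward direction rests on a structural claim that is false. Lemma~\ref{Maximal} only says that $\W_P=ab\dots b$: the root occurs once and first, the second letter is a leaf, and the word ends with that leaf. It does \emph{not} say that $b$ is the unique repeated vertex, nor that $\W_P$ alternates $b$ with single occurrences of the interposed vertices. For instance, the pair with word $123432$ and tree the corolla with root $1$ and children $4\vartriangleleft 3\vartriangleleft 2$ satisfies both axioms of Definition~\ref{Quilt def} and has degree $2=4-2$, so it is a maximal quilt in which the interior vertex $3$ is repeated. Consequently your case analysis ``$p=a_0$, $p=b_0$, or $p=u_i$ occurring once'' is not exhaustive: the extension vertex can be a repeated vertex other than the final leaf, and then the copy of $\W_Q$ is cut into pieces interleaved with other letters of $\W_P$ (extending the quilt above at $3$ by the arity-$2$ maximal quilt with word $a'b'$ produces, e.g., $12a'4a'b'2$), so the word shapes you read off, and hence the identifications $K=K_1,K_2,K_3$, are not established by your argument; even your $p=b_0$ case, which you flag as delicate, is asserted rather than proved. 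The paper sidesteps all of this by never describing $\W_P$ in detail: it applies Lemma~\ref{Submaximal} to $\W_R$ and asks only which of the two last-first pairs of $\W_R$ is collapsed by $\beta\circ\sigma^{-1}$, using of $P$ and $Q$ only that a maximal quilt's word has a single last-first pair and that its root occurs once and first.

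The converse is also thinner than it needs to be. Saying that the conditions of Definitions~\ref{Word extension} and~\ref{Tree extension} ``hold by construction'' begs the question: the substantive content is that $K_i$ satisfies them, namely (i) $K_i$ spans a subtree of $\Tree_R$; (ii) any letter not in $K_i$ lying in $\W_R$ between two letters of $K_i$ is $\vartriangleleft_R$ both of them, which is what makes the contracted pair $(\W_P,\Tree_P)$ satisfy Axiom~\ref{Quilt def}\eqref{Quilt2} and makes the ``repetitions occur wherever letters have been deleted'' clause hold; and (iii) two $K_i$-letters separated in $\W_R$ only by non-$K_i$ letters are occurrences of the same vertex, which is also what gives $\abs{\W_P}+\abs{\W_Q}=\abs{\W_R}+1$ and hence validates the degree count you use to conclude that $P$ and $Q$ are maximal. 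You verify only the subtree property, and only for $K_1$; the verification of (ii) for $K_2$ and especially $K_3$ is exactly where the quilt axioms do real work in the paper's proof, and it is missing from yours.
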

\begin{proof}
Let $\com{q}\stackrel\alpha\into\com{n}\stackrel\beta\onto\com{p}$ be the $\Comcat$-extension at $p$.
Note that $K$ must be the set of vertices of a subtree of $\Tree_R$. Applying $\beta\circ\sigma^{-1}$ to each term of $\W_R$ and eliminating repetitions gives $\W_P$.

First, consider Case (1) that $\W_R = ab\dots bc\dots c$.

Since $P$ is maximal, $\W_P$ can only have one last-first pair, so one of the pairs in $\W_R$ must be killed, therefore either $a,b\in K$ or $b,c\in K$.

Suppose that $a,b\in K$ and $c\notin K$. Since $K$ is collapsed to a single vertex and relabelled as $p$, this means that $\W_P = p\dots$, therefore $p$ is the root of $P$ and only occurs once in $\W_P$.  This means that no vertex of $R$ after the last $b$ can be in $K$ and no vertex before the last $b$ can not be in $K$. Therefore, $K$ is the set of vertices in the subword up to the last $b$ in $\W_R$. This is $K_1$.

Now, suppose that $b,c\in K$, but $a\notin K$. In this case, $p$ is the second vertex in $\W_P$. By Lemma~\ref{Maximal}, $p$ is also the final vertex of $\W_P$, is $\vartriangleleft_P$-maximal, and is a leaf.  Since $b\in K$, any $u\geq_R b$ must map to a descendent of $p$ in $\Tree_{P}$, which can only be $p$, so $u\in K$. On the other hand, $K$ must be the set of vertices of a subtree of $\Tree_R$. Since $b$ is a child of $a\notin K$, $b$ must be the root of this subtree, so $u\in K$ implies that $u\geq_R b$. 
Therefore
\[
K = \{u \mid u\geq_R b\} .
\]
This is $K_2$. Note that since $c\in K$, we have $b<_Rc$.

Now consider Case (2) that $\W_R = \dots ab\dots bc\dots ca\dots$. As above, the quotient must collapse precisely one of the last-first pairs in $\W_R$. 

Suppose that the root and the second vertex are in $K$. The second vertex must also be the final vertex, therefore  $\W_P$ begins and ends with $p$. This cannot happen for $P$ maximal (or any quilt except $\fbox1$) so this is a contradiction.

Therefore, $b,c\in K$. Since the subtree with vertices $K$ is just $\Tree_Q$ relabelled (by $\sigma\circ\alpha$) $b$ must be its root, therefore $u\in K \implies u\geq_Rb$ (and in particular, $b<_Rc$). Similarly, the subword of $\W_R$ made up of vertices from $K$, with repetitions eliminated is just $\W_Q$ relabelled, therefore $c$ is both the second and last vertex from $K$ in $\W_R$. This means that any $u\in K$ must occur in $\W_R$ between the first $b$ and the last $c$.

What if $u>_Rb$ and $\W_R = \dots ab\dots u\dots ca\dots$, but $u\notin K$? Let $v=\beta(\sigma^{-1}u)$ be the relabelled image of $u$ in $P$. This implies that in the quotient word $\W_P$, $v$ occurs between the  occurrences of $p$. By Axiom~\ref{Quilt def}\eqref{Quilt2}, this implies that $v\vartriangleleft_Pp$. On the other hand, $u\geq_Rb$ implies (by the Definition \ref{Tree extension} of a $\Treecat$-extension) that $v\geq_Pp$. This is a contradiction, therefore $K=\{ u \text{ occurring in }b\dots bc\dots c\mid u\geq_R b\}$. This is $K_3$.

Conversely, if an extension exists, then given $R$ and $K$, we can reconstruct everything else. Clearly, $q=\abs{K}$, $p=\#R+1-q$, and $\sigma\in \Sh_{p-1,q}$ is the unique shuffle that takes $\{p,\dots,p+q-1\}$ to $K$.

First, consider the conditions on $K\subset\com{\#R}$ for it to determine an extension in this way:
\begin{enumerate}
\item
\label{Subtree}
$K$ must be the set of vertices of a subtree. That is, it must be $\leq_R$-convex and contain a $\leq_R$-minimal element.
\item
\label{Sandwich1}
If $\W_R = \dots u\dots v\dots w\dots$ with $u,w\in K$ and $v\notin K$, then $v\vartriangleleft_R u,w$.
\item
\label{Repeat}
For $u,v\in K$, if a $u$ and a $v$ occur in $\W_R$ separated only by vertices not in $K$, then $u=v$.
\end{enumerate}

In the cases at hand, $\W_R$ contains 2 last-first pairs, and these are already accounted for. Suppose that Condition~\ref{Sandwich1} is satisfied and that $u,v\in K$ and $\underline u,\underline v\in\W_R$ are separated only by vertices not in $K$. If $\W_R = \dots \underline u\dots w\dots \underline v\dots w\dots$, then by  Axiom~\ref{Quilt def}\eqref{Quilt2}, $v\vartriangleleft_Rw$, but by the second condition $v \vartriangleright_Rw$. This is a contradiction, therefore any vertex between $\underline u$ and $\underline v$ cannot appear again after $\underline v$. Likewise, it cannot appear before $\underline u$, either. Since there can be no more last-first pairs, $\underline u$ cannot be last, and $\underline v$ cannot be first. This contradicts the no interlacing condition of Definition~\ref{Wordcat def}, unless $u=v$. Therefore, Condition~\ref{Sandwich1} implies Condition~\ref{Repeat}.

If $u\in K_1$ and $v<_R u$, then by Axiom~\ref{Quilt def}\eqref{Quilt1}, $\W_R\neq \dots u\dots v\dots$, so $v$ must occur before $u$, and so $v\in K_1$. This shows that $K_1$ is the set of vertices of a subtree, with $a$ as the root, so Condition~\ref{Subtree} is satisfied. Condition~\ref{Sandwich1} is vacuously true, because all vertices in $K_1$ appear before all the other vertices in $\W_R$.

By construction, $K_2$ is the set of vertices of the maximal subtree with root $b$, so Condition~\ref{Subtree} is satisfied. Now suppose that $\W_R = \dots u\dots v\dots w\dots$ with $u,w\geq_Rb$ and $v\not\geq_Rb$. Clearly, $v\neq a$, which is the only vertex $<_Rb$, and since $b$ is $\vartriangleright_R$-maximal, $v\vartriangleright_Rb$. By Axiom~\ref{Tree}\eqref{Inheritance}, this proves Condition~\ref{Sandwich1}.

If $u\in K_3$ and $b\leq_Rv\leq_Ru$, then by Axiom~\ref{Quilt def}\eqref{Quilt1}, $\W_R = \dots b\dots v\dots u\dots c\dots$, so $v\in K_3$. This shows that $K_3$ is the set of vertices of a subtree with root $b$, and Condition~\ref{Subtree} is satisfied. 
Suppose that $\W_R = \dots u\dots v\dots w\dots$ with $u,w\in K_3$ and $v\notin K_3$. This implies that $v$ occurs between $b$ and $c$, so the only way to not be in $K_3$ is  $v\ngeq_Rb$. Since $v$ occurs after $b$, we also have $v\nleq_Rb$. If $\W_R=\dots b\dots v\dots b\dots$, then $v\vartriangleleft_Rb$, so by Axiom~\ref{Tree}\eqref{Inheritance}, $v\vartriangleleft_Rb\leq_Ru,w$ implies $v\vartriangleleft_Ru,w$. If $\W_R=\dots c\dots v\dots c\dots$, then $v\vartriangleleft_Rc$, hence $v\not\vartriangleright_Rc$, and contrapositively by Axiom~\ref{Tree}\eqref{Inheritance}, $v\not\vartriangleright_Rb$, which eliminates any possibility but $v\vartriangleleft_Rb$, which again implies $v\vartriangleleft_Ru,w$.
\end{proof}
\begin{remark}
If $c\ngtr_Rb$, then $K_2=K_3=\{b\}$ and only determines a trivial extension by $\fbox1$.
\end{remark}

\begin{definition}
\label{Max sign}
For $Q\in\Qcat$, $\sgn_\homone(Q)$  is $(-1)^{1+n(n-1)/2}$ times the sign of the permutation from the labelled (numerical) order of $\com{\#Q}$ to $\downarrow$ order. Equivalently, this is minus the sign of the permutation from labelled order to reverse $\downarrow$ order.

Let
\[
L^0_n := \sum_{\substack{Q\in \Qcat(n)\\ \deg Q = n-2}} \sgn_\homone(Q)\, Q \ .
\] 
\end{definition}

\begin{theorem}
\label{L-infty thm1}
For all $n\geq2$,
\beq\label{L-infinity check}
0 = \partial L^0_n + \sum_{\substack{p,q\geq2\\ p+q=n+1}}\sum_{\sigma\in\Sh_{p-1,q}} (-1)^{(p-1)q}\sgn\sigma\,\left(L^0_p\circ_pL^0_q\right)^{\sigma^{-1}} ,
\eeq
therefore there exists a unique dg-operad homomorphism
$\homone:\Linfty \to \Qop$, such that $\homone(\ell_n) =  L^0_n$.
\end{theorem}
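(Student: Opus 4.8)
The plan is to prove the operadic relation \eqref{L-infinity check} by comparing, quilt by quilt, the coefficients of its two sides, and then to obtain $\homone$ from the generators-and-relations presentation of $\Linfty$. (For $n=2$ both sides vanish, so assume $n\geq3$.) First I would note that both sides of \eqref{L-infinity check} are supported on quilts $R\in\Qcat(n)$ with $\deg R=n-3$: the operator $\partial$ lowers degree by one while $L^0_n$ is supported on maximal quilts, and each $L^0_p\circ_p L^0_q$ with $p+q=n+1$ produces, by Lemma~\ref{Quilt extension lemma}, quilts of arity $n$ and degree $(p-2)+(q-2)=n-3$. By Lemma~\ref{Submaximal} such an $R$ has word of exactly one of the two shapes $\W_R=ab\cdots bc\cdots c$ or $\W_R=\cdots ab\cdots bc\cdots ca\cdots$. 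So it suffices to fix one such $R$ and show that its total coefficient in \eqref{L-infinity check} vanishes.

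Next I would enumerate the contributions to $R$. On the boundary side, $\partial L^0_n=\sum_Q\sgn_\homone(Q)\sum_{\underline a}\sgn_Q(\underline a)\,\partial_{\underline a}Q$ (Definitions~\ref{Max sign}, \ref{word boundary}), where $\partial_{\underline a}$ only deletes a repeated letter of $\W_Q$ and keeps $\Tree_Q$ (cf.\ \eqref{Q boundary}); so the maximal $Q$ contributing $R$ are exactly those whose word is obtained from $\W_R$ by inserting one more occurrence of an already-present vertex in such a way that the result has a single last-first pair and is still a quilt on $\Tree_R$ (Lemmas~\ref{Word length}, \ref{Maximal}, Definition~\ref{Quilt def}). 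On the composition side, Lemma~\ref{Extensions} tells us precisely when $R$ arises, up to permutation, as $X=R^\sigma\in\Ext(P,Q,p)$ with $P,Q$ maximal and $\sigma\in\Sh_{p-1,q}$: for the subtrees $K_1$ and $K_2$ when $R$ has the first shape (the latter only when $c>_R b$) and for $K_3$ when $R$ has the second shape (only when $c>_R b$), and in each admissible case the datum $K$ determines $p,q,\sigma,P,Q$ and $X$ uniquely, so there is a single term per admissible $K$.

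I would then match the two lists into cancelling pairs. For each admissible $R$ (in both shapes and for each allowed $K$) one exhibits a boundary contribution, or a second extension contribution, giving the same quilt with the opposite sign; the degenerate cases (e.g.\ $c\ngtr_R b$, for which $K_2$ and $K_3$ collapse to trivial extensions by $\fbox{1}$ that never occur for $p,q\geq2$, and candidate insertions on the boundary side that fail Definition~\ref{Quilt def}) have to be disposed of along the way. In every case the cancellation reduces to a sign identity asserting that the product of the relevant factors $\sgn_\homone(\cdot)$, the face sign $\sgn_Q(\underline a)$ (Definition~\ref{Face sign}), the extension sign $\sgn_{P,Q,p}(X)$ (Definition~\ref{Ext sign}), the shuffle sign $\sgn\sigma$, and the prefactor $(-1)^{(p-1)q}$ equals $-1$; I would verify this exactly by the telescoping-shuffle device used in the proofs of Lemmas~\ref{Rep composition} and \ref{Rep differential}, writing down a cycle of shuffles between the labelled order of $\com{\#Q}$, the $\downarrow$-orders of $P$, $Q$ and $R$, and the intervening block orders, and observing that its composite is the identity. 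Once \eqref{L-infinity check} is established the homomorphism follows at once: $\Linfty$ is the dg-operad generated by the $\ell_n$ subject to graded antisymmetry and the relations \eqref{Linfinity_relation}; $L^0_n$ has degree $n-2$ and is graded antisymmetric by a direct check on $\sgn_\homone$ (relabelling by $\sigma$ changes the permutation to $\downarrow$ order by $\sgn\sigma$); and \eqref{L-infinity check} is precisely the statement that $\ell_n\mapsto L^0_n$ respects \eqref{Linfinity_relation}. Hence the assignment extends uniquely to a dg-operad homomorphism $\homone:\Linfty\to\Qop$.

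The hard part is the third step: organizing the submaximal quilts by their two shapes and by the combinatorial datum $K$, constructing the cancelling pairing between boundary and extension contributions while correctly handling the degenerate cases, and then running the sign computation uniformly. The sign computation itself is routine --- the same shuffle-cancellation technique as in the earlier proofs --- but making every case fit together, especially the interplay of the face signs of Definition~\ref{Face sign}, the extension signs of Definition~\ref{Ext sign}, and the global factor $(-1)^{1+n(n-1)/2}$ carried by $\sgn_\homone$, is where the real care lies.
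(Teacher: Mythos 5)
Your proposal follows essentially the same route as the paper's proof: both sides of \eqref{L-infinity check} are supported on quilts of degree $\#R-3$, these are classified by Lemma~\ref{Submaximal}, the extension contributions are pinned down by Lemma~\ref{Extensions} via $K_1,K_2,K_3$, each such quilt is shown to occur exactly twice with opposite signs by the cyclic-shuffle sign device, and the homomorphism then follows from the presentation of $\Linfty$ together with the antisymmetry of $L^0_n$. One small caution for the write-up: in the second word shape, when $b\vartriangleleft_R c$ or $b\vartriangleright_R c$, the two cancelling contributions are both faces of (two different) maximal quilts rather than a face paired against an extension, so the cancelling pairing must be organized by the submaximal quilts $R$ themselves and not only by the admissible sets $K$ --- which is exactly how the paper's case analysis proceeds.
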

\begin{proof}
Trivially, $L^0_n$ has the same arity, degree, and antisymmetry as $\ell_n$, so eq.~\eqref{L-infinity check} is all that  needs  to be checked. 

Note that a term of eq.~\eqref{L-infinity check} will be either of the form
\[
\sgn_\homone(Q) \sgn_Q(\underline a)\, \partial_{\underline a}Q
\]
or
\[
(-1)^{(p-1)q}\sgn\sigma\, \sgn_\homone(P)\, \sgn_\homone(Q)\, \sgn_{Q,P,p}(R^\sigma)\,R
\]
where $P$ and $Q$ are maximal quilts, $p=\#P$, $q=\#Q$, $\sigma\in\Sh_{p-1,q}$, and $R^\sigma\in\Ext(P,Q,p)$.
In particular, if there is a term proportional to $R$, then  $\deg R = \#R-3$. 
We shall see that each such quilt appears exactly twice in eq.~\eqref{L-infinity check}.

So, suppose that $R\in\Qcat$ with $\deg R = \#R-3$. Because of the antisymmetry of $L^0_n$, we can assume without loss of generality that $R$ is labelled in reverse $\downarrow$ order; this simplifies signs.

There are 2 possibilities from Lemma~\ref{Submaximal}:
\begin{enumerate}
\item $\W_R = ab\dots bc\dots c$. 

By Lemma~\ref{Extensions}, $R$ is an extension up to permutation determined by $K_1$. Denote the structures associated to this with a subscript $1$. Let $p_1:=\#P_1$ and $q_1:=\#Q_1$.

The assumption of reverse $\downarrow$ order implies that $\sigma_1=\id$, and that $P_1$ and $Q_1$ are labelled in reverse $\downarrow$ order, so $\sgn\sigma_1=1$, $\sgn_\homone P_1=\sgn_\homone Q_1 = -1$.

All vertices of $R$ are interposed, except $c=p_1-1$, $b=n-1$, and $a=n$. Each vertex $u\leq p_1-2$ of $P_1$ is interposed and identified (Def.~\ref{Ext sign}) with the $R$-vertex of the same name. Each vertex $v\leq q_1-2$ of $Q_1$ is interposed and identified with the $R$-vertex $v+p_1-1$. This shows that $\sgn_{P_1,Q_1,p_1}(R) = (-1)^{p_1q_1}$, which is the sign of shuffling the $p_1-2$ interposed vertices of $P_1$ past the $q_1-2$ interposed vertices of $Q_1$. Therefore, the coefficient of $R$ in the term coming from this extension is  $(-1)^{q_1}$.

In this case, $b$ appears before $c$ in $\W_R$, so $b\ngtr_R c$. This leaves 3 possible subcases:
\begin{enumerate}
\item   $b <_R c$. 

We cannot add an extra vertex inside the subwords, since they are of maximal length already. By Axiom~\ref{Quilt def}\eqref{Quilt1}, we cannot add a $c$ before $b$ or a $b$ after $c$. Therefore $R$ is not a face.

By Lemma~\ref{Extensions}, $R$ is an extension up to permutation in a second way, determined by $K_2$. Denote structures associated to this with a subscript $2$. Let $p_2:=\#P_2$ and $q_2:=\#Q_2$.

By definition, $\sigma_2\in\Sh_{p_2-1,q_2}$ is the shuffle that maps $\{p_2,\dots,n\}$ to $K_2$. Since $a=n\notin K_2$, $\sigma_2(p_2-1)=n$; 
since $b=n-1$ and $c=p_1-1\in K_2$ are the $\downarrow$ first vertices in $K_2$, $\sigma_2(n)=n-1$ and $\sigma_2(n-1)=p_1-1$. These are the only vertices that aren't interposed.

$P_2$ is labelled in $\downarrow$ order, except that $p_2$ and $p_2-1$ are transposed. Therefore, $\sgn_\homone P_2=+1$.
All vertices of $P_2$ are interposed, except $p_2-1$ and $p_2$. An interposed vertex $u\in\com{p_2-2}$ is identified with the eponymous interposed vertex $u$ of $R^{\sigma_2}$.

$Q_2$ is labelled in $\downarrow$ order, therefore $\sgn_\homone Q_2 = -1$. 
All vertices of $Q_2$ are interposed, except $q_2-1$ and $q_2$. An interposed vertex $v\in\com{q_2-2}$ is identified with $v+p_2-1$ in $R^{\sigma_2}$.

Consider the following sequence of words and permutations.
\begin{itemize}
\item
Begin with the descending sequence of numbers from $n$ to $1$. 
\item
Shuffle $p_2-1$ to the beginning, past $q_2$ terms. The sign is $(-1)^{q_2}$.
\item
Shuffle this to
\[
(p_2-1)\,n\,(n-1)\,(p_2-2)\dots1\,(n-2)\dots p_2 .
\]
The sign is $(-1)^{p_2q_2}$.
\item
Note that the  subword  $(p_2-1)\dots 1$  is identified with the interposed vertices of $P_2$ in $\downarrow$ order. The  subword $(n-2)\dots p_2$ is identified with the interposed vertices of $Q_2$ in $\downarrow$ order.  Shuffle this to $(p_2-1)\,n\,(n-1)$ and then the interposed vertices of $R^{\sigma_2}$ in $\downarrow$ order. The sign is by definition $\sgn_{P_2,Q_2,p_2}(R^{\sigma_2})$.
\item
Apply $\sigma_2$. This gives
\[
n\,(n-1)\,(p_1-1)\,(n-2)\dots p_1\, (p_1-2)\dots 1 .
\]
The sign is of course $\sgn\sigma_2$.
\item
Shuffle back to the initial sequence (descending order). This moves the vertex $c=p_1-1$ past $q_1-2$ terms, so the sign is $(-1)^{q_1}$.
\end{itemize}
This shows that $\sgn\sigma_2\,\sgn_{P_2,Q_2,p_2}(R^{\sigma_2}) = (-1)^{p_2(q_2-1)+q_1}$, and so the coefficient of $R$ coming from this extension is $-(-1)^{q_1}$ and this cancels the term coming from the  extension determined by $K_1$.

\item  $b \vartriangleleft_R c$. 

$R= \partial_{\underline c}R'$, where $\Tree_{R'}=\Tree_R$ and $\W_{R'} = a\underline c b\dots bc\dots c$.

Since $\underline c$ is the first caesura in $\W_{R'}$, $\sgn_{R'}\underline c=-1$. 

The $\downarrow$ order for $R'$ is almost the reverse labelled order, except that $c=p_1-1$ has been moved to second place, past $n-p_1=q_1-1$ letters,
so $\sgn_\homone R' = (-1)^{q_1}$. Therefore, the coefficient of $R$ in the term coming from this face is $-(-1)^{q_1}$. This cancels with the term coming from the extension determined by $K_1$.

\item $b\vartriangleright_R c$. 

In this case, $R=\partial_{\underline b}R'$, where $\Tree_{R'}=\Tree_R$ and $\W_{R'}=ab\dots b c\dots c\underline b$.

$R'$ is labelled in reverse $\downarrow$ order, so $\sgn_\homone R'=-1$. 

All vertices of $R'$ are interposed, except $a$ and $b$. The subword $ab\dots b c$ contains $q_1+1$ distinct vertices, and thus $q_1-1$ interposed vertices. Since every interposed vertex first occurs after a caesura, there are $q_1-1$ caesurae before the first $c$. This shows that 
the numbering of $\W_{R'}$ (as in Def.~\ref{Face sign}) is in part
\[
\W_{R'} = a\,\underset{1}b\dots \underset{q_1-1}b\,c\dots c\,\underset{q_1}{\underline b} ,
\]
so $\sgn_{R'}\underline b = (-1)^q$. 
Therefore, the coefficient of $R$ in the term coming from this face is $-(-1)^{q_1}$, and this cancels with  the term coming from the extension determined by $K_1$.
\end{enumerate}
\item
 $\W_R = \dots ab\dots bc\dots ca\dots$. 
 
In this case, $R = \partial_{\underline a}R'$, where $\Tree_{R'}=\Tree_R$ and 
\[
\W_{R'} = \dots ab\dots b\underline a c\dots ca\dots \ .
\]
The $\downarrow$ order is the same for $R$ and $R'$, so $\sgn_\homone R'=-1$.

Suppose that the numbering of $\W_{R'}$ is in part,
\[
\W_{R'} = \dots \underset{m}a\,b\dots b\,\underset{l}{\underline a} \,c\dots c\,a\dots \ ,
\]
so $\sgn_{R'}\underline a = (-1)^l$, and the coefficient of $R$ as a face of  $R'$ is $-(-1)^l$.

This numbering means that $\underline a$ is the $l$'th caesura, so $c$ is the $l$'th interposed vertex in $\downarrow$ order and the $l+2$'nd vertex of all. 
 Hence, $c=n-l-1$.

Since the $a$ preceding the first $b$ is the $m$'th caesura, $b$ is the $m$'th interposed vertex in $\downarrow$ order and the $m+2$'nd vertex overall. 

This also implies that there are $l-m-1$ caesurae in the subword $b\dots b$ here. Each  vertex in this subword first appears directly after a caesura, except for $b$ itself, and each caesura is directly before a first appearance, therefore the number of vertices in $b\dots b$ is $l-m$.

Since $b$ appears before $c$ in $\W_R$, $b\ngtr_R c$. This leaves 3 possibilities:
\begin{enumerate}
\item $b<_Rc$. 

$R$ is not a face of any other quilt.

By Lemma~\ref{Extensions}, $R$ is an extension up to permutation determined by $K_3$.

All vertices of $R$ are interposed, except $n$, $n-1$, and $c$. The $\downarrow$ first vertices in $K_3$ are $b$ and $c$, and $\alpha(v) = v+p-1$, therefore $\sigma(n) = b$ and $\sigma(n-1)=c$. The root of $R$ is $n\notin K_3$, so $\sigma(p-1)=n$. Likewise, because $n-1$ is not interposed, it is not in $K_3$, therefore $\sigma(p-2)=n-1$.

This shows that all vertices of $R^\sigma$ are interposed except $p-1$, $p-2$, and $n-1$.

$Q$ is labelled in reverse $\downarrow$ order, so $\sgn_\homone Q=-1$.

The interposed vertices of $Q$ are $1$ through $q-2$. An interposed vertex $v$ is identified with the interposed vertex $\alpha(v) = v+p-1$ of $R^\sigma$.

The interposed vertices of $P$ are $1$ through $p-3$ and $p$. An interposed vertex $u\leq p-3$ is identified with the eponymous vertex of $R^\sigma$, and $p$ must be identified with $n$, since that is the only remaining interposed vertex of $R^\sigma$.

Consider the following sequence of words and permutations.
\begin{itemize}
\item
Begin with the descending sequence of numbers from $n$ down to $1$.
\item
Shuffle this to 
\[
(p-1)\,(p-2)\,(n-1)\,n\,(n-2)\dots p\,(p-3)\dots1 .
\]
The sign of this shuffle is $-1$.
\item
Shuffle $n-2$ through $p$ ($q-2$ vertices) past $p-3$ through $1$ ($p-3$ vertices) giving
\[
(p-1)\,(p-2)\,(n-1)\,n\,(p-3)\dots1\,(n-2)\dots p .
\]
The sign is $(-1)^{(p-1)q}$.
\item
The subword $n\,(p-3)\dots1$ consists of the $R^\sigma$-vertices  corresponding to interposed vertices of $P$, in descending labelled order. Shuffle these to  $\downarrow$ order for $R^\sigma$ (which corresponds to $\downarrow$ order for $P$). 
This gives $(p-1)\,(p-2)\,(n-1)$ and then the vertices corresponding to interposed vertices of $P$ and then $Q$, each in $\downarrow$ order.
The sign is $-\sgn_\homone P$, by definition.
\item
The vertices after the first 3 are the interposed vertices of $R^\sigma$. Shuffle these to $\downarrow$ order for $R^\sigma$. The sign is (by definition) $\sgn_{P,Q,p}(R^\sigma)$.
\item
Apply $\sigma$. This gives
\[
n\,(n-1)\,c\,(n-2)\dots(c+1)\,(c-1)\dots1 .
\]
The sign is (of course) $\sgn\sigma$.
\item
The only vertex out of position is $c$. Shuffle this back to the initial word with everything in descending labelled order. This moves $c$ past $n-2-c=l-1$ vertices, so the sign is $-(-1)^l$. 
\end{itemize}
This shows that $\sgn_\homone(P)\,\sgn(\sigma)\,\sgn_{P,Q,p}(R^\sigma) = -(-1)^{(p-1)q+l}$. Recalling that $\sgn_\homone Q=-1$, this shows that the coefficient of $R$ as an extension is $(-1)^l$, and this term cancels the term from $R$ as a face of $R'$.

\item $b\vartriangleleft_Rc$. 

$R=\partial_{\underline c} R''$, where $\W_{R''} = \dots a\underline c b\dots bc\dots ca\dots$ and $\Tree_{R''}=\Tree_R$.

Lemma~\ref{Extensions} shows that $R$ is not an extension up to permutation. 

The numbering of $\W_{R''}$ is in part
\[
\W_{R''} = \dots \underset{m}a\underset{m+1}{\underline c} b\dots bc\dots ca\dots ,
\]
so $\sgn_{R''}\underline c = - (-1)^m$.

The $\downarrow$ ordering is $\dots a\dots b\dots c\dots$ in both $R$ and $R'$, but for $R''$ it is $\dots a\dots cb\dots$. The  $c$ has been shuffled to just before $b$. This moves it past $l-m$ vertices occurring in the subword $b\dots b$.  This shows that $\sgn_\homone R'' = -(-1)^{l-m}$. The coefficient of  $R$ as a face of $R''$ is  $(-1)^l$. This cancels $R$ as a face of $R'$.

\item
 $b\vartriangleright_Rc$. 
 
$R=\partial_{\underline b} R''$, where $\W_{R''} = \dots ab\dots bc\dots c\underline b a\dots$.

Lemma~\ref{Extensions} shows that $R$ is not an extension. 

The numbering of $\W_{R''}$ is in part
\[
\W_{R''} = \dots \underset{m}a b\dots \underset{l}bc\dots c\underset{l+1}{\underline b} a\dots ,
\]
so $\sgn_{R''}\underline b = -(-1)^l$. In this case, the $\downarrow$ ordering for $R''$ is the same as for $R$ and $R'$, so $\sgn_\homone R'' = -1$. The coefficient of  $R$ as a face of $\partial R''$ is $(-1)^l$, so it cancels $R$ as a face of $R'$.
\qedhere
\end{enumerate}
\end{enumerate}
\end{proof}

\begin{definition}
\label{P0 def}
$P^0_n$ is $(-1)^{1+n(n-1)/2}$ times the sum of all quilts $Q\in\Qcat(n)$ with $\deg Q=n-2$ that are labelled in $\downarrow$ order. 
\end{definition}
The proof of Theorem~\ref{L-infty thm1} actually more directly proves the following result, which is equivalent because $\Qop$ is torsion free (as an abelian group).
\begin{corollary}
\label{Coinvariant version}
For all $n\geq 2$,
\[
\partial P^0_n  + \sum_{\substack{p,q\geq2\\ p+q=n+1}}\sum_{j=1}^p (-1)^{(p-1)q+(p-j)(q-1)}\,P^0_p\circ_jP^0_q
\]
is contained in the abelian subgroup spanned by elements of the form $Q-(\sgn \sigma)Q^\sigma$ for $Q\in\Qcat(n)$ and $\sigma\in\perm_n$. In other words, its image is $0$ in the coinvariant group, $\Qop(n)_{\perm_n}$.
\end{corollary}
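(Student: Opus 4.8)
The plan is to deduce the corollary from Theorem~\ref{L-infty thm1} by applying the full antisymmetrization operator and exploiting that $\Qop(n)$ is torsion free. Write $Y_n$ for the element in the statement and $N\subset\Qop(n)$ for the span of the $Q-(\sgn\sigma)Q^\sigma$ with $Q\in\Qcat(n)$, $\sigma\in\perm_n$, so the claim is $Y_n\in N$. The first point is group-theoretic: every $Q\in\Qcat(n)$ carries the canonical total order on its vertices given by first occurrence in $\W_Q$ (the $\downarrow$ order of Definition~\ref{Down order}), hence has no nontrivial automorphisms, so $\perm_n$ acts \emph{freely} on $\Qcat(n)$. Therefore $\Qop(n)=\Z[\Qcat(n)]$ is a free permutation module, the sign-coinvariant group $\Qop(n)_{\perm_n}=\Qop(n)/N$ is free abelian, and the norm map $\pi_n\colon x\mapsto\sum_{\sigma\in\perm_n}(\sgn\sigma)\,x^\sigma$ has kernel exactly $N$. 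Thus $Y_n\in N$ if and only if $\pi_n(Y_n)=0$, so it suffices to show $\pi_n(Y_n)=0$.

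Next I would compute $\pi_n$ on each piece of $Y_n$. Since relabelling by $\sigma$ composes the labelled and $\downarrow$ orders, $\sgn_\homone(Q^\sigma)=(\sgn\sigma)\sgn_\homone(Q)$ (Definition~\ref{Max sign}); as each isomorphism class of maximal quilt has a unique $\downarrow$-labelled representative, Definitions~\ref{Max sign} and \ref{P0 def} give $\pi_n(P^0_n)=L^0_n$, while $\pi_n(\partial P^0_n)=\partial L^0_n$ because $\partial$ is $\perm$-equivariant. For the composition terms I would invoke the standard operadic identity: for $a\in\Qop(p)$, $b\in\Qop(q)$ with $p+q=n+1$,
\[
\sum_{j=1}^{p}(-1)^{(p-j)(q-1)}\,\pi_n(a\circ_j b)=\sum_{\sigma\in\Sh_{p-1,q}}(\sgn\sigma)\,\bigl(\pi_p(a)\circ_p\pi_q(b)\bigr)^{\sigma^{-1}} ,
\]
a consequence of $\perm$-equivariance of partial composition and the coset decomposition $\perm_n=(\perm_{p-1}\times\perm_q)\,\Sh_{p-1,q}$, with $(-1)^{(p-j)(q-1)}$ the Koszul sign for moving the block of $q$ inputs of $b$ from the $p$-th to the $j$-th slot. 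Taking $a=P^0_p$, $b=P^0_q$, using $\pi_p(P^0_p)=L^0_p$ and $\pi_q(P^0_q)=L^0_q$, multiplying by $(-1)^{(p-1)q}$, and summing over $p+q=n+1$, this identifies $\pi_n(Y_n)$ with the right-hand side of eq.~\eqref{L-infinity check}, which vanishes by Theorem~\ref{L-infty thm1}. Hence $\pi_n(Y_n)=0$, and with the first paragraph this proves the corollary.

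The step I expect to be the real work is the operadic identity above — specifically, checking that the Koszul sign it produces is exactly $(-1)^{(p-j)(q-1)}$ under the conventions of Definitions~\ref{Linfty def} and \ref{Max sign}; this is the routine-but-fiddly translation between the ``operadic'' and ``slotwise'' forms of the $\Linfty$-relations. An equivalent route, closer to the remark preceding the corollary, avoids $\pi_n$ altogether: one rereads the proof of Theorem~\ref{L-infty thm1}, where each quilt $R$ with $\deg R=\#R-3$ is classified (Lemmas~\ref{Submaximal} and \ref{Extensions}) and its contributions to eq.~\eqref{L-infinity check} cancel in pairs after reducing ``without loss of generality'' to $R$ labelled in reverse $\downarrow$ order. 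That reduction \emph{is} the passage to $\Qop(n)_{\perm_n}$; keeping instead, for every $R$ labelled in $\downarrow$ order, a careful account of the coefficient of $R$ in $Y_n$ itself — all other terms being rearrangements $Q\mapsto(\sgn\sigma)Q^\sigma$ of such an $R$ — shows directly that $Y_n\in N$.
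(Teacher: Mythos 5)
Your proposal is correct, but it reaches the corollary by a genuinely different route than the paper. The paper's own justification is essentially your second, ``alternative'' route: the proof of Theorem~\ref{L-infty thm1} already works one $\perm_n$-orbit at a time (the reduction ``without loss of generality'' to reverse $\downarrow$ labelling), so it directly establishes the coinvariant statement, and the corollary is then declared equivalent to the theorem because $\Qop$ is torsion free. Your primary route instead treats the theorem as a black box and applies the norm map $\pi_n$, and it is sound: the $\perm_n$-action on $\Qcat(n)$ is indeed free (even more simply than via the $\downarrow$ order: if $Q^\sigma=Q$ then $\sigma^{-1}$ fixes every letter of $\W_Q$, which is surjective onto $\com{n}$), so $\ker\pi_n$ is exactly the span $N$ of the $Q-(\sgn\sigma)Q^\sigma$; the relation $\sgn_\homone(Q^\sigma)=(\sgn\sigma)\sgn_\homone(Q)$ gives $\pi_n(P^0_n)=L^0_n$ and $\pi_n(\partial P^0_n)=\partial L^0_n$; and the slot-versus-shuffle identity you flag as the real work does hold with the sign you wrote, though your label ``Koszul sign'' is only a mnemonic --- operadic equivariance in a graded operad carries no Koszul signs, and the check is pure permutation combinatorics: rewriting $\bigl((P^0_p)^\alpha\circ_p(P^0_q)^\beta\bigr)^{\sigma^{-1}}$ as $(P^0_p\circ_jP^0_q)^\gamma$, blowing slot $j$ up into a block of $q$ inputs multiplies the permutation sign by $(-1)^{q(p-j)}$, which against your prefactor $(-1)^{(p-j)(q-1)}$ leaves exactly the sign $(-1)^{p-j}$ of the cycle in $\perm_p$ moving slot $j$ to slot $p$; since both sides are totally antisymmetric and the two-vertex terms for distinct $(j,\gamma)$ are matched bijectively, checking one representative per slot suffices. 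What each approach buys: yours is a self-contained deduction from the statement of the theorem, at the cost of this standard-but-fiddly identity, which is precisely the content hidden in the paper's ``equivalent because torsion free'' remark; the paper's route avoids the identity but requires reopening the theorem's long case analysis. You also sharpen the paper's parenthetical in a useful way: torsion-freeness of $\Qop(n)$ as an abelian group would not by itself give $\ker\pi_n=N$ (nor torsion-free coinvariants); it is the freeness of the action on quilts that does, and you supply that point explicitly.
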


\subsection{$\Linfty$ in $\mQuilt$}
The next step is to construct a homomorphism from $\Linfty$ to $\mQuilt$. This will give the Hochschild bicomplex the structure of an $\Linfty$-algebra.

\begin{definition}
$L^1_n := L^0_{n+1}\circ_1\m$, $L^2_n := (L^0_{n+2}\circ_1\m)\circ_1\m = L^0_{n+2}\circ(\m,\m,\id,\dots)$, \emph{et cetera}, and $L_n=L^0_n+L^1_n\in\mQuilt(n)$.
\end{definition}

Note that $L_2$ and $L_3$ appeared already in Section~\ref{Gerstenhaber}, and $L^0_1 = \Delta$ from eq.~\eqref{Delta def}.

\begin{lemma}
\label{Vanishing terms}
If $n\geq3$, $Q\in \Qcat(n)$ is a maximal quilt, and $a\in\com{n}$ is not the root of $Q$, then $Q\circ_a\m=0$.

For any $n\geq0$, $L^2_n=0$.
\end{lemma}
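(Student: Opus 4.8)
The plan is to prove the two assertions separately, deriving the second from the first.

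\emph{First assertion.} Fix a maximal quilt $Q\in\Qcat(n)$ with $n\ge3$ and a vertex $a\in\com n$ that is not the root $r$ of $\Tree_Q$. I would begin by recalling the shape of a maximal quilt: by Lemma~\ref{Maximal} its word is $\W_Q=rb\dots b$ with $r$ the root (occurring once, as the first letter) and $b$ a leaf, and since $\deg Q=n-2$ the word $\W_Q$ has length $2n-2\ge4$, so $b$ occurs at least twice; moreover, by Lemma~\ref{Word length} together with $\deg Q=n-2$, $\W_Q$ has exactly one last--first pair, which is necessarily $\underline r\,\underline b$ (the first two letters). Then I would split on the number of occurrences of $a$ in $\W_Q$: if $a$ occurs more than once, $Q\circ_a\m=0$ is exactly Relation~\ref{mQuilt def}\eqref{flat1 axiom}; if $a$ occurs exactly once, then (since $b$ occurs twice) $a\ne b$, and it suffices to prove $\W_Q=\dots v\,a\,v\dots$ for some vertex $v$, since that gives $Q\circ_a\m=0$ by Relation~\ref{mQuilt def}\eqref{flat2 axiom}.

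The core of the argument, and the step I expect to be the main obstacle, is this ``sandwich'' claim. Write the unique occurrence of $a$ as $\underline a$, with immediate predecessor $\underline t$ and successor $\underline s$; these exist because $a\ne r$ makes $\underline a$ non-initial and $a\ne b$ makes it non-final. I would then invoke the trichotomy for a consecutive pair of letters (the first is a caesura; or the pair is last--first; or the first is a final occurrence and the second a non-initial occurrence). Applied to $\underline t\,\underline a$: the last alternative is impossible since $\underline a$ is the only occurrence of $a$, and the last--first alternative would force $t=r,\ a=b$, contrary to $a\ne b$; hence $\underline t$ is a caesura and $t$ recurs after $\underline a$. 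Applied to $\underline a\,\underline s$: $\underline a$ is not a caesura, and the last--first alternative would force $a=r$; hence $\underline s$ is not the first occurrence of $s$, so $s$ also occurs before $\underline a$. Now if $s\ne t$, the earlier $s$ lies strictly before $\underline t$ and the later $t$ strictly after $\underline s$, so $\W_Q$ contains both $s\dots t\dots s$ and $t\dots s\dots t$; by Axiom~\ref{Quilt def}\eqref{Quilt2} these yield $t\vartriangleleft_Q s$ and $s\vartriangleleft_Q t$, contradicting Axiom~\ref{Tree}\eqref{Comparability}. Hence $s=t$ and $\W_Q=\dots t\,a\,t\dots$, completing the first assertion.

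\emph{Second assertion.} I would write $L^2_n=L^0_{n+2}\circ(\m,\m,\id,\dots,\id)=\sum_Q\sgn_\homone(Q)\,Q\circ(\m,\m,\id,\dots,\id)$, summed over maximal $Q\in\Qcat(n+2)$. When $n=0$ this is a combination of $\Qtwo12\circ(\m,\m)$ and $\Qtwo21\circ(\m,\m)$, each zero by Relation~\ref{mQuilt def}\eqref{MC axiom} (with $\perm$-equivariance handling the second). When $n\ge1$, every maximal $Q$ in the sum has arity $n+2\ge3$, so the first assertion applies to it. I would then note that the full composition $Q\circ(\m,\m,\id,\dots,\id)$ can be formed by plugging $\m$ into vertex $2$ and then into vertex $1$, giving $(Q\circ_2\m)\circ_1\m$, or into vertex $1$ and then into the resulting vertex $1$ (the old vertex $2$), giving $\pm(Q\circ_1\m)\circ_1\m$; by the graded operad axioms these differ only by a sign. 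Since $\Tree_Q$ has a unique root, at most one of the vertices $1,2$ is that root, so by the first assertion at least one of $Q\circ_1\m$, $Q\circ_2\m$ vanishes, whence $Q\circ(\m,\m,\id,\dots,\id)=0$ for every $Q$ and therefore $L^2_n=0$. The same computation then shows $L^k_n=0$ for all $k\ge2$, so that $L_n=L^0_n+L^1_n$ is the only surviving piece; apart from the sandwich claim, everything here is a direct appeal to the defining relations of $\mQuilt$ and the established structure of maximal quilts.
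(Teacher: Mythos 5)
Your proposal is correct and follows essentially the same route as the paper: handle repeated $a$ by Relation~\ref{mQuilt def}\eqref{flat1 axiom}, otherwise use Lemmata~\ref{Word length} and \ref{Maximal} to see that the only last--first pair is the initial $rb$, deduce $\W_Q=\dots v\,a\,v\dots$ and apply Relation~\ref{mQuilt def}\eqref{flat2 axiom}, and then kill $L^2_n$ because vertices $1$ and $2$ cannot both be the root (with $n=0$ settled by Relation~\ref{mQuilt def}\eqref{MC axiom}). The only cosmetic difference is in the sandwich step, where you analyse both neighbours of $\underline a$ and exclude $s\neq t$ by an interlacing contradiction, while the paper inspects the subword between the two occurrences of the successor's vertex; both arguments are sound.
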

\begin{proof}
By Relation \ref{mQuilt def}\eqref{flat1 axiom}, if $a$ is repeated in $\W_Q$, then $Q\circ_a\m=0$, so suppose that $a$ is not repeated.
By Lemma~\ref{Maximal}, the last vertex in $\W_Q$ has an earlier occurrence, so $a$ is not the last vertex. Let $\underline v$ be the next letter after $a$. 

By Lemmata \ref{Word length} and \ref{Maximal}, the first 2 letters of $\W_Q$ are the only last-first pair. Since $a$ is not repeated, it is a last occurrence, therefore $\underline v$ cannot be a first occurrence. The subword between the previous $v$ and $\underline v$ cannot contain a last-first pair or $v$, therefore it must begin and end with the same vertex, which is $a$. Since $a$ is not repeated,  $\W_Q = \dots v\, a\,\underline v\dots$. By  Relation \ref{mQuilt def}\eqref{flat2 axiom} this implies that $Q\circ_a\m=0$.

$L^0_{n+2}$ is an alternating sum of maximal quilts. For $n\geq1$, and $Q\in\Qcat(n)$ maximal, $Q\circ(\m,\m,\id,\dots)= 0$, because $1$ and $2$ can't both be the root of $Q$, therefore
\[
L^2_n = L^0_{n+2}\circ(\m,\m,\id,\dots)= 0.
\]
For $n=0$, Relation~\ref{mQuilt def}\eqref{MC axiom} directly implies that $L^2_0=0$.
\end{proof}

The homomorphism $\homone$ defines a Maurer-Cartan equation in any $\Qop$-algebra over a field of characteristic $0$. The additional generator $\m$ is formally a solution of this Maurer-Cartan equation. The first term of the equation is $\partial\m=0$. The second term vanishes by Relation~\ref{mQuilt def}\eqref{MC axiom}. All other terms vanish by Lemma~\ref{Vanishing terms}.

A solution of the Maurer-Cartan equation for some $\Linfty$-algebra determines another $\Linfty$-algebra structure on the same graded vector space. This fact seems to be generally known but rarely written down. It appears as Lemma 4.4 in \cite{vdl2002}. In this spirit, the formal solution $\m$ determines from $\homone$ another homomorphism from $\Linfty$ to $\mQuilt$. 

This is also what changes the differential from $\partial$ to $\partial' = \partial + \ad_\Delta$. In principle, $\Delta$ should have further terms with more $\m$'s, but these vanish by Lemma~\ref{Vanishing terms}.

\begin{theorem}
\label{Linfty thm2}
For all $n\geq2$,
\beq\label{L-infinity check2}
0 = \partial' L_n + \sum_{\substack{p,q\geq2\\ p+q=n+1}}\sum_{\sigma\in\Sh_{p-1,q}} (-1)^{(p-1)q}\sgn\sigma\,\left(L_p\circ_pL_q\right)^{\sigma^{-1}} ,
\eeq
therefore there exists a unique dg-operad homomorphism
$\homtwo:\Linfty \to \mQuilt$, such that $\homtwo(\ell_n) =  L_n$.
\end{theorem}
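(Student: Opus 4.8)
The plan is to realize $\homtwo$ as the \emph{twist} of $\homone$ along the formal Maurer--Cartan element $\m$. First I would check that $\m\in\mQuilt(0)$, which has degree $-1$, actually solves the Maurer--Cartan equation attached to $\homone$, namely $\partial\m+\sum_{k\geq2}\tfrac1{k!}\,L^0_k\circ(\m,\dots,\m)=0$. The linear term is $\partial\m=0$ by definition. The quadratic term is $L^0_2\circ(\m,\m)=\Qtwo12\circ(\m,\m)-\Qtwo21\circ(\m,\m)$, which vanishes by Relation~\ref{mQuilt def}\eqref{MC axiom} (and its $\perm_2$-image). Every higher term $L^0_k\circ(\m,\dots,\m)$ with $k\geq3$ vanishes by Lemma~\ref{Vanishing terms}: in a maximal quilt at most one vertex is the root, so substituting $\m$ at two different vertices forces at least one substitution at a non-root vertex, which kills the diagram. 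Thus $\m$ is a genuine Maurer--Cartan element for the $\Linfty$-structure that $\homone$ (composed with the graded-operad inclusion $\Qop\hookrightarrow\mQuilt$) puts on $\mQuilt$.

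Next I would invoke the twisting construction (the operadic form of \cite[Lemma~4.4]{vdl2002}) applied to the dg-operad map $\homone\colon(\Linfty,\partial)\to(\Qop,\partial)\hookrightarrow(\mQuilt,\partial)$ and the element $\m$. This produces a homomorphism sending $\ell_n$ to the twisted bracket
\[
\sum_{k\geq0}\tfrac1{k!}\,L^0_{n+k}\circ(\underbrace{\m,\dots,\m}_{k},\underbrace{\id,\dots,\id}_{n})
\]
and intertwining $\partial$ with the twisted differential $\partial+\ad_{\Delta'}$, where $\Delta'=\sum_{k\geq1}\tfrac1{k!}\,L^0_{k+1}\circ(\m,\dots,\m,\id)$. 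By Lemma~\ref{Vanishing terms} again, every summand with $k\geq2$ vanishes, so the twisted bracket collapses to $L^0_n+L^0_{n+1}\circ_1\m=L^0_n+L^1_n=L_n$, and $\Delta'$ collapses to $L^0_2\circ_1\m=\Delta$ as defined in \eqref{Delta def}; hence the twisted differential is exactly $\partial'=\partial+\ad_\Delta$ from Definition~\ref{mQuilt def}. Therefore the twisted homomorphism is the desired $\homtwo\colon\Linfty\to\mQuilt$ with $\homtwo(\ell_n)=L_n$, and it is unique because the $\ell_n$ generate $\Linfty$; equation \eqref{L-infinity check2} is precisely the statement that this assignment respects the relations \eqref{Linfinity_relation}.

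Alternatively — and this is how I expect the rigorous write-up to proceed — one substitutes $L_n=L^0_n+L^1_n$ and $\partial'=\partial+\ad_\Delta$ into \eqref{L-infinity check2} and sorts the resulting terms by the number of copies of $\m$ appearing. The terms with no $\m$ reproduce \eqref{L-infinity check} and sum to $0$ by Theorem~\ref{L-infty thm1}. Every term with two or more $\m$'s vanishes by Lemma~\ref{Vanishing terms} together with Relation~\ref{mQuilt def}\eqref{MC axiom}, after using Relation~\ref{mQuilt def}\eqref{rearrangement axiom} to identify the handful of surviving diagrams before cancelling them (the same bookkeeping as in the proof that $(\partial')^2=0$). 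The crux, and the main obstacle, is the vanishing of the one-$\m$ part: one must rewrite $\partial L^1_n+\ad_\Delta L^0_n$ together with the cross terms $\sum(-1)^{(p-1)q}\sgn\sigma\,(L^0_p\circ_pL^1_q+L^1_p\circ_pL^0_q)^{\sigma^{-1}}$ as the image under $(-)\circ_1\m$ of the arity-$(n+1)$ instance of \eqref{L-infinity check}, using that $\partial$ is a derivation with $\partial\m=0$, the compatibility of partial composition with $\circ_1\m$, and the antisymmetry and degree bookkeeping of the $L^0$'s. Matching the Koszul and shuffle signs through this rewriting is the only genuinely delicate step; everything else is formal, and this rewriting is exactly the operadic content of the twisting lemma made explicit.
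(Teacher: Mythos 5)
You have the right skeleton---expanding $L_n=L^0_n+L^1_n$ and $\partial'=\partial+\ad_\Delta$ and sorting \eqref{L-infinity check2} by the number of $\m$'s is exactly how the paper proceeds, and your description of the one-$\m$ part (the arity-$(n+1)$ instance of \eqref{L-infinity check} composed with a single $\m$, with the shuffle and Koszul signs factoring) is correct---but you have mislocated the difficulty, and the step you wave through is where the genuine gap lies. The two-$\m$ terms $(L^1_p\circ_p L^1_q)^{\sigma^{-1}}$ with $p,q\geq2$, together with $\ad_\Delta L^1_n$, do \emph{not} vanish termwise: in $L^1_p\circ_pL^1_q$ each factor carries its $\m$ at the root of its own maximal quilt, the inner $\m$ sits at an interior vertex of the composite, and neither Lemma~\ref{Vanishing terms} (a single $\m$ at a non-root vertex of a maximal quilt) nor Relation~\ref{mQuilt def}\eqref{MC axiom} (both $\m$'s as the two children of one cell) applies; nor is this ``the same bookkeeping as $(\partial')^2=0$'', which is only the $p=q=1$ case. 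What is actually needed is the identity $\sum_{p+q=n+1}\sum_{\sigma\in\Sh_{p-1,q}}(-1)^{(p-1)q}\sgn\sigma\,\bigl(L^1_p\circ_pL^1_q\bigr)^{\sigma^{-1}}=0$, and the obvious derivation---composing the arity-$(n+2)$ instance of \eqref{L-infinity check} with $(\m,\m,\id,\dots)$, i.e.\ applying $\partial$ to $L^2_n=0$---produces this sum only with the coefficient $\binom{2}{1}=2$. Since $\mQuilt$ is an operad of abelian groups presented by generators and relations, with no torsion-freeness established, you may not cancel the $2$; this is precisely why the paper proves the unsymmetrized Corollary~\ref{Coinvariant version} and applies it to the vanishing sum $\sum_{\sigma}\sum_{i<j}-\sgn\sigma\,\bigl[(P^0_{n+2}\circ_i\m)\circ_{j-1}\m\bigr]^\sigma$ to obtain the two-$\m$ cancellation integrally.

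The same issue defeats your first route as written: the twisted brackets and twisted differential carry $1/k!$ coefficients, and the proof of the twisting lemma you cite (stated in \cite{vdl2002} for $\Linfty$-algebras over a field of characteristic $0$, not for a formal arity-zero element of an integral operad) reorganizes sums using exactly the binomial coefficients that are not invertible here; the paper invokes that picture only as motivation and says flatly that one ``can't actually divide by 2''. To repair the proposal you must either prove that $\mQuilt$ has no $2$-torsion or, as the paper does, route the two-$\m$ part through the coinvariant form of Theorem~\ref{L-infty thm1}.
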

\begin{proof}
The \emph{idea} is to write $L_n = L_n^0+L_n^1+\frac12L_n^2$ --- since $L_n^2=0$ --- and then compute $\partial L_n$. However, we can't actually divide by $2$.

Firstly,  compute $-\partial L^1_n$ and $-\partial L^2_n$ by composing eq.~\eqref{L-infinity check} with $(\m,\id,\dots)$ or $(\m,\m,\id,\dots)$.

Consider a term of $-\partial L^k_n$ involving $L^j_p\circ_p L^{k-j}_q$ (with $p+q=n+1$). This comes from composing $L^0_{j+p}\circ_{j+p}L^0_{k-j+q}$ with $k$ $\m$'s. The shuffle $\sigma\in\Sh_{p+j-1,q+k-j}$ factorizes uniquely as 
\[
\sigma = (\sigma_1\smile\sigma_2)\circ\chi
\]
where $\sigma_1\in\Sh_{j,k-j}$, $\sigma_2\in\Sh_{p-1,q}$, $\smile$ means concatenation (placing the permutations side-by-side) and  $\chi\in\Sh_{j,p-1,k-j,q}$ is the unique shuffle that swaps the $p-1$ letters with the $k-j$ letters. The last shuffle has $\sgn\chi = (-1)^{(p-1)(k-j)}$. There is another factor of $(-1)^{(k-j+q)j}$ from moving $j$ $\m$'s past $L^0_{k-j+q}$. The sign of $\sigma_1$ cancels with the effect of $\m$ being odd, so different values of $\sigma_1$ give equal contributions, and there are $\abs{\Sh_{j,k-j}} = \binom{k}{j}$ of these. 
The sign of this term is thus 
\[
(-1)^{(j+p-1)(k-j+q)}\sgn \sigma = (-1)^{(p-1)q}\sgn\sigma_2 .
\]
Renaming $\sigma_2$ as $\sigma$, all this gives
\beq
\label{dLnk}
-\partial L^k_n = \sum_{j=0}^k\sum_{ p+q=n+1} \sum_{\sigma\in\Sh_{p-1,q}} \binom{k}{j}(-1)^{(p-1)q}\sgn\sigma\,\left(L^j_p\circ_p L^{k-j}_q\right)^{\sigma^{-1}} .
\eeq

Observe that for $k=2$, the only  term that doesn't obviously vanish is with $j=1$.  This gives 
\[
0 = -\partial L_n^2 = 2 \sum_{ p+q=n+1} \sum_{\sigma\in\Sh_{p-1,q}} (-1)^{(p-1)q}\sgn\sigma\,\left(L^1_p\circ_p L^1_q\right)^{\sigma^{-1}} .
\]
To avoid the factor of $2$ requires a slightly messier approach. First consider 
\[
0 = \sum_{\sigma\in\perm_n}\sum_{1\leq i<j\leq n+2} -\sgn\sigma\,\left[(P_{n+2}^0\circ_i\m)\circ_{j-1}\m\right]^\sigma .
\]
Computing $\partial$ of this by Corollary~\ref{Coinvariant version} gives 
\beq
\label{without 2}
0 = \sum_{ p+q=n+1} \sum_{\sigma\in\Sh_{p-1,q}} (-1)^{(p-1)q}\sgn\sigma\,\left(L^1_p\circ_p L^1_q\right)^{\sigma^{-1}} .
\eeq

Adding up instances of  eqs.~\eqref{dLnk} and \eqref{without 2} gives
\begin{align}
-\partial L_n &= -\partial L^0_n - \partial L^1_n  \nonumber\\
&= \sum_{ p+q=n+1} \sum_{\sigma\in\Sh_{p-1,q}} (-1)^{(p-1)q}\sgn\sigma\,\left(L_p\circ_pL_q\right)^{\sigma^{-1}} .
\label{L expansion}
\end{align}

The $p=1$, $q=n$ part of \eqref{L expansion} is
\[
\sum_{\sigma\in\Sh_{0,n}} \sgn\sigma\, (L_1\circ_1 L_n)^{\sigma^{-1}}
= L_1\circ_1 L_n = \Delta \circ_1 L_n 
\]
since $\Sh_{0,n}$ is trivial and $L_1=\Delta$.

The $p=n$, $q=1$ part of \eqref{L expansion} is
\beq
\label{last L}
\sum_{\sigma\in\Sh_{n-1,1}} (-1)^{n-1}\sgn\sigma\,(L_n\circ_n L_1)^{\sigma^{-1}} .
\eeq
Note that $\Sh_{n-1,1}$ consists of cyclic permutations of the form $(a\dots n)$, so for $\sigma = (a\dots n)$,
\[
(L_n\circ_n L_1)^{\sigma^{-1}} = L_n^{\sigma^{-1}} \circ_a L_1
= (\sgn\sigma) L_n \circ_a L_1 .
\]
This simplifies \eqref{last L} to
\[
(-1)^{n-1}\sum_{a=1}^n L_n\circ_a L_1 = -(-1)^{\deg L_n} L_n\circ_a \Delta ,
\]
since $\deg L_n = n-2$.
Together, these terms add up to $\ad_\Delta L_n$ by eq.~\eqref{ad}. Combining this with $\partial L_n$ gives $\partial' L_n$ and proves eq.~\eqref{L-infinity check2}.
\end{proof}

\subsection{Maurer-Cartan}
\label{Maurer-Cartan}
For any diagram of algebras $\A:\Xc\to\Alg_\Bbbk$,  
the  Hochschild complex $\s\tot C^{\bullet,\bullet}(\A,\A)$ is an $\Linfty$-algebra under the action $\Rep\circ\homtwo$. 
The reduced asimplicial subcomplex, $\s\tot\bar C_{\mathrm a}^{\bullet,\bullet}(\A,\A)$, is the $\Linfty$-subalgebra governing deformations of diagrams of algebras.

Specifically, in characteristic $\Char\Bbbk=0$, the Maurer-Cartan equation for $f\in C^{\bullet,\bullet}(\A,\A)$ is
\[
0 = \delta f + \sum_{n=2}^\infty \frac1{n!} \widehat{L_n}(\underbrace{f,\dots,f}_{n\text{ times}}) .
\]
This doesn't make sense in finite characteristic.

Because the definition of $L_n$ involves antisymmetrization, it is useful to write down an unsymmetrized version, which has $\frac1{n!}$ as many terms. 
\begin{definition}
\label{P def}
$P_n := P^0_n +  P^0_{n+1}\circ_1\m$ (where $P^0_n$ was defined in Def.~\ref{P0 def}).
\end{definition}
With this,
\[
L_n = \sum_{\sigma\in\perm_n} \sgn\sigma\, (P_n)^\sigma ,
\]
thanks to Lemma~\ref{Vanishing terms}.

With this definition,
\begin{align*}
P_2 &= \Qtwo12 + \Qthreea\m21, &
P_3 &= \Qthreea132 - \Qfoura\m213 ,
\end{align*}
\begin{multline*}
-P_4 = \Qfourd1432 + \Qsixb1\blank324\blank + \Qsixb13\blank2\blank4+  \Qfoura1324 \\
+ \Qfivea\m2134 + \Qfiveb\m2143 + \Qseven\m21\blank34\blank + \Qseven\m213\blank\blank4 .
\end{multline*}
\begin{remark}
Note that $P_3$ differs from $P'_3$ (Def.~\ref{P3}) by a permutation of the second term. 
\end{remark}

With this, the Maurer-Cartan equation can be rewritten as 
\beq
\label{MC finite characteristic}
0 = \delta f + \sum_{n=2}^\infty \widehat{P_n}(f,\dots,f)
\eeq
which \emph{does} make sense in finite characteristic. 

The infinite sum here isn't really infinite. 
\begin{proposition}
For $f\in C_{\mathrm a}^{\bullet,\bullet}(\A,\A)$, the Maurer-Cartan equation \eqref{MC finite characteristic} is only cubic:
\[
0 = \delta f + \widehat{P_2}(f,f) + \widehat{P_3}(f,f,f) .
\]
For  $f\in C^{\bullet,\bullet}(\A,\A)$, it is quartic:
\[
0 = \delta f + \widehat{P_2}(f,f) + \widehat{P_3}(f,f,f) + \widehat{P_4}(f,f,f,f) ,
\]
and
\[
\widehat{P_4}(f,f,f,f) = -\widehat{\Qfoura1324}(f,f,f,f) .
\]
\end{proposition}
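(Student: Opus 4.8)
Proof of the final Proposition --- plan.

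The plan is to read finiteness off from the colouring combinatorics. A Maurer--Cartan element $f$ has shifted total degree $\abs f=1$, i.e.\ $f=f^{2,0}+f^{1,1}+f^{0,2}$ with $f^{p,q}\in C^{p,q}(\A,\A)$, and in the asimplicial case the summand $f^{2,0}$ is absent. Since each $\widehat{P_n}$ is multilinear, since $P_n=P^0_n+P^0_{n+1}\circ_1\m$, and since $\widehat{P^0_{n+1}\circ_1\m}(f,\dots,f)=\pm\widehat{P^0_{n+1}}(\hat\m,f,\dots,f)$ with $\hat\m\in C^{0,2}(\A,\A)$ grafted onto the root (which is vertex $1$, the first in $\downarrow$ order, for a maximal quilt), it suffices to show: (a) $\widehat Q(g_1,\dots,g_n)=0$ whenever $Q\in\Qcat(n)$ is a maximal quilt with $n\ge5$ (resp.\ $n\ge4$ in the asimplicial case) and each $g_a$ is homogeneous of bidegree $(p_a,q_a)$ with $p_a+q_a=2$ (and $p_a\le1$ in the asimplicial case), together with the variant in which one $g_a$ is replaced by $\hat\m$; and (b) among the arity-$4$ maximal quilts only $\Qfoura1324$ admits such a colouring, and no arity-$5$ maximal quilt obtained by grafting an $\m$ onto its root does. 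Throughout I would use the combinatorial description of $\Ind(\W_Q;\bp)$ by a function $\omega:\W_Q\to\N$ recalled after Definition~\ref{Word coloring}, together with Definition~\ref{Tree coloring}.

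The first ingredient is degree bookkeeping. If $\widehat Q(g_1,\dots,g_n)\ne0$ then word and tree colourings both exist; with $p_a+q_a=2$ this forces $p':=\sum_a p_a-\deg Q\ge0$ and $q':=n+1-\sum_a p_a\ge0$, so for a maximal quilt $\sum_a p_a$ lies between $n-2$ and $n+1$ and $p'\le3$ (and $p'\le2$ in the asimplicial case, where moreover $p_a\le1$ makes $\sum_a p_a\le n$ automatic). A tree colouring also forces every vertex $u$ of $\Tree_Q$ to have at most $q_u=2-p_u$ children --- in particular at most two; a vertex with a child has $p_u\le1$, a vertex with two children has $p_u=0$ --- and a word colouring forces each vertex to occur at most $p_u+1\le3$ times (at most twice in the asimplicial case). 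Finally, for $n\ge3$ a maximal quilt has $\deg Q=n-2\ge1$ and hence at least one gap between two consecutive occurrences of some vertex; every such gap must contain a letter with $\omega>0$, while $\sum\omega=p'$, so $p'\ge1$ and $\sum_a p_a\ge n-1$.

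The second ingredient is structural. By Lemma~\ref{Maximal}, $\W_Q=ab\dots b$ with $a$ the root, $b$ the $\vartriangleleft_Q$-last leaf, and every other vertex $\vartriangleleft_Q b$. The at most $p'\le3$ (resp.\ $\le2$) letters with $\omega>0$ must meet all $\deg Q=n-2$ gaps; since the intervals cut out by the gaps are laminar (no interlacing), the gaps organise into at most that many nested chains. Exactly as in the proof of Lemma~\ref{Extensions}, Axiom~\ref{Quilt def}\eqref{Quilt2} shows that the repeated vertices inside a single nested chain form a $\vartriangleleft_Q$-chain, hence a $\leq_Q$-antichain; since there are only $n$ vertices in all, a short argument (the parents of such an antichain are forced to coincide) shows these vertices are siblings, so their common parent has at least that many children. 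With ``at most two children'' this caps the length of each chain, hence gives a bound on $n$; sharpening it to $n\le4$ (and, in the asimplicial case, $n\le3$, where at most one input can have $p=0$, so a $2$-child vertex would have to be the root of a corolla with $\ge3$ leaves, which has no tree colouring) requires tracking the single extra $p=0$ input supplied by $\hat\m$ --- exactly the difference between the two bounds --- and a finite case check.

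Finally, for arity $4$ over the full complex I would run through the eight quilts of $-P_4$ listed after Definition~\ref{P def}. Up to relabelling, three of the four summands of $P^0_4$ are corollas whose root has three children and are killed by the child-count bound; the fourth is $\Qfoura1324$, whose tree is $1\to\{3,2\}$, $3\to4$, and whose only colouring by bidegree-$2$ inputs is $\bp=(0,2,1,1)$ (so its bidegree-$(2,0)$ slot must be fed $f^{2,0}$, which vanishes in the asimplicial case, consistent with the cubic truncation there). Each of the four summands of $P^0_5\circ_1\m$ has a vertex occurring four times in the word, forcing $p\ge3>2$, so none admits a colouring. Hence $\widehat{P_4}(f,f,f,f)=-\widehat{\Qfoura1324}(f,f,f,f)$. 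I expect the main obstacle to be the structural step: the child-count inequality alone only yields a crude bound on $n$, and extracting the sharp $n\le4$ (and $n\le3$) requires using the quilt axioms --- especially Axiom~\ref{Quilt def}\eqref{Quilt2} --- to pin down how the few active letters constrain the word and hence the tree, all the while keeping careful track of the asymmetry caused by $\hat\m$. The arity-$4$ case over the general complex is a genuine boundary case that must be verified by hand rather than by a clean inequality.
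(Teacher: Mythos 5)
Your reduction to maximal quilts with $\hat\m$ allowed in the root slot, and your bookkeeping constraints (at most $q_u=2-p_u$ children, at most $p_u+1$ occurrences, every gap needs a letter with $\omega>0$), are fine, but the structural step that is supposed to produce the truncation bounds rests on a false claim, and the case checks contain errors. Repeated vertices whose gaps lie in a common nested chain need \emph{not} be siblings, even in a maximal quilt: the arity-$5$ quilt with word $12324542$ and tree $1\to\{3,2\}$, $3\to\{5,4\}$ is maximal (one last-first pair, degree $3$), and its repeated vertices $2$ and $4$ have nested gaps but parents $1$ and $3$. So ``the parents of such an antichain are forced to coincide'' fails, the child-count cap you want to apply to a common parent is unavailable, and --- as you yourself concede --- the sharp bounds $n\le4$ and $n\le3$ are left to an unspecified finite case check; the quantitative content of the proposition is therefore not established. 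The arity-$5$ verification is also wrong in one case: $\Qfiveb\m2143$ has word $\m\,1\,2\,1\,3\,4\,3\,1$, in which no vertex occurs four times (vertex $1$ occurs three times), so ``forcing $p\ge3$'' does not apply; that term does vanish, but only because vertex $2$ has two children, hence would need bidegree $(0,2)$, while being the sole letter between two occurrences of vertex $1$. (Smaller inaccuracies: $\Qfoura1324$ also admits the colouring $\bp=(0,2,1,2)$, not only $(0,2,1,1)$, and ``at most one input can have $p=0$'' in your asimplicial aside is unjustified, since several arguments could be the $(0,2)$ component of $f$.)

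The missing idea --- and the paper's actual argument --- is that for a maximal quilt with $n\ge3$ \emph{no argument other than the root can have $p_a=0$ at all}: such a vertex occurs only once in $\W_Q$, and the same sandwich argument as in Lemma~\ref{Vanishing terms} places it as $\dots v\,a\,v\dots$, so the gap of $v$ containing it has no letter with $\omega>0$ and no colouring exists (the cochain-level analogue of Relation~\ref{mQuilt def}\eqref{flat2 axiom}). Hence every non-root vertex has at most one child while the root has at most two, which together with Lemma~\ref{Maximal} forces the tree to be the root with children $3$ and $2$ and a single chain descending from $3$, and forces $\W_Q=1232\dots2n2$; then $2$ occurs $n-1$ times, so $p_2\ge n-2$, giving $n\le4$ in the full complex and $n\le3$ in the asimplicial one, and identifying $\Qfoura1324$ as the unique arity-$4$ survivor, with all $\m$-terms of $P_4$ (including the troublesome $\Qfiveb\m2143$) excluded by the same mechanism. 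Your proposal has most of the ingredients but lacks this one observation, and without it the truncation does not follow.
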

\begin{proof}
For the Maurer-Cartan equation, $f\in C^{\bullet,\bullet}(\A,\A)$ has $\abs f = 1$. This is shifted from the total of the bidegree, so there can be components in bidegrees $(0,2)$, $(1,1)$ and $(2,0)$.

After the linear term, the right side  of eq.~\eqref{MC finite characteristic} is a sum of terms of the form $\widehat{P_n}(f_1,\dots,f_n)$, where each $f_a\in C^{p_a,q_a}(\A,\A)$ is a component of $f$ (quite possibly repeated). This can be written as a combination of terms of the form $\widehat{P^0_n}(f_1,\dots,f_n)$ if we allow $f_1$ to (possibly) be $\hat\m$. 

This is, in turn, a combination of terms of the form 
\beq
\label{Q term}
\widehat Q(f_1,\dots,f_n),
\eeq
 where $Q$ is a maximal quilt, labelled in $\downarrow$ order. By Lemma~\ref{Maximal}, $\W_Q = 12\dots2$, where $1$ is the root and $2$ is a leaf.
 
Consider a nonvanishing term of the form \eqref{Q term} with $n\geq3$
The root $1$ in $\Tree_Q$ cannot have more than $q_1\leq2$ children. 
 By the same reasoning as in Lemma~\ref{Vanishing terms}, for any $a\neq1$, $f_a$  cannot have bidegree $(0,2)$, therefore $a$ cannot have more than one child in $\Tree_Q$. 
 This shows that $1$ has two children --- namely, $3$ and $2$ --- with the other vertices in a single chain descending from $3$. Since $Q$ is maximal, this means that $\W_Q = 1232\dots 2n2$ (with $2$ repeated $n-1$ times). 
 This implies that $p_2\geq n-2$, since otherwise no coloring exists.
 
In the full complex, $C^{\bullet,\bullet}(\A,\A)$, $p_2\leq 2$, so this implies $n\leq4$, and the Maurer-Cartan equation is quartic. In the asimplicial subcomplex, $C_{\mathrm a}^{\bullet,\bullet}(\A,\A)$, $p_2\leq 1$, so this implies $n\leq3$, and the Maurer-Cartan equation is cubic. 
\end{proof}

The Maurer-Cartan equation really consists of 3 or 4 separate equations in different bidegrees.

\begin{definition}
Denote 
\[
\MC(f) :=  \delta f + \widehat{P_2}(f,f) + \widehat{P_3}(f,f,f) + \widehat{P_4}(f,f,f,f) 
\]
and 
\[
\MC_0(f) := \dS f + \widehat{P_2^0}(f,f) + \widehat{P_3^0}(f,f,f) + \widehat{P_4^0}(f,f,f,f) .
\]
Write $\MC^{(p,q)}$ for the component in $C^{p,q}(\A,\A)$ (with $p+q=3$). 
\end{definition}

\begin{definition}
A \emph{deformation} of a diagram of algebras, $\A:\Xc\to\Alg_\Bbbk$ is a cochain $f+g$ with $f\in C^{0,2}(\A,\A)$ and $g\in C^{1,1}(\A,\A)$ such that $\hat\m+f$ and $\A+g$ define a diagram of algebras. That is, for each $x\in B_0\Xc$, $\hat\m(x)+f(x)$ is an associative product on $\A(x)$, for each $\phi\in B_1\Xc$, $\A[\phi]+g[\phi]$ is a homomorphism of the resulting algebras, and this defines a functor.
\end{definition}
\begin{theorem}
\label{MC thm1}
For a diagram of algebras $\A:\Xc\to\Alg_\Bbbk$, the solutions of the Maurer-Cartan equation in the reduced asimplicial subcomplex $\bar C_{\mathrm a}^{\bullet,\bullet}(\A,\A)$ are the  deformations of $\A$ as a diagram of algebras.
\end{theorem}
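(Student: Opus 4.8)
The strategy is to convert the statement into a bidegree-by-bidegree bookkeeping exercise, using the explicit description of the $\Linfty$-operations in Definition~\ref{P def} together with the fact, established in the Proposition just above, that on $\bar C_{\mathrm a}^{\bullet,\bullet}(\A,\A)$ the Maurer--Cartan equation is only cubic: $0=\MC(f)=\delta f+\widehat{P_2}(f,f)+\widehat{P_3}(f,f,f)$.

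First I would set up the dictionary. Fix a test ring $R$ (a complete local Noetherian $\Bbbk$-algebra with residue field $\Bbbk$ and maximal ideal $\mathfrak m$, e.g.\ $R=\Bbbk[[t]]$). Since $\A$ is not deformed as a diagram of \emph{vector} spaces, a deformation $\A_R$ of $\A$ over $R$ is exactly the data of a deformed multiplication $\hat\m[x]+\mu[x]$ and deformed structure maps $\A[\phi]+\lambda[\phi]$, with $\mu,\lambda$ having entries in $\mathfrak m$. After the standard gauge normalization $\A_R[\id_x]=\id$, the cochain $f:=\mu+\lambda$ lies in $\bar C_{\mathrm a}^{\bullet,\bullet}(\A,\A)$ and has shifted total degree $1$; its only possible components are $\mu\in\bar C^{0,2}(\A,\A)$ and $\lambda\in\bar C^{1,1}(\A,\A)$, because the asimplicial complex has no $(p,0)$ part. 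The three defining conditions of a diagram of algebras — associativity of $\hat\m_R$, the homomorphism property of $\A_R[\phi]$, and functoriality $\A_R[\psi\phi]=\A_R[\psi]\A_R[\phi]$ — are equations living in bidegrees $(0,3)$, $(1,2)$ and $(2,1)$ respectively, and $\A_R[\id_x]=\id$ is precisely the normalization built into $\bar C_{\mathrm a}^{\bullet,\bullet}$.

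Next I would match the components. By Theorems~\ref{Linfty thm2}, \ref{mQuilt algebra} and \ref{Asimplicial algebra2}, $\s\tot\bar C_{\mathrm a}^{\bullet,\bullet}(\A,\A)$ is an $\Linfty$-algebra whose Maurer--Cartan equation is the cubic one above. Splitting $\MC(f)=0$ into homogeneous bidegrees and inserting $f=\mu+\lambda$: in bidegree $(0,3)$ the only surviving terms are $\dH\mu$ and the part of $\widehat{P_2}(\mu,\mu)$ coming from $\Qtwo12$, and $\MC_{(0,3)}(f)=0$ is the associativity of $\hat\m+\mu$ — the classical Hochschild Maurer--Cartan equation. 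In bidegree $(2,1)$ only $\dS\lambda$ and the part of $\widehat{P_2}(\lambda,\lambda)$ from $\Qtwo12$ survive, and $\MC_{(2,1)}(f)=0$ reproduces $\A_R[\psi\phi]=\A_R[\psi]\A_R[\phi]$ (its linear part being $\dS\lambda=0$). In bidegree $(1,2)$ the contributions are $\dS\mu$, $\dH\lambda$, the mixed $\mu$--$\lambda$ terms of $\widehat{P_2}$ (from $\Qtwo12$, $\Qtwo21$ and the $\Qthreea\m21$ summand of $P_2$), the terms of $\widehat{P_3}$ with one $\mu$ and two $\lambda$'s (from $\Qthreea132$), and the purely cubic term $\widehat{\Qfoura\m213}(\lambda,\lambda,\lambda)$; unwinding Definitions~\ref{IndQuilt action} and \ref{Rep0 def} for these few quilts shows that $\MC_{(1,2)}(f)=0$ is exactly the homomorphism identity, the last term accounting for the genuinely cubic right-hand side $\hat\m[y]\circ(\A[\phi]+\lambda[\phi])^{\otimes2}$. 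Since there is no $(3,0)$ component, these three equalities exhaust $\MC(f)=0$, so $f$ solves the Maurer--Cartan equation if and only if $\A_R$ is a deformation of $\A$; this is the asserted correspondence. (As usual one may further note that two Maurer--Cartan elements are related by the gauge action of a degree-$0$ element, i.e.\ by an $\Linfty$-homotopy, precisely when the corresponding deformations are isomorphic over $R$ by an isomorphism reducing to $\id_\A$, although this refinement is not needed for the bare statement.)

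The main obstacle is the bidegree-$(1,2)$ identification: everything else is either classical (the $(0,3)$ part) or a short simplicial-cocycle computation (the $(2,1)$ part). For $(1,2)$ one must expand $\widehat{P_2}$ and $\widehat{P_3}$ on mixed arguments by running through the colorings of $\Qtwo12$, $\Qtwo21$, $\Qthreea\m21$, $\Qthreea132$ and $\Qfoura\m213$, discarding those incompatible with the vertical degrees $0$ (for $\hat\m$ and $\mu$) and $1$ (for $\lambda$), and checking that the signs produced by Definition~\ref{Sign shuffle} assemble into exactly the linear, quadratic and cubic terms obtained from expanding $\bigl(\A[\phi]+\lambda[\phi]\bigr)\circ\bigl(\hat\m[x]+\mu[x]\bigr)=\bigl(\hat\m[y]+\mu[y]\bigr)\circ\bigl(\A[\phi]+\lambda[\phi]\bigr)^{\otimes2}$. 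The shape analysis carried out in the proof of the preceding Proposition — which isolates the chain-shaped maximal quilts $\W_Q=1232\cdots$ as the only ones that can contribute with arguments of these degrees — does most of this combinatorics; what remains is to read off the multilinear maps and their signs.
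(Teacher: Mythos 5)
Your overall strategy is the same as the paper's: split the Maurer--Cartan equation $0=\delta f+\widehat{P_2}(f,f)+\widehat{P_3}(f,f,f)$ into bidegrees $(0,3)$, $(1,2)$, $(2,1)$ and match these with associativity, the homomorphism property, and functoriality; your $(0,3)$ and $(2,1)$ identifications agree with the paper, and the normalization remark is right. The paper streamlines the bookkeeping by writing $\MC(f)=\MC^0(\hat\m+f)$, so that only the undecorated quilts $\Qtwo12$ and $\Qthreea132$ ever appear; you instead keep $\hat\m$ separate and expand $P_2=\Qtwo12+\Qthreea\m21$ and $P_3=\Qthreea132-\Qfoura\m213$ directly, which is workable but is exactly where your term inventory for bidegree $(1,2)$ goes wrong.

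Concretely: the term $\widehat{\Qfoura\m213}(\lambda,\lambda,\lambda)$, which you single out as ``the purely cubic term accounting for the genuinely cubic right-hand side,'' is identically zero on the asimplicial complex. Its underlying quilt is the arity-$4$ maximal quilt with word $123242$, and the shape analysis in the Proposition you cite shows a coloring requires $p_2\ge 2$, while every argument ($\lambda$ of bidegree $(1,1)$, $\hat\m$ of bidegree $(0,2)$) has $p\le 1$. Moreover, if it were nonzero it would be cubic in $\lambda$, which does not match any term of $(\hat\m[y]+\mu[y])\circ(\A[\phi]+\lambda[\phi])^{\otimes2}$. The genuinely cubic term $\mu[y]\circ(\lambda[\phi]\otimes\lambda[\phi])$ actually comes from $\widehat{\Qthreea132}(\mu,\lambda,\lambda)$, and the term $\hat\m[y]\circ(\lambda[\phi]\otimes\lambda[\phi])$ comes from the $(\lambda,\lambda)$ evaluation of the $\Qthreea\m21$ summand of $P_2$ (equivalently $\widehat{\Qthreea132}(\hat\m,\lambda,\lambda)$ in the paper's formulation) --- a contribution your list omits, since you only record the \emph{mixed} $\mu$--$\lambda$ terms of $\widehat{P_2}$ (and, incidentally, $\Qtwo21$ is not a summand of $P_2$; it enters only through $\dH\lambda$). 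Since you defer the $(1,2)$ verification to precisely this term-by-term reading, the roadmap as written would not assemble into the homomorphism identity; the fix is to correct the inventory as above, or to adopt the paper's device of absorbing $\hat\m$ into the Maurer--Cartan element.
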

\begin{proof}
First, note that adding the identity $0=\MC_0(\hat\m)$ gives $\MC(f) = \MC_0(\hat\m+f)$.

In this case, an element with shifted total degree $1$ can have 2 components. Let $f\in C^{0,2}(\A,\A)$ and $g\in C^{1,1}(\A,\A)$. The first piece of the Maurer-Cartan equation is
\[
0 = \MC^{(0,3)}(f+g) = \widehat{\Qtwo12}(\hat\m+f,\hat\m+f) .
\]
Evaluating at any $x\in B_0\Xc = \Obj\Xc$, this is
\[
0 =  (\hat\m[x]+f[x])\circ_1(\hat\m[x]+f[x]) - (\hat\m[x]+f[x])\circ_2 (\hat\m[x]+f[x]) ,
\]
which is precisely the condition that  $\hat\m[x]+f[x]$ be an associative product on the vector space $\A(x)$.

Next,
\[
\MC^{(1,2)}(f+g)= \dS(\hat\m+f) + \widehat{\Qtwo12}(\hat\m+f,g) + \widehat{\Qtwo12}(g,\hat\m+f) + \widehat{\Qthreea132}(\hat\m+f,g,g) .
\]
Evaluating this at $\phi:x\to y$ in $B_1\Xc$ gives
\[
\MC^{(1,2)}(f+g)[\phi] = (\A[\phi]+g[\phi])\circ (\hat\m[x]+f[x]) - (\hat\m[y]+f[y]) \circ (\A[\phi]+g[\phi])^{\otimes 2} .
\]
Setting this to $0$ is precisely the condition that $\A[\phi]+g[\phi]$ be a homomorphism between these associative products.

The ``reduced'' condition is simply that for any $x\in B_0\Xc$, $g[\id_x]=0$, so $\A[\id_x]+g[\id_x]=\id_{\A(x)}$.

Finally, 
\[
\MC^{(2,1)}(f+g) = \dS g + \widehat{\Qtwo12}(g,g) .
\]
Evaluating this at $(\psi,\phi)\in B_2\Xc$, and using the fact that $\A$ is a functor, gives
\[
\MC^{(2,1)}(f+g)[\psi,\phi] = (\A[\psi]+g[\psi])\circ (\A[\phi]+g[\phi]) - (A[\psi\circ\phi]+g[\psi\circ\phi]) .
\]
Setting this to $0$ is the condition that $\A+g$ be a functor.

This shows that a solution of the Maurer-Cartan equation in $C_{\mathrm a}^{\bullet,\bullet}(\A,\A)$  is a deformation of the diagram of algebras $\A$. The deformed diagram has multiplication given by $\hat\m+f$ and the morphism part of the functor is $\A+g$.
\end{proof}

\begin{definition}
\label{Skew}
A \emph{skew diagram of algebras} $(\A,u)$ over a category $\Xc$ consists of
\begin{itemize}
\item
for every $x\in B_0\Xc$, an associative algebra $\A(x)$,
\item
for every $\phi:x\to y$ in $\Xc$, a homomorphism $\A[\phi]:\A(x)\to\A(y)$, and
\item
for every $z\stackrel\psi\leftarrow y\stackrel\phi\leftarrow x$, an  element of the unitalization $u(\psi,\phi)\in 1+\A(z)$,
\end{itemize}
such that:
\begin{itemize}
\item
For any $z\stackrel\psi\leftarrow y\stackrel\phi\leftarrow x$ and $a\in\A(x)$,
\beq
\label{skew functor}
\A(\psi;\A(\phi;a))\cdot u(\psi,\phi) = u(\psi,\phi)\cdot\A(\psi\circ\phi;a) ;
\eeq
\item
for any $(\chi,\psi,\phi)\in B_3\Xc$, 
\beq
\label{u cocycle}
\A(\chi;u(\psi,\phi))\cdot u(\chi,\psi\circ\phi) = u(\chi,\psi)\cdot u(\chi\circ\psi,\phi) .
\eeq
\end{itemize}
\end{definition}
This is equivalent to a twisted precosheaf of algebras \cite[Def.~2.15]{d-l-l2017}.

In particular, a diagram of algebras, $\A$, is trivially a skew diagram of algebras $(\A,1)$. A diagram of algebras can be deformed as a skew diagram.

\begin{theorem}
\label{MC thm2}
For a diagram of algebras $\A:\Xc\to\Alg_\Bbbk$, the solutions of the Maurer-Cartan equation in $C^{\bullet,\bullet}(\A,\A)$ are the deformations of $\A$ as a skew diagram of algebras.
\end{theorem}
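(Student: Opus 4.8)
The plan is to repeat, one bidegree at a time, the analysis in the proof of Theorem~\ref{MC thm1}, but now without imposing the asimplicial or normalization conditions. As there, I would begin by adding the identity $0 = \MC^0(\hat\m)$, which gives $\MC(f+g+h) = \MC^0(\hat\m + f + g + h)$, so that a Maurer--Cartan element of $C^{\bullet,\bullet}(\A,\A)$ is the same datum as an associative-type deformation $\hat\m + f + g + h$. An element of shifted total degree $1$ in the full complex has \emph{three} components, $f\in C^{0,2}(\A,\A)$, $g\in C^{1,1}(\A,\A)$, and $h\in C^{2,0}(\A,\A)$; and by the preceding Proposition the Maurer--Cartan equation is quartic and splits into the four equations $\MC_{(0,3)}=\MC_{(1,2)}=\MC_{(2,1)}=\MC_{(3,0)}=0$. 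The goal is then to identify these four equations, evaluated respectively at elements of $B_0\Xc$, $B_1\Xc$, $B_2\Xc$, $B_3\Xc$, with the four axioms of a skew diagram of algebras in \cite[Def.~3.1]{haw2018}.

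The first two components are handled as in Theorem~\ref{MC thm1}. Evaluating $\MC_{(0,3)}(f+g+h) = \widehat{\Qtwo12}(\hat\m + f, \hat\m + f)$ at each $x\in B_0\Xc$ gives exactly the condition that $\hat\m[x] + f[x]$ be an associative product on $\A(x)$. For $\MC_{(1,2)}$, one must note that, unlike in the asimplicial subcomplex, a maximal quilt of arity $3$ or $4$ may now be coloured with its second vertex carrying vertical degree $p_2 = 2$, so the $\widehat{P_3}$ and $\widehat{P_4}$ terms can contribute; evaluating the resulting cochain at $\phi\in B_1\Xc$ should reproduce the skew analogue of the statement that $\A[\phi] + g[\phi]$ intertwines the deformed products, with the correction expressed through $h$. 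I expect this to match \cite[Def.~3.1]{haw2018} once the surviving colourings are enumerated.

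The genuinely new content lies in bidegrees $(2,1)$ and $(3,0)$, which is where $h\in C^{2,0}(\A,\A)$ participates. I would evaluate $\MC_{(2,1)}(f+g+h)$ at a composable pair $(\psi,\phi)\in B_2\Xc$: this combines $\dS g$, $\widehat{\Qtwo12}(g,g)$, and the $\widehat{P_3},\widehat{P_4}$ contributions landing in bidegree $(2,1)$, and should be identified with the axiom controlling how $\A[\psi]\circ\A[\phi]$ relates to $\A[\psi\circ\phi]$ in a skew diagram, the discrepancy being measured by $h$. Similarly, $\MC_{(3,0)}(f+g+h)$, evaluated at a triple $(\chi,\psi,\phi)\in B_3\Xc$, contains $\dS h$ together with the terms $\widehat{\Qtwo12}(g,h) + \widehat{\Qtwo12}(h,g)$ and the relevant $\widehat{P_3},\widehat{P_4}$ contributions, and should yield the coherence (cocycle-type) condition on the skew twist $h$. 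I would also record explicitly that, since the normalization condition is dropped, there is no longer any requirement that the components of the deformation on identity morphisms vanish, which is exactly what distinguishes a skew diagram from an ordinary one.

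The main obstacle is the combinatorial bookkeeping: for each of the four target bidegrees I must determine which colourings of the maximal quilts appearing in $P^0_2, P^0_3, P^0_4$ (allowing the permitted $\hat\m$-insertions) admit colourings compatible with the bidegrees $(0,2)$, $(1,1)$, $(2,0)$ of the inserted components, and then check the signs. The quartic term is the delicate part: the Proposition reduces it to $-\widehat{\Qfoura1324}(f,f,f,f)$, and I would need to expand this on the three components of $f+g+h$ and confirm that, together with the cubic and quadratic terms, the four equations coincide verbatim with \cite[Def.~3.1]{haw2018}. The shape $\W_Q = ab\dots b$ of maximal quilts from Lemma~\ref{Maximal}, together with the degree-counting exploited in Lemma~\ref{Vanishing terms}, is what keeps this enumeration finite and makes the match tractable.
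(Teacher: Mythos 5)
Your overall route is the paper's: expand $\MC(f+g+h)=\MC^0(\hat\m+f+g+h)$, split by bidegree, and match the four components, evaluated on $B_0\Xc,\dots,B_3\Xc$, against \cite[Def.~3.1]{haw2018}; and your reading of the $(2,1)$ and $(3,0)$ components (functoriality up to a discrepancy measured by $h$, plus a cocycle condition on $h$) is right in outline. However, your treatment of bidegree $(1,2)$ contains a genuine error: $h$ cannot contribute there (nor to $(0,3)$). For arity $2$ the $q$-count excludes $h$, and on a maximal quilt of arity $n\geq3$ the root cannot be coloured by $h$ (it has children but $q=0$), while every non-root vertex only admits colourings of vertical degree at least $1$ (the $\dots vav\dots$ argument of Lemma~\ref{Vanishing terms}); since a $(1,2)$ term has total vertical degree $n-1$ and $h$ alone uses $2$, the remaining $n-2$ non-root vertices cannot each contribute at least $1$. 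Hence $\MC_{(1,2)}$ is literally the same as in Theorem~\ref{MC thm1}: it says that each $\A[\phi]+g[\phi]$ is an honest homomorphism of the deformed products. There is no ``skew analogue with correction through $h$'', and \cite[Def.~3.1]{haw2018} does not skew the homomorphism axiom; the skewness sits entirely in the $(2,1)$ and $(3,0)$ equations. (Similarly $\widehat{\Qtwo12}(h,g)=0$, because a vertex coloured by $h$ has $q=0$ and so cannot have children, so your symmetrized quadratic term in $(3,0)$ reduces to $\widehat{\Qtwo12}(g,h)$ alone; and your list for $(2,1)$ omits $\widehat{\Qtwo12}(\hat\m,h)$, which does occur.)

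Beyond this, the substantive half of the theorem --- actually identifying the $(2,1)$ and $(3,0)$ equations with the skew-diagram axioms --- is only promised in your proposal. The paper obtains a verbatim match via two devices you should adopt: set $f=0$ without loss of generality (since $\hat\m$ and $f$ occur only in the combination $\hat\m+f$), and pass to the unitalizations of the algebras, so that the surviving quilt terms assemble as $\A'(\psi;\A'(\phi;a))\cdot[1+h(\psi,\phi)]-[1+h(\psi,\phi)]\cdot\A'(\psi\circ\phi;a)$ in bidegree $(2,1)$ and as the corresponding cocycle identity for $1+h$ in bidegree $(3,0)$, where $\A'=\A+g$. Supplying that enumeration of colourings (only $\widehat{\Qtwo12}$, $\widehat{\Qthreea132}$ with $\hat\m$ in the first slot, and $\widehat{\Qfoura1324}(\hat\m,h,\cdot,\cdot)$ survive) and this reorganization is what remains to complete the proof.
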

\begin{proof}
In this case, an element with shifted total degree $1$ can have 3 components. Let $f\in C^{0,2}(\A,\A)$, $g\in C^{1,1}(\A,\A)$ and $h\in C^{2,0}(\A,\A)$, and consider $\MC(f+g+h)=\MC_0(\hat\m+f+g+h)$.

The term $h$ does not contribute to the bidegree $(0,3)$ and $(1,2)$ components of the Maurer-Cartan equation, so as in Theorem~\ref{MC thm1} those remain the associativity and homomorphism conditions.

Because $\hat\m$ and $f$ only appear in the combination $\hat\m+f$, it is sufficient to consider the case that $f=0$. This reduces clutter.

The first component involving $h$ is
\begin{align*}
\MC^{(2,1)}(g+h) &= \MC_0^{(2,1)}(\hat\m+g+h)\\
 &= \dS g + \widehat{\Qtwo12}(g,g) + \widehat{\Qtwo12}(\hat\m,h)  + \widehat{\Qthreea132}(\hat\m,h,g) \\ &\qquad+ \widehat{\Qthreea132}(\hat\m,g,h) - \widehat{\Qfoura1324}(\hat\m,h,g,g) .
\end{align*}
The last 4 terms, evaluated at $(\psi,\phi)\in B_2\Xc$ and applied to an algebra element $a$, give
\[
\A'(\psi;\A'(\phi;a))\cdot h(\psi,\phi) - h(\psi,\phi)\cdot \A'(\psi\circ\phi;a)
\]
where $\A'[\phi] := \A[\phi]+g[\phi]$. This can be combined with the other terms nicely if we use the unitalization of the algebras. Let $1$ be the unit. This gives,
\[
\MC^{(2,1)}(g+h)(\psi,\phi;a) = \A'(\psi;\A'(\phi;a))\cdot [1+h(\psi,\phi)] - [1+h(\psi,\phi)]\cdot \A'(\psi\circ\phi;a) .
\]
Setting this to $0$ is condition \eqref{skew functor} for $(\A',1+h)$.

Finally, 
\begin{align*}
\MC^{(3,0)}(g+h) &= \MC_0^{(3,0)}(\hat\m+g+h) \\
&= \dS h + \widehat{\Qtwo12}(g,h) + \widehat{\Qthreea132}(\hat\m,h,h) - \widehat{\Qfoura1324}(\hat\m,h,g,h) .
\end{align*}
Evaluating this at $(\chi,\psi,\phi)\in B_3\Xc$ gives
\begin{align*}
\MC^{(3,0)}(g+h)(\chi,\psi,\phi) &= \A'(\chi;h(\psi,\phi))-h(\chi\circ\psi,\phi) + h(\chi,\psi\circ\phi) - h(\chi,\psi) \\
&\qquad+\A'(\chi;h(\psi,\phi))\cdot h(\chi,\psi\circ\phi) - h(\chi,\psi)\cdot h(\chi\circ\psi,\phi) \\
&= \A'(\chi;1+h(\psi,\phi))\cdot [1+h(\chi,\psi\circ\phi)] \\
& \qquad - [1+h(\chi,\psi)]\cdot [1+h(\chi\circ\psi,\phi)] .
\end{align*}
Setting this to $0$ is the cocycle condition \eqref{u cocycle} for $(\A',1+h)$.

Together, this shows that if $g+h$ is a solution of the Maurer-Cartan equation, then $(\A',1+h)$ is a skew diagram of algebras. More generally, if $f+g+h$ is a solution of Maurer-Cartan, then the same is true with the algebra multiplication modified to $\hat\m+f$.
\end{proof}
In particular, a solution of the Maurer-Cartan equation in $\bar C^{\bullet,\bullet}(\A,\A)$ gives a skew diagram of algebras that respects the identity morphisms in $\Xc$.

\subsection{One or two algebras}
\label{One or two}
\subsubsection{One}
Let $\onecat$ be the category with a single object, $1$ and only the identity morphism. A diagram $\A:\onecat\to\Alg_\Bbbk$ is equivalent to a single algebra, $A=\A(1)$. The full Hochschild bicomplex is unnecessarily messy in this case, but the reduced complex simplifies to the Hochschild complex of $A$:
\[
\bar C^{p,q}(\A,\A) = 
\begin{cases}
C^q(A,A) & p=0\\
0 & p\geq1 .
\end{cases}
\]
On this reduced complex, all of the higher brackets vanish, $\widehat L_n=0$ for $n\geq3$, and the $\Linfty$-algebra is just the usual differential graded Lie algebra.

\subsubsection{Two} 
Let $\twocat$ be the category with two objects, $1$ and $2$, and one nonidentity morphism. $\gamma:1\to2$. A diagram $\A:\twocat\to\Alg_\Bbbk$ is equivalent to two algebras, $A=\A(1)$ and $B=\A(2)$, and one homomorphism $\A[\gamma]:A\to B$.

The reduced Hochschild bicomplex is relatively simple:
\[
\bar C^{0,\bullet}(\A,\A) \cong C^\bullet(A,A)\oplus C^\bullet(B,B),
\]
\[
\bar C^{1,\bullet}(\A,\A) \cong C^\bullet(A,B),
\]
where the isomorphism is given by evaluation at $\gamma$, and $\bar C^{p,\bullet}(\A,\A)=0$ for $p\geq2$. This is the same complex for a morphism described in \cite{g-s1983}. The binary bracket $\widehat L_2$ is equivalent to the one discussed there, and does not simplify significantly from the general case.

The higher brackets do simplify. The only maximal quilts that have a nonzero action on this complex are those with only 2 rows, thus
\begin{align*}
\widehat{P_3} &= \widehat{\Qthreea132}, &
\widehat{P_4} &= -\widehat{\Qfourd1432}, &
\widehat{P_5} &= -
\widehat{\Qwrap{\begin{array}{|c|c|c|c|}\hline\multicolumn{4}{|c|}{1}\\\hline5&4&3&2\\\hline\end{array}}}, &
P_6 &= 
\widehat{\Qwrap{\begin{array}{|c|c|c|c|c|}\hline\multicolumn{5}{|c|}{1}\\\hline6&5&4&3&2\\\hline\end{array}}},
\end{align*}
and so on. For $n\geq3$ and $f_1,\dots,f_n\in \bar C^{\bullet,\bullet}(\A,\A)$,
\[
\widehat P_n(f_1,\dots,f_n) \in \bar C^{1,\bullet}(\A,\A)
\]
only depends on $f_1[2]$, and $f_2[\gamma],\dots,f_n[\gamma]$. It does not use the multiplication in either algebra.

This is equivalent (up to signs) to the $\Linfty$-algebra described in \cite{f-m-y2009} and presumably to that described in \cite{bor2007}.

\section{Conclusions}
The dg-operad of abelian groups $\Qop$ (Def.~\ref{Quilt def}) is a suboperad of the Hadamard product of the Gerstenhaber-Voronov operad, $\Wordop$, and $\Brace$. The colored operad of sets $\CQ$ (Def.~\ref{CQ def}) is a suboperad of the Hadamard product of $\MultiSemiD$ (constructed from the semisimplex category) and $\NSOp$ (governing non-symmetric operads). 

$\CQ$ acts (Lem.~\ref{rep lemma}) on the underlying vector spaces of the Hochschild bicomplex $C^{\bullet,\bullet}(\A,\A)$ of a diagram of vector spaces, $\A:\Xc\to\Vect_\Bbbk$. A coloring of a quilt is a lift to an element of $\CQ$. I defined an action $\Rep_0$ of $\Qop$ on the Hochschild bicomplex  by a sum over colorings with signs defined by shuffles. In this way, $\s_2C^{\bullet,\bullet}(\A,\A)$ is a $\Qop$-algebra (Thm.~\ref{Rep0 thm}). The asimplicial and reduced subcomplexes are $\Qop$-subalgebras (Thms.~\ref{Asimplicial algebra} and \ref{Reduced thm}).

I defined another dg-operad $\mQuilt$ (Def.~\ref{mQuilt def}) by adjoining to $\Qop$ an extra generator $\m$ that satisfies the properties of multiplication in a diagram of algebras, viewed as a Hochschild cocycle. In this way, the Hochschild complex (with shifted degree) $\s\tot C^{\bullet,\bullet}(\A,\A)$ of a diagram of algebras $\A:\Xc\to\Alg_\Bbbk$ is an $\mQuilt$-algebra (Thm.~\ref{Rep mQuilt}). The asimplicial and reduced subcomplexes are $\mQuilt$-subalgebras (Thm.~\ref{Asimplicial algebra2}).

I constructed representatives in $\mQuilt$ for the generators of $\Susp\Gerst$ and explicit homotopies for their relations, thus showing (Thm.~\ref{Gerstenhaber thm}) that there is a graded operad homomorphism $\Ghom :\Susp\Gerst \to H_\bullet(\mQuilt)$, and hence the Hochschild cohomology and asimplicial cohomology of a diagram of algebras are Gerstenhaber algebras.

I constructed   dg-operad homomorphisms $\homone:\Linfty\to\Qop$ (Thm.~\ref{L-infty thm1}) and  $\homtwo:\Linfty\to\mQuilt$ (Thm.~\ref{Linfty thm2}). These make the Hochschild bicomplex of a diagram of vector spaces or algebras  into an $\Linfty$-algebra. 

This $\Linfty$-algebra structure defines Maurer-Cartan equations in the Hochschild complex.
I showed that the solutions of the Maurer-Cartan equation in the reduced asimplicial subcomplex are the deformations of a diagram of algebras (Thm.~\ref{MC thm1}), and  the solutions of the Maurer-Cartan equation in the full Hochschild bicomplex are the deformations of a diagram of algebras into skew diagrams of algebras (Thm.~\ref{MC thm2}).

These results are actually not limited to diagrams of algebras over a field. They also apply to a diagram of algebras over a commutative ring. More generally, $\A$ could be a functor from $\Xc$ to a nonsymmetric colored operad of abelian groups, along with a multiplication $\hat\m\in C^{0,2}(\A,\A)$.

I have not computed the homology of my dg-operads, $\Qop$ and $\mQuilt$.
If Conjecture~\ref{Acyclic} is true that $\Qop$ is acyclic, then $\Qquotient:\Qop\to\Brace$ induces an isomorphism from $H_\bullet(\Qop)$ to $\Brace$.

The Hochschild cohomology of an algebra is a Gerstenhaber algebra, with no universal identities aside from the defining ones, therefore  $\Ghom:\Susp\Gerst\to H_\bullet(\mQuilt)$ is injective. It seems likely that it is an isomorphism, but it would be very interesting if it is not. A cohomology class not contained in the image of $\Ghom$ would be a new operation on the Hochschild cohomology of a diagram of algebras.


Looking forward, this $\Linfty$-algebra governing deformations of diagrams of algebras could be the key to a deeper understanding of these deformations. It opens the door to possible further results such as formality theorems.

The methods which I introduce here have been adapted and extended to study deformations of more general structures in \cite{d-h-l2022}.

\subsection*{Acknowledgements}
I wish to thank Chris Fewster and Kasia Rejzner for many conversations, Uli Kr\"ahmer, Ieke Moerdijk, Wendy Lowen, and Sarah Whitehouse for their suggestions, and  Murray Gerstenhaber, Alexander Voronov, and Bruno Vallette for answering some questions by email. Martin Markl drew my attention to \cite{f-m-y2009}.
I would especially like to thank James Griffin, who pointed out that $\Wordop$ is a known operad, and directed me to relevant references.

The paper \cite{k-v-z1997} of Kimura, Voronov, and Zuckerman also played a key role in this project. That paper lists some relations in low arity for a type of homotopy Gerstenhaber operad (although with inconsistent signs). Constructing operations on cochains to satisfy those relations gave me enough examples to guess the general form of useful operations, leading me to define $\Qop$. Note that \cite{k-v-z1997} should be read in conjunction with \cite{vor2000}.

\begin{appendix}
\section{Summary of Notation}
\label{Notation}
For the most part, I have tried to follow the notation and terminology in the book by Loday and Vallette \cite{l-v2012}. 

Fonts are used in the following way.
Operads and colored operads are denoted with text italic names. Categories and similar structures are denoted with sans serif names. Morphisms are denoted by lower case Greek letters. Functors, operad homomorphisms, and similar things are denoted by upper case calligraphic letters. Elements of operads are usually denoted by upper case letters, but there are many exceptions.

\begin{list}{}{\setlength{\labelwidth}{8em}}
\item[$\underline a$]  A specific occurrence of $a$ in a word. 
\item[{$[n]$}]  $=\{0,1,\dots,n\}$ 
\item[$\com{n}$]  $=\{1,2,\dots,n\}$ (Def.~\ref{Comcat def}).
\item[$\#$]  Arity. 
\item[$\norm{\,\cdot\,}$]  Shifted bidegree  (Def.~\ref{Gradings}).
\item[$\abs{\;\cdot\;}$]  Cardinality of a set. Length of a word (Def.~\ref{Word}). Shifted total degree  (Def.~\ref{Gradings}).
\item[$\leq_T$]  Vertical partial order of a tree $T$  (Def.~\ref{Tree}).
\item[$\trianglelefteq_T$]  Horizontal partial order of a tree $T$  (Def.~\ref{Tree}).
\item[$\downarrow$] The order of first occurrence in a word (Def.~\ref{Down order}).
\item[$\widehat\;$] $\Rep_0$ or $\Rep$.
\item[$\Hadtimes$] Hadamard product of operads of sets (Sec.~\ref{Colored operads}).
\item[$\Had$] Hadamard product of algebraic operads \cite[Sec.~5.3.2]{l-v2012}.
\item[$\circ$] Full composition in an operad. Composition of functions.
\item[$\circ_a$] Partial composition at $a$ (Defs.~\ref{Tree composition}, \ref{Word composition}, \ref{MultiSemiD def}, \ref{NSOp def}).
\item[$\comp$] The composition product of Hochschild cochains \cite{g-s1988a,g-s1988b,haw2018}.
\item[$\smile$] Concatenation of words (Def.~\ref{Word}). Concatenation of permutations (Thm.~\ref{Linfty thm2}). The cup product of Hochschild cochains \cite{g-s1988a,g-s1988b,haw2018}.
\item[$\A$] Some functor $\A:\Xc\to\Vect_\Bbbk$ (Sec.~\ref{Representation}) or $\A:\Xc\to\Alg_\Bbbk$ (Sec.~\ref{mQuilts} and later).
\item[$\ad$]  The adjoint action of an operad (Def.~\ref{ad def}).
\item[$\Alg_\Bbbk$] Category of algebras over $\Bbbk$.
\item[$\Brace$]  Brace operad (Defs.~\ref{Treecat def} and \ref{Tree composition})  \cite{cha2002,foi2002}.
\item[$\Xc$] Some fixed small category. 
\item[$C^{\bullet,\bullet}$]  Hochschild bicomplex  (Def.~\ref{Bicomplex def}) with differential $\dS$ for a diagram of vector spaces or $\delta = \dS+\dH$ for a diagram of algebras.
\item[$C_{\mathrm a}^{\bullet,\bullet}$]  $\subset C^{\bullet,\bullet}$. Asimplicial subcomplex (Def.~\ref{Asimplicial}). 
\item[$\bar C^{\bullet,\bullet}$] $\subset C^{\bullet,\bullet}$. Reduced subcomplex  (Def.~\ref{Reduced}).
\item[$\Place_T$]  Word of corners of a tree $T$ (Sec.~\ref{Words sec}).
\item[$\Ind$] Set of colorings (Defs.~\ref{Tree coloring} and \ref{Word coloring}).
\item[$\CQ$] (Def.~\ref{CQ def}) a colored operad of sets.
\item[$\Comop$]  Commutative operad (Sec.~\ref{Com}) \cite{l-v2012}.
\item[$\partial_{\underline a}$] Face of a word (Def.~\ref{Face def}) or quilt.
\item[$\partial$] Boundary in a dg-operad (Def.~\ref{word boundary}).
\item[$\partial'$] Boundary in $\mQuilt$ (Def.~\ref{mQuilt def}).
\item[$\deg$]  Homological degree or total cohomological degree (Defs.~\ref{Degree def} and \ref{Gradings}).
\item[$\delta$] $=\dS+\dH$.
\item[$\dH$]  $=\hat\Delta$. Hochschild coboundary (Def.~\ref{dH def}) \cite{haw2018}.
\item[$\dS$]  Simplicial coboundary  (Def.~\ref{dS def}) \cite{haw2018}.
\item[$\Delta$] $\in\mQuilt$. (Def.~\ref{mQuilt def}).
\item[$\Simplex$] Simplex category. Objects are $[n]$. Morphisms are weakly increasing functions.
\item[$\Comcat$] Category with objects $\com{n}$ and weakly increasing functions as morphisms (Def.~\ref{Comcat def}). Isomorphic to $\Simplex$.
\item[$\SemiD$] $\subset\Simplex$. Subcategory with strictly increasing functions as morphisms (Def.~\ref{Semisimplex def}).
\item[$E_{W,\underline a}$] Expansion of a coloring of a word $W$ (Def.~\ref{Expansion}).
\item[$\E_T$]  Edges of a tree, $T$, or quilt (Def.~\ref{Tree}).
\item[$\varepsilon_i$]  Face map in $\Simplex$ (Def.~\ref{Face map def}).
\item[$\Endop$]  Endomorphism operad of a dg-vector space \cite{l-v2012}.
\item[$\Ext$] The set of extensions (Defs.~\ref{Tree extension}, \ref{Word extension}, and \ref{Quilt extension}).
\item[$\Wordcat$] A set of words (Def.~\ref{Wordcat def}).
\item[$\Wordop$]  Gerstenhaber-Voronov operad (Defs.~\ref{Wordop def}, \ref{word boundary}, and \ref{Word composition}).
\item[$\Ghom$] $:\Susp\Gerst \to H_\bullet(\mQuilt)$ (Thm.~\ref{Gerstenhaber thm}).
\item[$\Gerst$]  Gerstenhaber operad (Def.~\ref{Gerstenhaber def}) \cite[Sec.~13.3.12]{l-v2012}.
\item[$\Qquotient$] $:\Qop\to\Brace$ (Def.~\ref{Quotient}).
\item[$\Wone$] The ``initial'' word used in defining the sign of a coloring (Def.~\ref{Initial word}).
\item[$\homtwo$] $:\Linfty\to\mQuilt$ (Thm.~\ref{Linfty thm2}).
\item[$\homone$] $:\Linfty\to\Qop$ (Thm.~\ref{L-infty thm1}).
\item[$\rep$] Action of $\CQ$ on $C^{\bullet,\bullet}(\A,\A)$ (Def.~\ref{IndQuilt action}).
\item[$\Lieop$]  Lie operad.
\item[$\Linfty$]  Strong homotopy Lie operad (Def.~\ref{Linfty def}).
\item[$\m$] Additional generator for $\mQuilt$.
\item[$\hat\m$] $\in C^{0,2}(\A,\A)$ The product in a diagram of algebras, expressed as a Hochschild cochain (Def.~\ref{dH def}).
\item[$\mQuilt$]  Operad generated by $\Qop$ and $\m$ with differential $\partial'=\partial+\ad_\Delta$ (Def.~\ref{mQuilt def}).
\item[$\MultiSemiD$] Colored operad constructed from the semisimplex category $\SemiD$ (Def.~\ref{MultiSemiD def}).
\item[$\N$] $=\{0,1,2,\dots\}$.
\item[$\NSOp$] The colored operad of sets that governs nonsymmetric operads (Def.~\ref{NSOp def}).
\item[$\pi_a$] Function in Definition~\ref{Word coloring} of a coloring of a word.
\item[$\pi_W$] Canonical function from a word to its alphabet (Def.~\ref{Word}).
\item[$\Qcat$] The set of quilts (Def.~\ref{Quilt def}).
\item[$\Qop$]  Quilt operad (Def.~\ref{Quilt def}).
\item[$\Rep_0$] $:\Qop\to\Endop[\s_2C^{\bullet,\bullet}(\A,\A)]$ (Def.~\ref{Rep0 def}).
\item[$\Rep$] $:\mQuilt\to\Endop[\s\tot C^{\bullet,\bullet}(\A,\A)]$ (Thm.~\ref{mQuilt algebra}).
\item[$\s$]  Degree $-1$ (homological degree $1$) generator for suspension \cite[Sec.~1.5.1]{l-v2012}.
\item[$\s_2$] Bidegree $(0,-1)$ generator for suspension.
\item[$\Susp$]  Suspension of an operad  \cite[Sec.~7.2.2]{l-v2012}.
\item[$\perm_n$] The group of permutations of $\com{n}$.
\item[$\perm$] The disjoint union of all finite symmetric groups.
\item[$\Wtwo$] The ``shuffled'' word used in defining the sign of a coloring (Def.~\ref{Shuffled word}).
\item[$\sgn$] Sign of a permutation. 
\item[$\sgn_W$] Sign for a face of a word  $W$ or quilt (Def.~\ref{Face sign}). 
\item[$\sgn_{P,Q,a}$] Sign for an extension (Def.~\ref{Ext sign}). 
\item[$\sgn_Q$] Sign for a coloring of a quilt $Q$ (Def.~\ref{CQ sign}). Sign for a face of a quilt.
\item[$\sgn_\homone$] The sign for a maximal quilt (Def.~\ref{Max sign}).
\item[$\Tree_Q$] Underlying tree of a quilt $Q$ (Def.~\ref{Quilt def}).
\item[$\tot$]  Total complex of a bicomplex.
\item[$\Treecat$] The set of planar rooted trees (Def.~\ref{Treecat def}).
\item[$\V_T$]  Vertices of $T$  (Def.~\ref{Tree}).
\item[$\Vect_\Bbbk$] The category of vector spaces over a field $\Bbbk$.
\item[$\W_Q$]  Underlying word of a quilt $Q$.
\end{list}

\end{appendix}

\end{document}